\newcommand{\papertitle}{Semiclassical Hodge theory for log Poisson manifolds}
\numberwithin{equation}{section}
\newcommand{\sbt}{\,\begin{picture}(-1,1)(-1,-2)\circle*{2}\end{picture}\ }
\definecolor{darkred}{rgb}{.8,0,0}
\definecolor{tocolor}{rgb}{.1,.1,.1}
\definecolor{urlcolor}{rgb}{.2,.2,.6}
\definecolor{linkcolor}{rgb}{.1,.1,.5}
\definecolor{citecolor}{rgb}{.4,.2,.1}
\definecolor{gray}{rgb}{.8,.8,.8}
\newcommand{\thdef}[2]{
	\newaliascnt{#1}{theorem}  
	\newtheorem{#1}[#1]{#2}
	\aliascntresetthe{#1}  
	\newtheorem*{#1*}{#2}
	\expandafter\newcommand\expandafter{\csname #1autorefname\endcsname}{#2}
}
\newtheorem{theorem}{Theorem}[section]
\newtheorem*{theorem*}{Theorem}
\newtheorem{mainthm}{Theorem}
\theoremstyle{definition}
\theoremstyle{remark}
\newenvironment{example}
  {\pushQED{\qed}\examplex}
  {\popQED\endexamplex}
\newenvironment{remark}
  {\pushQED{\qed}\remarkx}
  {\popQED\endremarkx}
\newcounter{abvarcount}
\newcounter{toriccount}
\newcommand{\toricex}{\stepcounter{toriccount}Toric varieties, part \arabic{toriccount}}
\newcounter{twopurecount}
\newcommand{\twopureex}{\stepcounter{twopurecount}Two-pure varieties, part \arabic{twopurecount}}
\newcounter{sklyanincount}
\newcommand{\sklyaninex}{\stepcounter{sklyanincount}Sklyanin's elliptic Poisson structures, part \arabic{sklyanincount}}
\newcounter{surfacecount}
\newcommand{\surfaceex}{\stepcounter{surfacecount}Surfaces, part \arabic{surfacecount}}
\newcounter{threefoldcount}
\newcounter{logsympcount}
\newcommand{\logsympex}{\stepcounter{logsympcount}Log symplectic manifolds, part \arabic{logsympcount}}
\newcommand{\defn}[1]{\textbf{\emph{#1}}}
\newcommand{\abrac}[1]{\left\langle#1\right\rangle}
\newcommand{\rbrac}[1]{\left(#1\right)}
\newcommand{\pairnaive}[1]{\rbrac{#1}}
\newcommand{\pair}[1]{\abrac{#1}}
\newcommand{\sbrac}[1]{\left[#1\right]}
\newcommand{\set}[2]{\left\{#1 \,\middle|\, #2 \right\}}
\newcommand{\mapdef}[5]{\begin{array}{ccccc}
#1 &:& #2 &\rightarrow& #3 \\
&& #4 &\mapsto& #5
\end{array}}
\newcommand{\CC}{\mathbb{C}}
\newcommand{\PP}{\mathbb{P}}
\newcommand{\RR}{\mathbb{R}}
\newcommand{\ZZ}{\mathbb{Z}}
\newcommand{\QQ}{\mathbb{Q}}
\newcommand{\bA}{\mathbb{A}}
\newcommand{\iu}{\mathrm{i}}
\newcommand{\famX}{\mathcal{X}}
\newcommand{\famXo}{\mathcal{X}^\circ}
\newcommand{\famXc}{\overline{\mathcal{X}}}
\newcommand{\fambdX}{\partial\mathcal{X}}
\newcommand{\XX}{\X}
\newcommand{\X}{\mathsf{X}}
\newcommand{\cX}{\overline{\mathsf{X}}}
\newcommand{\tX}{\widetilde{\X}}
\newcommand{\bdX}[1][]{\partial^{#1}\X}
\newcommand{\Xo}{\X^\circ}
\newcommand{\Q}[2][]{\mathsf{Q}_{#1}(#2)}
\newcommand{\QP}[1][]{\mathsf{Q}}  
\newcommand{\allones}[1][]{\mathbf{1}} 
\newcommand{\Aut}[1]{\mathsf{Aut}\rbrac{#1}}
\newcommand{\Gm}{\CC^\times}
\newcommand{\QQx}{\QQ^\times}
\newcommand{\CCx}{\CC^\times}
\newcommand{\symgp}[1]{S_{#1}}
\newcommand{\U}{\mathsf{U}}
\newcommand{\V}{\mathsf{V}}
\newcommand{\Bl}[2]{\mathsf{Bl}_{#2}(#1)}
\newcommand{\Y}{\mathsf{Y}}
\newcommand{\tY}{\widetilde{\mathsf{Y}}}
\newcommand{\cY}{\overline{\mathsf{Y}}}
\newcommand{\bdY}{\partial\Y}
\newcommand{\Yo}{\Y^\circ}
\newcommand{\Z}{\mathsf{Z}}
\newcommand{\cZ}{\overline{\mathsf{Z}}}
\newcommand{\N}{\mathsf{N}}
\newcommand{\bZ}[1][]{\overline{\Z}}
\newcommand{\Zo}{\mathsf{Z}^\circ}
\newcommand{\bS}{\mathsf{S}}
\newcommand{\sing}[1]{{#1}_{\mathrm{sing}}}
\newcommand{\per}{\wp}
\newcommand{\KodSpenc}{\kappa}
\newcommand{\quant}{\widehat{\mathrm{quant}}}
\newcommand{\ctb}[1]{\mathsf{T}^*#1}
\DeclareMathOperator{\rank}{rank}
\DeclareMathOperator{\codim}{codim}
\DeclareMathOperator{\img}{image}
\DeclareMathOperator{\res}{Res}
\DeclareMathOperator{\Sym}{Sym}
\newcommand{\id}{\mathrm{id}}
\DeclareMathOperator{\sgn}{sgn}
\newcommand{\adm}{\diamond} 
\newcommand{\cl}[2][\mathrm{cl}]{{#2}^{(#1)}} 
\newcommand{\ps}{\sigma}
\newcommand{\pss}{\ps^\sharp}
\newcommand{\cT}[1]{\mathcal{T}_{#1}} 
\newcommand{\cN}[1]{\mathcal{N}_{#1}} 
\newcommand{\coN}[1]{\mathcal{N}^\vee_{#1}} 
\newcommand{\cTlog}[1]{\mathcal{T}_{\overline{#1}}(-\log \partial #1)} 
\newcommand{\der}[2][\bullet]{\mathscr{X}^{#1}_{#2}} 
\newcommand{\forms}[2][\bullet]{\Omega^{#1}_{#2}} 
\newcommand{\rforms}[2][\bullet]{\Omega^{-#1}_{#2}}
\newcommand{\logforms}[2][\bullet]{\Omega^{#1}_{\overline{#2}}(\log \partial#2)} 
\newcommand{\dolb}[2][\bullet,\bullet]{\mathcal{A}^{#1}_{#2}}
\newcommand{\Dolb}[2][\bullet,\bullet]{\mathscr{A}^{#1}({#2})}
\newcommand{\Dolbc}[2][\bullet]{\mathscr{A}^{#1}_c({#2})}
\newcommand{\cO}[1]{\mathcal{O}_{#1}} 
\newcommand{\cI}{\mathcal{I}} 
\newcommand{\sL}{\mathcal{L}}
\newcommand{\coH}[2][\bullet]{\mathsf{H}^{#1}\!\rbrac{#2}}
\newcommand{\sHom}[2][]{\mathcal{H}om_{#1}(#2)}
\newcommandx{\sEnd}[3][1=,2=]{\mathcal{E}nd_{#1}^{#2}(#3)}
\newcommand{\HdR}[2][\bullet]{\mathsf{H}_{\mathrm{dR}}^{#1}(#2)}
\newcommand{\HdRc}[2][\bullet]{\mathsf{H}_{\mathrm{dR},c}^{#1}(#2)}
\newcommandx{\PcoHdR}[3][1=\bullet,2=j]{\bigoplus_{#2 \in \ZZ}\HdR[#1+2#2]{#3}}
\newcommand{\E}{\mathsf{E}}
\newcommand{\Eo}{\E^\circ}
\newcommand{\cE}{\overline{\E}}
\newcommand{\sMix}[2][]{\mathcal{M}ix_{#2}^{#1}}
\newcommand{\Mix}[2][]{\mathsf{Mix}^{#1}(#2)}
\newcommand{\sMixc}[2][]{\mathcal{M}ix_{#2;c}^{#1}}
\newcommand{\Mixc}[2][]{\mathsf{Mix}^{#1}_{c}(#2)}
\newcommand{\HP}[2][\bullet]{\mathsf{HP}_{#1}(#2)}
\newcommand{\Hlgy}[2][\bullet]{\mathsf{H}_{#1}(#2)}
\newcommand{\rsect}[1]{\mathsf{R\Gamma}\rbrac{#1}}
\newcommand{\K}[2][\bullet]{\mathsf{K}^{#1}(#2)}
\newcommand{\tK}[2][\bullet]{\widetilde{\mathsf{K}}^{#1}(#2)}
\newcommand{\Kc}[2][\bullet]{\mathsf{K}^{#1}_c(#2)}
\newcommand{\shfK}[2][\bullet]{\mathcal{K}^{#1}(#2)}
\newcommand{\KB}[2][\bullet]{\mathsf{K}^{#1}_{\mathrm{B}}(#2)}
\newcommand{\KBc}[2][\bullet]{\mathsf{K}^{#1}_{\mathrm{B},c}(#2)}
\newcommand{\sKB}[2][\bullet]{\mathcal{K}^{#1}_{\mathrm{B}}(#2)}
\newcommand{\KR}[2][\bullet]{\mathsf{K}^{#1}(#2)_{\RR}}
\newcommand{\KRc}[2][\bullet]{\mathsf{K}^{#1}_{c}(#2)_{\RR}}
\newcommand{\KdR}[2][\bullet]{\mathsf{K}^{#1}_{\mathrm{dR}}(#2)}
\newcommand{\KdRc}[2][\bullet]{\mathsf{K}^{#1}_{\mathrm{dR},c}(#2)}
\newcommand{\sKdR}[2][\bullet]{\mathcal{K}^{#1}_{\mathrm{dR}}(#2)}
\newcommand{\KDol}[2][\bullet]{\mathsf{K}^{#1}_{\mathrm{Dol}}(#2)}
\newcommand{\sKDol}[2][\bullet]{\mathcal{K}^{#1}_{\mathrm{Dol}}(#2)}
\newcommand{\Ktop}[2][\bullet]{\mathsf{K}^{#1}_{\mathrm{top}}(#2)}
\newcommand{\End}[2][]{\mathsf{End}^{#1}\rbrac{#2}}
\newcommand{\Hom}[2][]{\mathsf{Hom}_{#1}\rbrac{#2}}
\newcommand{\ExtZ}[2][]{\mathsf{Ext}_{\ZZ}^{#1}(#2)}
\newcommand{\ExtMHS}[2][1]{\mathsf{Ext}_{\mathrm{MHS}}^{#1}(#2)}
\newcommand{\Jac}[2][]{\mathsf{J}^{#1}(#2)}
\newcommand{\dd}{\mathrm{d}}
\newcommand{\dlog}[1]{\tfrac{\mathrm{d}#1}{#1}}
\newcommand{\del}{\partial}
\newcommand{\delb}{\overline{\partial}}
\newcommand{\delps}[1][]{\delta_{\ps_{#1}}}
\newcommand{\hook}[1]{\iota_{#1}}
\newcommand{\hookps}{\hook{\ps}}
\newcommand{\cvf}[1]{\partial_{#1}}
\newcommand{\logcvf}[1]{#1\partial_{#1}}
\newcommand{\QCoh}[1]{\mathsf{QCoh}(#1)}
\newcommand{\SF}[1][\bullet]{s_{#1}}
\newcommand{\grS}[1][\bullet]{\mathrm{gr}^{\SF[]}_{#1}}
\newcommand{\grW}[1][\bullet]{\mathsf{gr}^{\W[]}_{#1}}
\newcommand{\grF}[1][\bullet]{\mathsf{gr}_{\F[]}^{#1}}
\newcommand{\W}[1][\bullet]{W_{#1}}
\newcommand{\F}[1][\bullet]{F^{#1}}
\newcommand{\Fps}[1][\bullet]{F^{#1}_\ps}
\newcommand{\bFps}[1][\bullet]{\overline{F^{#1}_\ps}}
\newcommand{\MPois}{\mathfrak{M}_{\mathrm{Pois}}}
\newcommand{\MHodge}{\mathfrak{M}_{\mathrm{Hodge}}}
\newcommand{\Spec}[1]{\mathsf{Spec}(#1)}
\newcommand{\Bet}{\mathrm{B}}
\newcommand{\dR}{\mathrm{dR}}
\newcommand{\Flag}[2][\bullet]{\mathsf{Flag}_{#1}(#2)}
\newcommand{\tipi}{2 \pi \mathrm{i}}
\newcommand{\ch}[1][]{\mathrm{ch}_{#1}}
\newcommand{\charge}{c}
\newcommand{\Ahat}{\widehat{\mathrm{A}}}
\newcommand{\ahat}{\widehat{a}}
\newcommand{\modtwo}{\hspace{-0.7em}\mod 2}
\newcommand{\fg}{\mathfrak{g}}
\newcommand{\fj}{\mathfrak{j}}
\newcommand{\fq}{\mathfrak{q}}
\newcommand{\ft}{\mathfrak{t}}
\newcommand{\cA}{\mathcal{A}}
\newcommand{\calE}{\mathcal{E}}
\newcommand{\cM}{\mathcal{M}}
\newcommand{\sfS}{\mathsf{S}}
\newcommand{\zb}{\overline{z}}
\newcommand{\yb}{\overline{y}}
\title{\papertitle}
\author{Aidan Lindberg}
\address{University of Toronto}
\email{aidan.lindberg@mail.utoronto.ca}
\author{Brent Pym}
\address{McGill University}
\email{brent.pym@mcgill.ca}
\begin{document}
           
\maketitle

\begin{abstract}
 We construct a mixed Hodge structure on the topological K-theory of smooth   Poisson varieties, depending weakly on a choice of compactification.  We  establish a package of tools for calculations with these structures, such as functoriality statements, projective bundle formulae, Gysin sequences and Torelli properties.  We show that for varieties with trivial A-hat class, the corresponding period maps for families can be written as exponential maps for bundles of tori, which we call the ``quantum parameters''.  As justification for the terminology, we show that in many interesting examples, the quantum parameters of a Poisson variety coincide with the parameters appearing in its known deformation quantizations.  In particular, we give a detailed implementation of an argument of Kontsevich, to  prove that his canonical quantization formula, when applied to Poisson tori, yields noncommutative tori with parameter ``$q = e^\hbar$''.
\end{abstract}

{
\setcounter{tocdepth}{1} 
\tableofcontents 
}

\section{Introduction}

\subsection{Overview}
This paper is the first in a series of works about ``semiclassical Hodge theory'', by which we mean the behaviour of Hodge theory under noncommutative deformations of smooth varieties.  This is a particular regime of Katzarkov--Kontsevich--Pantev's noncommutative Hodge theory~\cite{Katzarkov2008} in which everything can be phrased in terms of classical geometry and mixed Hodge structures, although these structures depend in an interesting way on the noncommutative deformation.  The ultimate aim, following a strategy of Kontsevich~\cite{Kontsevich2008a}, is to provide a tractable mechanism to calculate the result of his canonical deformation quantization formula~\cite{Kontsevich03} in many cases of interest.

We recall that the quantization formula associates, to every Poisson manifold, a noncommutative deformation of its algebra of functions.  The formula is a Feynman-style series expansion~\cite{Cattaneo2000}, whose beautiful structure presents several challenges.  For instance, it depends implicitly on a choice of ``gauge'', in the form of a Drinfel'd associator~\cite{Dolgushev2021,Kontsevich1999,Severa2011,Tamarkin1999,WW15,WWBraces}; its coefficients are multiple zeta values~\cite{Banks2020,Felder2010}, hence conjecturally transcendental; it can diverge, necessitating resummation~\cite{Garay2014,Li2023a}; and even when it converges, it is unclear how to calculate the sum.  Consequently, to our knowledge, the main cases in which the formula can be computed directly are present already in the original 1997 preprint of  \cite{Kontsevich03}.  

However, a deep insight of Kontsevich, outlined in \cite{Kontsevich2008a}, is that for \emph{holomorphic} Poisson brackets, many of these difficulties can be overcome by reinterpreting the problem as a comparison of certain Hodge-theoretic period maps. Our goal in this series of papers is to flesh this idea out.  As we shall explain, it allows one to establish the convergence and explicitly calculate the quantization  for a wide class of interesting Poisson structures, and provides a novel conceptual explanation for the recurrence of classical special functions in the relations defining well-known noncommutative algebras.  It also gives a useful tool to address questions about quantizability of modules, via the noncommutative Hodge conjecture~\cite{Lin2023}.

In this first paper, we develop the ``classical side'' of the story, i.e.~the Hodge theory of holomorphic Poisson manifolds.  Our main results concern the construction and properties of a natural mixed Hodge structure on the topological K-theory of a holomorphic Poisson manifold (equipped with a suitable equivalence class of compactifications).  Roughly speaking, this structure is obtained by deforming the classical mixed Hodge structure of Deligne~\cite{Deligne1971a} to incorporate information about the periods of the symplectic leaves of the Poisson bracket; it is closely related to the Hodge decomposition for generalized complex manifolds~\cite{Cavalcanti2006,Gualtieri2004}.  We examine the resulting period maps and explain how they can often be computed in terms of certain ``quantum parameters'' taking values in complex abelian Lie groups.  As justification for the terminology, and a preview of future developments, we prove that for toric Poisson varieties, these parameters correspond, under Kontsevich's quantization, to the usual ``$q$-parameters'' defining \emph{noncommutative} tori, thus providing an explicit non-perturbative calculation of the quantization, and verifying the noncommutative Hodge conjecture in this case.  All of the key obstructions to a direct calculation of the quantization formula are already present in this simple example---indeed it is the simplest example for which these complexities occur---so it nicely illustrates the efficacy of the Hodge-theoretic approach.

In the sequel, we will construct corresponding mixed Hodge structures on the ``quantum side'' using recent advances in noncommutative algebraic geometry, and prove in full generality that they agree with the structures presented here, so that many other natural examples of Poisson varieties can be treated in the same manner as tori.  In additional forthcoming work of the second author with Matviichuk--Lapointe and with Matviichuk--Schedler, we will explain how to further extend the class of examples that can be treated by leveraging orbifold resolution of singularities, and filtered deformations of toric structures, respectively.

We now give a detailed overview of contents of the present paper.
\subsection{Logarithmic Poisson manifolds}
The basic constructions of mixed Hodge theory for noncompact smooth varieties rely on the existence and uniqueness (up to ``zig-zags'') of smooth compactifications with normal crossings boundary, obtained using Nagata's compactification theorem~\cite{Nagata1962} and Hironaka's desingularization theorem~\cite{Hironaka1964}.   To adapt these methods to Poisson varieties,  we further require the compactifications to be compatible with Poisson brackets.  However, Poisson structures do not always lift along blowups, so the obvious analogues of Nagata and Hironoka's theorems fail in the Poisson category.  The existence and uniqueness also fail in the analytic category, since there are smooth varieties that are biholomorphic as complex manifolds, but have non-isomorphic mixed Hodge structures. As a result, we are led to carry around the compactifications as part of the data.  Thus, our main objects of study are the following:
\begin{definition}
A \defn{log manifold} is a pair $\X = (\cX,\bdX)$, where $\cX$ is a compact complex manifold that admits a K\"ahler metric (e.g.~a smooth projective $\CC$-variety), and $\bdX\subset \cX$ is a normal crossings  divisor.  A \defn{Poisson structure} on a log manifold $\X$ is a holomorphic Poisson bivector $\ps$ on $\cX$ that is tangent to $\bdX$.
\end{definition}

We allow the possibility that the normal crossings divisor $\bdX$ is smooth, or even empty.  We think of the interior $\X^\circ := \cX \setminus \bdX$ as the primary geometric object, so that two log Poisson manifolds are \defn{weakly equivalent} if they differ by a zig-zag of morphisms that restrict to isomorphisms of the interiors.  In particular, our main constructions will be invariant under weak equivalences.  Our notation throughout reflects the classical notation for the interiors, so for instance, the cohomology $\coH{\X}$ is canonically identified with the cohomology $\coH{\Xo}$ of the interior $\Xo$, which we have found convenient for calculations.  (In other words, we have adopted some notation from logarithmic algebraic geometry, although we do not work directly with log schemes in this paper.)

In \autoref{sec:poisson}, we recall and develop some basic geometry of log Poisson manifolds.  In particular, we highlight the important role played by Poisson submanifolds whose conormal Lie algebras are abelian; we call them ``coabelian'' for short.  As a consequence of Polishchuk's results on blowups of Poisson brackets~\cite{Polishchuk1997}, these are exactly the submanifolds whose complement has an obvious logarithmic Poisson compactification, given by blowing up the submanifold and adding the exceptional divisor to the boundary.  This allows us to produce natural logarithmic counterparts for many important noncompact Poisson varieties, related to tori, del Pezzo surfaces, elliptic Feigin--Odesskii--Sklyanin algebras, Fano 3-folds, etc., which we use as running examples throughout.

\subsection{Hodge--de Rham theory}
Classical Hodge theory is rooted in singular (Betti) and de Rham cohomology, but in noncommutative geometry, we lose direct access to these invariants, e.g.~since there is no underlying topological space in which to form simplices.    However, there are closely related invariants that persist: the topological K-theory and the periodic cyclic homology, which can be thought of roughly as the ``two-periodization'' of cohomology thanks to the Atiyah--Hirzebruch and Hochschild--Kostant--Rosenberg theorems, respectively.  Thus for a log manifold $\X$, we denote by $\KB{\X} := \Ktop{\Xo}$ the topological K-theory and by 
\[
\KdR{\X} := \bigoplus_{j \in \ZZ} \HdR[\bullet+2j]{\X} \cong \bigoplus_{j \in \ZZ} \HdR[\bullet+2j]{\Xo}
\]
the graded vector space obtained by the 2-periodization of the logarithmic de Rham cohomology of $\X$ (or equivalently its interior $\Xo$).

Here, there is an important subtlety, which is well known, but bears repeating: when defining the Chern character in de Rham cohomology via curvature of connections, one obtains classes in cohomology whose periods are not rational, but rather rational multiples of powers of $\tipi$, i.e.~Tate twists. This forces us to adjust the notion of complex conjugation on $\KdR{\X}$ relative to $\HdR{\X}$, and to reindex the the Hodge filtration by the columns of the Hodge diamond instead of the diagonals; see \autoref{fig:surface-hodge-filtration}.  This  matches the re-indexing that occurs when we relate $\KdR{\X}$ to periodic cyclic homology, or to the Hodge decomposition for generalized K\"ahler manifolds~\cite{Cavalcanti2006,Gualtieri2004}. 

In \autoref{sec:deRham}, following Kontsevich~\cite{Kontsevich2008a}, we explain how a Poisson structure $\ps$ on $\X$ induces a deformation of the usual Hodge filtration on differential forms, via the Brylinski--Koszul Poisson homology mixed complex~\cite{Brylinski1988,Koszul1985,Stienon2011}.  Then, using the classical theorems on degeneracy of the Hodge and weight spectral sequences, together with a suitable equivariance under the operation of rescaling the Poisson bivector, we prove that this deformed filtration (and its Poincar\'e dual in compactly supported cohomology) defines an $\RR$-Hodge structure:
\begin{mainthm}[see \autoref{thm:R-MHS}]\label{thm:mainA}
	Let $(\X,\ps)$ be a log Poisson manifold.  Then the weight filtration and $\ps$-deformed Hodge filtration on the periodized de Rham complex define Poincar\'e dual mixed $\RR$-Hodge structures $\KR{\X,\ps}$ and $\KRc{\X,\ps}$. 
\end{mainthm}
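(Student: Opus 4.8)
The plan is to deduce the theorem from Deligne's classical mixed Hodge structure on $\HdR{\Xo}$ by a deformation argument, exploiting that the underlying real vector space, the weight filtration, and the real structure of $\KR{\X,\ps}$ do not depend on $\ps$ --- only the Hodge filtration does. Recall that $\Fps$ is the abutment of the spectral sequence of the $2$-periodized logarithmic de Rham complex $\logforms{\X}$ equipped with the Brylinski--Koszul differential $\dd + \delps$, where $\delps = [\dd,\hookps]$ is defined on $\logforms{\X}$ and preserves its pole-order weight filtration $\W$ precisely because $\ps$ is tangent to $\bdX$; the filtration is the ``column'' reindexing of the degree filtration, the shift being exactly what makes $\delps$ --- which lowers de Rham degree by one --- filtration-compatible after periodization. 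The filtration $\W$ on $\KR{\X,\ps}$ is the $\ps$-independent classical periodized one, and the real structure is that of $\Ktop{\Xo}$ transported through the (Tate-twisted) de Rham Chern character. Finally, rescaling differential forms according to their degree defines a $\CC^\times$-action intertwining the $\ps$-deformation with the $(t\ps)$-deformation; it fixes the column filtration, and for $t\in\RR_{>0}$ it preserves $\W$ and the real structure.

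The first step is to show the $\ps$-deformed Hodge spectral sequence degenerates at $E_1$ with the \emph{classical} $E_1$-page. On the associated graded of the column filtration the induced differential is $\delb$ together with the $\delps$-contribution, and on Dolbeault cohomology the operator $\delps$ vanishes: since $\delps = [\partial,\hookps]$ there, the holomorphic bivector being $\delb$-closed means $\hookps$ descends to Dolbeault cohomology, while $\partial$ acts as zero by the classical $E_1$-degeneration theorem ($\cX$ being Kähler). Hence $E_1$ is the usual logarithmic Hodge cohomology $\bigoplus_j H^\bullet(\cX, \logforms[p+j]{\X})$, and all higher differentials, which likewise only involve $\partial$ and $\hookps$, vanish. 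Running the same argument on the weight-graded pieces --- logarithmic forms restricted to the normalizations of the multiple intersections of the components of $\bdX$, each compact Kähler with the induced holomorphic Poisson structure --- shows the deformed weight spectral sequence degenerates at $E_2$. It follows that the deformed mixed Hodge numbers $\dim\grFps[p]\grW[m]$ equal Deligne's and are independent of $\ps$, so $\KR{\X,\ps}$ has the same underlying $\W$-filtered real vector space as the classical periodized structure $\KR{\X,0}$.

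It remains to verify the Hodge axiom: $(\Fps,\bFps)$ is $m$-opposed on $\grW[m]$ for every $m$. At $\ps = 0$ this holds because $\KR{\X,0}$ is, by construction, a finite direct sum of Tate twists of Deligne's mixed Hodge structures, and those operations preserve mixed $\RR$-Hodge structures; in particular $(\F,\bF)$ is $m$-opposed on $\grW[m]$. By the previous step the relevant dimensions are $\ps$-independent, so $m$-opposedness is an open condition on the flag $\Fps$; the set of $t$ for which it holds for $t\ps$ is therefore open, contains $0$, and is $\RR_{>0}$-invariant by the rescaling equivariance --- hence is all of $\CC$, and $\KR{\X,\ps}$ is a mixed $\RR$-Hodge structure. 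The compactly supported version $\KRc{\X,\ps}$ follows by the same argument, or via duality: the integration pairing $\HdR{\Xo}\otimes\HdRc{\Xo}\to\CC$, periodized and Tate-twisted, is $\W$-compatible classically and compatible with the deformed differentials because $\hookps$ --- hence $\delps$ --- is graded self-adjoint for it (boundary terms vanishing by Stokes), so it presents $\KRc{\X,\ps}$ as the dual of $\KR{\X,\ps}$, giving the asserted Poincaré duality.

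I expect the real work, and the place where the hypotheses genuinely enter, to lie in the second step: keeping the bigraded Hodge-by-weight data consistent through the column reindexing and the $2$-periodization, checking that $\delps$ preserves $\W$ and restricts compatibly to the boundary strata, and --- most delicately --- verifying that the rescaling automorphisms respect the real structure, whose definition involves Tate twists by powers of $\tipi$, so that the $\RR_{>0}$-equivariance legitimately transports the real-Hodge symmetry from small $\ps$ to $\ps$ itself.
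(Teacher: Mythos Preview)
Your overall strategy---deduce degeneration, then use that opposedness is open together with the $\RR_{>0}$-equivariance of $\Fps$ under rescaling $\ps$---is exactly the paper's. However, your execution has two real gaps, and the point you flag as ``most delicate'' is in fact the easy one.

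First, your degeneration argument is incomplete. You correctly show that $\delps=[\del,\hookps]$ induces the zero map on Dolbeault cohomology (since $\del$ does), which kills $d_1$ of the hypercohomology spectral sequence; but ``all higher differentials, which likewise only involve $\del$ and $\hookps$, vanish'' is not justified---higher $d_r$'s are zigzag operations and do not reduce to compositions of the induced operators on $E_1$. The paper sidesteps this entirely with a dimension count: the two spectral sequences give inequalities $\sum_{q-p=j}\dim\coH[q]{\forms[p]{\X}}\ge\dim\KDol[j]{\X,\ps}$ and $\sum_j\dim\KDol[\ell+2j]{\X,\ps}\ge\dim\KdR[\ell]{\X}$, and since $\KdR{\X,\ps}\cong\KdR{\X}$ via $e^{\hookps/u}$, the composite inequality is an equality by Deligne's classical degeneration, forcing both to degenerate. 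For the weight spectral sequence the paper simply observes that $e^{\hookps/u}$ preserves $\W$ (as you noted), so the entire weight-filtered periodic complex is isomorphic to the $\ps=0$ one, and $E_2$-degeneration is literally the classical statement.

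Second, and more seriously, you assert that degeneration of both spectral sequences implies the mixed Hodge numbers $\dim\grFps[p]\grW[m]$ are $\ps$-independent, and then run the open-plus-equivariance argument directly on $\grW[m]$. But the constancy of $\dim\Fps[p]$ on the whole space does not give constancy of the \emph{induced} filtration on $\grW[m]$; for that you need Deligne's lemma that the Hodge filtration on $\grW$ agrees with the one coming from the weight spectral sequence (\cite[1.3.17]{Deligne1971a} in the paper's citation). Without this, opposedness on $\grW[m]$ is the conjunction of an open condition (the sum spans) and a closed one (the intersection lies in $\W[m-1]$), so your continuity argument does not go through. The paper avoids this by first treating the compact case $\bdX=\varnothing$---where $\W$ is trivial, so only $\dim\Fps[p]$ matters and the open-plus-equivariance argument works cleanly---and then, for general $\X$, observing that the $E_1$ page of the weight spectral sequence consists of such compact pieces, which are therefore pure; Deligne's lemma then transports purity to $\grW[m]$. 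Your self-identified delicate point, that rescaling by $\hbar\in\RR_{>0}$ respects the real structure, is immediate: $\hbar\diamond(-)$ acts on each $\HdR[n+2j]{\X}$ by the real scalar $\hbar^{-j}$, which commutes with any conjugation. Finally, your Poincar\'e duality sketch omits the Mukai involution $(-)^\vee$: the pairing compatible with $\Fps$ is $\int\nu^\vee\wedge\omega$, and the key identity is that the top-degree part of $(e^{\hookps}\nu_0)^\vee\wedge(e^{\hookps}\omega_0)$ equals that of $\nu_0^\vee\wedge\omega_0$, reducing to the classical case.
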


\subsection{Lattices}
To obtain the full data of a mixed Hodge structure, one requires, in addition, an integral lattice.  For this, there are several possibilities: the de Rham isomorphism from periodized singular cohomology; the Chern character $\ch$ from topological K-theory $\KB{\X}$; and (variants of) the Caldararu--Mukai ``charge'' vector
\[
c_\X := \Ahat_\X^{1/2} \ch : \KB{\X}\to \KdR{\X}\,,
\]
which corrects the failure of the Chern character to respect Poincar\'e duality. Here $\Ahat^{1/2}_\X \in \KdR[0]{\X} \cong \bigoplus_j \HdR[2j]{\Xo}$ is the square root of the $\Ahat$-class of the interior $\Xo$.

The de Rham and Chern lattices are the easiest to work with from the perspective of geometry, in that they are functorial for pullbacks along arbitrary morphisms of log Poisson manifolds. This  naive functoriality fails in an interesting way for the charge lattice, but the latter is the most natural lattice from the physical/noncommutative perspective: it corresponds to the charge of D-branes \cite{CY98,MRM97}, and in our subsequent work, we will see that it is functorial for quantizable Fourier--Mukai transforms, which will allow us to use our Hodge structures to construct explicit, computable obstructions to deformations of morphisms and modules.  This motivates an examination of its functoriality in purely classical terms, which we carry out in \autoref{sec:lattices} and summarize as follows.

Let $\K{\X,\ps}$ denote the mixed Hodge structure obtained by equipping the mixed $\RR$-Hodge structure $\KR{\X,\ps}$ with the charge lattice.  We say that a morphism of log Poisson manifolds $(\X,\ps) \to (\Y,\eta)$ is \defn{K-quantizable} if the pullback in K-theory defines a morphism of mixed Hodge structures.  Thus, by definition, the assignment $(\X,\ps) \mapsto \K{\X,\ps}$ is contravariantly functorial for K-quantizable morphisms. We show that \'etale maps, immersions and blowups of coabelian submanifolds, coabelian vector bundles and their projectivizations are all K-quantizable.   This allows us to extend many of the classical tools for calculation of mixed Hodge structures on cohomology to our setting, which we collect in the following.
\begin{mainthm}[see \autoref{sec:lattices}]\label{thm:mainB}
The functor $(\X,\ps) \mapsto \K{\X,\ps}$ on the category of K-quantizable morphisms has the following properties:
\begin{enumerate}
\item It is invariant under weak equivalences and multiplication by classes of coabelian Poisson vector bundles.
\item It satisfies Bott periodicity, namely $\K{\X,\ps} \cong \K[\bullet+2]{\X,\ps}(1)$ where $(-)(1)$ is the Tate twist.
\item It is compatible with the classical Gysin sequence, blowup formula, and projective formula, provided the vector bundles and submanifolds involved are coabelian.
\end{enumerate}
\end{mainthm}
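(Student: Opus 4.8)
The plan is to bootstrap everything from the classical theory. Each of the three assertions has a familiar counterpart for the Deligne mixed Hodge structure on the un-periodized cohomology of the interiors, and the work is to transport these counterparts across the three extra layers of structure present in $\K{\X,\ps}$: the $2$-periodization, the passage from the de Rham and Chern lattices to the charge lattice $c_\X$, and the replacement of the classical Hodge filtration by its $\ps$-deformation. The last of these is genuinely new; the other two are governed by results already in hand, since \autoref{sec:lattices} exhibits the relevant classes of morphisms---\'etale maps, closed immersions, blowups of coabelian submanifolds, and projections and projectivizations of coabelian Poisson vector bundles---as K-quantizable. So the proof is largely an assembly: identify the maps occurring in the classical formulae as pullbacks and pushforwards along morphisms of these types, and then invoke the classical statements one layer at a time.

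Parts (1) and (2) are quick. For (1), all of $\KR{\X,\ps}$, $\KdR{\X}$ and $\KB{\X}$ are invariants of the interior $\Xo$, and the $\ps$-deformed filtration is built from the Brylinski--Koszul Poisson homology mixed complex of $\Xo$, which is likewise intrinsic and functorial for Poisson morphisms; invariance under weak equivalences is then Deligne's birational invariance of the weight filtration applied to the de Rham realization, while invariance along the projection of a coabelian Poisson vector bundle $\E\to\X$ follows from homotopy invariance of topological K-theory and of log de Rham cohomology (the total space retracts onto the base, and Polishchuk's blowup result supplies a log Poisson compactification) together with the K-quantizability of that projection, reducing if one prefers to the projective bundle formula of (3). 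For (2), the isomorphism $\KdR[\bullet]{\X}\xrightarrow{\sim}\KdR[\bullet+2]{\X}$ is the reindexing $j\mapsto j+1$ of the summands $\HdR[\bullet+2j]{\Xo}$, and on the Betti side it is multiplication by the Bott class in $\Ktop[-2]{\mathrm{pt}}$, whose charge is a nonzero multiple of $\tipi$; with the column indexing of the Hodge filtration described in the introduction this shifts the weight by $-2$ and the ($\ps$-deformed) Hodge filtration by $-1$, so it is an isomorphism onto the Tate twist $\K[\bullet+2]{\X,\ps}(1)$, the only thing to check being that the deformed filtration is defined uniformly enough across the periodization for the shift to preserve it, which is clear from its construction.

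For (3) I would treat the Gysin sequence, the blowup formula and the projective bundle formula uniformly. In each, the maps in play---restriction $j^*$ to an open Poisson submanifold, Gysin pushforward $i_*$ along a closed coabelian Poisson submanifold, and the structural maps of the blowup and projective bundle decompositions---are pullbacks and pushforwards along the morphisms shown to be K-quantizable in \autoref{sec:lattices}, the coabelian hypothesis being precisely what makes the blowup $\Bl{\cX}{\Z}$ and the bundle $\PP(E)$ log Poisson. The de Rham realization of each statement is then Deligne's classical package after $2$-periodization, with the Gysin map of Hodge type $(c,c)$ for $c$ the codimension; compatibility with the charge lattice is the $\Ahat^{1/2}$-twisted Grothendieck--Riemann--Roch identity underlying the definition of $c_\X$, which matches the K-theoretic localization sequence and the projective bundle and blowup decompositions with their cohomological shadows, Tate twists included; and on the $\ps$-deformed filtration one uses that restriction of forms is a morphism of Poisson homology mixed complexes (handling $j^*$ and all the pullback maps), while $i_*$ is realized through the Leray residue along the exceptional divisor of the Poisson blowup---or dually, through Poincar\'e duality and the compactly supported structure of \autoref{thm:mainA}---so that it too becomes a morphism of mixed Hodge structures.

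The main obstacle is this last point: compatibility of the Gysin pushforward $i_*$ with the \emph{$\ps$-deformed} Hodge filtration. Pushforward, unlike pullback, is not a natural operation on the Poisson homology complex, so direct functoriality is unavailable; one must instead describe $i_*$ geometrically via the residue along the exceptional divisor of the Poisson blowup---which exists only because $\Z$ is coabelian---and then reduce the filtered statement to the already-established K-quantizability of coabelian Poisson vector bundles and their projectivizations and the matching Thom isomorphism. A secondary technical point is the $\Ahat^{1/2}$-bookkeeping: one must verify that it is the charge, rather than the bare Chern character, that renders these sequences exact as sequences of lattices, and that its correction term is compatible with the Tate twists appearing in the classical Gysin and projective bundle formulae.
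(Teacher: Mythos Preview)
Your outline is broadly on the right track, but it misses the technical pivot that the paper actually uses, and it misreads one clause of the statement.

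First, ``multiplication by classes of coabelian Poisson vector bundles'' in part~(1) does not mean homotopy invariance along the projection $\E\to\X$; it means that for a coabelian Poisson vector bundle $\E$, the operation $[\E]\cup(-)$ on $\KB{\X}$ is an endomorphism of the mixed Hodge structure $\K{\X,\ps}$ (\autoref{lem:coabelian-K-mult}). This is not a formality: it rests on \autoref{thm:chern-classes}, which asserts that multiplication by any characteristic class of a coabelian bundle preserves $\Fps$. The proof reduces to the rank-one case via Segre classes and the projective bundle, and then uses the Poisson connection $\nabla:\sL\to\cT{\X}\otimes\sL$ on a coabelian line bundle to show, by an explicit chain-level homotopy, that $e^{-\hookps/u}(\omega\wedge-)e^{\hookps/u}$ preserves the $u$-adic filtration modulo boundaries. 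This computation is the real engine of the section, and your proposal does not isolate it.

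Second, the paper does not handle the Gysin pushforward via the Leray residue along the exceptional divisor, nor by a direct Poincar\'e-duality argument on the Poisson homology complex. Instead it introduces a general criterion (\autoref{lem:ahat-kquant}): a morphism $f$ is (properly) K-quantizable whenever $[\cT{f}]$ lies in the $\QQ$-subalgebra of $\KB[0]{\X}$ generated by coabelian bundles, since then $\Ahat_f^{1/2}$ is a polynomial in their characteristic classes and \autoref{thm:chern-classes} applies. The embedding of a coabelian submanifold, the projection and zero section of a coabelian vector bundle, the projectivization, and the blowdown all satisfy this criterion by inspection of their relative tangent complexes. The Gysin sequence is then assembled from the K-theoretic Thom isomorphism (\autoref{cor:thom}) and a specialization map built from the \emph{degeneration to the normal cone} (\autoref{lem:normal-cone}), not from a residue description; all the maps in that degeneration are again K-quantizable by the same criterion.

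So your ``main obstacle''---compatibility of $i_!$ with $\Fps$---is correctly identified, but the resolution is not a direct description of pushforward on the mixed complex. It is the single lemma that multiplication by $c_1$ of a coabelian line bundle preserves $\Fps$ up to homotopy, from which everything else follows formally via the $\Ahat_f$ criterion. Your sketch would need that lemma anyway (the residue along the exceptional divisor still involves the Euler/Thom class of the normal bundle, whose compatibility with $\Fps$ is exactly what \autoref{thm:chern-classes} supplies), so the paper's route is both cleaner and unavoidable.
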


\subsection{Period maps and quantum parameters}\label{sec:intro-quantum-param}
In \autoref{sec:periods}, we consider the behaviour of the mixed Hodge structure $\K{\X, \ps}$ as the log Poisson manifold $(\X,\ps)$ varies in a holomorphic family, i.e. we study the \defn{period map}
\[
\mapdef{\per}{\MPois}{\MHodge}
{(\X,\ps)}{\K{\X,\ps}},
\]
from the moduli space of log Poisson manifolds to the moduli space of mixed Hodge structures.  We shall not make explicit use of these moduli spaces in the body of the paper, where we refer instead to general holomorphic families $(\X_s,\ps_s)_{s \in \bS}$ parameterized by a complex analytic space $\bS$.  But we shall stick with this suggestive picture in the introduction, in order to better convey the key ideas.

First off, a straightforward analogue of the classical argument for (generalized) complex manifolds~\cite{Baraglia14, Griffiths68}, yields a formula for the differential of $\wp$ in terms of the Kodaira--Spencer map and the contraction of polyvectors into forms, which implies that $\wp$ is truly a period map in the Hodge-theoretic sense:
\begin{mainthm}[see \autoref{prop:InfPeriodMap} and \autoref{rmk:moduli}]\label{thm:mainC}
    The period map $\wp$ is holomorphic and satisfies Griffiths' transversality condition, i.e.~defines a variation of mixed Hodge structures. 
\end{mainthm}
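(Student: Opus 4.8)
The plan is to reduce the statement to an explicit computation of the derivative of $\wp$ at a point of $\bS$, following the classical Griffiths/Katz–Oda template adapted to the Poisson setting. First I would fix a family $(\X_s,\ps_s)_{s\in\bS}$ and, after shrinking $\bS$, trivialize the underlying $C^\infty$ manifold and the underlying topological K-theory so that $\KR{\X_s,\ps_s}$ becomes a fixed vector space equipped with a varying pair of filtrations $(\W[\bullet],\Fps[\bullet]_s)$. Holomorphy of $\wp$ then means that the subspaces $\Fps[p]_s\subset\KdR{\X}$ vary holomorphically in $s$, and Griffiths transversality means $\partial_s\Fps[p]_s\subseteq\Fps[p-1]_s$ where $\partial_s$ denotes any holomorphic vector field on $\bS$ acting on the family. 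Both assertions are infinitesimal, so it suffices to establish the formula for the Kodaira–Spencer derivative of $\wp$ promised in \autoref{prop:InfPeriodMap}, and then read off holomorphy and transversality from the shape of that formula.

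The key step is the derivative computation. The Hodge filtration $\Fps[p]_s$ on the periodized de Rham complex is built, as in \autoref{sec:deRham}, from the Poisson-deformed differential $\dps[s] = \dd + \hbar\,\LL_{\ps_s}$ (or the corresponding $\delps[s]$ on the Brylinski–Koszul mixed complex), whose deformation as $s$ varies is governed by two pieces of data: the variation of the complex structure on $\cX_s$, encoded by the Kodaira–Spencer class $\KodSpenc(\partial_s)\in\coH[1]{\cX,\cTlog{\X}}$, and the variation of the bivector $\ps_s$ within a fixed complex structure, encoded by $\partial_s\ps_s\in\coH[0]{\cX,\wedge^2\cTlog{\X}}$. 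I would use a Čech or Dolbeault representative to write down, on the level of the de Rham complex, the Gauss–Manin connection $\nabla$ on $\KdR{\X}$ and compute $\nabla_{\partial_s}$ applied to a section of $\Fps[p]$ represented by a $\dps$-closed form. The result, as in the generalized-complex case of \cite{Baraglia14,Griffiths68}, is that $\nabla_{\partial_s}$ acts on the associated graded $\grFps[p]$ by (contraction with $\partial_s\ps_s$) $+$ (the classical cup-with-Kodaira–Spencer operator), both of which are operators of filtration degree $-1$ in the $\ps$-reindexed Hodge filtration — here one must check that the Poisson contraction $\hookps$ indeed drops the column index of the Hodge diamond by one, which is exactly the reindexing discussed around \autoref{fig:surface-hodge-filtration}. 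Holomorphy of $\wp$ follows because $\nabla^{0,1}$ annihilates $\Fps[\bullet]$ (the filtration is cut out by holomorphic data), and transversality is precisely the statement that $\nabla_{\partial_s}\Fps[p]\subseteq\Fps[p-1]$, which the formula exhibits.

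Two technical points need care. The first is the degeneration input: to even speak of $\Fps[p]_s$ as a genuine filtration by subspaces of a fixed-dimensional cohomology, and to know the dimensions $\dim\grFps[p]_s$ are locally constant in $s$, I would invoke the $\ps$-twisted Hodge-to-de Rham degeneration and weight spectral sequence degeneration that underlie \autoref{thm:mainA}, together with the $\Gm$-equivariance in the rescaling parameter $\hbar$ that was used there; semicontinuity plus constancy of Euler characteristics then forces local constancy, so the $\Fps[p]_s$ form a holomorphic subbundle of the (trivialized) de Rham bundle. The second is compatibility with the weight filtration $\W[\bullet]$: since $\W[\bullet]$ is the classical weight filtration, independent of $\ps$, Griffiths transversality of the full variation of mixed Hodge structures reduces to transversality of $\Fps[\bullet]$ on each graded piece $\grW[w]$, where the above argument applies verbatim. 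The main obstacle I anticipate is bookkeeping rather than conceptual: correctly tracking the interaction of the two reindexings — the $2$-periodization/column reindexing of the Hodge filtration and the shift introduced by $\hookps$ — so that the cup-with-Kodaira–Spencer term and the contract-with-$\partial_s\ps$ term land in the same filtration degree $-1$, and checking that the Gauss–Manin connection constructed from $\dps$ really is flat and really does restrict from the periodic to the finite-dimensional picture compatibly with the $\Gm$-action. Once the derivative formula is in hand in the form stated in \autoref{prop:InfPeriodMap}, holomorphy and transversality are immediate corollaries, which is the content of \autoref{rmk:moduli}.
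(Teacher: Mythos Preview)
Your proposal is correct and follows essentially the same approach as the paper: trivialize the family in $C^\infty$, split the infinitesimal variation into a complex-structure piece (classical Kodaira--Spencer) and a bivector piece, and verify that both act by contraction operators that drop the Poisson--Hodge filtration degree by one. The paper's execution is slightly more streamlined than what you describe: rather than working directly with the deformed differential, it uses the explicit factorization $\Fps = e^{\hookps/u}\F_{I(s)}$ from \autoref{lem:exp-ps}, so that the derivative is a one-line Leibniz rule computation yielding $e^{\hookps/u}\iota_\kappa$, with the classical Griffiths result supplying the $I(s)$-derivative; this shortcut avoids having to reassemble the Gauss--Manin connection from a \v{C}ech/Dolbeault model by hand.
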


Hence, it is natural to consider the following \defn{global Torelli problem}:
\begin{quote}
    To what extent is a log Poisson manifold $(\X,\ps)$ determined by its period $\wp(\X,\ps)$?
\end{quote}

The period map $\wp$ is very far from being injective, even if we restrict our attention to Poisson structures on a fixed underlying log manifold $\X$.  For instance, if $\X = \PP^n$ is a projective space (viewed as a log manifold with empty boundary divisor), the space of Poisson structures on $\X$ has many irreducible components distinguished by the topology of the symplectic foliations, but the mixed Hodge structures $\K[0]{\PP^n,\ps} \cong \ZZ^{n+1}$ and $\K[1]{\PP^n,\ps} = 0$ are trivial, so that $\per(\PP^n,\ps)$ is independent of $\ps$; see \autoref{cor:P^n-hodge}.  However, that framing is too defeatist: for many of these components, there is a canonically associated family of log Poisson manifolds \emph{with nontrivial boundary}, modelling the complements of certain subvarieties of $\PP^n$ determined by the topology of the foliation.  For these associated families, the period map is a covering onto its image that identifies two log Poisson manifolds if and only if  certain  cohomology classes are integral.

Hence, the best one  can hope for is to find components of $\MPois$ on which $\wp$ is an immersion, whose fibres are characterized in explicit topological terms.  As in the classical situation, one easy way to guarantee immersivity is the Calabi--Yau condition.  Indeed, the following result is a straightforward adaptation of results of Baraglia~\cite{Baraglia14} and Katzarkov--Kontsevich--Pantev~\cite{Katzarkov2008}; we merely take the opportunity to make it explicit:
\begin{mainthm}[\cite{Baraglia14,Katzarkov2008}; see \autoref{ex:logCY}]\label{thm:mainD}
    The period map $\wp$ is an immersion over the locus of log Calabi--Yau Poisson manifolds, i.e.~those whose  underlying compactification divisor $\bdX \subset \cX$ is anticanonical.
\end{mainthm}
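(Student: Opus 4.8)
The plan is to run the Poisson (equivalently, generalized-complex) analogue of Griffiths' classical local Torelli argument for Calabi--Yau manifolds, in the form given by Baraglia \cite{Baraglia14} and Katzarkov--Kontsevich--Pantev \cite{Katzarkov2008}, with the holomorphic volume form replaced by a nowhere-vanishing \emph{logarithmic} volume form. Being an immersion is pointwise, so it suffices to prove that the differential $\dd\wp$ is injective at each log Calabi--Yau Poisson manifold $(\X,\ps)$.

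First I would invoke the infinitesimal period map (\autoref{thm:mainC}, \autoref{prop:InfPeriodMap}): at $(\X,\ps)$ the tangent space $\tb[(\X,\ps)]{\MPois}$ is the hypercohomology of the logarithmic Lichnerowicz complex $(\derlog[\bullet]{\X},[\ps,-])$ (suitably truncated, i.e.\ the relevant Poisson cohomology), and $\dd\wp$ sends a class $\mu$ to the operator on $\KdR{\X}$ given by contraction with $\mu$, which lowers the $\ps$-deformed Hodge filtration $\Fps$ by one step. So it is enough to show that this operator is nonzero whenever $\mu\neq 0$, and I would test it on the top graded piece. The log Calabi--Yau hypothesis makes the logarithmic canonical bundle $\logforms[n]{\X}$ trivial, so there is a nowhere-vanishing logarithmic volume form $\Omega$; using $[\ps,\ps]=0$ one checks that $\Omega$ extends to a $\dps$-closed mixed-degree form $\widetilde\Omega$ --- concretely $\widetilde\Omega=e^{-\hookps}\Omega$, up to the periodicity parameter --- whose class spans the one-dimensional top step of $\Fps$. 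Moreover contraction with $\Omega$ gives sheaf isomorphisms $\derlog[k]{\X}\xrightarrow{\sim}\logforms[n-k]{\X}$ intertwining the Lichnerowicz differential with (a sign times) the Brylinski--Koszul differential $\delps$, and hence identifies the Poisson cohomology with the graded pieces $\grFps$ of the deformed Hodge filtration (Poisson homology).

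With this in hand, composing $\dd\wp$ with evaluation at $[\widetilde\Omega]$ gives the map $\mu\mapsto[\hook{\mu}\widetilde\Omega]$ from $\tb[(\X,\ps)]{\MPois}$ into the graded piece of $\Fps$ one step below the top; modulo lower filtration this class is $[\hook{\mu}\Omega]$, which under the identifications above is exactly the isomorphism ``contract with $\Omega$'' on hypercohomology followed by the inclusion of a single graded piece. Hence $\mu\mapsto\dd\wp(\mu)$ is injective, so $\dd\wp(\mu)=0$ forces $\mu=0$, and $\wp$ is an immersion over the log Calabi--Yau locus.

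The step I expect to require the most care --- and where the ``straightforward adaptation'' of Baraglia and Katzarkov--Kontsevich--Pantev really lies --- is the chain of identifications in the second paragraph: pinning down $\tb[(\X,\ps)]{\MPois}$ as the hypercohomology of the truncated logarithmic Lichnerowicz complex (dropping the $\cO{\cX}$-term, which would correspond to noncommutative rather than Poisson deformations), keeping track of the column reindexing and Tate twists in the definitions of $\Fps$ and of $\KdR{\X}$, and --- most delicately --- matching Poisson cohomology with the graded of the deformed Hodge filtration via contraction with the logarithmic volume form. This last point is cleanest when $\ps$ is unimodular for $\Omega$ (which holds for all our running examples, e.g.\ the toric case), where it is essentially Poisson--Poincar\'e duality; in general it is supplied by the generalized-complex analysis of the cited references, and only needs to be transcribed to the logarithmic de Rham complex.
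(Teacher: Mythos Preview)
Your approach is essentially the paper's: both reduce the immersion to injectivity of the contraction map on the top Hodge line spanned by $e^{\hookps}\mu$, where $\mu$ is a trivialization of the log canonical bundle $\forms[n]{\X}$, using that contraction with $\mu$ identifies logarithmic polyvectors with logarithmic forms. Two remarks are worth making.

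First, your hedging about unimodularity is unnecessary. The paper observes that in the log Calabi--Yau case, unimodularity is \emph{automatic}: since global logarithmic forms are closed (by Deligne's mixed Hodge theory), the form $\hookps\mu \in \coH[0]{\forms[n-2]{\X}}$ is closed, which says exactly that $\mu$ is invariant under Hamiltonian flows; equivalently $e^{\hookps}\mu$ is closed. So the ``cleanest'' case is the only case, and your identification of Poisson cohomology with Poisson homology via $\hook{(-)}\mu$ holds without further hypothesis. The same observation shows that every logarithmic bivector on a log Calabi--Yau is Poisson, since $\hook{[\ps,\ps]} = [\hookps,[\dd,\hookps]]$ vanishes on closed forms.

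Second, the paper organizes the argument slightly differently: rather than working with the full Poisson cohomology $\coH[2]{\der[\ge 1]{\X},[\ps,-]}$ as the tangent space, it invokes the logarithmic Bogomolov--Tian--Todorov lemma \cite[Lemma 4.19]{Katzarkov2008} to conclude that deformations of $\X$ as a log manifold are unobstructed, giving a universal deformation over an open set $\sfS_0 \subset \coH[1]{\cT{\X}}$. Combined with the previous paragraph, this exhibits a universal deformation of $(\X,\ps)$ over the product $\sfS_0 \times \coH[0]{\der[2]{\X}}$, and the immersion is then checked separately on the two factors: the Poisson direction is the linear map $\ps \mapsto \hookps\mu$, and the complex-structure direction is the classical local Torelli for log Calabi--Yau manifolds, which identifies $\coH[1]{\cT{\X}}$ with $\Hom{\coH[0]{\forms[n]{\X}},\coH[1]{\forms[n-1]{\X}}}$. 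Your more uniform treatment via Poisson cohomology is equivalent (the degeneration of the hypercohomology spectral sequence, transported from \autoref{prop:degen} via $\hook{(-)}\mu$, splits the tangent space in the same way), but the paper's decomposition has the bonus of establishing smoothness of the moduli, not just immersivity of the period map.
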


In general, the characterization of the fibres seems to be subtle.  We initiate the study by considering here the special case in which $\Ahat_\X = 1$; this condition, while restrictive, is satisfied in many natural and interesting examples, and corresponds to a special symmetry of the period map.  Namely, $\MPois$ carries a natural action of the multiplicative group $\Gm$ by rescaling the Poisson bivector, while $\MHodge$ carries an action of the multiplicative monoid $\ZZ\setminus \{0\}$ by the Adams operations on topological K-theory.  The triviality of $\Ahat_\X$ is equivalent to the statement these actions on $\MPois$ and $\MHodge$ are suitably intertwined by the period map $\wp$, in a neighbourhood of the point $(\X,\ps=0) \in \MPois$.

We develop the basic linear algebra of such ``Adams--equivariant'' mixed Hodge structures, relating them to filtered mixed Hodge structures by a variant of the Rees construction, so that taking the  limit $\lim_{\hbar \to 0}\hbar \ps$ corresponds to passing to an associated graded of $\K{\X,\ps}$.  This mechanism, together with Carlson's theory~\cite{Carlson1980} of extensions of mixed Hodge structures,  allows us to associate, to each log manifold $\X$ with trivial $\Ahat$-class, a complex abelian Lie group $\Q{\X}$, and to each Poisson structure $\ps$ on $\X$ an element
\[
q(\ps) \in \Q{\X}\,,
\]
which we call the \defn{quantum parameter}; see \autoref{sec:qparam} for the explicit construction in terms of operations on cohomology. It depends only on the mixed Hodge structure $\K{\X,\ps}$, and the filtration induced by the Adams action; sometimes the latter is just a re-indexing of the weight filtration, hence completely determined by the period $\per(\X,\ps)$, but in general, it is more information.

The global structure of $\Q{\X}$ is determined by the Hodge numbers of $\X$.  In many interesting examples, it is either a compact torus $\CC^n/\Lambda$ or an affine algebraic torus $(\CCx)^n$, e.g.~this holds under suitable purity assumptions on the cohomology of $\X$; see \autoref{sec:purity}. Meanwhile, the behaviour of the quantum parameter is characterized by \autoref{thm:qparam-torelli}, which we paraphrase as follows:
\begin{mainthm}[see \autoref{thm:qparam-torelli}]\label{thm:mainE}
Let $\X$ be a log manifold with $\Ahat_\X=1$.  Then for every Poisson structure $\ps$ on $\X$, the map $\hbar \mapsto q(\hbar \ps)$ for $\hbar \in \CC$ defines a one-parameter subgroup of $\Q{\X}$.  Two Poisson bivectors $\ps,\ps'$ on $\X$ have the same quantum parameter if and only if the contraction operators $\hookps,\hook{\ps'}$ differ by an integral endomorphism of K-theory.
\end{mainthm}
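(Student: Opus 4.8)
The plan is to unwind the construction of $\Q{\X}$ and of $q$ from \autoref{sec:qparam} until both assertions reduce to one piece of linear algebra about the classical Hodge structure $\K{\X,0}$. Recall that $\K{\X,\ps}$ and $\K{\X,0}$ share the same underlying lattice $\KB{\X}$ (via the charge map) and the same weight filtration $\W$ on $\KdR{\X}$, and differ only in the Hodge filtration: the classical $\F$ when $\ps = 0$, and its $\ps$-deformation $\Fps$ in general. As recalled in \autoref{sec:deRham}, this deformation is generated by the contraction operator $\hookps \in \End{\KdR{\X}}$, and the hypothesis $\Ahat_\X = 1$ --- equivalently, the Adams-equivariance of the period map near $(\X,0)$ underlying \autoref{thm:mainA} --- guarantees that $\hookps$ lowers $\W$ by $2$ and is a morphism of mixed Hodge structures $\hookps \colon \K{\X,0} \to \K{\X,0}(-1)$; in particular it is nilpotent. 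Write $\fq_\X := \HomMHS{\K{\X,0},\K{\X,0}(-1)}$ for the complex vector space of such endomorphisms, and $\fq_{\X,\ZZ} := \fq_\X \cap \End{\KB{\X}}$ for the subgroup of those preserving the charge lattice inside $\KdR{\X}$. The construction of \autoref{sec:qparam} --- Carlson's theory of iterated extensions together with the Rees picture for the Adams filtration --- presents $\Q{\X}$ as a quotient of $\fq_\X$, with $q(\ps)$ the image of $\hookps$; so the whole theorem reduces to identifying the kernel of this quotient map.

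The one genuinely Hodge-theoretic input is the following. \emph{Key Lemma: $\fq_\X \cap \F[0]\End{\K{\X,0}} = 0$}; that is, a weight-$(-2)$ endomorphism of $\K{\X,0}$ that preserves the Hodge filtration vanishes. I would prove this using the Deligne bigrading $\bigoplus_{p,q} I^{p,q}$ of $\K{\X,0}$, for which $\F[a] = \bigoplus_{p \ge a} I^{p,q}$: any $N \in \fq_\X$ has self-bidegree $(-1,-1)$, since the Tate twist shifts bidegrees by $(1,1)$, so $N(I^{p,q}) \subseteq I^{p-1,q-1}$; if moreover $N(\F[a]) \subseteq \F[a]$ for all $a$, then taking $a = p$ gives $N(I^{p,q}) \subseteq \F[p] \cap I^{p-1,q-1} = 0$, and as $p,q$ are arbitrary, $N = 0$. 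Granting the lemma, the kernel of $\fq_\X \to \Q{\X}$ is exactly $\fq_{\X,\ZZ}$: any ambiguity in the isomorphism class of $\K{\X,\ps}$ as an Adams-filtered mixed Hodge structure comes from post-composing with an integral automorphism that is the identity on $\grW$, whose weight-$(-2)$ component lies in $\fq_{\X,\ZZ}$, while the Key Lemma forbids any nonzero contribution from its Hodge-filtration-preserving part. Thus $\Q{\X} \cong \fq_\X/\fq_{\X,\ZZ}$ is a complex abelian Lie group and $q(\ps) = [\hookps]$ in this quotient.

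Both statements are then immediate. Since $\hook{\hbar\ps} = \hbar\,\hookps$, the map $\hbar \mapsto q(\hbar\ps) = [\hbar\,\hookps]$ is the composite of the $\CC$-linear map $\hbar \mapsto \hbar\,\hookps$ with the quotient homomorphism $\fq_\X \to \Q{\X}$, hence a holomorphic one-parameter subgroup of $\Q{\X}$. For the second claim, contraction is linear in the bivector, so $\hookps - \hook{\ps'} = \hook{\ps-\ps'}$ again lies in $\fq_\X$ --- even though $\ps - \ps'$ need not itself be Poisson, $\fq_\X$ is a linear space containing both $\hookps$ and $\hook{\ps'}$ --- and therefore $q(\ps) = q(\ps')$ if and only if $\hook{\ps-\ps'} \in \fq_{\X,\ZZ}$, i.e.\ if and only if $\hookps$ and $\hook{\ps'}$ differ by an integral endomorphism of $\KB{\X}$.

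I expect the main obstacle to be the Key Lemma, together with the input --- to be supplied by \autoref{sec:qparam} --- that $\hookps$ really is a weight-$(-2)$ morphism of Hodge structures precisely when $\Ahat_\X = 1$; everything downstream is bookkeeping with Carlson's theory. The point requiring the most care is that equality of quantum parameters is, a priori, a statement about isomorphism of Adams-filtered mixed Hodge structures, which must be translated faithfully into an equality of classes in $\fq_\X/\fq_{\X,\ZZ}$; it is exactly here that the Key Lemma enters, ruling out the possibility that a Hodge-filtration-preserving comparison automorphism could create identifications beyond translation by the integral lattice $\fq_{\X,\ZZ}$.
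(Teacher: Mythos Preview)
Your framework rests on a claim that is not true in general: that $\hookps$ defines a morphism of mixed Hodge structures $\K{\X,0}\to\K{\X,0}(-1)$, in particular an endomorphism of $\KdR{\X}$ lowering $\W$ by~$2$. But $\hookps$ does not commute with the de~Rham differential---indeed $[\dd,\hookps]=\delps$ is the entire mechanism generating the deformed filtration---so it does not act on $\KdR{\X}$ at all. What is true is that $\hookps$ commutes with $\delb$, hence acts on Dolbeault cohomology $\coH[q]{\forms[p]{\X}}\to\coH[q]{\forms[p-2]{\X}}$; this is precisely how the paper realizes $\hookps$ as an element of $\F[-1]\fq(\V)\cong\grF[-1]\W[0]\End{\KdR{\X}}$ via the infinitesimal period relation (\autoref{prop:InfPeriodMap}), not as an honest endomorphism of $\KdR{\X}$. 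Moreover, the hypothesis $\Ahat_\X=1$ only makes the charge map compatible with the Adams operations; it says nothing about $\W$, and \autoref{lem:bivector-preserves-weight} gives only that $\hookps$ \emph{preserves} $\W$, not that it lowers it. You are conflating $\W$ with the Adams filtration $\SF$: the group $\Q{\X}$ is built from the successive extensions of $\grS\K{\X}$, not of $\grW\K{\X}$, and these coincide only under the purity assumptions of \autoref{sec:purity}. Consequently your identification $\Q{\X}\cong\fq_\X/\fq_{\X,\ZZ}$ with $\fq_\X=\HomMHS{\K{\X,0},\K{\X,0}(-1)}$ does not match the paper's $\Q{\X}$, and your Key Lemma, while correct, is solving the wrong problem: the quotient by $\F[0]$ is already baked into the definition of $\fq(\V)$ as a quotient of $\W[0]\Hom$.

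The paper's argument is shorter and uses equivariance rather than an explicit formula. Since $\F_{\hbar\ps}=\hbar\adm\Fps$ (\autoref{lem:equivariant}), the map $\hbar\mapsto q(\hbar\ps)$ lifts to a $\Gm$-equivariant map from $\bA^1$ to the universal cover $\widetilde{\Q{\X}}\cong\fq(\X)$; since $\Gm$ acts on this vector space by scalars, equivariance forces the lift to be linear, giving the one-parameter subgroup. Its generator is identified with the contraction operator on Dolbeault cohomology by \autoref{prop:InfPeriodMap}. The equality criterion is then read off directly from Carlson's formula $J(A,B)=\W[0]\HomC{A_\dR,B_\dR}/(\F[0]\W[0]+\W[0]\HomZ{A_\Bet',B_\Bet'})$: two generators in $\fq(\X)$ exponentiate to the same point of $\Q{\X}$ exactly when their difference lies in the image of an integral map between successive Adams-graded pieces, modulo $\F[0]$.
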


This result means that, for fixed $\X$ with $\Ahat_\X=1$, the quantum parameter  is an ``exponential function'' of $\ps$.  In \autoref{sec:examples-of-periods}, we show by way of example that this function can be readily computed in terms of classical Poisson geometry in several cases of interest, mirroring the parameters arising in known deformation quantizations of these structures, thus justifying the name.

\autoref{thm:mainE} effectively reduces the global Torelli problem to the classical one for the underlying log manifolds (when the $\Ahat$ class is trivial): up to some discrete information, the period $\wp(\X,\ps)$ is determined by the integral Hodge structure $\coH{\X;\ZZ}$ and the quantum parameter $q(\ps)$, and the behaviour of the latter is understood.

\subsection{Quantum tori}
In the final section, as a preview of our subsequent work, we consider the simplest case in which the period map can be used to extract a nontrivial result in deformation quantization, by computing the canonical deformation quantization of the ``log canonical'' bracket $\{x,y\} = xy$ and its higher-dimensional counterparts, following the outline given by Kontsevich in \cite{Kontsevich2008a}. As stated above, the canonical quantization depends implicitly on a choice of Drinfel'd associator, or more precisely, a stable formality morphism for Hochschild cochains, in the sense of \cite{Dolgushev2021}, and we will address this dependence as well.

Note that the log canonical bracket defines an invariant Poisson bracket on the torus group $(\Gm)^2$ with algebra of functions $\cO{}((\Gm)^2) = \CC[x^{\pm1},y^{\pm 1}]$; it has a natural logarithmic model, given by any toric compactification.  Since the quantization formula is equivariant for linear changes of coordinates, it follows easily that it gives an associative star product
\[
\star : \cO{}((\Gm)^2) \times \cO{}((\Gm)^2) \to \cO{}((\Gm)^2)[[\hbar]]
\]
which on generators takes the form
\[
x \star y =w(\hbar) xy \qquad\qquad\qquad y \star x = w(-\hbar) xy
\]
for some formal power series $w(\hbar) \in \CC[[\hbar]]$ whose exact form depends on the choice of stable formality morphism.  It is thus determined by a quantum-torus type relation
\begin{align*}
x \star y &= q(\hbar) y \star x  & q(\hbar) &:= \frac{w(\hbar)}{w(-\hbar)},
\end{align*}
where the parameter $q(\hbar)$ is again a formal power series, which determines the star product up to isomorphism. 

The problem is therefore to compute $q(\hbar)$.  The brute force method is to first compute $w(\hbar)$ directly using the Feynman expansion.  Using the software from \cite{Banks2020} we can compute the first handful of terms for some different choices of stable formality morphism. If we use Kontsevich's original formula (corresponding to the Alekseev--Torossian associator), we find
\[
w(\hbar) = 1 + \frac{\hbar}{2}  + \frac{\hbar^2}{24} - \frac{\hbar^3}{48}  - \frac{\hbar^4}{1440}  + \frac{\hbar^5}{480} + \left(\frac{251  \zeta(3)^{2}}{2048 \pi^{6}} -  \frac{17}{184320} \right)\hbar^6 + \cdots\,.
\]
where $\zeta(3) = \sum_{n \ge 1}\frac{1}{n^3}$ is a Riemann zeta value.   Meanwhile, using the logarithmic formality morphism of \cite{Alekseev2016,Kontsevich1999} (corresponding to the Knizhnik--Zamolodzhikov associator) we find a different result:
\[
w(\hbar) =1 + \frac{\hbar}{2} + \frac{\hbar^2}{24} - \frac{\hbar^3}{48} - \frac{13\hbar^4}{5760}  + \frac{\hbar^5}{768} +  \frac{505\hbar^6}{4032} + \cdots\, .
\]
Despite the lack of an obvious pattern in the coefficients, and the presence of the conjecturally transcendental number $\zeta(3)^2/\pi^6$, when we compute the parameter $q(\hbar)$, we find the beginnings of a familiar series:
\begin{align}
q(\hbar)  =  1+ \hbar + \frac{\hbar^2}{2} + \frac{\hbar^3}{3!} + \frac{\hbar^4}{4!}  + \frac{\hbar^5}{5!} +  \frac{\hbar^6}{6!} + \cdots \,.\label{eq:q-parameter}
\end{align}
This is as far as the computer can currently take us in a reasonable amount of time, due to the factorial growth in the number of graphs.  

Intriguingly, the formula for $w(\hbar)$ is sensitive not just to the choice of formality morphism, but also to the dimension of the torus.  For instance, if we consider a three-dimensional torus with coordinates $(x,y,z)$ and Poisson bracket of the form
\[
\{x,y\} = xy \qquad \{y,z\} = a yz \qquad \{z,x\} = bzx
\]
for some constants $a,b$, we still have $x \star y = w(\hbar) xy$ for some series $w(\hbar)$, but now it depends on both of the additional parameters $a$ and $b$, e.g.~using Kontsevich's original formula we find
\begin{align*}
w(\hbar) &= 1 + \tfrac{1}{2} \hbar +  \tfrac{ab+1}{24}\hbar^2  - \tfrac{1-ab}{48}\hbar^3 \\
&\ \ \ - \tfrac{16 - 54ab + 11(a^2+b^2)+37(a^3+b^3)-34(a^2b+ab^2)+7(a^3b+ab^3)-36a^2b^2}{23040}\hbar^4 + \cdots .
\end{align*}
Nevertheless, we still have $x \star y = q(\hbar) y \star x$ with $q(\hbar)$ as in \eqref{eq:q-parameter}.

Despite all these subtleties, the Hodge-theoretic approach outlined by Kontsevich puts the problem to rest, as follows.  In \autoref{sec:qtori}, we construct, for each noncommutative torus
\begin{align*}
A_q &:= \frac{\CC\abrac{x^{\pm1},y^{\pm 1}}}{(xy-q\,yx)} & q &\in \Gm
\end{align*}
a canonical mixed Hodge structure $\K{A_q}$, giving a variation of mixed Hodge structures over the parameter space $q \in \Gm$.  We establish a global Torelli property: $A_q$ is determined up to isomorphism by $\K{A_q}$. Replacing $q\in \Gm$ with the formal series $q(\hbar)\in\CC[[\hbar]]$ obtained from canonical quantization as above, we obtain a variation $\K{A_{q(\hbar)}}$ over the formal affine line $\hbar \in \widehat{\bA^1}$.  We prove it is isomorphic to the variation $\K{(\CCx)^2,\hbar \ps}$, provided that the quantization is performed using a stable formality morphism that is compatible with the Getzler's Gauss--Manin connection on cyclic chains~\cite{Getzler93}.  In particular, this condition holds for Kontsevich's original formula by the methods of \cite{CFW11,DTT08,DTT09,Shoikhet03,Tamarkin2001,Tsygan99,WWChains}, and is conjectured to hold in complete generality; see \autoref{rmk:grt}.

As a result, we identify the parameter $q(\hbar)$ of the quantization with the quantum parameter of the Poisson structure in the sense of \autoref{sec:intro-quantum-param}, yielding the following folklore result that was sketched by Kontsevich~\cite{Kontsevich2008a} and worked out in detail in the first author's MSc thesis~\cite{Lindberg2020}:
\begin{mainthm}[see \autoref{thm:q=e^h} and \autoref{rmk:grt}] \label{thm:mainF}
The canonical quantization of $\{x,y\} = xy$ is determined up to isomorphism by the quantum parameter
\begin{align*}
q(\hbar) = q(\hbar\sigma) = e^\hbar \in \CC[[\hbar]]^\times,\label{eq:torus}
\end{align*}
for any stable formality morphism compatible with the Gauss--Manin connection on cyclic chains.  Similar results hold for Poisson tori of higher dimension.
\end{mainthm}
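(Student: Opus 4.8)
The plan is to realize $q(\hbar)$ as a Carlson-type extension invariant shared by two variations of mixed Hodge structures over the formal disc $\hbar\in\widehat{\bA^1}$: the \emph{classical} variation $\K{(\Gm)^2,\hbar\ps}$ attached to the Poisson torus of \autoref{sec:intro-quantum-param}, and the \emph{quantum} variation $q\mapsto\K{A_q}$ restricted along the series $q(\hbar)$ produced by canonical quantization. Concretely, I would (i) compute the classical quantum parameter $q(\hbar\ps)=e^\hbar$ directly from Poisson geometry; (ii) establish the global Torelli property for the $A_q$, identifying the single nonsplit extension class of $\K{A_q}$ with $q$; (iii) identify the two variations using Getzler's Gauss--Manin connection on cyclic chains; and (iv) read off $q(\hbar)=e^\hbar$ by comparing the invariant in the two models.

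For step (i), the interior of any toric log model of $(\Gm)^2$ is $(\Gm)^2$ itself, whose integral cohomology is of Hodge--Tate type with $\HdR[0]{(\Gm)^2}=\Tate{0}$, $\HdR[1]{(\Gm)^2}=\Tate{-1}^{\oplus 2}$ and $\HdR[2]{(\Gm)^2}=\Tate{-2}$; in particular $\Ahat_{(\Gm)^2}=1$, and the purity hypotheses of \autoref{sec:purity} give $\Q{(\Gm)^2}\cong\Gm$. Writing $\ps=\logcvf{x}\wedge\logcvf{y}$ one has $\hookps\!\left(\dlog{x}\wedge\dlog{y}\right)=1$, so the contraction operator carries the generator of $\HdR[2]{(\Gm)^2}$ to that of $\HdR[0]{(\Gm)^2}$ in the periodized complex. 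Feeding this into the explicit description of \autoref{sec:qparam}, \autoref{thm:mainE} (detailed in \autoref{thm:qparam-torelli}) says that $\hbar\mapsto q(\hbar\ps)$ is the one-parameter subgroup of $\Gm$ whose infinitesimal generator is exactly this contraction; with the normalization of the charge lattice fixed in \autoref{sec:lattices} (absorbing the powers of $\tipi$) this subgroup is $\hbar\mapsto e^{\hbar}$. The higher-dimensional tori reduce to this case: the complementary subtorus acts on $(\Gm)^n$ preserving the bracket $\{x_i,x_j\}=a_{ij}x_ix_j$, Kontsevich's quantization and the period map are equivariant for that action, and passing to invariants identifies the relevant commutation parameter with $q^{(2)}(a_{ij}\hbar)=e^{a_{ij}\hbar}$.

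For steps (ii)--(iii) I would first recall the construction of \autoref{sec:qtori}: one identifies $\HP{A_q}$ via Hochschild--Kostant--Rosenberg with the periodized de Rham cohomology of a toric model deformed by the class of the commutation relation, transports the weight and Hodge filtrations, and checks with Carlson's theory \cite{Carlson1980} that the resulting class in $\K[0]{A_q}$ --- a nonsplit extension of $\Tate{-2}$ by $\Tate{0}$, valued in the Carlson group $\ExtMHS{\Tate{-2},\Tate{0}}\cong\Gm$ --- is precisely $q$; this is the global Torelli property. The crux, step (iii), is that a stable formality morphism induces a quasi-isomorphism from the cyclic mixed complex of $A_{q(\hbar)}$ to that of $\cO{}((\Gm)^2)$ with differential deformed by $\hbar\ps$, and that compatibility with Getzler's Gauss--Manin connection \cite{Getzler93} upgrades this to a \emph{horizontal} isomorphism of the associated variations of mixed Hodge structures over $\widehat{\bA^1}$. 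A horizontal isomorphism preserves weights, Hodge filtrations, lattices and hence Carlson invariants, so $\K{A_{q(\hbar)}}\cong\K{(\Gm)^2,\hbar\ps}$; comparing the two descriptions of the invariant gives $q(\hbar)=q(\hbar\ps)=e^\hbar$. For Kontsevich's original morphism the required compatibility is supplied by \cite{CFW11,DTT08,DTT09,Shoikhet03,Tamarkin2001,Tsygan99,WWChains}; in general it is the conjectural input flagged in \autoref{rmk:grt}.

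I expect step (iii) to be the main obstacle, specifically upgrading the periodic-cyclic comparison to one of \emph{filtered}, Adams-graded mixed Hodge structures rather than a mere isomorphism of flat bundles. One must (a) promote the formality/HKR quasi-isomorphism to a map of mixed complexes respecting the column (good-truncation) filtration, exploiting that the formality morphism is a deformation of HKR and is homogeneous for the $\hbar$-rescaling implementing the Adams action; (b) verify horizontality order-by-order in $\hbar$, where the key identity is Getzler's cyclic Cartan homotopy formula; and (c) handle the formal-versus-convergent passage, since the quantization a priori lives over $\widehat{\bA^1}$, invoking the Torelli statement over $\Gm$ only after noting that $e^\hbar$ converges and that the formal germ of a one-parameter subgroup determines it. Once this is in place nothing further is required: the transcendental contributions to $w(\hbar)$ cancel for purely Hodge-theoretic reasons, exactly as the numerical experiments in the introduction suggest.
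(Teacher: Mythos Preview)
Your proposal is correct and follows essentially the same route as the paper's \autoref{sec:qtori}: compute the classical quantum parameter via \autoref{thm:qparam-torelli} (the paper's \autoref{ex:toricPeriods}), establish the Torelli property for $\K{A_q}$ (the paper's \autoref{cor:qtori-torelli}), and use cyclic formality together with the Cattaneo--Felder--Willwacher horizontality result to identify the two variations over $\widehat{\fq}$ (the paper's \autoref{thm:canonical-quant-mhs}), then read off $\quant=\exp$. One small remark: in the K-theoretic indexing the relevant extension is of $\ZZ(-1)$ by $\ZZ$ rather than $\ZZ(-2)$ by $\ZZ$, and your anticipated obstacles in step~(iii) largely dissolve because a quasi-isomorphism of \emph{mixed} complexes automatically respects the Hodge ($u$-adic) filtration, while horizontality with respect to the Gauss--Manin connection is exactly what transports the lattice---no separate Adams-grading verification is needed.
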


Note that \autoref{thm:mainF} implies a sort of convergence of the star product: it is gauge equivalent to an associative product on the vector space $\cO{}(\Gm)=\CC[x^{\pm1},y^{\pm 1}]$ that is defined by an entire function of the parameter $\hbar$. It also gives a novel Hodge-theoretic interpretation for the well-studied condition that $q$ is a root of unity: as we explain in \autoref{sec:qtori-hodge-conj}, it exactly corresponds to the existence of nontrivial Hodge classes, giving a nontrivial instance of the noncommutative Hodge conjecture~\cite{Lin2023}.

\subsection{Acknowledgements} We are grateful to many people for interesting conversations and correspondence (even brief ones) over the years that have helped shaped the contents of this paper, including Arend Bayer, Pieter Belmans, Cl\'ement Dupont, George Elliott, Robert Friedman, Ezra Getzler, Phillip Griffiths, Marco Gualtieri, Richard Hain, Nigel Hitchin, Andrei Konovalov, Maxim Kontsevich, Alexander Kupers, Mykola Matviichuk, Tony Pantev, Sam Payne, Pavel Safronov, Travis Schedler, Nick Sheridan, Goncalo Tabuada, Michel Van den Bergh and Thomas Willwacher.  Special thanks are due to the participants of the noncommutative Hodge theory seminar held in Edinburgh in 2018; to Theo Raedschelders and Sue Sierra for extensive discussion about Hodge theory of Sklyanin algebras; to Alejandro Cabrera, whose careful reading of the first author's related MSc thesis led to several improvements; to Peter Banks and Erik Panzer for their collaboration in \cite{Banks2020}, without which this project never would have started; and to Nicole Zolkavich for catching innumerable typos in earlier drafts of this paper.  A.~L.\ was supported by a Doctoral Scholarship (CGS-D) from the Natural Sciences and Engineering Research Council of Canada and a FAST Fellowship from the University of Toronto.  B.~P.\ was supported by a Discovery Grant from the Natural Sciences and Engineering Research Council of Canada, a New university researchers startup grant from the Fonds de recherche du Qu\'ebec -- Nature et technologies (FRQNT), and a startup grant at McGill University.

\section{Log Poisson manifolds}
\label{sec:poisson}

\subsection{Log manifolds}  We shall work in the following category:

\begin{definition}
A \defn{log manifold} is a pair $\X=(\cX,\bdX)$ where $\cX$ is a compact K\"ahler  manifold called the \defn{compactification of $\X$}, and $\bdX\subset \cX$ is a normal crossings divisor called the \defn{boundary of $\X$}. The \defn{interior of $\X$} is the open submanifold $\Xo := \cX \setminus \bdX$.
\end{definition}

\begin{definition}
If $\X$ and $\Y$ are log manifolds, a \defn{morphism} $\phi : \X \to \Y$ is a holomorphic map $\overline{\phi} : \cX \to \cY$  that respects the interiors, in the sense that  $\overline{\phi}(\Xo) \subset \Yo$. Equivalently, the divisor $\phi^*\bdY$ is supported on $\bdX$.  A morphism $\phi$ is a \defn{weak equivalence} if the induced map on interiors $\phi^\circ := \overline{\phi}|_{\Xo} : \Xo \to \Yo$  is an isomorphism.
\end{definition}

\begin{remark}
    Here and throughout, by a K\"ahler manifold, we mean a complex manifold that admits a K\"ahler metric.  The metric and symplectic structure play no role whatsoever.
\end{remark}

\begin{remark}
Every compact K\"ahler manifold can be viewed as a log manifold with empty boundary divisor, in which case a morphism is just a holomorphic map, and a weak equivalence is just a biholomorphism.  We make no notational distinction between such a manifold $\X$ and its associated log manifold $(\X,\varnothing)$. 
\end{remark}

One should think of a log manifold $\X$ as a ``nice'' model for its interior $\Xo$, hence the notion of weak equivalence.  We may also seek nice models for submanifolds:

\begin{definition}
    If $\X$ is a log manifold, an \defn{(embedded) log submanifold} of $\X$ is a morphism of log manifolds $\Y \to \X$ whose underlying map $\cY\to\cX$ is a closed embedding.
\end{definition}

\begin{example}
    Let $\cZ \subset \cX$ be a closed submanifold such that $\bdX \cap \cZ \subset \cZ$ is a normal crossing divisor in $\cZ$, and let $\Z := (\cZ,\bdX\cap \cZ)$.  Then the inclusion gives a log submanifold $\Z\hookrightarrow \X$ modelling the inclusion of the interiors $\Zo\hookrightarrow \Xo$.  We call such a submanifold a \defn{closed log submanifold}.
\end{example}

\begin{example}
    The identity map on $\cX$ gives a morphism $\X \to \cX$, making $\X$ a log submanifold of $\cX$. It models the inclusion $\Xo \hookrightarrow \cX$.
\end{example}

\subsection{(Poly)vector fields and forms}\label{sec:calc}
The \defn{tangent sheaf} of a log manifold $\X = (\cX,\bdX)$, denoted
\[
\cT{\X} = \cTlog{\X} \subset \cT{\cX}
\]
is the sheaf of holomorphic vector fields on $\cX$ that are tangent to $\bdX$.  Note that $\cT{\X}|_{\Xo}$ is just the tangent sheaf of the interior. The \defn{polyvector fields on $\X$} are the exterior algebra
\[
\der{\X} := \wedge^\bullet \cT{\X}\,.
\]
Equipped with the wedge product and Schouten bracket, they form a sheaf of Gerstenhaber algebras.  

Dually,  we denote by 
\[
\forms{\X} := \wedge^\bullet \cT{\X}^\vee \cong \forms{\cX}(\log \bdX) 
\]
the sheaf of differential forms on $\cX$ with at worst logarithmic poles on $\bdX$.  Equipped with the wedge product and the de Rham differential, they form a sheaf of commutative differential graded algebras.  The \defn{de Rham cohomology of $\X$} is the hypercohomology $\HdR{\X} := \coH{\rsect{\forms{\X}}}$.  It is functorial for pullbacks along morphisms of log manifolds, and restriction to the interior gives a natural isomorphism $\HdR{\X}\cong\HdR{\Xo}$ with the usual de Rham cohomology of the interior; hence it is invariant under weak equivalence.

Note that $\forms{\X}$ forms a module over $\der{\X}$ by the contraction operator $\iota$.  For the avoidance of doubt about signs: the convention is that if $\xi \in \cT{\X}$ and $\alpha \in \forms{\X}$, then $\hook{\xi}\alpha = \alpha(\xi,-,\ldots,-)$ is the contraction in the first slot of $\alpha$, and the action of higher degree polyvectors is determined by $\hook{\xi\wedge\eta} = \hook{\xi}\hook{\eta}$.  

\subsection{Logarithmic Poisson structures}

Recall that a (holomorphic) Poisson structure on a complex manifold $\Y$ is a Poisson bracket $\{-,-\} : \cO{\Y} \times \cO{\Y} \to \cO{\Y}$ on its sheaf of regular functions, or equivalently a global section $\eta \in \coH[0]{\der[2]{\Y}}$ such that  $[\eta,\eta] = 0$.  A Poisson subvariety is a locally closed analytic subspace $\Z\subset \Y$ such that the bracket on $\cO{\Y}$ descends to $\cO{\Z}$. Every Poisson manifold has a foliation by symplectic leaves, which are maximal immersed analytic submanifolds whose tangent space spanned by the image of the induced map $\eta^\sharp : \forms[1]{\Y}\to\cT{\Y}$.  Note that the leaves need not be locally closed. For a thorough introduction, see e.g.~\cite{Dufour2005,LGPV2013,Polishchuk1997}.

\begin{definition}
A \defn{Poisson structure} on a log manifold $\X$ is a Poisson structure $\ps$ on the compactification $\cX$ such that the boundary divisor $\bdX$ is a Poisson subvariety.  A \defn{log Poisson manifold} is a pair $(\X,\ps)$, where $\X$ is a log manifold and $\ps$ is a Poisson structure on $\X$.
\end{definition}

There are many equivalent formulations of this definition that are useful:

\begin{lemma}\label{lem:log-poisson-equiv}
Let $\X$ be a log manifold and let $\{-,-\}$ be the Poisson structure on $\cX$ corresponding to a Poisson bivector $\ps \in \coH[0]{\der[2]{\cX}}$.  Then the following statements are equivalent:
\begin{enumerate}
\item $\{-,-\}$ defines a Poisson structure on $\X$.
\item The Poisson bivector $\ps$ is tangent to $\bdX$.
\item $\ps \in \coH[0]{\der[2]{\X}}$ is a logarithmic bivector.
\item\label{it:leaf-equiv} Every symplectic leaf of $\Xo$ is also a symplectic leaf of $\cX$.
\end{enumerate}
\end{lemma}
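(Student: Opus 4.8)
The plan is to localize the problem: reduce the equivalence of (1)--(3) to a single divisibility condition in coordinates adapted to the normal crossings divisor, and then establish (2)$\iff$(4) by a symplectic-foliation argument.

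Around any point of $\cX$ I would choose holomorphic coordinates $x_1,\dots,x_n$ with $\bdX = \{x_1\cdots x_k = 0\}$; then $\cT{\X}$ is free over $\cO{\cX}$ on $x_1\partial_{x_1},\dots,x_k\partial_{x_k},\partial_{x_{k+1}},\dots,\partial_{x_n}$, and $\der[2]{\X}$ is free on the pairwise wedges of these generators. Writing $\ps = \sum_{a<b}\ps^{ab}\,\partial_{x_a}\wedge\partial_{x_b}$ with $\ps^{ab}\in\cO{\cX}$, condition (3) unwinds to the divisibility requirements $x_ax_b\mid\ps^{ab}$ for $a,b\le k$ and $x_a\mid\ps^{ab}$ for $a\le k<b$. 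Since $\{x_i,x_c\} = \pm\,\ps^{ic}$ and each $(x_i)$ is prime, this is the same as $\{x_i,\cO{\cX}\}\subseteq(x_i)$ for every branch $\{x_i=0\}$, which is exactly what it means for the bivector $\ps$ to be tangent to $\bdX$ (condition (2)). Finally, $\bdX$ is a Poisson subvariety iff its reduced ideal, locally $(x_1\cdots x_k)$, is a Poisson ideal; expanding $\{x_1\cdots x_k,f\}$ with the Leibniz rule and reducing modulo each $(x_i)$ shows this holds iff $\{x_i,\cO{\cX}\}\subseteq(x_i)$ for all branches, i.e.\ iff (2). Hence (1)$\iff$(2)$\iff$(3).

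For (2)$\Rightarrow$(4) I would argue that tangency of $\ps$ to $\bdX$ means every Hamiltonian vector field $\{f,-\}$ preserves $\bdX$, so its local flows preserve both $\bdX$ and $\Xo$; therefore the symplectic leaf of $\cX$ through a point $q\in\Xo$ — being swept out by such flows — lies entirely in $\Xo$, and since the characteristic distribution $\img(\pss)$ over $\Xo$ is intrinsically that of the Poisson manifold $\Xo$, this leaf coincides with the $\Xo$-leaf through $q$. Conversely, if (4) holds, then every $\cX$-leaf meeting $\Xo$ equals an $\Xo$-leaf and is in particular contained in $\Xo$; thus $\Xo$, and hence its complement $\bdX$, is a union of symplectic leaves of $\cX$. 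As $\img(\pss)$ is tangent to each leaf, it is tangent to $\bdX$ along its smooth locus, and since $\bdX$ is reduced a holomorphic vector field tangent to $\bdX$ along its smooth locus preserves the ideal sheaf of $\bdX$; so all Hamiltonian vector fields of $\ps$ are tangent to $\bdX$, which is (2).

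The routine core is the chart computation behind (1)$\iff$(2)$\iff$(3), which is immediate once one has the description of the log tangent sheaf and its exterior powers established earlier in the paper. The step deserving the most care is (2)$\iff$(4): symplectic leaves need not be locally closed and $\bdX$ is stratified, so one must check carefully that Hamiltonian flows genuinely respect the stratification and that ``$\Xo$ is a union of $\cX$-leaves'' really does transfer tangency along the dense smooth locus of $\bdX$ to an invariance statement for its ideal sheaf (this is where reducedness of $\bdX$ enters).
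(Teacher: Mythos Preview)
Your proof is correct and follows the same approach as the paper's, just with considerably more detail: the paper dismisses $(1)\Leftrightarrow(2)\Leftrightarrow(3)$ as ``well-known'' without argument, and for $(2)\Leftrightarrow(4)$ simply observes that since symplectic leaves partition $\cX$, condition (4) is equivalent to the leaf through every $p\in\bdX$ being contained in $\bdX$, which is equivalent to tangency of $\ps$. Your chart computation and your careful handling of the smooth-locus-to-ideal-sheaf step flesh out exactly what the paper leaves implicit.
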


\begin{proof}
    The equivalences $(1) \Leftrightarrow (2) \Leftrightarrow (3)$ are well-known. For the equivalence of (2) and (4), note that the symplectic leaves partition $\cX$, so (4) is equivalent to the statement that the leaf through every point $p \in \bdX$ is entirely contained in $\bdX$, which is equivalent to the statement that $\ps$ is tangent to $\bdX$.
\end{proof}

\begin{definition}	
	A \defn{morphism of log Poisson manifolds} $(\X,\ps) \to (\Y,\eta)$ is a Poisson map $\cX \to \cY$ that defines a morphism of log manifolds $\X \to \Y$; it is a \defn{weak equivalence} if the map $\X\to\Y$ is a weak equivalence, i.e.~the map of interiors $\Xo\to\Yo$ is a Poisson isomorphism.  Two log Poisson manifolds $(\X,\ps)$ and $(\X',\ps')$ are \defn{weakly equivalent} if there exists a zig-zag of weak equivalences of log Poisson manifolds
 \[
 \begin{tikzcd}
 (\X,\ps) & (\X_1,\ps_1)\ar[l] \ar[r] & \cdots & (\X_j,\ps_j)\ar[l]\ar[r] & (\X',\ps')
 \end{tikzcd}
 \]
 for some $j \ge 0$.
\end{definition}

\begin{definition}
Let $(\Y,\eta)$ be a holomorphic Poisson manifold.  A \defn{logarithmic model for $\Y$} is a log Poisson manifold $(\X,\ps)$ manifold together with a holomorphic Poisson isomorphism $(\Xo,\ps|_{\Xo})\cong (\Yo,\eta)$.  
\end{definition}

\begin{example}[\toricex]\label{ex:toric-def}
Let $\X$ be a \defn{toric log manifold}, by which we mean a smooth projective toric variety $\cX$, equipped with its toric boundary divisor $\bdX \subset \cX$ (which is automatically normal crossings).  It is a log model for an affine algebraic torus $\Xo \cong (\Gm)^n$.   Let $\ft$ be the Lie algebra of the torus.  Then the infinitesimal action map  $\ft \to  \cT{\cX}$ identifies $\cT{\X}\subset \cT{\cX}$ with the trivial bundle $\ft\otimes\cO{\cX}$.  Since $\coH[0]{\cO{\cX}} = \CC$ and $\coH[q]{\cO{\cX}} = 0$ for $q>0$, we have a canonical isomorphism
\begin{align*}
\coH[q]{\der{\X}} &\cong \begin{cases}\wedge^\bullet \ft \ & q = 0  \\ 0 & q > 0 \end{cases}\, ,
\end{align*}
and since $\ft$ is abelian, the Schouten bracket is identically zero.
Thus a Poisson structure on $\X$ is equivalent to an element $\ps \in \coH[0]{\der[2]{\X}} \cong \wedge^2 \ft$.  Concretely, if $x_1,\ldots,x_n : \Xo \cong (\Gm)^n \to \Gm$ are toric coordinates (i.e.~a basis of the character group), then the torus action is generated by the vector fields $\logcvf{x_1},\ldots,\logcvf{x_n}$, which extend uniquely to the compactification $\cX$, and a Poisson structure has the form
\[
\ps = \sum_{i,j} \lambda_{ij} x_ix_j \cvf{x_i}\wedge\cvf{x_j}
\]
for a skew-symmetric matrix $(\lambda_{ij})_{i,j} \in \CC^{n\times n}$, i.e.~the Poisson bracket is given by
\[
\{x_i,x_j\} = \lambda_{ij}x_ix_j
\]
for all $i,j$.
\end{example}

\begin{example}[\logsympex]\label{ex:log-symp-def}
A \defn{log symplectic manifold in the sense of Goto}~\cite{Goto2002} is a triple $(\Y,\Z,\omega)$ where $\Y$ is a complex manifold, $\Z\subset\Y$ is a hypersurface and $\omega \in \coH[0]{\forms[2,cl]{\Y}(\log \Z)}$ is a global logarithmic form that is closed and non-degenerate, in the sense that its top power trivializes the log canonical bundle $(\det\forms[1]{\Y})(\Z)$. The inverse $\ps = \omega^{-1}$ is then a Poisson structure on $\Y$ that is tangent to $\Z$.  When $\Y$ is compact K\"ahler and $\Z$ is normal crossings, we obtain a log Poisson manifold in the sense of the present paper; see \autoref{ex:log-symp-blowup} below for a hint of what to do when $\Z$ is more singular.
\end{example}

In view of \autoref{lem:log-poisson-equiv} part (\ref{it:leaf-equiv}), symplectic leaves are treated as fundamental building blocks in the theory: we are not allowed to modify them in the course of compactification.  Consequently, not every holomorphic Poisson manifold admits  a logarithmic model, an obvious class of counterexamples being given by artificially removing subvarieties that are not Poisson:

\begin{example}
Let $\Y$ be a K3 surface equipped with the Poisson structure given by a holomorphic symplectic structure, and let $\Z \subset \Y$ be a non-empty closed subvariety of positive codimension.  Note that $\Z$ is not a Poisson subvariety, but the open complement $\Y \setminus \Z$ is.  Since K3 surfaces are the unique minimal representative of their birational class, the only possible compactifications of $\Y\setminus \Z$ as a complex manifold are iterated blowups of $\Y$ along the points of $\Z$. But the pullback of a two-form along a blowup vanishes on the exceptional divisor, so the pullback of the bivector $\ps$ always has poles. Hence the only possible compactification of $\Y\setminus \Z$ as a Poisson manifold is $\Y$ itself, for which the boundary $\Z$ is not a Poisson subvariety.  Therefore $\Y\setminus \Z$ does not admit a logarithmic compactification.
\end{example}

\subsection{Building logarithmic models via blowups}\label{sec:compactifications}

If $\X$ is a compact K\"ahler Poisson manifold, and $\Z\subset \X$ is any closed Poisson subvariety, then trivially $\X$ gives a Poisson compactification of $\X\setminus \Z$ for which the boundary is Poisson. If $\Z$ is not a hypersurface, this compactification is not logarithmic, and it is natural to try to rectify this by forming the blowup $b : \Bl{\Z}{\X} \to \X$.  We then have an isomorphism of $\X\setminus \Z \cong \Bl{\X}{\Z} \setminus \E$ where $\E := b^{-1}(\Z)$ is the exceptional divisor.   However, as explained by Polishchuk in \cite[\S8]{Polishchuk1997}, the Poisson structure on $\X$ need not lift to the blowup, and even if it does, the exceptional divisor $\E$ need not be a Poisson subvariety.    This leads us to restrict the class of Poisson submanifolds that we can remove, using the conditions in \emph{op.~cit.}, as follows.

\begin{definition}
	Let $\X$ be a log Poisson manifold and $\Z \subset \X$ a closed log Poisson submanifold. The \defn{conormal Lie algebra of $\Z$} is the conormal sheaf $\coN{\Z} := \cI/\cI^2$ where $\cI < \cO{\cX}$ is the defining ideal of $\cZ\subset \cX$, equipped with the Lie bracket induced by the Poisson bracket on $\cX$.
\end{definition}

Recall from \cite[\S8]{Polishchuk1997} that a finite-dimensional Lie algebra $\fg$ is called \defn{degenerate} if for every $x,y,z \in \fg$, the symmetric two-tensor $[x,y]z + [y,z]x+[z,x]y \in S^2\fg$ is equal to zero.  Equivalently, either $\fg$ is abelian or has a basis $e_1,\ldots,e_{n-1},f$ such that $[f,e_i] = e_i$ for all $i$ and $[e_i,e_j] = 0$ for all $i,j$.  

\begin{definition}
Let $\X$ be a log Poisson manifold.  A \defn{codegenerate} (respectively \defn{coabelian}) \defn{submanifold} is a closed log Poisson submanifold $\Z \subset \X$ for which the fibre of the conormal Lie algebra at every point is degenerate (resp.~abelian).
\end{definition}

\begin{example}
    Any Poisson hypersurface is a coabelian submanifold: the conormal Lie algebra is rank one, hence automatically abelian.
\end{example}

The following fundamental result is due to Polishchuk.
\begin{theorem}[{\cite[\S8]{Polishchuk1997}}]
Let $\X$ be a Poisson manifold  and let $\Z \subset \X$ be a closed Poisson submanifold.  Then the Poisson structure on $\X$ lifts to the blowup $\Bl{\X}{\Z}$ if and only if $\Z$ is codegenerate.  The exceptional divisor is then a Poisson submanifold if and only if $\Z$ is coabelian.
\end{theorem}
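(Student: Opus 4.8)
The plan is to work in local analytic coordinates adapted to the submanifold $\cZ$ and track what happens to the Poisson bivector under the blowup. First I would choose local coordinates $(x_1,\ldots,x_k,y_1,\ldots,y_m)$ on $\cX$ in which $\cZ = \{x_1 = \cdots = x_k = 0\}$, so the conormal sheaf $\coN{\Z}$ is spanned by the classes $[x_1],\ldots,[x_k]$ and the conormal Lie bracket is computed by the Poisson brackets $\{x_i,x_j\} \bmod \cI$. The blowup $\Bl{\X}{\Z}$ is covered by charts of the form $x_1 = u_1$, $x_i = u_1 v_i$ for $i \ge 2$, with the remaining $y$-coordinates untouched; the exceptional divisor $\E$ is $\{u_1 = 0\}$ in this chart. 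Writing $\ps = \sum_{i<j}\{x_i,x_j\}\,\cvf{x_i}\wedge\cvf{x_j} + \sum_{i,\ell}\{x_i,y_\ell\}\,\cvf{x_i}\wedge\cvf{y_\ell} + \sum_{\ell < \ell'}\{y_\ell,y_{\ell'}\}\,\cvf{y_\ell}\wedge\cvf{y_{\ell'}}$, I would substitute the blowup coordinate change and examine the order of vanishing of each resulting coefficient along $\{u_1 = 0\}$. The transformation law $\cvf{x_i} = \frac{1}{u_1}\cvf{v_i}$ for $i\ge 2$ introduces potential poles, and the content of the argument is to show that the codegeneracy condition forces these poles to cancel, while abelianness is exactly what is needed to ensure in addition that the lifted bivector is \emph{tangent} to $\E$ rather than merely regular there.

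The key computational steps, in order: (i) expand $b^*\ps$ in the blowup chart and isolate, for each monomial in the $\cvf{v_i},\cvf{u_1},\cvf{y_\ell}$, its leading behaviour as a Laurent expansion in $u_1$; (ii) observe that the coefficients $\{x_i,x_j\}$ lie in $\cO{\cX}$ and can be Taylor-expanded as $\{x_i,x_j\} = c_{ij} + \sum_p a_{ijp}x_p + O(\cI^2)$, where the constants $c_{ij}$ (resp.\ the linear coefficients $a_{ijp}$, read on $\cZ$) encode the conormal bracket's structure constants; (iii) feed the identity $[x_i,x_j]x_p + [x_j,x_p]x_i + [x_p,x_i]x_j = 0$ in $S^2\coN{\Z}$ — i.e.\ degeneracy — into the $u_1^{-1}$ coefficient of $b^*\ps$ and check it vanishes; (iv) for the tangency statement, show that the surviving $u_1^0$ term has the coefficient of $\cvf{u_1}$ divisible by $u_1$ precisely when the constants $c_{ij}$ vanish, i.e.\ when $\coN{\Z}$ is abelian, and conversely produce the explicit non-tangent term when $\coN{\Z}$ has the non-abelian degenerate form $[f,e_i]=e_i$. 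One should also handle the log structure: since $\cZ$ is a \emph{log} submanifold, $\bdX \cap \cZ$ is normal crossings in $\cZ$, and one must check the total transform $b^*\bdX + \E$ remains normal crossings and that $\ps$ stays tangent to all of it — but this is essentially automatic once the coordinates above are chosen compatibly with $\bdX$, since $b$ is an isomorphism away from $\E$ and $\ps$ was already tangent to $\bdX$ upstairs.

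The main obstacle I anticipate is the bookkeeping in step (iii)–(iv): there are three types of terms in $\ps$ ($xx$, $xy$, $yy$), each transforms differently, and one must verify that the \emph{only} source of a simple pole along $\E$ is the $\{x_i,x_j\}$-block and that the degeneracy relation kills it — the $\{x_i,y_\ell\}$ terms contribute at worst a regular $u_1^0$ piece and the $\{y_\ell,y_{\ell'}\}$ terms are untouched. A clean way to organize this is to note that $\Bl{\X}{\Z}$ near $\E$ is the total space of (an open part of) the projectivized normal bundle, and the obstruction to lifting $\ps$ is the class of the conormal bracket viewed as a map $\coN{\Z}\otimes\coN{\Z}\to \coN{\Z}\oplus \cT{\Z}$; the pole cancellation is then the statement that the purely conormal component of this map, symmetrized, is the coboundary measured by degeneracy. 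For the converse directions one simply exhibits the standard local models — a Poisson structure with $\{x_i,x_j\}$ having a nonzero quadratic-or-higher symmetric obstruction does not lift, and one with $\{x_1,x_2\} = c \neq 0$ constant lifts but has exceptional divisor cut out by a non-Poisson equation — so the "only if" halves are routine. Since the theorem is quoted from \cite[\S8]{Polishchuk1997}, I would present this as a streamlined account adapted to the log setting rather than reprove it from scratch.
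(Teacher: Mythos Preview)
The paper does not supply its own proof of this theorem; it simply attributes the result to Polishchuk and cites \cite[\S8]{Polishchuk1997}. So there is nothing to compare against beyond noting that your local-coordinate, pole-cancellation approach is indeed the one Polishchuk uses.

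That said, your outline contains a genuine confusion that would derail the computation. You write the Taylor expansion $\{x_i,x_j\} = c_{ij} + \sum_p a_{ijp}x_p + O(\cI^2)$ and then, in step~(iv), assert that the exceptional divisor is Poisson ``precisely when the constants $c_{ij}$ vanish, i.e.\ when $\coN{\Z}$ is abelian.'' But $\cZ$ is by hypothesis a \emph{Poisson} submanifold, so $\cI$ is a Poisson ideal and $\{x_i,x_j\}\in\cI$; the constant terms $c_{ij}$ (really, the restrictions $\{x_i,x_j\}|_{\cZ}$) vanish automatically. The conormal bracket is $[[x_i],[x_j]] = \sum_p a_{ijp}[x_p]$ in $\cI/\cI^2$, so its structure constants are the $a_{ijp}$, not the $c_{ij}$; abelianness means $a_{ijp}=0$. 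Your proposed counterexample ``$\{x_1,x_2\}=c\neq 0$ constant'' is therefore not a Poisson submanifold at all.

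Once you fix this indexing, the logic reshuffles: because $\{x_i,x_j\}$ vanishes to order \emph{one} along $\cZ$, the pullback $b^*\ps$ has at worst a simple pole along $\E$, and the residue---a bivector on $\E$ with values in $\cN_\E$---is controlled by the $a_{ijp}$. The degeneracy identity $[x,y]z+[y,z]x+[z,x]y=0$ in $S^2(\cI/\cI^2)$ is precisely the vanishing of this residue (a homogeneous quadratic expression in the affine fibre coordinates $v_i$). When it holds, the regular lift has a $\cvf{u_1}$-component whose restriction to $\{u_1=0\}$ is again built from the $a_{ijp}$, and \emph{that} is what must vanish for $\E$ to be a Poisson hypersurface---i.e.\ for $\coN{\Z}$ to be abelian. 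Your steps (iii) and (iv) are thus both governed by the linear coefficients $a_{ijp}$; the $c_{ij}$ never enter.
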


Note that when we blow up a closed log submanifold, the union of the exceptional divisor and the boundary divisor is a normal crossings divisor.  This allows us to make the following definition. 

\begin{definition}\label{def:blowup-and-complement}
Let $\X$ be a log Poisson manifold and let $\Z \subset \X$ be a closed log Poisson submanifold.
\begin{itemize}
\item If $\Z$ is codegenerate, then the \defn{blowup of $\X$ along $\Z$} is the log Poisson manifold $\Bl{\X}{\Z} := (\Bl{\cX}{\cZ},b^*\bdX)$.  
\item If, in addition, $\Z$ is coabelian, then the \defn{complement of $\Z$} is the log Poisson manifold $\X\setminus \Z := (\Bl{\cX}{\cZ}, b^*\bdX + \E)$.
\end{itemize}
\end{definition}
The nomenclature and notations $\Bl{\X}{\Z}$ and $\X \setminus \Z$ are chosen to be consistent with the usual notations for blowups and complements of manifolds upon passing to interiors.  Indeed, as log manifolds, both the blowup and the complement have the same compactification $\overline{\X\setminus \Z} = \overline{\Bl{\X}{\Z}} = \Bl{\cX}{\cZ}$, and there are natural morphisms $\Bl{\X}{\Z} \to \X$ and $\X\setminus \Z \to \X$ given by the usual blowdown  $\Bl{\cX}{\cZ}\to\cX$; the induced maps of interiors are the  blowdown $\Bl{\Zo}{\Xo} \twoheadrightarrow \Xo$ and the inclusion $\Xo\setminus \Zo \hookrightarrow \Xo$, respectively.  Moreover, in the coabelian case, the exceptional divisor gives a log Poisson hypersurface $\E\subset\Bl{\X}{\Z}$ and we have a canonical isomorphism of log manifolds $\Bl{\X}{\Z} \setminus \E \cong \X \setminus \Z$.

\begin{example}[\surfaceex]\label{ex:surface-constr}
Let $\X$ be a compact K\"ahler surface.  Then a Poisson structure $\ps$ is equivalent to a section of the anticanonical line bundle. Its vanishing set is an anticanonical divisor $\Y\subset \X$ which may be singular.   If $p \in \X$, then $p$ is codegenerate if and only if $p \in \Y$, and $p$ is coabelian if and only if $p \in \sing{\Y}$ is a singular point of $\Y$.  Hence by repeatedly blowing up the singular points, we obtain a log Poisson surface $(\X',\Y',\ps')$ that gives a  logarithmic model for the complement $\X\setminus \Y$.
\end{example}

\begin{example}[\sklyaninex]\label{ex:sklyanin-constr}
	Sklyanin~\cite{Sklyanin1982} defined a family of Poisson structures  on $\PP^3$ associated with elliptic curves.  For each such structure $\ps$, the closures of its two-dimensional symplectic leaves give a pencil of quadric surfaces whose base locus is an elliptic normal curve $\Z\subset \PP^3$.  The vanishing set of $\ps$ consists of the curve $\Z$ and four isolated points $p_1,\ldots,p_4$, given by the singular points of the singular quadrics in the pencil.  The curve $\Z$ is coabelian, so we may blow it up to form a canonical logarithmic model of the complement $\PP^3 \setminus \Z$.  However, the linearization of the Poisson structure at each of the four points $p_1,\ldots,p_4$ is isomorphic to $\mathfrak{sl}(2;\CC)$, so these points are not codegenerate, and can therefore not be blown up.
\end{example}

\begin{example}[More general threefolds]\label{ex:threefold-constr}
Let $(\X,\ps)$ be a compact K\"ahler Poisson threefold. There are two possibilities: either the bivector $\ps$ is nonvanishing, or it has at least one zero.

 If $\ps$ is non-vanishing, then all of its symplectic leaves have dimension two; such threefolds are classified by Druel in \cite{Druel1999}.  Note that the only nontrivial closed Poisson submanifolds are hypersurfaces given by unions of closed symplectic leaves.  Such hypersurfaces are automatically smooth, so they can be added as boundary divisors to obtain logarithmic models for their complements, with no need to blow up.  

On the other hand, if $\ps$ has a zero, then Druel's result shows that at least one irreducible component of the vanishing locus $\ps^{-1}(0)$ has positive dimension, i.e.~contains a curve.  (From the previous example of Sklyanin's structures, we see that there may  be additional isolated zero-dimensional components.) Let $\Z = \cup_i \Z_i$ be the union of the positive-dimensional components of $\ps^{-1}(0)$.  It is common to encounter examples where each $\Z_i$ is a smooth curve; for instance, according to the classification results in \cite{Cerveau1996,Loray2013,Pym2015}, this is the case when $\X$ is a Fano threefold of Picard rank one (such as $\PP^3$) and $\ps$ is generic,~i.e.~lies in a Zariski open dense set of the space of all Poisson structures on $\X$. In this situation, each $\Z_i$ is coabelian by \cite[Theorem 13.1]{Polishchuk1997}, so we can form a log model for its complement by blowing up.  If, in addition, the intersection points $\Z_i \cap \Z_j$ are coabelian and the tangent spaces of $\Z_i$ and $\Z_j$ are pairwise distinct, we may first blow up all intersection points $\Z_i \cap \Z_j$ and then blow up the strict transforms of all the components $\Z_i$ to obtain a log model for $\Y\setminus \Z$; this works for all generic Poisson Fano threefolds of Picard rank one, except those named ``Aff'' in \cite[Table 1]{Loray2013}.
\end{example}

\begin{example}[\logsympex]\label{ex:log-symp-blowup}
For most interesting log symplectic manifolds $(\Y,\Z,\omega)$ in the sense of Goto (see \autoref{ex:log-symp-def}), the polar divisor $\Z$ is not normal crossings. Sometimes, one can construct a log model for $\Y \setminus \Z$ by iteratively blowing up strata of $\Z$; this is the case, for instance, for Hilbert schemes of surfaces with a smooth anticanonical divisor~\cite{Bottacin1998,Ran2016} and for some of Feigin--Odesskii's elliptic Poisson structures~\cite{Feigin1989,Feigin1998}. However, for many singularity types, this does not work, and one instead needs to use \emph{weighted} blowups to produce a log model for $\Y\setminus \Z$, at the cost of working with orbifolds. This will be discussed in forthcoming work of the second author with Lapointe and Matviichuk.
\end{example}

\subsection{Vector bundles and projective bundles}\label{sec:coabelian-constructions}
All vector bundles in this paper are holomorphic unless otherwise stated.

Let $\X$ be a log manifold.  Given a vector bundle on the interior $\Eo \to \Xo$, we may seek to compactify it as follows.  First, we try to extend it to a vector bundle $\E' \to \cX$.  Such an extension may not exist, but if it does, we obtain a logarithmic compactification of $\Eo$ by adding a hyperplane bundle at infinity, i.e.~forming the projective bundle $\PP(\E'\oplus{\cO{\cX}})$ and identifying $\E'$ with the complement of the hyperplane bundle $\PP(\E')\subset \PP(\E'\oplus{\cO{\cX}})$.  This motivates the following definition.

\begin{definition}
Let $\X=(\cX,\bdX)$ be a log manifold.  A \defn{log vector bundle $\E\to\X$}  is a log manifold given by a projective bundle $\cE = \PP(\E'\oplus \cO{\cX})$ for some vector bundle $\E' \to \cX$, together with the divisor $\partial\E = \PP(\E') \cup \cE|_{\bdX}$ 
\end{definition}

Note that the $\Gm$-action on the vector bundle $\E^\circ$ extends to an action on the log vector bundle $\E$ by automorphisms that fix the hyperplane at infinity pointwise.  Moreover, the bundle projection and zero section give morphisms of log manifolds $p : \E \to \X$ and $i : \X \to \E$ such that $p\circ i = \id_{\X}$.

The vector fields on a log vector bundle $p:\E\to\X$ fit in an exact sequence
\begin{align*}
\xymatrix{
0 \ar[r] & \cT{\E/\X} \ar[r] & \cT{\E} \ar[r] & p^* \cT{\X} \ar[r] & 0
}
\end{align*}
of locally free sheaves on $\cE := \PP(\E'\oplus\cO{\cX})$, where $\cT{\E/\X}$ is the sheaf of vertical vector fields on the projective bundle $\cE$ that are tangent to the hyperplane at infinity. Note that such vector fields are exactly the ones that generate the affine transformations of the fibres of $\E'$, i.e.~translations and linear endomorphisms.  In particular $\cT{\E/\X}$ is independent of the boundary divisor $\bdX$.

Pushing forward to the base, we thus obtain a canonical exact sequence
\[
\xymatrix{
0 \ar[r] & \sEnd{\calE} \oplus \calE \ar[r] & p_*\cT{\E} \ar[r] &  \cT{\X} \ar[r] & 0\, .
}
\]
Concretely, in a local trivialization of $\E$ over an open set $\U\subset \cX$ with linear fibre coordinates  $y^1,\ldots,y^n$, a log vector field on $p^{-1}(\U)$ can be written uniquely in the form
\[
\xi = \xi_0 + \sum_i \rbrac{a^i + \sum_j b^{i}_{j}y^j}\cvf{y^i}\,,
\]
where $a^i,b^i_j \in \cO{\cX}$ are functions on the base and $\xi_0 \in \cT{\X}$ is a logarithmic vector field on $\X$.

We can construct polyvectors of higher degree on $\E$ by taking wedge products of such vector fields.  In fact this exhausts the possibilities:
\begin{lemma}
	The canonical map $\wedge^\bullet p_*\cT{\E} \to p_*\der{\E}$ is surjective.
\end{lemma}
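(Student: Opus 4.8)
The plan is to reduce the statement, which is local on $\cX$, to an explicit computation with logarithmic polyvector fields on $\PP^n$. First I would pass to an open set $\U\subseteq\cX$ over which $\E'$ is trivial, identifying $\cE|_\U\cong\U\times\PP^n$ so that the section at infinity $\PP(\E')$ becomes $\U\times H$ for a fixed hyperplane $H\subseteq\PP^n$, with affine coordinates $y^1,\dots,y^n$ (where $n=\rank\E'$) on $\PP^n\setminus H$; then $\partial\E$ restricts to the normal crossings divisor $(\U\times H)\cup((\bdX\cap\U)\times\PP^n)$. Writing $\mathrm{pr}_1\colon\U\times\PP^n\to\U$ (which is $p$ over $\U$) and $\mathrm{pr}_2\colon\U\times\PP^n\to\PP^n$ for the projections, the product formula for logarithmic tangent sheaves gives a splitting $\cT{\E}\cong\mathrm{pr}_1^*\cT{\X}\oplus\mathrm{pr}_2^*\cT{\PP^n}(-\log H)$ over $\U\times\PP^n$. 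Taking exterior powers, pushing forward along the proper morphism $p$, and using the projection formula together with flat base change (the cohomology of $\wedge^j\cT{\PP^n}(-\log H)$ being constant over the trivial family), I would obtain natural identifications
\[
p_*\der[k]{\E}\big|_{\U}\;\cong\;\bigoplus_{i+j=k}\wedge^i\cT{\X}\big|_{\U}\otimes_\CC W_j,\qquad W_j:=\coH[0]{\wedge^j\cT{\PP^n}(-\log H)},
\]
so that in particular $p_*\cT{\E}|_\U\cong\cT{\X}|_\U\oplus(\cO{\U}\otimes_\CC W_1)$ as $W_0=\CC$. Under these identifications the canonical map becomes the direct sum of the maps $\id\otimes m_j$, where $m_j\colon\wedge^j W_1\to W_j$ is exterior multiplication; so it suffices to prove each $m_j$ surjective.

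To understand $W_j$ I would use the fact that $\cT{\PP^n}(-\log H)\cong\cO{\PP^n}(1)^{\oplus n}$: this follows from the Euler sequence, since imposing tangency to $H=\{x_0=0\}$ replaces the middle term $\cO{\PP^n}(1)^{\oplus(n+1)}$ by $\cO{\PP^n}\oplus\cO{\PP^n}(1)^{\oplus n}$ in such a way that the Euler vector field acquires a unit component, splitting off the copy of $\cO{\PP^n}$. Hence $\wedge^j\cT{\PP^n}(-\log H)\cong\cO{\PP^n}(j)^{\oplus\binom{n}{j}}$, and on the chart $\PP^n\setminus H$ a global section is identified with a polyvector field $\sum_{i_1<\dots<i_j}f_{i_1\cdots i_j}\,\cvf{y^{i_1}}\wedge\dots\wedge\cvf{y^{i_j}}$ whose coefficients are polynomials of degree $\le j$; in particular $W_1$ is spanned by the $\cvf{y^i}$ and the $y^k\cvf{y^i}$. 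It then remains to note that $W_j$ is spanned by the monomial polyvectors $y^\alpha\,\cvf{y^{i_1}}\wedge\dots\wedge\cvf{y^{i_j}}$ with $m:=|\alpha|\le j$, and that writing $y^\alpha=y^{k_1}\cdots y^{k_m}$ and using $\cO$-multilinearity of the wedge (the indices $i_1,\dots,i_j$ being distinct, so no factor vanishes),
\[
y^\alpha\,\cvf{y^{i_1}}\wedge\dots\wedge\cvf{y^{i_j}}=\bigl(y^{k_1}\cvf{y^{i_1}}\bigr)\wedge\dots\wedge\bigl(y^{k_m}\cvf{y^{i_m}}\bigr)\wedge\cvf{y^{i_{m+1}}}\wedge\dots\wedge\cvf{y^{i_j}}
\]
exhibits it as $m_j$ applied to a wedge of $j$ elements of $W_1$ (here $m\le j$ is used). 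Hence $m_j$ is surjective and the lemma follows.

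I do not anticipate a real obstacle here: once the two structural inputs are in place --- the splitting of the logarithmic tangent sheaf over a trivializing chart, and the identification $\cT{\PP^n}(-\log H)\cong\cO{\PP^n}(1)^{\oplus n}$ --- the rest is bookkeeping, and the combinatorial surjectivity of $m_j$ is immediate. The only place demanding a little care is making the projection-formula/base-change identifications precise enough to see that the canonical map is genuinely $\bigoplus_j\id\otimes m_j$ with $m_j$ the honest exterior product on sections over $\PP^n$. (One could instead avoid quoting $\cT{\PP^n}(-\log H)\cong\cO{\PP^n}(1)^{\oplus n}$ in favour of a direct estimate in a chart meeting $H$, forcing the affine coefficients of a logarithmic $j$-vector field to have degree $\le j$, but the Euler-sequence route seems cleanest.)
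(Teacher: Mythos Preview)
Your argument is correct and follows essentially the same route as the paper: reduce via the fibre-bundle structure (your local trivialization plus projection formula is the paper's ``K\"unneth formula'') to the surjectivity of $\wedge^\bullet W_1\to W_\bullet$ for $W_j=\coH[0]{\wedge^j\cT{\PP^n}(-\log H)}$. The paper simply asserts this last surjectivity, whereas you supply the explicit verification via $\cT{\PP^n}(-\log H)\cong\cO{\PP^n}(1)^{\oplus n}$ and the monomial-distribution trick.
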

\begin{proof}
The pair $(\PP(\E'\oplus\cO{\cX}),\PP(\E'))$ is a fibre bundle over $\cX$ with fibre $(\PP^n,H)$ where $H\subset \PP^n$ is a hyperplane.  Hence by the K\"unneth formula, the problem reduces to the case in which the base is a point, i.e.~to the statement that the canonical map $\wedge^\bullet \coH[0]{ \cT{\PP^n}(-\log H)} \to \coH[0]{\wedge^\bullet \cT{\PP^n}(-\log H)} $ is surjective. 
\end{proof}

Using this result, we may understand Poisson structures on log vector bundles as follows.
\begin{definition}
A \defn{log Poisson vector bundle} is a log vector bundle $\E \to \X$ equipped with a Poisson structure $\eta \in \coH[0]{\der[2]{\E}}$.  A \defn{codegenerate} (respectively \defn{coabelian}) \defn{vector bundle on $\X$} is a log Poisson vector bundle whose zero-section is a codegenerate (resp.~coabelian) submanifold.
\end{definition}

Thus, in a local trivialization with fibre coordinates $y^1,\ldots,y^n$ as above, the Poisson structure on a log vector bundle has the form
\[
\eta = \ps + (\alpha^i + \beta^{i}_j y^j) \cvf{y^i} + (a^{ij} + b^{ij}_k y^k + c^{ij}_{kl} y^ky^l) \cvf{y^i}\wedge\cvf{y^j}\,,
\]
where $\ps \in \der[2]{\X}$ is a log Poisson structure on the base, $\alpha^i, \beta^i_j \in \cT{\X}$ are log vector fields and $a^{ij},b^{ij}_k,c^{ij}_{kl} \in \cO{\cX}$ are  functions, and we have used the Einstein summation convention (summing over repeated indices). The further possible conditions on $(\E,\eta)$ then have the following explicit descriptions:
\begin{itemize}
\item $(\E,\eta)$ has the zero section as a Poisson submanifold if and only if $\alpha^i=a^{ij}=0$ for all indices $i,j$.
\item $(\E,\eta)$ is codegenerate if and only if $\alpha^i=a^{ij}=0$ for all $i,j$ and the Lie algebra with structure constants $b^{ij}_k$ is degenerate
\item $(\E,\eta)$ is coabelian if and only if $\alpha^i=a^{ij}=b^{ij}_k=0$ for all $i,j,k$, i.e.~it has the form
\[
\eta = \ps + \beta^{i}_j y^j \cvf{y^i} +  c^{ij}_{kl} y^ky^l \cvf{y^i}\wedge\cvf{y^j}\,,
\]
or equivalently, it is invariant under the action of $\Gm$ on the fibres.
\end{itemize}

\begin{example}
Let $\Y$ be a complex manifold.  Then its cotangent bundle $\ctb{\Y}$ carries a canonical symplectic (hence Poisson) structure. If $\X$ is a log model for $\Y$, we obtain a log Poisson vector bundle model for $\ctb{\Y}$ by taking $\E'$ to be the log cotangent bundle, i.e.~the vector bundle on $\cX$ associated to the locally free sheaf $\forms[1]{\X}=\logforms[1]{\X}$.  Note that the zero section is not a Poisson submanifold, so in particular, this log Poisson vector bundle is not codegenerate (hence also not coabelian). Also, the symplectic form has a pole of order greater than one at infinity, so it is \emph{not} a log symplectic structure in the sense of Goto.
\end{example}

\begin{example}\label{ex:log-line-bundle}
By the logarithmic counterpart of \cite[Proposition 5.2]{Polishchuk1997} a coabelian line bundle is equivalent to the data of an invertible sheaf $\sL \to \cX$ and an operator $\nabla : \sL \to \cT{\X}\otimes \sL$, making $\sL$ into a Poisson module.   In particular, the modular representation of \cite{Weinstein1997} makes the total space of the log canonical sheaf $\det \forms[1]{\X}$ a coabelian  line bundle on $\X$ in a natural way.  Similarly,  if $\mathsf{D}$ is any Poisson divisor on $\cX$, then $\cO{\cX}(\mathsf{D})$ is naturally a coabelian line bundle by \cite[Proposition 7.1]{Polishchuk1997}.
\end{example}

\begin{example}\label{ex:coabelian-normal}
If $(\X,\ps)$ is a log Poisson manifold and $\Z\subset \X$ is a coabelian submanifold, then the normal bundle $\N_{\Z} \to \Z$ naturally has the structure of a coabelian vector bundle.  Indeed, the sheaf of functions on the normal bundle is the symmetric algebra $\Sym{\coN{\Z}} \cong \bigoplus_{j \ge 0}\cI^j/\cI^{j+1}$, where $\cI\subset \cO{\cX}$ is the ideal defining $\cZ$.  Since $\{\cI,\cI\}\subset \cI^2$, the Poisson bracket on $\cO{\X}$ induces a canonical bracket on $\Sym{\coN{\Z}}$ that is homogeneous of weight zero, i.e.~a $\Gm$-invariant log Poisson structure on $\N_\Z$.  
\end{example}

For a coabelian vector bundle, the $\Gm$-invariance of the Poisson structure implies that it descends to a Poisson structure $\PP(\eta)$ on the projective bundle $\PP(\E')$, so that we may make the following construction.
\begin{definition}\label{def:projectivization}
Let $(\E,\eta) \to (\X,\ps)$ be a coabelian vector bundle.  Its \defn{projectivization} is the log manifold $\PP(\E) := (\PP(\E'),\PP(\cE')|_{\bdX})$ equipped with the induced Poisson structure $\PP(\eta)$.  Its \defn{tautological bundle} is the blowup $\cO{\PP(\E)}(-1) := \Bl{\E}{\X}$ along the zero section.
\end{definition}
Evidently, $\PP(\E)$ is a log model for the projective bundle $\PP(\Eo)\to \Xo$ and inherits many of the ``expected'' properties. For instance, $\cO{\PP(\E)}(-1)$ is a coabelian line bundle over $\PP(\E)$, and there is a natural commutative diagram of log Poisson manifolds
\[
\begin{tikzcd}
\PP(\E) \ar[rd] & \cO{\PP(\E)}(-1) \ar[l] \ar[d]\ar[r] & \E \ar[ld] \\
& \X
\end{tikzcd}
\]
which induces a weak equivalence $\cO{\PP(\E)}(-1)\setminus 0 \to \E \setminus 0$ between the complements of the zero sections in the sense of \autoref{def:blowup-and-complement}.  Similarly, if $\Z \subset \X$ is a coabelian submanifold, the exceptional divisor of the blowup $\Bl{\X}{\Z}$ is canonically identified, as a log Poisson manifold, with the projective bundle $\PP(\N_{\Z})$, giving the usual Cartesian diagram
\[
\begin{tikzcd}
\PP(\N_{\Z}) \ar[r]\ar[d] & \Bl{\X}{\Z} \ar[d] \\
\Z \ar[r] & \X
\end{tikzcd}\,.
\]

\section{Hodge-de Rham theory}
\label{sec:deRham}

In this section, we describe the natural structure on the de Rham cohomology of a log Poisson manifold.  Namely, we will construct Hodge and weight filtrations on the two-periodized de Rham complex, and prove that the corresponding spectral sequences degenerate, giving a mixed $\RR$-Hodge structure.  We also establish Poincar\'e duality and study the action of characteristic classes of coabelian line bundles.  Our key tools are Kassel's notion of a mixed complex (originating from cyclic homology), Deligne's results on mixed Hodge theory, and the equivariance of the Hodge filtration  under rescaling of the Poisson bivector.

\subsection{Mixed complexes}
Our basic algebraic tool is the following notion, due to Kassel~\cite{Kassel1987}. Note that the literature typically uses homological grading conventions, but we use cohomological gradings throughout.

\begin{definition}
A (cohomologically graded) \defn{ mixed complex} is a triple  $(M,b,B)$ where $(M,b)$ is a cochain complex (called the \defn{underlying complex}), and $B : M \to M[-1]$ is a cochain map such that $B^2=0 : M \to M[-2]$.  We call $b$ the \defn{primary differential} and $B$ the \defn{secondary differential}.  A morphism of mixed complexes is a morphism of the underlying cochain complexes that intertwines the secondary differentials.    Such a morphism is a \defn{quasi-isomorphism} if the map of underlying complexes is a quasi-isomorphism.
\end{definition}

\begin{definition}
Let $(M,b,B)$ be a mixed complex.
\begin{itemize}
\item The \defn{cohomology of $M$} is the cohomology of the underlying complex $(M,b)$.
\item The \defn{periodic cyclic complex of $M$} is the complex of formal Laurent series $(M((u)),b+uB)$ where $u$ is a formal variable of degree two; its cohomology is the \defn{periodic (cyclic) cohomology of $M$}.
\end{itemize}
\end{definition}
\begin{remark}
    The variable $u$ is simply a bookkeeping device to keep track of the degrees and filtrations. 
    In noncommutative Hodge theory \cite{Katzarkov2008}, $u$ is reinterpreted as the coordinate on an auxiliary projective line via the Rees construction.
\end{remark}

The periodic cyclic complex comes equipped with subcomplexes
\[
\F[p]M((u)) = u^pM[[u]]
\]
given by bounding the power of $u$ from below, giving a decreasing filtration
\begin{align*}
M((u)) \supset \cdots \supset \F[p]M((u)) \supset \F[p+1]M((u)) \supset \cdots
\end{align*}
with associated graded given by ``turning off $B$'':
\[
\grF[] (M((u)),b+uB) \cong (M((u)),b)\,.
\]
Passing to cohomology, we obtain the decreasing \defn{Hodge filtration}
\[
\F[p]\coH{M((u))} = \img\rbrac{ \coH{\F[p]M((u))} \to \coH{M((u))}},
\]
and a spectral sequence 
\[
E_1 = \coH{M,b}((u)) \Rightarrow \grF \coH{M((u)),b+uB}\,.
\]
Multiplication by $u$ gives the \defn{Bott periodicity} isomorphism
\begin{equation}
\begin{tikzcd}
 \F[p]\coH[n]{M((u)),b+uB} \ar[r,"\sim"] & \F[p+1]\coH[n+2]{M((u)),b+uB}
\end{tikzcd} \label{eq:bott}
\end{equation}
for all $p,n \in \ZZ$.

Note that if $(\cM,b,B)$ is a sheaf of bounded below mixed complexes on a space, the operator $B$ induces an operator on the derived global sections $M := \rsect{\cM,b}$, making the latter into a mixed complex.  The cohomology of $M$ is then the hypercohomology of $\rsect{\cM,b}$ so that we have also have a hypercohomology spectral sequence
\[
E_1^{p,q} := \coH[q]{\cM^p} \Rightarrow \coH[p+q]{M}.
\]

\subsection{The mixed complex of a log Poisson manifold}

If $(\X,\ps)$ is a log Poisson manifold, we define a sheaf of mixed complexes
\[
\sMix{\X,\ps} := (\forms[-\bullet]{\X},\delps,\dd)\,,
\]
where $\forms[-\bullet]{\X} = \logforms[-\bullet]{\X}$ denotes the sheaf of logarithmic differential forms with the degrees reversed, so that $\forms[p]{\X}$ lies in cohomological degree $-p$.  The secondary differential $B$ is the de Rham differential $\dd$ and the primary differential $b$ is the  Poisson homology operator
\[
\delps := [\dd,\hookps] = \dd \hookps - \hookps \dd
\]
introduced by Brylinski~\cite{Brylinski1988} and Koszul~\cite{Koszul1985}.  Here $\hookps : \forms{\X} \to \forms[\bullet-2]{\X}$ is the contraction by the Poisson bivector.

\begin{remark}
The sign convention for $\hook{}$ is noted in \autoref{sec:calc}.  With this convention, if $x,y$ are coordinates on $\CC^2$, the bivector $\ps = \cvf{x}\wedge\cvf{y}$ has inverse the symplectic form $\omega = \ps^{-1} = \dd y \wedge \dd x$, and we have the identity $\hookps \omega = 1$.
\end{remark}

\begin{definition}
The \defn{mixed complex} of $(\X,\ps)$ is the derived global sections
\[
\Mix{\X,\ps} := \rsect{\sMix{\X,\ps}}.
\]
\end{definition}

Here we allow ourselves the freedom to choose a convenient resolution to compute the derived global sections; all constructions we make will be independent of this choice, up to quasi-isomorphism of mixed complexes.  For instance, we may use the \v{C}ech resolution associated to a good cover, or the Dolbeault resolution
\[
\forms[p]{\X} = \forms[p]{\cX}(\log\bdX) \hookrightarrow \dolb[p,\bullet]{\X} \,,
\]
where
\[
\dolb[p,q]{\X} := \dolb[0,q]{\cX} \otimes_{\cO{\cX}} \forms[p]{\cX}(\log \bdX)
\]
is the sheaf of $C^\infty$-forms of type $(p,q)$ whose holomorphic factors have at worst logarithmic poles on $\bdX$ and whose anti-holomorphic factors are smooth.  Thus, if $z_1,\ldots,z_k,y_1,\ldots,y_j$ are local holomorphic coordinates on $\cX$ such that $\bdX$ is the vanishing set of $y_1y_2\cdots y_j$, then $\dolb{\X}$ is generated as a $C^\infty_{\cX}$-algebra by the forms $\dd z_i,\dd y_\ell/y_\ell, \dd \zb_i$ and $\dd \yb_\ell$.  Since these sheaves are $C^\infty_{\cX}$-modules, they are acyclic, and hence we may take
\[
\Mix[n]{\X,\ps} := \Dolb[n]{\X}\,,
\]
where
\[
\Dolb[n]{\X} := \bigoplus_{q-p=n} \Dolb[p,q]{\X}
\]
is the space of Dolbeault forms (with log poles) corresponding to the $n$th column of the Hodge diamond.   We emphasize that the total degree is $n = q-p$, not $n=q+p$, since $\Mix{\X}$ reverses the degree of the holomorphic forms.

In this Dolbeault model, the primary differential on the complex $\Mix{\X,\ps}$ is identified with the operator
\[
\delb + [\del,\hook{\ps}]  : \Dolb[n]{\X} \to \Dolb[n+1]{\X}
\]
and the secondary differential is identified with the operator
\[
\del : \Dolb[n]{\X} \to \Dolb[n-1]{\X}
\]
for all $n \in \ZZ$.

In some cases, one can find an alternate model for the mixed complex that is more amenable to direct calculations.  Here are some particularly simple examples of this phenomenon:

\begin{example}[\toricex]\label{ex:toric-mixed}
Let $\X$ be a toric log manifold as in \autoref{ex:toric-def}. The trivialization $\cT{\X} \cong \ft \otimes \cO{\cX}$ dualizes to a trivialization $\forms[1]{\X}\cong \ft^\vee\otimes\cO{\cX}$, so that $\coH{\forms{\X}} \cong \wedge^\bullet \ft^\vee$, and moreover the contraction of global polyvectors into forms is identified with the contraction $\wedge^\bullet \ft \times \wedge^{\bullet}\ft^\vee \to \wedge^{\bullet} \ft^\vee$.  Note that all global forms on $\X$ are $\dd$-closed.  Hence the inclusion of the global sections in the derived global sections gives a quasi-isomorphism of mixed complexes
\begin{align*}
\rbrac{\wedge^{-\bullet} \ft^\vee,0,0} \cong  \Mix{\X,\ps} \label{eq:toric-mix-formal}
\end{align*}
for all Poisson structures $\ps$ on $\X$.
\end{example}

\begin{example}[\twopureex]\label{ex:two-pure-formality}
    More generally, we recall that every global logarithmic form on a log manifold is closed thanks to Deligne's mixed Hodge theory.  Hence we always have a canonical map of mixed complexes
    \[
    (\coH[0]{\forms[-\bullet]{\X}},0,0) \to \Mix{\X,\ps}\,.
    \]
    Usually this map is not an isomorphism, but sometimes it is.  Namely, this is the case when $\Y$ is a smooth algebraic Poisson variety whose cohomology satisfies the 2-purity condition (see \autoref{sec:purity} below) and $\X$ is any algebraic log model for $\Y$. 
 It follows that in this case, the mixed complex is independent of the compactification, i.e.~depends only on $\Y$.
\end{example}

\begin{example}[\logsympex]
Let $\X$ be a log manifold equipped with a log symplectic form $\omega \in \coH[0]{\forms[2]{\X}}$ and denote the corresponding Poisson structure $\ps = \omega^{-1}$.  This includes all compact hyperk\"ahler manifolds as the case $\bdX=\varnothing$, but could also come from a log model for a noncompact holomorphic symplectic manifold as in \autoref{ex:log-symp-blowup}.  

The mixed complex of $(\X,\ps)$ has the following ``Fourier dual'' description, due to Kontsevich~\cite[Theorem 1.36.1]{Kontsevich2008a}.
The bivector $\ps$ defines a nondegenerate pairing  $\abrac{-,-}_\ps$ on $\forms[k]{\X}$ by the formula $\abrac{\alpha_1\wedge\cdots\wedge\alpha_k,\beta_1\wedge\cdots\wedge \beta_k}_\ps = \det \ps(\alpha_i,\beta_j)_{ij}$.  Suppose that $\dim \X = 2n$, and define the symplectic Hodge star $* : \forms[k]{\X} \to \forms[2n -k]{\X}$, by
\[
\alpha \wedge * \beta = \tfrac{1}{n!}\abrac{\alpha,\beta}_\ps \omega^{n}
\]
for $\alpha, \beta \in \forms[k]{\X}$.  Using the identities in \cite[\S 2.1]{Brylinski1988}, it is straightforward to show that $*$ gives an isomorphism of sheaves of mixed complexes
\[
(\sMix{\X},\delps,\dd) = (\forms[-\bullet]{\X},\delps,\dd) \cong (\forms{\X}[2n],\dd,\delps)
\]
where $[2n]$ denotes a shift in degree by the dimension of $\X$.  Therefore using the Dolbeault resolution, we obtain a quasi-isomorphism of mixed complexes
\[
\Mix{\X,\ps}^\bullet \cong \bigoplus_{p+q=\bullet+\dim\X}\left(\Dolb[p,q]{\X},\del+\delb,\delps\right)
\]
identifying the primary differential with the $C^\infty$ de Rham differential $\del + \delb$.
\end{example}

\subsection{Logarithmic Poisson homology}

Using the mixed complex $\Mix{\X,\ps}$ we may define the following invariants:
\begin{definition}
Let $(\X,\ps)$ be a log Poisson manifold. 
\begin{itemize}
\item The \defn{logarithmic Brylinski--Koszul Poisson homology} is the cohomology of its mixed complex, denoted
\[
\KDol{\X,\ps} := \coH{\rsect{\sMix{\X,\ps},\delps}}\, .
\]
\item The \defn{logarithmic periodic cyclic Poisson homology} is the periodic cyclic cohomology of its mixed complex, denoted
\[
\KdR{\X,\ps} := \coH{\rsect{\sMix{\X,\ps}((u)),\delps+u\dd}} \,.
\]
\end{itemize}
\end{definition}

\begin{remark}
We use cohomological gradings throughout, but note that Poisson homology is usually given a homological grading, in which case the degree is reversed.
\end{remark}

The notation $\KDol{-}$ and $\KdR{-}$ is chosen because their role in K-theory is analogous to the role of Dolbeault and de Rham cohomology in classical Hodge theory.  In particular, when $\ps = 0$, we have
\[
\KDol[n]{\X} := \KDol[n]{\X,\ps=0} \cong \bigoplus_{q-p=n} \coH[q]{\forms[p]{\X}} = \bigoplus_{q-p=n} \coH[q]{\logforms[p]{\X}}
\]
so that $\KDol[n]{\X}$ amounts to a totalization of the usual bigrading on Dolbeault cohomology.  Similarly, setting $u=1$, we see that
\begin{align}
\KdR[n]{\X} := \KdR[n]{\X,\ps=0} \cong \bigoplus_{j\in\ZZ}\HdR[n+2j]{\X} \label{eq:K-vs-H}
\end{align}
is the two-periodization of the de Rham cohomology.  In fact, $\KdR[n]{\X,\ps}$ is canonically isomorphic to $\KdR[n]{\X}$ for all $\ps$:
\begin{lemma}\label{lem:exp-ps}
We have the identity
\[
e^{-\hookps/u} \cdot u\dd \cdot e^{\hookps/u} = \delps + u\dd
\]
as operators on $\rforms{\X}((u))$, and hence we have an isomorphism
\begin{align}
\exp(\hookps/u) : \KdR{\X,\ps} \to \KdR{\X} \cong \bigoplus_{j \in \ZZ} \HdR[\bullet + 2j]{\X}  \label{eqn:HP-compare}
\end{align}
of graded vector spaces (even as $\CC((u))$-modules).
\end{lemma}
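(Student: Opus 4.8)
The plan is to prove the operator identity by a direct conjugation computation, with the Poisson condition $[\ps,\ps]=0$ as the only nontrivial input, and then to observe that it upgrades to an isomorphism of periodic cyclic complexes, hence of their cohomologies, by repeating the computation on a resolution.

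First I would check that $\exp(\pm\hookps/u)$ is well defined: since contraction by the bivector $\ps$ lowers the degree of a differential form by two, $\hookps$ is nilpotent on $\rforms{\X}$, so $\exp(\pm\hookps/u)=\sum_{k\ge 0}\tfrac{(\pm 1)^k}{k!}\,\hookps^k u^{-k}$ is a finite sum on each form; it is a $\CC((u))$-linear automorphism of $\rforms{\X}((u))$ with inverse $\exp(\mp\hookps/u)$, and it preserves the cohomological grading because $\hookps$ raises it by two while $u^{-1}$ lowers it by two. Note that it visibly does not preserve the Hodge filtration $\F[p]M((u))=u^pM[[u]]$, which is precisely why the filtration moves. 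Then I would expand
\[
\exp(-\hookps/u)\cdot u\dd\cdot\exp(\hookps/u)=\sum_{k\ge 0}\frac{(-1)^k}{k!}\,u^{1-k}\,(\mathrm{ad}_{\hookps})^k(\dd),
\]
where $\mathrm{ad}_{\hookps}(-)=[\hookps,-]$. The $k=0$ term is $u\dd$, and since $[\hookps,\dd]=\hookps\dd-\dd\hookps=-\delps$ the $k=1$ term is $\delps$. For $k\ge 2$ one has $(\mathrm{ad}_{\hookps})^k(\dd)=-(\mathrm{ad}_{\hookps})^{k-1}(\delps)$, so the whole tail collapses once one knows
\[
[\hookps,\delps]=[\hookps,[\dd,\hookps]]=\pm\hook{[\ps,\ps]}=0,
\]
which is Koszul's bracket identity \cite{Koszul1985} (see also \cite[\S 2.1]{Brylinski1988}) together with the integrability of $\ps$. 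Hence the series terminates after two terms and equals $\delps+u\dd$, the claimed identity; equivalently $\exp(\hookps/u)$ intertwines $\delps+u\dd$ with $u\dd$, and $\exp(-\hookps/u)$ intertwines them back.

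To obtain the isomorphism of the hypercohomology invariants $\KdR{\X,\ps}\cong\KdR{\X}$, I would rerun the computation on the logarithmic Dolbeault resolution. There $\hookps$ is contraction by the \emph{holomorphic} bivector $\ps$, so $[\delb,\hookps]=0$, and the computation above with $\del$ in place of $\dd$ gives
\[
\exp(-\hookps/u)\cdot(\delb+u\del)\cdot\exp(\hookps/u)=\delb+[\del,\hookps]+u\del,
\]
which is exactly the periodic cyclic differential of $\sMix{\X,\ps}$ in the Dolbeault model. Thus $\exp(\hookps/u)$ is a degree-preserving, $\CC((u))$-linear isomorphism from the periodic cyclic complex of $\sMix{\X,\ps}$ to that of $\sMix{\X}$, with inverse $\exp(-\hookps/u)$; passing to cohomology yields the stated isomorphism (and the Bott periodicity class $u$ is respected, whence it is even an isomorphism of $\CC((u))$-modules).

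The only genuine ingredient is the vanishing $[\hookps,\delps]=0$ coming from $[\ps,\ps]=0$; everything else is formal manipulation of a terminating series plus a compatibility check with the resolution, so I do not anticipate a serious obstacle, and indeed the content is classical, going back to Koszul--Brylinski and used in this periodic form by Kontsevich \cite{Kontsevich2008a}.
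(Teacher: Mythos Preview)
Your argument is correct and is exactly the standard computation that the paper defers to \cite[Lemma 4.6]{Polishchuk1997} and \cite[\S1.34]{Kontsevich2008a}: expand the conjugation as a terminating $\mathrm{ad}$-series, use $[\hookps,\dd]=-\delps$ for the linear term, and kill the tail with $[\hookps,\delps]=\pm\hook{[\ps,\ps]}=0$. The Dolbeault check you add is a harmless redundancy, since the sheaf-level identity already induces the hypercohomology isomorphism, but it does no harm.
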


\begin{proof}
	See \cite[Lemma 4.6]{Polishchuk1997} or \cite[\S 1.34]{Kontsevich2008a}.
\end{proof}

\subsection{The Hodge filtration}
For a log Poisson manifold $(\X,\ps)$, 
the Hodge filtration on the mixed complex $\Mix{\X,\ps}((u))$ gives rise to a filtration on its periodic cohomology $\KdR{\X,\ps}$, and hence on $\KdR{\X}$ via \autoref{lem:exp-ps}. Note, however, that the isomorphism \eqref{eqn:HP-compare} does not respect the Hodge filtrations, since the operator $\exp(\hookps/u)$ involves negative powers of $u$.  Therefore, by transporting the Hodge filtration on $\KdR{\X,\ps}$ through this isomorphism, we obtain a filtration on $\KdR{\X}$ that depends on $\ps$, which will be more convenient  for computations.
\begin{definition}
The \defn{Poisson--Hodge filtration} $\Fps\KdR{\X}$ is the decreasing filtration
\[
\KdR{\X} \supset \cdots \supset \Fps[p]\supset \Fps[p+1]\supset \cdots
\]
obtained by transporting the filtration $\F$ on the periodic cohomology of the mixed complex $\Mix{\X,\ps}$ via the isomorphism of \autoref{lem:exp-ps}.
\end{definition}

Using the Dolbeault resolution to define $\Mix{\X}$, the filtration $\Fps$ has the following explicit description.  First, if $\ps=0$, an element in $\KdR[n]{\X}$ lies in the subspace $\F[p] = \F[p]_{\ps =0}$ if and only if it can be represented by a closed differential form of mixed degree lying in the subspace
\[
\Dolb[n-2p]{\X} \oplus \Dolb[n-2p-2]{\X} \oplus  \Dolb[n-2p-4]{\X} \oplus \cdots \subset \Dolb[\bullet]{\X}((u))^n
\]
where we have identified the left hand side as a subspace of the degree-$n$ part of $\Dolb[\bullet]{\X}((u))$ via evaluation at $u=1$.  Consequently, under the identification $\KdR[n]{\X} \cong \bigoplus_{j\in\ZZ}\HdR[n+2j]{\X}$ we have
\[
\F[p]\KdR[n]{\X} \cong \bigoplus_{j\in\ZZ}\F[p+j]\HdR[n+2j]{\X}
\]
where $\F\HdR{\X}$ is the usual Hodge filtration given by $\F[p]\HdR[l]{\X} = \img \coH[l]{\forms[\ge p]{\X}}$. Put differently, the filtration on $\KdR{\X}$ corresponds, up to an overall shift, to the columns of the Hodge diamond as in \autoref{fig:surface-hodge-filtration}.

For general $\ps$, this filtration is modified: elements of $\Fps[p]$ are represented by closed differential forms that lie in the subspace
\[
e^{\hookps}\rbrac{\Dolb[n-2p]{\X} \oplus \Dolb[n-2p-2]{\X} \oplus \Dolb[n-2p-4]{\X} \oplus \cdots}  \subset \Dolb[\bullet]{\X}((u))^n,
\]
or equivalently by a sequence of forms $\omega_p,\omega_{p+1},\omega_{p+2},\ldots$  where $\omega_j \in \Dolb[n-2j]{\X}$ for all $j$, satisfying the sequence of equations
\begin{align*}
 (\delb + \delps)\omega_p &= 0 \\
\del \omega_{p} + (\delb + \delps)\omega_{p+1} &= 0 \\
\del \omega_{p+1} + (\delb + \delps)\omega_{p+2} &= 0 \\
&\ \ \vdots
\end{align*}
Nevertheless, the \emph{dimension} of  $\Fps[p]$ is independent of $\ps$ by \autoref{prop:degen} below.  In our simple examples from the previous subsection, it can be computed directly as follows.

\begin{figure}
\begin{subfigure}{0.45\textwidth}
\begin{center}
\begin{tikzcd}[row sep=5,column sep=5]
 &\F[n+1] \ar[r,hook]& \F[n] \ar[r,hook] & \F[n-1] \\
&& \coH[2]{\forms[2]{\X}} \\
&\coH[0]{\forms[2]{\X}} & \coH[1]{\forms[1]{\X}} & \coH[2]{\forms[0]{\X}} \\
&& \coH[0]{\forms[0]{\X}}
\end{tikzcd} 
\caption{$\KdR[2n]{\X}$}
\end{center}
\end{subfigure}
\begin{subfigure}{0.45\textwidth}
\begin{center}
\begin{tikzcd}[row sep=5,column sep=5]
 & \F[n] \ar[r,hook] & \F[n-1] \\
& \coH[1]{\forms[2]{\X}} & \coH[2]{\forms[1]{\X}} \\
& \coH[0]{\forms[1]{\X}} & \coH[1]{\forms[0]{\X}} \\ \ & \ 
\end{tikzcd}
\caption{$\KdR[2n-1]{\X}$}
\end{center}
\end{subfigure}
\caption{$\KdR{\X}$, shown here in the case $\dim \X = 2$, is the two-periodization of the de Rham cohomology.  Its Hodge filtration corresponds to the columns of the classical Hodge diamond, with the indexing shifted according to the degree.}\label{fig:surface-hodge-filtration}
\end{figure}

\begin{example}[\toricex]\label{ex:toric-hodge}
Let $\X$ be a toric log manifold as in \autoref{ex:toric-def}, compactifying the torus group $\Xo = (\Gm)^n$ with Lie algebra $\ft$.   Note that we have  a canonical isomorphism $\HdR{\Xo}\cong \wedge^\bullet \ft^\vee$; concretely the cohomology is freely generated by the elements $\dlog{x_i} \in \coH[0]{\forms[1]{\X}}$ where $x_1,\ldots,x_n : \Xo \to \Gm$ are toric coordinates (i.e.~a basis for the character lattice).  

To compute $\KdR{\X}$ and the Poisson Hodge filtration, we use the canonical quasi-isomorphism of mixed complexes $(\wedge^{-\bullet} \ft^\vee,0,0) \hookrightarrow \Mix{\X}$ from \autoref{ex:toric-mixed}.   It induces an isomorphism $\KdR{\X}\cong\wedge^{-\bullet}\ft^\vee((u))$ that commutes with the contraction of polyvectors into forms.  Hence if $\ps \in \wedge^2\ft \cong \coH[0]{\der[2]{\X}}$ is any Poisson structure on $\X$, we have an identification
\[
\Fps[p]\KdR{\X} \cong e^{\hookps/u}\rbrac{u^p \wedge^{2p-\bullet}\ft^\vee + u^{p+1} \wedge^{2p+2-\bullet}\ft^\vee + \cdots} \cong e^{\hookps}\rbrac{\wedge^{\ge 2p-\bullet}\ft^\vee},
\]
where the second isomorphism is evaluation at $u=1$.

In particular, if $\ps = \sum_{ij}\lambda_{ij}x_ix_j\cvf{x_i}\wedge\cvf{x_j}$, then
\[
\Fps[n]\KdR[-n]{\X} = \CC\cdot \sbrac{e^{\hookps/u} \dlog{x_1}\wedge\cdots \wedge \dlog{x_n}} \subset \KdR[-n]{\X}
\]
is a line, generated by the differential form
\begin{align*}
e^{\hookps/u} \dlog{x_1}&\wedge\cdots \wedge \dlog{x_n} =  \dlog{x_1}\wedge\cdots \wedge \dlog{x_n} \\
&\ \ \ + u^{-1} \sum_{i < j}(-1)^{(j-1)(i-1)}\lambda_{ij} \dlog{x_1}\wedge\cdots \wedge \widehat{\dlog{x_i}}\wedge\cdots \wedge \widehat{\dlog{x_j}} \wedge \cdots \wedge \dlog{x_n} \\
&\ \ \ + \cdots \,.
\end{align*}

Note that we may therefore recover $\ps$ from the position of the line $\Fps[n]\KdR[-n]{\X}$ relative to the decomposition $\KdR[-n]{\X}=\bigoplus_{j}\HdR[2j-n]{\X}$, namely the projection of $\Fps[n]\KdR[-n]{\X}$ to $\HdR[n]{\X}\oplus \HdR[n-2]{\X}$ is identified with the graph of the contraction operator $\hookps : \coH[0]{\forms[n]{\X}} \to \coH[0]{\forms[n-2]{\X}}$,  which determines $\ps$.
\end{example}

\begin{example}[\logsympex]\label{ex:log-symp-Hodge}
Let $\X$ be a log manifold of dimension $2n$ with a log symplectic form $\omega$ as in \autoref{ex:log-symp-def}. 
 Then the Hodge filtration has the following description, due to Kontsevich~\cite[Proposition 1.36.1]{Kontsevich2008a}.   Note that the map $\forms[-\bullet]{\X} \to \forms[\bullet]{\X}((u))$ defined by $\alpha \mapsto \frac{\alpha}{u^j}$ for $\alpha \in \forms[j]{\X}$ has degree zero.  Hence we may define a degree-zero $\CC((u))$-linear map
\[
L_\omega : \sMix[\bullet]{\X}((u)) \to \forms{\X}[2n]((u))
\]
by $L_\omega(\alpha) = u^{n}\frac{\omega}{u}\wedge \frac{\alpha}{u^j}$ for $\alpha \in \sMix[-j]{\X}=\forms[j]{\X}$.  Setting $u=1$, it is just multiplication by $\omega$, so it is analogous to the Lefschetz operator of a K\"ahler form. One checks as in \cite[Proposition 1.36.1]{Kontsevich2008a} that $
e^{u\hookps} * e^{-\hookps/u} = e^{-L_\omega}$, so that we have a commutative diagram\footnote{Note that Kontsevich uses the convention $\delta_\ps=[\iota_\ps, d]$. Since $\ast^2=1$ \cite[Lemma 2.1.2]{Brylinski1988}, Kontsevich's equation $e^{\omega\wedge -}=e^{\iota_\ps} \ast e^{-\iota_\sigma}$ gives this commuting diagram in our conventions.} of isomorphisms of sheaves of filtered dg $\CC((u))$-modules
\[
\begin{tikzcd}
	(\F\sMix[\bullet]{\X}((u)),\delps+u\dd)  \ar[r,"e^{\hookps/u}"] \ar[d,"*"] &
 (\Fps\sMix[\bullet]{\X}((u)),u\dd) \ar[d,"e^{-L_\omega}"] \\
 (\F\forms{\X}[2n]((u)),\dd+u\delps) \ar[r,"e^{u\hookps}"] & 
( \F\forms{\X}[2n]((u)),\dd)\,, 
\end{tikzcd}
\]
where in all corners but the top right, the filtration $F^\bullet$  is the Hodge filtration of the corresponding mixed complex.  Passing to hypercohomology and setting $u=1$, we obtain an isomorphism 
\[
\Fps[p]\KdR[j]{\X,\ps} \cong \bigoplus_{l \le n-p} e^{\omega}\HdR[j+2l]{\X} \subset \KdR[j]{\X}
\]
where we have identified $\omega$ with its image in $\HdR[2]{\X}$ under the canonical inclusion of the Hodge subspace $\F[2]\HdR[2]{\X}\cong \coH[0]{\forms[2]{\X}}$.  In particular, if $\X$ is connected, we have a line
\[
\Fps[n]\KdR[0]{\X} = \CC \cdot e^{\omega} \subset \KdR[0]{\X}
\]
from which we recover $\omega$ from the projection to $\HdR[0]{\X}\oplus \HdR[2]{\X}$ as the graph of the operator $\omega\wedge -$.
\end{example}

\subsection{The weight filtration}

Recall from Deligne~\cite{Deligne1971a} that the logarithmic forms $\forms{\X} = \logforms{\X}$ carry an increasing filtration
\[
\cdots \subset \W[j]\forms{\X}\subset \W[j+1]\forms{\X}\subset \cdots \subset \forms{\X}
\]
given by the order of poles, called the \defn{weight filtration}. Equivalently, we have $\W[j]\forms{\X} = \forms[j]{\X}\wedge\forms[\bullet-j]{\cX}$.  We denote by
\[
\W\sMix{\X,\ps} = \W\forms[-\bullet]{\X}
\]
the corresponding weight filtration on $\sMix{\X,\ps}$.   Recall that the weight filtration  is preserved by the de Rham differential $\dd$. It is also compatible with the Poisson bivector, in the following way.
 
\begin{lemma}\label{lem:bivector-preserves-weight}
The filtration $\W\forms{\X}$ is preserved by the contraction by any logarithmic polyvector field.  Thus $\W\sMix{\X,\ps}$ is a filtration by mixed subcomplexes.
\end{lemma}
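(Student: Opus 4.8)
The plan is to reduce the statement to a routine computation in local coordinates adapted to the boundary divisor. Since the contraction $\hook{-}$ is $\cO{\cX}$-linear in its polyvector argument and each $\W[j]\forms{\X}$ is a sub-$\cO{\cX}$-module of $\forms{\X}$, it suffices to verify the claim for a set of local generators of $\der{\X} = \wedge^\bullet\cT{\X}$ over $\cO{\cX}$. As $\cT{\X}$ is locally free, these generators may be taken to be decomposable polyvectors $\xi_1 \wedge \cdots \wedge \xi_r$ with each $\xi_a$ a local section of $\cT{\X}$, and by the convention $\hook{\xi_1\wedge\cdots\wedge\xi_r} = \hook{\xi_1}\cdots\hook{\xi_r}$ from \autoref{sec:calc} the whole statement reduces to the case $r=1$: I must only show that $\hook{\xi}$ preserves $\W\forms{\X}$ for a single logarithmic vector field $\xi$.

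I would then work locally. Fix holomorphic coordinates $z_1,\dots,z_k,y_1,\dots,y_m$ on $\cX$ in which $\bdX = \{y_1\cdots y_m = 0\}$. Then $\cT{\X}$ is freely generated over $\cO{\cX}$ by $\cvf{z_1},\dots,\cvf{z_k},\logcvf{y_1},\dots,\logcvf{y_m}$, the dual module $\forms[1]{\X}$ is freely generated by $\dd z_1,\dots,\dd z_k,\dlog{y_1},\dots,\dlog{y_m}$, and $\W[j]\forms{\X}$ is exactly the $\cO{\cX}$-span of the wedge monomials in these generators involving at most $j$ factors of the form $\dlog{y_\ell}$. By $\cO{\cX}$-linearity it is enough to contract by one of the frame fields. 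One computes $\hook{\cvf{z_i}}\dd z_{i'} = \delta_{ii'}$, $\hook{\cvf{z_i}}\dlog{y_\ell} = 0$, $\hook{\logcvf{y_\ell}}\dd z_i = 0$ and $\hook{\logcvf{y_\ell}}\dlog{y_{\ell'}} = \delta_{\ell\ell'}$, so by the graded Leibniz rule $\hook{\cvf{z_i}}$ applied to a monomial either deletes a $\dd z$-factor or annihilates it, never raising the number of $\dlog{}$-factors, while $\hook{\logcvf{y_\ell}}$ either deletes the factor $\dlog{y_\ell}$, which \emph{lowers} that number, or annihilates the monomial. Hence $\hook{\xi}$ carries $\W[j]\forms{\X}$ into $\W[j]\forms[\bullet-1]{\X}$, proving the first assertion. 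Note that tangency to $\bdX$ is precisely what makes this work: contracting the non-logarithmic field $\cvf{y_\ell}$ into $\dlog{y_\ell}$ would instead produce $1/y_\ell$, a genuine pole.

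For the second assertion, recall that $\delps = [\dd,\hookps] = \dd\hookps - \hookps\dd$. By the first part applied to the bivector $\ps$, the operator $\hookps$ preserves $\W\forms{\X}$, and $\dd$ preserves $\W\forms{\X}$ as recalled above; hence $\delps$, being a linear combination of the compositions $\dd\hookps$ and $\hookps\dd$, preserves it as well. Thus $\W\sMix{\X,\ps} = \W\forms[-\bullet]{\X}$ is a filtration of the underlying complex of $\sMix{\X,\ps}$ by subsheaves stable under both the primary differential $\delps$ and the secondary differential $\dd$; such subsheaves are precisely mixed subcomplexes, which is the claim.

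I do not expect any genuine difficulty here: the only substantive choice is to use the local frame $\cvf{z_i},\logcvf{y_\ell}$ that exhibits the logarithmic structure, after which the argument is bookkeeping with the Leibniz rule. The one place to be slightly careful is in organizing the two successive $\cO{\cX}$-linearity reductions — first in the polyvector degree, then down to a single coordinate (log-)vector field — so that one only ever has to contract one generator into one monomial.
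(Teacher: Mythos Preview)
Your proof is correct and follows essentially the same strategy as the paper: reduce to a single logarithmic vector field via $\cO{\cX}$-linearity and the identity $\hook{\xi_1\wedge\cdots\wedge\xi_r}=\hook{\xi_1}\cdots\hook{\xi_r}$, then deduce that $\delps=[\dd,\hookps]$ preserves the filtration. The only difference is stylistic: where you compute in local coordinates adapted to $\bdX$, the paper handles the single-vector-field case coordinate-free via the Leibniz rule applied to the description $\W[j]\forms{\X}=\forms[j]{\X}\wedge\forms[\bullet-j]{\cX}$, writing $\hook{\xi}(\forms[j]{\X}\wedge\forms[\bullet-j]{\cX})\subset\forms[j-1]{\X}\wedge\forms[\bullet-j]{\cX}+\forms[j]{\X}\wedge\forms[\bullet-j-1]{\cX}=\W[j]\forms[\bullet-1]{\X}$.
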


\begin{proof}
Consider first the case of a vector field.  If $\xi \in \cT{\X} \subset \cT{\cX}$, and $j \ge 0$, we have
\[
\hook{\xi} \W[j]\forms[\bullet]{\X} = \hook{\xi}\rbrac{\forms[j]{\X} \wedge \forms[\bullet-j]{\cX}} \subset \forms[j-1]{\X}\wedge\forms[\bullet-j]{\cX} + \forms[j]{\X} \wedge \forms[\bullet-j-1]{\cX} = \W[j]\forms[\bullet-1]{\X}.
\]
Since the algebra $\der{\X}$ of polyvectors is generated over $\cO{\cX}$ by $\cT{\X}$, and the filtration $\W\forms{\X}$ is $\cO{\cX}$-linear, it follows that the weight filtration is preserved by contraction with an arbitrary logarithmic polyvector.  In particular, $\hookps$ preserves $\W\forms{\X}$, and hence so does the differential $\delps = [\dd,\hookps]$.
\end{proof}

This allows us to define the weight filtration on $\KdR{\X,\ps}$ as in Deligne's classical mixed Hodge theory~\cite{Deligne1971a}:
\begin{definition}
The \defn{weight filtration on $\KdR{\X,\ps}$} is defined by
\[
\W[j]\KdR[n]{\X,\ps} = \img\rbrac{ \coH[n]{\rsect{\W[j-n]\sMix{\X,\ps}((u))},\delps + u\dd} \to \KdR{\X,\ps}}\, .
\]
\end{definition}

In light of \autoref{lem:bivector-preserves-weight}, we have the following.
\begin{lemma}
The operator $e^{\hookps/u}$ on $\sMix{\X,\ps}((u))$ induces an isomorphism
\[
(\KdR{\X,\ps},\F,\W) \to (\KdR{\X},\Fps,\W)
\]
of bifiltered vector spaces.
\end{lemma}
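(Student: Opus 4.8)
The plan is to reduce the statement to two facts already in hand: the conjugation identity of \autoref{lem:exp-ps} and \autoref{lem:bivector-preserves-weight}. The first makes the claim for the Hodge filtrations a tautology, and the second handles the weight filtrations; I do not expect any serious obstacle.

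First I would recall from \autoref{lem:exp-ps} that $e^{-\hookps/u}\cdot u\dd\cdot e^{\hookps/u}=\delps+u\dd$ on $\rforms{\X}((u))$. Since $\hookps$ is nilpotent on each degree, $e^{\pm\hookps/u}$ are degree-zero $\CC((u))$-linear automorphisms of $\rforms{\X}((u))$, and this identity says precisely that $e^{\hookps/u}$ is a chain isomorphism from the complex with differential $\delps+u\dd$ to the one with differential $u\dd$; computing the derived global sections via any fixed resolution (say the Dolbeault one), it induces on cohomology the isomorphism \eqref{eqn:HP-compare}, $\KdR{\X,\ps}\to\KdR{\X}$. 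Because the Poisson--Hodge filtration $\Fps$ on $\KdR{\X}$ is \emph{defined} to be the transport of the Hodge filtration $\F$ on $\KdR{\X,\ps}$ along this very isomorphism, compatibility with the Hodge filtrations holds by definition. So the only thing left to verify is compatibility with the weight filtrations.

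For that, the point is that the formal variable $u$ (of degree two) plays no role in the order-of-poles filtration, so $\W[k]\sMix{\X}((u))=(\W[k]\sMix{\X})((u))$; and by \autoref{lem:bivector-preserves-weight} the contraction $\hookps$ preserves each $\W[k]\sMix{\X}$, hence $e^{\pm\hookps/u}$ preserve $\W[k]\sMix{\X}((u))$. Since $\delps$, $\dd$ and $e^{\pm\hookps/u}$ all preserve $\W[k]$, the conjugation identity restricts to these subcomplexes, giving for each $k$ a chain isomorphism $(\W[k]\sMix{\X,\ps}((u)),\delps+u\dd)\to(\W[k]\sMix{\X}((u)),u\dd)$ compatible with the inclusions into the full complexes. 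Passing to derived global sections and cohomology, $e^{\hookps/u}$ fits into a commuting square with the natural maps to $\KdR{\X,\ps}$ and $\KdR{\X}$ and with isomorphisms down the sides; hence it carries the image of $\coH[n]{\rsect{\W[j-n]\sMix{\X,\ps}((u))},\delps+u\dd}$ onto the image of $\coH[n]{\rsect{\W[j-n]\sMix{\X}((u))},u\dd}$, i.e.\ $e^{\hookps/u}\,\W[j]\KdR[n]{\X,\ps}=\W[j]\KdR[n]{\X}$ — the two weight filtrations being defined with the same shift $j\mapsto j-n$. Running the same argument for $e^{-\hookps/u}$ shows the inverse also preserves $\W$, so $e^{\hookps/u}$ is an isomorphism of bifiltered vector spaces.

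The one point requiring a little care, and the closest thing to an ``obstacle'', is making the weight filtration available in the resolution used to compute derived global sections: in the Dolbeault model one sets $\W[k]\dolb[p,q]{\X}:=\dolb[0,q]{\cX}\otimes_{\cO{\cX}}\W[k]\forms[p]{\cX}(\log\bdX)$, which is again acyclic and on which the computation in the proof of \autoref{lem:bivector-preserves-weight} applies verbatim; the \v{C}ech resolution of a good cover works equally well. Since everything substantive is packaged into \autoref{lem:exp-ps} and \autoref{lem:bivector-preserves-weight}, the remaining work is purely bookkeeping — mostly keeping the degree shift in the definition of $\W$ consistent on both sides.
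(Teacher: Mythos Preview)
Your argument is correct and is exactly what the paper intends: the lemma is stated as an immediate consequence of \autoref{lem:bivector-preserves-weight} (together with \autoref{lem:exp-ps}), with no separate proof given. Your write-up simply unpacks this, noting that compatibility with $\Fps$ is tautological by definition and that preservation of $\W$ follows from \autoref{lem:bivector-preserves-weight}.
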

Thus, unlike the Hodge filtration, the weight filtration is independent of the Poisson bivector.  Note that the indexing convention is such that under the canonical isomorphism $\KdR{\X}\cong\bigoplus_l \HdR[\bullet+2l]{\X}$, we have
\[
\W[j]\KdR{\X} = \bigoplus_{l} \W[j+l]\HdR[\bullet +2l]{\X}\,.
\]
where on the right hand side, $\W$ denotes Deligne's weight filtration in cohomology.

\begin{example}[\toricex]\label{ex:toric-weight}
For $\X$ a toric log manifold with Lie algebra $\ft$ and toric coordinates $x_1,\ldots,x_n$, we have $\HdR{\X} \cong \wedge^\bullet \ft^\vee \cong \CC[\dlog{x_1},\ldots,\dlog{x_n}]$, and the weight in cohomology is such that $\HdR[j]{\X}$ is concentrated in weight $2j$.  Hence, in this case, the weight filtration on $\KdR{\X}$ corresponds, up to a re-indexing, to the increasing filtration by the degree in cohomology.  For instance, we have
\[
\W[2j]\KdR[0]{\X}=\W[2j+1]\KdR[0]{\X} = \bigoplus_{l \le j} \HdR[2l]{\X}
\]
for all $j$.
\end{example}

As explained by Deligne \cite[\S 3.1.5]{Deligne1971a}, the associated graded of $\W \forms{\X}$ has the following description.  Let $\cX_j$ be the manifold parameterizing pairs of a point $p \in \cX$ and an ordered tuple $(Y_1,\ldots,Y_j)$ of distinct branches of $\bdX$ at $p$.   Then $\cX_j$ carries a free action of the symmetric group $S_j$ by reordering the $Y$s.  The map $\phi_j : \cX_j \to \cX$ that forgets the $Y$s is a closed immersion, whose image is the locus where $\bdX$ has multiplicity $j$ (the locus of $j$-fold intersection of distinct branches of $\bdX$).  Note that if $\ps$ is a Poisson structure on $\X$, then it is tangent to each such intersection, and thus lifts uniquely to an $S_j$-invariant Poisson structure $\ps_j$ on $\cX_j$.

Taking iterated residues along local branches of $\bdX$ gives a map
\begin{align}
\res : \W[j] \forms{\X} \to \phi_{j*}\forms[\bullet-j]{\cX_j} \label{eq:res-forms}
\end{align}
which induces an isomorphism of complexes
\[
\grW[j]\forms{\X} \cong \rbrac{\phi_{j*}\forms[\bullet]{\cX_j} \otimes \sgn_j}^{S_j}[-j]
\]
where $\sgn_j$ denotes the sign representation of $S_j$.  It is straightforward to check that the residue map \eqref{eq:res-forms} commutes with the natural action of the logarithmic polyvectors $\der{\X}$ on both sides by contraction.  Consequently, it gives an isomorphism
\[
\grW[j]\sMix{\X,\ps} \cong (\phi_{j*}\sMix{\cX_j,\ps_j}\otimes \sgn_j)^{S_j}[j]
\]
of sheaves of mixed complexes.

\subsection{Spectral sequences}
We may now associate to $\sMix{\X,\ps} = (\rforms{\X},\delps,\dd)$ three natural spectral sequences.  Firstly, we have the usual spectral sequence
\[
E_1^{-p,q} = \coH[q]{\forms[p]{\X}} \Rightarrow \KDol[q-p]{\X,\ps}
\]
for the hypercohomology of $(\sMix{\X,\ps},\delps)$. 
Meanwhile, the Hodge filtration induces the Hodge--de Rham spectral sequence 
\[
E_1 =  \KDol{\X,\ps}((u)) \Rightarrow \grF\KdR[\bullet]{\X,\ps},
\]
and the weight filtration induces the weight spectral sequence
\[
E_1^{-j,n+j} = \rbrac{\KdR[n]{\cX_j,\ps|_{\cX_j}} \otimes \sgn_j}^{S_j}\Rightarrow \grW\KdR{\X,\ps}\,.
\]

The following result, proven below, is fundamental:
\begin{proposition}\label{prop:degen}
For any  log Poisson manifold $(\X,\ps)$, the following statements hold:
\begin{itemize}
\item The hypercohomology and Hodge--de Rham spectral sequences both degenerate at $E_1$.
\item The weight spectral sequence degenerates at $E_2$.
\end{itemize}
\end{proposition}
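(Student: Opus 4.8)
The plan is to reduce the degeneration statements to Deligne's classical theorems~\cite{Deligne1971a}, which settle the case $\ps=0$, and then to bootstrap to arbitrary $\ps$ by comparing dimensions via \autoref{lem:exp-ps}. When $\ps=0$ the primary differential $\delps$ vanishes, so the hypercohomology spectral sequence of $(\sMix{\X,0},0)$ degenerates at $E_1$ for trivial reasons, while the Hodge--de Rham and weight spectral sequences reduce to the classical ones for the logarithmic de Rham complex $\logforms{\X}$ on the compact K\"ahler manifold $\cX$ with normal crossings boundary $\bdX$, and hence degenerate at $E_1$ and $E_2$ respectively. In particular, for each parity $i\in\{0,1\}$, classical Hodge--de Rham degeneration gives the identity $\sum_{p+q\equiv i\,(2)}\dim_\CC\coH[q]{\forms[p]{\X}}=\dim_\CC\KdR[i]{\X}$.

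For general $\ps$ I would dispatch the hypercohomology and Hodge--de Rham spectral sequences by a dimension-counting sandwich. By \autoref{lem:exp-ps} the number $\dim_\CC\KdR[i]{\X,\ps}$ is independent of $\ps$ and equals $\dim_\CC\KdR[i]{\X}$. The Hodge--de Rham spectral sequence $E_1=\KDol{\X,\ps}((u))\Rightarrow\grF\KdR{\X,\ps}$ gives $\dim_\CC\KdR[i]{\X,\ps}\le\sum_{n\equiv i\,(2)}\dim_\CC\KDol[n]{\X,\ps}$, and the hypercohomology spectral sequence gives $\sum_{n\equiv i\,(2)}\dim_\CC\KDol[n]{\X,\ps}\le\sum_{p+q\equiv i\,(2)}\dim_\CC\coH[q]{\forms[p]{\X}}$. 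Chaining these with the base case, the two ends of the chain coincide, so every inequality is forced to be an equality. Since a bounded spectral sequence degenerates at $E_r$ exactly when the total dimensions of its $E_r$- and $E_\infty$-pages agree, and a sum of inequalities is an equality only if each summand is, both spectral sequences degenerate at $E_1$ in every bidegree.

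The weight spectral sequence is the real difficulty, since its $E_2$-page sits in the interior of the filtration and admits no such sandwich; here I would use the rescaling $\Gm$-action on the bivector. First, by \autoref{lem:exp-ps} applied to the compact K\"ahler strata $\cX_j$ --- $S_j$-equivariantly, as $\ps_j$ is $S_j$-invariant --- the dimension of each $E_1$-term $(\KdR[n]{\cX_j,\ps_j}\otimes\sgn_j)^{S_j}$ is independent of $\ps$, and $\dim_\CC\grW\KdR{\X,\ps}$ is $\ps$-independent as well, using the $\W$-compatibility of $e^{\hookps/u}$. Now consider the family $\hbar\mapsto(\X,\hbar\ps)$ over $\bA^1$: computed via any fixed resolution, its weight-filtered periodic complex has $\hbar$-independent underlying space and weight filtration and an $\hbar$-linear differential, so its $E_1$-page (having constant dimension) is a vector bundle over $\bA^1$ and $d_1$ an algebraic endomorphism of it, whence $\dim_\CC E_2(\hbar)=\dim_\CC E_1-2\rank d_1(\hbar)$ is upper semicontinuous in $\hbar$. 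At $\hbar=0$ the spectral sequence degenerates at $E_2$ by Deligne, so $\dim_\CC E_2(0)=\dim_\CC\grW\KdR{\X}$, which is also the lower bound $\dim_\CC E_\infty(\hbar)$ valid for every $\hbar$; hence the locus where the weight spectral sequence degenerates at $E_2$ is Zariski open, contains $0$, and is therefore dense and meets $\Gm$. Finally, in the Dolbeault model one checks that the automorphism of $\Dolb[\bullet]{\X}((u))$ acting on $\dolb[p,q]{\X}$ by $\hbar^{-p}$ and substituting $u\mapsto\hbar u$ conjugates the differential $\delb+[\del,\hookps]+u\del$ of $(\X,\ps)$ precisely into that of $(\X,\hbar\ps)$, compatibly with the weight filtration $\W[j]\forms{\X}=\forms[j]{\X}\wedge\forms[\bullet-j]{\cX}$ and the Hodge filtration by powers of $u$; this identifies the weight spectral sequences of $(\X,\hbar\ps)$ and $(\X,\hbar'\ps)$ for all $\hbar,\hbar'\in\Gm$, so the degeneration locus, already known to meet $\Gm$, must contain all of $\Gm$ --- in particular the point $\hbar=1$, which is $(\X,\ps)$ itself.

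The step I expect to demand the most care is this last rescaling isomorphism: one must confirm that $\hbar^{-p}$ on $\dolb[p,q]{\X}$ commutes with $\delb$ and preserves the weight filtration (it acts by a single scalar in each bidegree, so both are automatic), that it is compatible with the passage to derived global sections, and --- the only genuine computation --- that combined with $u\mapsto\hbar u$ it carries $\delb+[\del,\hookps]+u\del$ to $\delb+[\del,\hook{\hbar\ps}]+u\del$ on the nose; this needs attention precisely because the $\delb$-term does not rescale with $\ps$, and pinning it down is what fixes the weight of $u$ under the action. A secondary point is the $2$-periodic, $\CC((u))$-linear bookkeeping in the dimension counts above, which I would keep honest by working one parity $i\in\{0,1\}$ at a time, as in the sandwich, together with the routine fact that a family of cohomology spaces of a fixed complex with algebraic differential and locally constant dimension forms a vector bundle.
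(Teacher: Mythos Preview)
Your argument for the hypercohomology and Hodge--de Rham spectral sequences is essentially the paper's: the same dimension sandwich, grounded in Deligne's degeneration for $\ps=0$ and in \autoref{lem:exp-ps}.

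For the weight spectral sequence, your argument is correct but substantially more elaborate than needed, and you are in fact holding the key ingredient without using it fully. You invoke the $\W$-compatibility of $e^{\hookps/u}$ only to say that $\dim_\CC\grW\KdR{\X,\ps}$ is $\ps$-independent. But that compatibility, together with \autoref{lem:exp-ps}, says much more: the operator $e^{\hookps/u}$ is an isomorphism of \emph{weight-filtered complexes}
\[
\bigl(\sMix{\X,\ps}((u)),\ \delps+u\dd,\ \W\bigr)\ \xrightarrow{\ \sim\ }\ \bigl(\sMix{\X,0}((u)),\ u\dd,\ \W\bigr),
\]
since $\hookps$ preserves $\W$ by \autoref{lem:bivector-preserves-weight} and $e^{-\hookps/u}(u\dd)e^{\hookps/u}=\delps+u\dd$. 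A filtered isomorphism of complexes induces an isomorphism of the associated spectral sequences from $E_0$ onward; so the weight spectral sequence for $(\X,\ps)$ is \emph{identical} to the two-periodization of Deligne's, and degenerates at $E_2$ immediately. This is the paper's one-line argument. Your semicontinuity-plus-$\Gm$-equivariance route works, and the rescaling automorphism you write down does conjugate the differentials correctly, but none of that machinery is required once you observe that $e^{\hookps/u}$ already identifies the weight-filtered complexes directly. The claim that ``the weight spectral sequence is the real difficulty'' is thus a misdiagnosis: it is actually the easiest part.
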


In particular, for all $n$ and $j$, we have a canonical isomorphism
\[
\grF[j] \KdR[n]{\X,\ps} \cong \KDol[n-2j]{\X,\ps}\,,
\]
and by choosing a splitting of the hypercohomology filtration on $\KDol{\X,\ps}$, we obtain a \emph{non-canonical} isomorphism
\[
\KDol[n]{\X,\ps} \cong \bigoplus_{q-p=n} \coH[q]{\logforms[p]{\X}} = \KDol[n]{\X},
\]
further cementing the analogy between $\KDol{-}$ and Dolbeault cohomology.

\begin{proof}[Proof of \autoref{prop:degen}]
We will reduce the proof to the corresponding statement for ordinary logarithmic de Rham cohomology, proven by Deligne in \cite{Deligne1971a}.

For the first statement, note that since $\cX$ is proper and the sheaf $\forms{\X} = \logforms{\X}$ is coherent, the hypercohomology spectral sequence is finite-dimensional, which implies that $\KDol{\X,\ps}$ is finite-dimensional, and hence the Hodge--de Rham spectral sequence is finite-dimensional in each degree.  Considering the dimensions of the $E_1$ and $E_\infty$ pages, we obtain inequalities
\[
\sum_{q-p=j}\dim \coH[q]{\forms[p]{\X}} \ge \dim \KDol[j]{\X,\ps}
\]
and
\[
\sum_{j \in \ZZ} \dim \KDol[\ell+2j]{\X,\ps} \ge \dim \KdR[\ell]{\X}\,.
\]
The degeneration of both spectral sequences at $E_1$ is then equivalent to the statement that both inequalities are actually equalities.  This, in turn, is equivalent to the statement that
\[
\sum_{q-p=\ell \modtwo}\dim \coH[q]{\forms[p]{\X}} = \dim \KdR[\ell]{\X}\, ,
\] 
which is an immediate consequence of the $E_1$-degeneration of the classical Hodge--de Rham spectral sequence
\[
E_1^{p,q} = \coH[q]{\forms[p]{\X}} \Rightarrow \HdR[p+q]{\X}
\]
established in~\cite[Corollaire 3.2.13(ii)]{Deligne1971a}.

For the second statement, note that since conjugation by $\exp(\hookps/u)$ preserves the weight filtration on $\sMix{\X}((u))$, the problem reduces to the case $\ps = 0$.  In that case, the spectral sequence is simply the two-periodization of the usual weight spectral sequence for logarithmic de Rham cohomology, which degenerates at $E_2$ by \cite[Corollaire 3.2.13(iii)]{Deligne1971a}.
\end{proof}

\subsection{Flag varieties}\label{sec:flags}
It follows immediately from \autoref{prop:degen} that for a fixed log  manifold $\X$, the dimension of the Hodge filtration $\Fps$ is independent of $\ps$:
\begin{corollary}\label{cor:hodge-nums-const}
If $(\X,\ps)$ is a log Poisson manifold, then
\[
\dim \Fps[p]\KdR[n]{\X} = \dim \F[p]\KdR[n]{\X}
\]for all $p,n \in \ZZ$.
\end{corollary}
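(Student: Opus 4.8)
The plan is to reduce the statement to the $\ps$-independence already built into \autoref{prop:degen}, after transporting the Hodge filtration through the isomorphism of \autoref{lem:exp-ps}. First I would observe that, by the very definition of the Poisson--Hodge filtration, the operator $\exp(\hookps/u)$ restricts to a linear isomorphism $\F[p]\KdR[n]{\X,\ps} \cong \Fps[p]\KdR[n]{\X}$ for every $p$ and $n$, so that $\dim \Fps[p]\KdR[n]{\X} = \dim \F[p]\KdR[n]{\X,\ps}$. It therefore suffices to show that the latter dimension is independent of $\ps$; specializing to $\ps = 0$ then gives exactly $\dim \Fps[p]\KdR[n]{\X} = \dim \F[p]\KdR[n]{\X}$.

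To establish this $\ps$-independence I would invoke the degenerations of \autoref{prop:degen} twice. All the vector spaces in sight are finite-dimensional (a point already used in the proof of \autoref{prop:degen}, via properness of $\cX$ and coherence of $\logforms{\X}$), so the $E_1$-degeneration of the Hodge--de Rham spectral sequence lets me write
\[
\dim \F[p]\KdR[n]{\X,\ps} = \sum_{j \ge p} \dim \grF[j]\KdR[n]{\X,\ps} = \sum_{j \ge p} \dim \KDol[n-2j]{\X,\ps},
\]
using the canonical identification $\grF[j] \KdR[n]{\X,\ps} \cong \KDol[n-2j]{\X,\ps}$ recorded just after \autoref{prop:degen}. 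Then the $E_1$-degeneration of the hypercohomology spectral sequence $E_1^{-p,q} = \coH[q]{\forms[p]{\X}} \Rightarrow \KDol[q-p]{\X,\ps}$, whose $E_1$-page is manifestly independent of $\ps$, forces
\[
\dim \KDol[m]{\X,\ps} = \sum_{q-p=m} \dim \coH[q]{\forms[p]{\X}},
\]
and the right-hand side does not involve $\ps$. Substituting back into the previous display shows that $\dim \F[p]\KdR[n]{\X,\ps}$ is independent of $\ps$, which completes the argument.

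There is no genuine obstacle here: the mathematical content sits entirely in \autoref{prop:degen}, which is already in hand, so the remaining work is purely organizational. The main point requiring care is bookkeeping --- making sure the degree shift relating $\grF[j]\KdR[n]{-}$ to $\KDol[n-2j]{-}$ and the column re-indexing of the Hodge filtration are applied consistently --- and recording explicitly that the finiteness needed to pass from a filtered space to the sum of the dimensions of its graded pieces is available. I would also note that only the Hodge and hypercohomology degenerations play a role; the weight spectral sequence is not needed for this corollary.
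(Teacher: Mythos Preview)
Your proposal is correct and is precisely the argument the paper has in mind: the corollary is stated as an immediate consequence of \autoref{prop:degen}, and your expansion via the two $E_1$-degenerations is exactly how one unpacks that immediacy. The paper gives no further detail, so your version is just a more explicit rendering of the same approach.
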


As a consequence, we have a well-defined map from the space of Poisson structures on $\X$ to a suitable flag manifold, which we describe as follows.  Let $b_{n}^p := \dim \F[p]\KdR[n]{\X}$ and let $b_n = (b_n^p)_{p\in \ZZ}$. Note that by the Bott periodicity isomorphism~\autoref{eq:bott}, we have
\[
b_{p}^{n} = b_{p+1}^{n+2}
\]
for all $p,n$.  Let $\Flag[n]{\X}$ be the variety parameterizing flags of shape $b_n$ in the vector space $\KdR[n]{\X}$.  Then for any Poisson structure $\ps$, the filtration $\Fps\KdR[n]{\X}$ gives a point in $\Flag[n]{\X}$, and hence we may define a map
\[
\mapdef{\phi_n}{\bA^1}{\Flag[n]{\X}}
{\hbar}{\F_{\hbar\ps}}
\]
where $\bA^1=\CC$ is the complex  line.  

Note that the vector space $\KdR[n]{\X} \cong \PcoHdR[n]{\X}$ carries a canonical action of the multiplicative group $\Gm$, defined by the property that the subspace $\HdR[n+2j]{\X}$ has weight $-j$, i.e.~$\hbar \in \Gm$ acts by 
\begin{align*}
\hbar \adm \alpha := \hbar^{-j} \alpha \qquad \textrm{for }\alpha \in \HdR[n+2j]{\X} \subset \KdR[n]{\X}\,.  
\end{align*}
We then have an induced action of $\Gm$ on the flag manifold $\Flag[n]{\X}$, which extends to an action of the multiplicative monoid $\bA^1$ by the valuative criterion for properness.  The map $\phi_n$ then has the following crucial property:

\begin{lemma}\label{lem:equivariant}
The map $\phi_n$ is $\Gm$-equivariant: we have
\begin{align*}
\F[p]_{\hbar \ps}\KdR[n]{\X} = \hbar \adm \Fps[p]\KdR[n]{\X} 
\end{align*}
for every $\hbar \in \bA^1$ and every Poisson structure $\ps$ on $\X$.
\end{lemma}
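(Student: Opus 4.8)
The plan is to realise the two subspaces in the claimed identity as the images of one and the same filtered vector space under two comparison isomorphisms that differ by an explicit rescaling operator, and then to recognise that operator — once transported to $\KdR{\X}$ — as the action $\adm$. This rescaling operator is the integrated form of the $\Gm$-action rescaling the Poisson bivector; the content of the proof is to package its interaction with \autoref{lem:exp-ps} cleanly.

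First I would construct the operator. Fix a model for $\Mix{\X,\ps} = \rsect{\sMix{\X,\ps}}$. Besides the $u$-grading on the periodic complex, $\Mix{\X,\ps}((u))$ retains the form degree inherited from the logarithmic forms $\forms[\bullet]{\X}$, so it is a direct sum of pieces $u^{m}\cdot(\text{form degree }k)$; let $w$ be the diagonal operator acting on such a piece by $m-k$ and, for $\hbar\in\Gm$, set $\rho_\hbar := \hbar^{w}$. Since $\delta_{\ps}=[\dd,\hook{\ps}]$ lowers form degree by one (hence raises $w$ by one), $u\dd$ leaves $w$ unchanged, and $\hook{\ps}/u$ raises $w$ by one, conjugation by $\rho_\hbar$ sends
\[
\delta_{\ps}+u\dd\ \longmapsto\ \hbar\,\delta_{\ps}+u\dd=\delta_{\hbar\ps}+u\dd,\qquad \exp(\hook{\ps}/u)\ \longmapsto\ \exp(\hbar\,\hook{\ps}/u)=\exp(\hook{\hbar\ps}/u).
\]
From the first displayed map, $\rho_\hbar$ intertwines the differential $\delta_{\ps}+u\dd$ with $\delta_{\hbar\ps}+u\dd$, hence induces an isomorphism $\KdR{\X,\ps}\cong\KdR{\X,\hbar\ps}$; being diagonal in $u$ it preserves the Hodge filtration $\F[p]=u^{p}(-)[[u]]$, so it carries $\F[p]\KdR[n]{\X,\ps}$ onto $\F[p]\KdR[n]{\X,\hbar\ps}$; and the second map, combined with \autoref{lem:exp-ps} applied to both $\ps$ and $\hbar\ps$, gives the intertwining relation $\exp(\hook{\hbar\ps}/u)\circ\rho_\hbar=\rho_\hbar\circ\exp(\hook{\ps}/u)$ between the comparison maps into $\KdR{\X}$.

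Next I would identify $\rho_\hbar$ on $\KdR{\X}$ itself. For $\ps=0$ the periodic differential is just $u\dd$, which commutes with $w$; so $w$ descends to a grading of $\KdR{\X}\cong\bigoplus_{j}\HdR[\bullet+2j]{\X}$, and a short check of where a de Rham class of degree $\bullet+2j$ sits in the periodised complex shows it lands in $w$-weight $-j$ — precisely the weight of the $\adm$-action. Hence $\rho_\hbar$ acts on $\KdR{\X}$ as $\hbar\adm$. Putting the three properties together with the defining formula $\Fps[p]\KdR[n]{\X}=\exp(\hook{\ps}/u)(\F[p]\KdR[n]{\X,\ps})$ then yields, for $\hbar\in\Gm$,
\[
\F[p]_{\hbar\ps}\KdR[n]{\X}=\exp(\hook{\hbar\ps}/u)\big(\F[p]\KdR[n]{\X,\hbar\ps}\big)=\rho_\hbar\,\exp(\hook{\ps}/u)\big(\F[p]\KdR[n]{\X,\ps}\big)=\hbar\adm\,\Fps[p]\KdR[n]{\X}.
\]
It remains to treat $\hbar=0$: here $\F[p]_{0\cdot\ps}\KdR[n]{\X}=\F[p]\KdR[n]{\X}$ by definition, and since the flags $\F[p]_{\hbar\ps}\KdR[n]{\X}$ depend polynomially on $\hbar$ and have constant type by \autoref{cor:hodge-nums-const}, $\phi_n$ is a morphism from $\bA^1$ to the compact variety $\Flag[n]{\X}$; as the monoid action of $0\in\bA^1$ on $\Flag[n]{\X}$ is by construction the limit of the $\Gm$-orbit, equivariance on the dense open $\Gm\subset\bA^1$ propagates to $\hbar=0$.

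I expect the only genuinely delicate point to be fixing the right normalisation of $\rho_\hbar$: there is a unique exponent — the operator $w=m-k$ — that makes $\rho_\hbar$ simultaneously rescale the Poisson homology differential by $\hbar$, fix $u\dd$, and restrict on cohomology to the Adams ($\adm$) grading, and confirming that a single choice does all three jobs requires keeping the $u$-grading and the form-degree grading carefully separated through whichever resolution is used for $\rsect{\sMix{\X,\ps}}$. Everything else, including the $\hbar=0$ boundary case, is then formal.
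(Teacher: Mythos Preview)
Your argument is correct and follows essentially the same route as the paper. The paper works explicitly in the Dolbeault model, declaring that $u^{j}\Dolb[p,q]{\X}$ has $\Gm$-weight $j-p$; your weight operator $w=m-k$ (with $m$ the $u$-power and $k$ the holomorphic form degree) is exactly this grading, stated model-independently. Both proofs then check the same three facts: the periodic differential has weight zero, $\hookps/u$ has weight one, and the induced action on $\KdR{\X}\cong\bigoplus_j\HdR[\bullet+2j]{\X}$ is the $\adm$-action with weight $-j$. One small point of exposition: when you write ``the differential $\delta_\ps+u\dd$'' you should also note that the resolution differential (e.g.\ $\delb$ in the Dolbeault model, or the \v{C}ech differential) has $w$-weight zero as well, which is immediate since it preserves both the $u$-power and the form degree. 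Your explicit treatment of the boundary case $\hbar=0$ via continuity into the flag variety is a nice touch that the paper leaves implicit.
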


\begin{proof}
We first lift the action of $\Gm$ on $\KdR{\X}$ to an action at cochain level.  Using the Dolbeault resolution, we have
\[
\Mix{\X}((u)) = \Dolb[\bullet]{\X}((u)) = \bigoplus_{j,p,q \in \ZZ} u^j \Dolb[p,q]{\X}\,,
\]
where $u^j\Dolb[p,q]{\X}$ has degree $q-p+2j$.   Consider the action $\diamond$ of $\Gm$ on the vector space $\Dolb[\bullet]{\X}((u))$ for which $u^j\Dolb[p,q]{\X}$ has weight $j-p$.  Then the operators $\delb$ and $u\del$ have weight zero, so that the differential $\delb+u\del$ is preserved.  Note that cocycles in $u^j\Dolb[p,q]{\X}$ project to the summand $\HdR[p+q]{\X} \subset \K[q-p+2j]{\X}$, on which $\Gm$ acts with weight equal to $\tfrac{1}{2}((q-p+2j)-(p+q))= j-p$.  Hence the action $\diamond$ on $\Dolb[\bullet]{\X}((u))$ induces the action $\diamond$ on $\KdR{\X}$, as desired. Furthermore, since $u$ has weight one, the action preserves the filtration $\F$ by the powers of $u$, i.e.~$\hbar \diamond \F = \F$ for all $\hbar \in \Gm$. 

Now observe that the operator $\hookps$ sends $u^j\Dolb[p,q]{\X}$ to $u^j\Dolb[p-2,q]{\X}$, and therefore has weight two, so that $\hookps/u$ has weight one, i.e.~we have the commutation relation
\[
(\hbar\adm-) \circ \frac{\hookps}{u}  = \frac{\hook{\hbar \ps}}{u}\circ (\hbar \adm -)
\]
as operators on $\Dolb[\bullet]{\X}((u))$.  Therefore
\[
\hbar\diamond  \Fps = \hbar \diamond (e^{\hookps/u}\F) = e^{\hbar\hookps/u}\hbar\diamond \F = e^{\hbar\hookps/u}\F = \F_{\hbar\ps}\,,
\]
as desired.
\end{proof}

\subsection{The Hodge decomposition and Bott periodicity}
To set notations, recall that a \defn{pure $\RR$-Hodge structure of weight $n$} is a $\CC$-vector space $\V_\dR$ equipped with a real structure (i.e.~an antilinear involution $v \mapsto \overline{v}$), and a decreasing filtration $\F = \F\V_\dR$ by $\CC$-linear subspaces, satisfying the \defn{opposedness axiom}, which states that the subspaces
\[
\V^{p,q} := \F[p]\V_\dR \cap \overline{\F[q]{\V_\dR}}
\]
for $n=p+q$ give a decomposition $\V_\dR = \bigoplus_{p+q=n}\V^{p,q}$.  A \defn{mixed $\RR$-Hodge structure} is the data of a complex vector space $\V_\dR$, equipped with a real structure, a decreasing filtration $\F\V_\dR$ and an increasing filtration $\W=\W\V_\dR$ such that
\begin{itemize}
\item $\W$ is preserved by the real structure, i.e.~$\W = \overline{\W}$;
\item $\F$ induces a pure Hodge structure of weight $j$ on $\grW[j]\V_\dR$ for all $j \in \ZZ$.
\end{itemize}
If $\V$ is a mixed $\RR$-Hodge structure, its \defn{Tate twist} $\V(j)$ is the vector space $\V_\dR$ equipped with the real structure $v \mapsto (-1)^j\overline{v}$, and the filtrations defined by
\[
\F[k]\V(j)_\dR:=\F[k+j]\V_\dR  \qquad \W[k]\V(j)_\dR:= \W[k+2j]\V_\dR
\]
for all $j,k \in \ZZ$.

If $\X$ is a  log manifold, then the de Rham cohomology $\HdR{\X}$  carries a mixed Hodge structure by a theorem of Deligne~\cite[Theorem 3.2.5]{Deligne1971a}; the weight filtration is induced by $\W\forms{\X}$, the Hodge filtration is induced by the truncations $\F[p]\forms{\X} = \forms[\ge p]{\X}$, and the real structure is given by complex conjugation of differential forms.  We denote this mixed Hodge structure by  $\coH{\X;\RR}$.  One readily checks that the filtrations $\W\KdR{\X}$ and $\F\KdR{\X}$ described above correspond precisely to sums of Tate twists of $\HdR{\X}$ under the isomorphism $\KdR{\X}\cong \bigoplus_{j\in\ZZ}\HdR[\bullet+2j]{\X}$ of \eqref{eq:K-vs-H}: more precisely, there is a unique real structure on $\KdR{\X}$, making it into a mixed $\RR$-Hodge structure $\KR{\X}$ such that
\[
\KR{\X} \cong \bigoplus_{j \in \ZZ} \coH[\bullet+2j]{\X;\RR}(j).
\]
As we will recall below, it can be naturally refined to an integral lattice given by topological K-theory, which will match well with the Tate twists. 

We incorporate Poisson structures by deforming $\F$:
\begin{definition}
If $\ps$ is a Poisson structure on $\X$, we denote by $\KR{\X,\ps}$ the data of the bifiltered vector space $(\KdR{\X},\Fps,\W)$ together with the canonical real structure on $\KdR{\X}$ as above.
\end{definition}

 The main theorem of this section is the following.
\begin{theorem}\label{thm:R-MHS}
	Let $(\X,\ps)$ be a log Poisson manifold. Then $\KR{\X,\ps}$ is a mixed $\RR$-Hodge structure, with the following properties:
\begin{enumerate}
\item\label{it:functor}\defn{Functoriality:} If $\phi : (\X,\ps) \to (\Y,\eta)$ is any morphism of log Poisson manifolds, then the pullback in de Rham cohomology defines a morphism $\phi^* : \KR{\Y,\eta}\to\KR{\X,\ps}$ of mixed $\RR$-Hodge structures.
\item \label{it:weak-invar} \defn{Invariance under weak equivalence:} If $\phi : (\X,\ps) \to (\Y,\eta)$ is a weak equivalence of log Poisson manifolds, then $\phi^*: \KR{\Y,\eta} \to \KR{\X,\ps}$ is an isomorphism.
\item\label{it:Bott} \defn{Bott periodicity:} For every $j \in \ZZ$, multiplication by $u^j$ gives a natural isomorphism
\[
\KR{\X,\ps} \cong \KR[\bullet+2j]{\X,\ps}(j).
\]
\item\label{it:equivariance}\defn{Equivariance:} For every $\hbar \in \RR^\times$, the action $\hbar \diamond (-)$ on $\KdR{\X}$, gives an isomorphism
\[
\KR{\X,\ps} \cong \KR{\X,\hbar \ps}.
\]
\item\label{it:wt-bound} \defn{Weight bounds:} If $\X$ has no boundary strata of codimension greater than $k$, i.e.~$\cX_{k+1} = \varnothing$, then the weights of $\KR[n]{\X,\ps}$  are concentrated in the interval $[n,n+k]$, that is, we have
\[
\grW[j]\KR[n]{\X,\ps} = 0 \qquad \textrm{ for }j \notin [n,n+k].
\]
\item\label{it:lowest-wt} \defn{Description of the lowest weight:} The lowest weight-graded piece is given by
\[
\grW[n]\KR[n]{\X,\ps} = \img\rbrac{\KR[n]{\cX,\ps} \to \KR[n]{\X,\ps}}\,.
\]
\item\label{it:pure} \defn{Compactness implies purity:} If $\X$ is compact K\"ahler, i.e.~the boundary $\bdX$ is empty, then $\KR[n]{\X,\ps}$ is a pure Hodge structure of weight $n$.
\end{enumerate}
\end{theorem}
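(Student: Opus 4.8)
The plan is to reduce the whole statement to Deligne's classical mixed Hodge theory, using two devices: the rescaling equivariance of \autoref{lem:equivariant} to control the $\ps$-deformation of the Hodge filtration, and the weight spectral sequence of \autoref{prop:degen} to bootstrap from the compact (pure) case to the general one. The base case $\ps=0$ is immediate: $\KR[n]{\X}$ is by construction the direct sum $\bigoplus_{j}\coH[n+2j]{\X;\RR}(j)$ of Tate twists of Deligne's mixed $\RR$-Hodge structure on $\HdR{\X}$, hence is itself a mixed $\RR$-Hodge structure, and when $\bdX=\varnothing$ it is pure of weight $n$ since each $\coH[n+2j]{\cX;\RR}$ is then pure of weight $n+2j$.

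Next I would prove the pure case for an arbitrary Poisson structure, i.e.\ part (\ref{it:pure}). Fix $n$ and assume $\bdX=\varnothing$. By \autoref{cor:hodge-nums-const} the filtrations $\F_{\hbar\ps}$ all have the same numerical type, so the assignment $\hbar\mapsto\F_{\hbar\ps}$ is the $\Gm$-equivariant morphism $\phi_n:\bA^1\to\Flag[n]{\X}$ of \autoref{lem:equivariant}, with $\phi_n(0)=\F_{\ps=0}$ and $\phi_n(\hbar)=\hbar\diamond\phi_n(1)$ for $\hbar\neq 0$. The opposedness condition for weight $n$ --- that $\KdR[n]{\X}=\F^p\oplus\overline{\F^{n-p+1}}$ for all $p$, with respect to the fixed canonical real structure --- defines an open subset $U_n\subset\Flag[n]{\X}$ (the requisite dimension equalities hold because they hold at $\ps=0$), and $U_n$ is stable under the $\diamond$-action of $\RR^\times$ since for real $\hbar$ the operator $\hbar\diamond(-)$ commutes with complex conjugation, acting by a real scalar on each summand $\coH[n+2j]{\X;\RR}(j)$. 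Since $\phi_n(0)\in U_n$ by the base case, continuity of $\phi_n$ gives $\phi_n(\hbar)\in U_n$ for all small real $\hbar$; combined with the $\RR^\times$-invariance of $U_n$ and the identity $\phi_n(\hbar)=\hbar\diamond\phi_n(1)$, this forces $\phi_n(1)=\F_\ps\in U_n$. Hence $\KR[n]{\X,\ps}$ is pure of weight $n$.

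To obtain the mixed case in general I would feed this into the weight spectral sequence
\[
E_1^{-j,\,n+j}=\rbrac{\KdR[n]{\cX_j,\ps_j}\otimes\sgn_j}^{S_j}\ \Longrightarrow\ \grW\KdR{\X,\ps},
\]
which degenerates at $E_2$ by \autoref{prop:degen}. Each $\cX_j$ is compact K\"ahler, so by the pure case just established every $E_1$-term is a pure $\RR$-Hodge structure of the weight dictated by the spectral sequence, and the differential $d_1$, an alternating sum of restriction-and-residue maps, is a morphism of such structures because the iterated residue map \eqref{eq:res-forms} commutes both with the contraction operators and with complex conjugation. Therefore $E_2=E_\infty$ is a direct sum of pure $\RR$-Hodge structures, so each $\grW[m]\KR[n]{\X,\ps}$ is pure of weight $m$; as $\W$ is defined over $\RR$, this shows $\KR[n]{\X,\ps}$ is a mixed $\RR$-Hodge structure. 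The weight bounds (\ref{it:wt-bound}) then follow from the vanishing $E_1^{-j,\bullet}=0$ for $j>k$ (equivalently, from Deligne's weight bounds for $\HdR{\X}$, transported through the $2$-periodization and Tate twists), and the description of the lowest weight (\ref{it:lowest-wt}) follows from part (\ref{it:functor}) together with the classical identity $\grW[n]\HdR[n]{\Xo}=\img\rbrac{\HdR[n]{\cX}\to\HdR[n]{\Xo}}$.

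The remaining properties are comparatively formal. For functoriality (\ref{it:functor}): since $\overline\phi$ is a Poisson map, pullback of forms intertwines the contraction operators $\hook{\eta}$ and $\hookps$, hence the Poisson homology differentials, defining a morphism of mixed complexes $\Mix{\Y,\eta}\to\Mix{\X,\ps}$; on periodic cyclic cohomology it respects $\F$, hence $\Fps$ after transporting by $e^{\hookps/u}$, and it respects $\W$ by Deligne, so it is a morphism of mixed $\RR$-Hodge structures --- and (\ref{it:weak-invar}) is then immediate, a weak equivalence inducing an isomorphism of interiors hence of $\HdR{-}$. Bott periodicity (\ref{it:Bott}) holds because multiplication by $u^j$ is an isomorphism shifting degree by $2j$, the Hodge filtration by $j$ and the weight filtration by $2j$, and introducing the sign $(-1)^j$ from the re-indexed real structure --- precisely the data of the Tate twist $(j)$. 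Equivariance (\ref{it:equivariance}) is immediate from \autoref{lem:equivariant}, once one notes that $\hbar\diamond(-)$ preserves $\W$ and commutes with the real structure for real $\hbar$. The main obstacle, I expect, is the pure-case argument: one must verify carefully that the opposedness locus $U_n$ is genuinely open --- which is exactly where \autoref{cor:hodge-nums-const} enters, to pin down the flag type --- and genuinely $\RR^\times$-stable, and that $\phi_n(0)$ is correctly the classical filtration $\F_{\ps=0}$; a secondary subtlety is checking that the weight spectral sequence differential is a morphism of the $\ps$-\emph{deformed} $\RR$-Hodge structures, not merely of the underlying graded vector spaces.
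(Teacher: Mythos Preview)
Your proposal is correct and follows essentially the same route as the paper: establish the pure case via openness of the opposedness locus together with the $\RR^\times$-equivariance of \autoref{lem:equivariant}, then bootstrap to the mixed case via the weight spectral sequence, with the remaining items being formal. The one point you flag as a ``secondary subtlety'' is handled in the paper by an explicit appeal to Deligne's lemma on two filtrations \cite[Corollaire 1.3.17]{Deligne1971a} (cf.\ \cite[Lemma 3.11, Theorem 3.12]{Peters2008}): knowing that $d_1$ is a morphism of pure Hodge structures and that the spectral sequence degenerates at $E_2$ is not by itself enough to conclude that the filtration induced by $\Fps$ on $\grW$ agrees with the one inherited from $E_1$, and it is this lemma that closes the gap.
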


\begin{proof}
 Statement (\ref{it:Bott}) follows immediately from the definition of the filtrations and the real structure. For statement (\ref{it:equivariance}), the compatibility of the action with the Hodge filtrations is the content of \autoref{lem:equivariant}, so it suffices to check compatibility with the real structure.  But if $\hbar \in \CCx$, the operator $\hbar \diamond (-)$ acts on each summand $\HdR[n+2j]{\X}$ by scalar multiplication by a power of $\hbar$.  Hence if $\hbar \in \RR^\times$, it acts by a real scalar, and therefore preserves any real structure on $\HdR[n+2j]{\X}$, as desired.

 Statement (\ref{it:functor}) is immediate from the functoriality of the mixed complex  and of the real structure in cohomology. 
 Assuming that $\KR{\X,\ps}$ is a mixed $\RR$-Hodge structure,  statement (\ref{it:weak-invar}) follows: restriction of de Rham cohomology classes to the interior gives an isomorphism $\KdR{\X} \cong \bigoplus_{j}\HdR[\bullet+2j]{\Xo}$, and if $\phi$ is a weak equivalence then $\phi^\circ : \Xo \to \Yo$ is an isomorphism, so that the morphism $\phi^*$ of mixed $\RR$-Hodge structures is an isomorphism at the level of vector spaces, hence an isomorphism.

It thus remains to prove that $\KR{\X,\ps}$ is a mixed $\RR$-Hodge structure and establish the statements about the weights.   To do so, we will first treat the case in which $\X = \cX$ is a compact K\"ahler manifold, i.e.~we prove statement (\ref{it:pure}).  For this, we need to show that $\KdR[n]{\X}\cong\Fps[p]\oplus \bFps[q]$ whenever $n = p+q+1$.  But complex conjugation is continuous, so this is an open condition on flags in $\KdR{\X}$, and it holds for $\ps=0$ by classical Hodge theory.  Hence it holds also for $\ps$ in an open ball $\U$ around the origin in the finite-dimensional vector space $\coH[0]{\der[2]{\X}}$.  But if  $\ps$ is \emph{any} Poisson structure on $\X$, there exists a constant $\hbar \in \RR^\times$ such that $\hbar \ps \in \U$, so the result follows from statement \eqref{it:equivariance}, which we have already proven.

Now let $\X$ be arbitrary, and consider the weight spectral sequence for $\KdR{\X,\ps}$; its $E_1$ page is
\[
E_1^{-j,n+j}=(\KdR[n]{\cX_j,\ps_j} \otimes \sgn_j)^{\symgp{j}} \Rightarrow \KdR{\X,\ps}.
\]
Since this spectral sequence is the two-periodization of the weight spectral sequence for cohomology, its differential preserves the real structure.  By \autoref{prop:degen}, the spectral sequence degenerates at $E_2$, so it follows from \cite[Corollaire 1.3.17]{Deligne1971a} (see also \cite[Lemma 3.11 and Theorem 3.12]{Peters2008}) that the filtration on $\grW\KdR{\X,\ps}$ induced by $\F$ coincides with the filtration on $E_2$ induced by the filtration on $E_1$, which is exactly the direct sum of the Hodge  filtrations  on the direct summands $(\KdR[n]{\cX_j,\ps_j}\otimes\sgn_j)^{\symgp{j}}$.  As proven above, under the isomorphism $e^{\hook{{\ps_j}}/u}$ with $\KdR[n]{\cX_j}$, the latter defines a pure Hodge structure of weight $n$.  Hence $\grW[j] \KdR{\X,\ps} \cong E_2$ is the cohomology of a complex of pure Hodge structures of weight $j$ and is thus pure of weight $j$, so that $\KR{\X,\ps}$ is a mixed $\RR$-Hodge structure.  Statements (\ref{it:wt-bound}) and (\ref{it:lowest-wt}) now follow immediately as in classical mixed Hodge theory \cite[\S3.2]{Deligne1971a} by considering the weights and degrees on the $E_1$ page.
\end{proof}

\subsection{Compact support}

Let $(\X,\ps)$ be a log Poisson manifold.  We now explain how to introduce a version of de Rham theory with compact support, which is Poincar\'e dual to the Hodge structure defined above.

Consider the sheaf
\[
\forms{\X}(-\bdX) = \logforms{\X}(-\bdX) \subset \forms{\cX}
\]
of forms that can be written locally as $f \omega$ where $f$ is a local defining equation for $\bdX$ and $\omega$ is a logarithmic form.  One readily checks that it is exactly the sheaf of holomorphic forms on $\cX$ whose pullback to $\bdX$ is identically zero. Hence its hypercohomology computes the relative de Rham cohomology $\HdR{\cX,\bdX}$, or equivalently, the compactly supported cohomology of the interior $\HdRc{\Xo}$; see, e.g.~\cite[\S3.3]{Brown2021}.  Hence we denote it by $\HdRc{\X}$ and call it the \defn{compactly supported de Rham cohomology of $\X$}.  The vector space $\HdRc{\X}$ carries a mixed Hodge structure that is Poincar\'e dual to $\coH{\X;\RR}$: in brief, the Hodge filtration is induced by the truncations $\F[p] \forms{\X}(-\bdX) = \forms[\ge p]{\X}(-\bdX)$, and the weight spectral sequence has $E_1$ page given by the alternating sum of the pullbacks $\HdR{\cX_j}\to\HdR{\cX_{j+1}}$. 

Consider the subsheaf
\[
\sMixc{\X,\ps} := \forms[-\bullet]{\X}(-\bdX) \subset \sMix{\X,\ps}\,.
\] It is a $\der{\X}$-submodule by $\cO{\cX}$-linearity, and is readily seen to be preserved by the de Rham differential, so that it defines a sheaf of mixed subcomplexes.  Passing to derived global sections we obtain a morphism of mixed complexes
\[
\Mixc{\X,\ps} \to \Mix{\X,\ps}
\]
and we make the following definition.
\begin{definition}
The \defn{compactly supported periodic cyclic Poisson homology} $\KdRc{\X,\ps}$ is the periodic cyclic cohomology of $\Mixc{\X,\ps}$.
\end{definition}
Applying \autoref{lem:exp-ps}, we deduce that the operator $e^{\hookps/u}$ identifies the periodic hypercohomology of $\sMixc{\X,\ps}$ with $\KdRc{\X}$ exactly as in the non-compactly supported case, so that we obtain a canonical mixed $\RR$-Hodge structure 
\[
\KRc{\X,\ps} = (\KdRc{\X},\Fps,\W,\overline{(-)})\,.
\]

\subsection{Poincar\'e duality}\label{sec:deRham-dual}

The natural Poincar\'e duality pairing between cohomology with and without compact supports is expressed by integration of compactly supported forms.  However, note that in our model for $\KdRc{\X}$ we work with logarithmic forms whose support is not compact in general; rather they are required to be smooth on $\cX$ and vanish when pulled back to the boundary. As observed in \cite[Remark 3.3]{Brown2021}, this is enough to deduce that Poincar\'e duality is still expressed by the obvious integral.

More precisely, define the \defn{Mukai involution} by
\begin{align*}
\omega \mapsto \omega^\vee := (-1)\diamond \omega \label{eq:mukai}\,,
\end{align*}
i.e.~$(-)^\vee$ acts with eigenvalue $(-1)^j$ on the summand $\HdR[n+2j]{\X}\subset \KdR[n]{\X}$.  As in the proof of \autoref{lem:equivariant}, this lifts to the Dolbeault resolution $\Dolb[\bullet]{\X}((u))$, so we may define a $\CC((u))$-bilinear map
\[
\pairnaive{-,-}_\dR : \Dolb[\bullet]{\X}((u)) \otimes_{\CC((u))} \Dolbc{\X}((u)) \to \CC((u)) \qquad 
\nu \otimes \omega \mapsto \int_{\cX} \nu^\vee \wedge\omega\,.
\]
It is well defined because the zeros of $\omega$ cancel the poles of $\nu^\vee$ so that the product $\nu^\vee \wedge \omega$ is a smooth form on the compact manifold $\cX$, and the integral converges. Since $\cX$ is even-dimensional as a real manifold, forms of opposite parity are orthogonal, and the pairing defines a morphism of complexes by Stokes' theorem.

The compatibility with the Poisson Hodge filtrations is expressed by the following result, which in the case $\X=\cX$ is  a special case of Poincar\'e duality for compact generalized K\"ahler manifolds~\cite{Cavalcanti2006,Gualtieri2004}.  Indeed, the argument for the latter reduces to linear algebra pointwise in each tangent space, and hence it works equally well when $\cX \neq \X$.

\begin{proposition}
The pairing has the following property:
\[
\pairnaive{\Fps[p],\Fps[q]}_\dR \subset \F[p+q]\CC((u)) = u^{p+q}\CC[[u]]\,.
\]
\end{proposition}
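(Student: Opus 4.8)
The plan is to reduce the inclusion to an identity at the level of the Dolbeault model for the mixed complexes, and then to two essentially algebraic facts about $\hookps$ and the Mukai involution $(-)^\vee$.

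First I would unwind the definitions. By construction of the Poisson--Hodge filtration (via \autoref{lem:exp-ps}), $\Fps[p]\KdR{\X}$ is the image under $e^{\hookps/u}$ of the Hodge filtration on $\KdR{\X,\ps}$, which in turn is the image of $\coH{u^p\Mix{\X,\ps}[[u]]}$; the same holds for $\Fps[q]\KdRc{\X}$ with $\Mixc{\X,\ps}$ in place of $\Mix{\X,\ps}$. Using the Dolbeault models $\Mix{\X,\ps}=\Dolb[\bullet]{\X}$ and $\Mixc{\X,\ps}=\Dolbc{\X}$, a class in $\Fps[p]\KdR{\X}$ is represented by $e^{\hookps/u}\xi$ with $\xi\in u^p\Dolb[\bullet]{\X}[[u]]$, and a class in $\Fps[q]\KdRc{\X}$ by $e^{\hookps/u}\zeta$ with $\zeta\in u^q\Dolbc{\X}[[u]]$. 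Since $\pairnaive{-,-}_\dR$ descends from the chain-level pairing $\nu\otimes\omega\mapsto\int_\cX\nu^\vee\wedge\omega$, it suffices to prove
\[
\int_\cX(e^{\hookps/u}\xi)^\vee\wedge(e^{\hookps/u}\zeta)\in u^{p+q}\CC[[u]]
\]
for all such $\xi,\zeta$, with no use of $\dd$-closedness. All of this is well-defined formally in $u$ because $\hookps$ is nilpotent on forms, so each coefficient of $u^r$ is a finite sum.

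The two algebraic facts are: (i) the Mukai involution interacts with $e^{\hookps/u}$ by a sign flip, $(e^{\hookps/u}\xi)^\vee=e^{-\hookps/u}(\xi^\vee)$; and (ii) $\hookps$ is self-adjoint for the undeformed pairing $\int_\cX\alpha\wedge\beta$, that is,
\[
\int_\cX(\hookps\alpha)\wedge\beta=\int_\cX\alpha\wedge(\hookps\beta)\qquad\text{for }\alpha\in\forms{\X},\ \beta\in\forms{\X}(-\bdX).
\]
For (i), recall from the proof of \autoref{lem:equivariant} that $\hookps$ has $\adm$-weight $2$ while multiplication by $u^{-1}$ has $\adm$-weight $-1$, so $\hookps/u$ has odd $\adm$-weight; since $(-)^\vee=(-1)\adm$, this gives $(-1)\adm\circ(\hookps/u)=-(\hookps/u)\circ(-1)\adm$ and hence (i) after exponentiating. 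For (ii), I would use the second-order Leibniz identity for $\hookps$ --- writing $\ps$ locally as $\sum_i\xi_i\wedge\eta_i$ with $\xi_i,\eta_i$ logarithmic vector fields, so that $\hookps=\sum_i\hook{\xi_i}\hook{\eta_i}$ --- together with the vanishing $\int_\cX\hook{\xi}(-)=0$ for every log vector field $\xi$ (valid because $\hook{\xi}$ lowers degree and so annihilates the top-degree part of any form), applied to the resulting cross terms; here the hypothesis $\beta\in\forms{\X}(-\bdX)$ is used to ensure all products that arise are smooth on the compact manifold $\cX$, the logarithmic poles of the $\alpha$-factor being cancelled by the zeros of the $\beta$-factor. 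When $\bdX=\varnothing$ this is precisely the pointwise computation underlying Poincar\'e duality for generalized K\"ahler manifolds~\cite{Cavalcanti2006,Gualtieri2004}, which may be cited instead; its pointwise character is what makes it survive the logarithmic modification.

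Combining (i) and (ii): $e^{-\hookps/u}$ is a polynomial in the self-adjoint operator $\hookps$ with coefficients in $\CC[u^{-1}]$, hence self-adjoint, so
\begin{align*}
\int_\cX(e^{\hookps/u}\xi)^\vee\wedge(e^{\hookps/u}\zeta)
&=\int_\cX(e^{-\hookps/u}\xi^\vee)\wedge(e^{\hookps/u}\zeta)\\
&=\int_\cX\xi^\vee\wedge(e^{-\hookps/u}e^{\hookps/u}\zeta)=\int_\cX\xi^\vee\wedge\zeta.
\end{align*}
Thus the $\ps$-deformed pairing, pulled back along $e^{\hookps/u}$, agrees with the undeformed one, and the claim collapses to the case $\ps=0$: since $\xi^\vee\in u^p\Dolb[\bullet]{\X}[[u]]$ and $\zeta\in u^q\Dolbc{\X}[[u]]$, $u$ is central, and $\int_\cX$ picks out top-degree parts coefficient by coefficient, the result lies in $u^{p+q}\CC[[u]]$. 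I expect the one genuinely technical step to be the self-adjointness identity (ii): fixing the signs in the Leibniz expansion of $\hookps$ on a product, and checking that each integration-by-parts step is legitimate in the logarithmic setting. Everything else is formal $u$-adic bookkeeping and the sign computation (i).
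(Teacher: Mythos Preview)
Your proof is correct and follows essentially the same strategy as the paper: both reduce to the classical case $\ps=0$ by establishing the chain-level identity $\int_\cX(e^{\hookps/u}\xi)^\vee\wedge(e^{\hookps/u}\zeta)=\int_\cX\xi^\vee\wedge\zeta$. The only difference is packaging---the paper cites this identity as the orthogonality of the Clifford action on spinors from \cite{Cavalcanti2006,Gualtieri2011}, whereas you unpack it explicitly into the sign flip (i) and the self-adjointness of $\hookps$ (ii).
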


\begin{proof}
	Suppose that $\nu = e^{\iota_\ps/u}\nu_0$ and $\omega = e^{\iota_\ps/u}\omega_0$, where $\nu_0 \in \F[p]\Dolb[\bullet]{\X}((u))$ and $\omega_0 \in \F[q]\Dolb[\bullet]{\X}((u))$ lie in the corresponding components of the classical Hodge filtration.  One can verify that the top-degree components of $\nu^\vee \wedge \omega$ and $\nu_0^\vee \wedge\omega_0$ are equal; for instance, this is a consequence of the orthogonality of the Clifford action on spinors as in \cite[\S 2]{Cavalcanti2006} and \cite[\S 1.1]{Gualtieri2011}.  Hence we have $\int_{\cX}\nu^\vee \wedge\omega = \int_{\cX}\nu_0^\vee \wedge\omega_0$, so that the result reduces to the classical case $\ps=0$, where it holds by orthogonality of the Dolbeault decomposition on forms.
\end{proof}

In this way we obtain Poincar\'e duality for our Hodge structures:
\begin{corollary}\label{cor:Poincare-duality}
The pairing $\pairnaive{-,-}_\dR$ on forms induces a nondegenerate pairing 
\[
\pairnaive{-,-}_{\dR} : \KR{\X,\ps} \otimes \KRc[-\bullet]{\X,\ps} \to \RR
\]
of mixed $\RR$-Hodge structures.
\end{corollary}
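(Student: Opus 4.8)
The plan is to assemble \autoref{cor:Poincare-duality} from three ingredients that are all now in place: the cochain-level pairing $\pairnaive{-,-}_\dR$ constructed just above, its compatibility with the Hodge filtrations (the immediately preceding \autoref{thm:R-MHS}-adjacent Proposition), and the fact that classical Poincar\'e duality on $\HdR{\Xo}$ identifies it (up to sign twists) with $\HdRc{\Xo}$. First I would observe that, after setting $u=1$, the pairing $\pairnaive{-,-}_\dR$ descends to cohomology: it is a morphism of complexes by Stokes' theorem as noted, so it induces a $\CC((u))$-bilinear map $\KdR{\X,\ps}\otimes \KdRc[-\bullet]{\X,\ps}\to\CC((u))$, and restricting to $u$-degree zero gives the $\RR$-valued pairing on the real structures (one checks the pairing is real, i.e.\ $\overline{\pair{a,b}_\dR}=\pair{\bar a,\bar b}_\dR$, using that the Mukai involution and complex conjugation of Dolbeault forms are real operations and that integration is real on real forms — here one must be careful with the Tate twist, since the real structure on $\KR{\X}$ differs from that on $\HdR{\X}$ by powers of $\tipi$, but the index bookkeeping is exactly arranged so that $\HdR[n+2j]{\X}(j)$ pairs with $\HdR[-n-2j]{\X}(-j)$ landing in $\RR(0)$).

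Next I would verify nondegeneracy. Via \autoref{lem:exp-ps}, the operator $e^{\hookps/u}$ identifies $\KdR{\X,\ps}\cong\KdR{\X}=\bigoplus_j\HdR[\bullet+2j]{\X}$ and likewise in the compactly supported case, and since $(\nu^\vee)$ uses only the weight-by-columns action one checks $\int_\cX (e^{\hookps/u}\nu_0)^\vee\wedge e^{\hookps/u}\omega_0 = \int_\cX \nu_0^\vee\wedge\omega_0$ in top degree, exactly as in the proof of the preceding Proposition. So nondegeneracy of $\pairnaive{-,-}_\dR$ reduces to the case $\ps=0$, where after unwinding the periodization it is the direct sum over $j$ of the classical Poincar\'e duality pairings $\HdR[n+2j]{\X}\otimes\HdRc[2\dim\cX-n-2j]{\X}\to\CC$ — equivalently $\HdR{\cX,\bdX}$ against $\HdR{\cX}$ — which is perfect by Poincar\'e--Lefschetz duality for the compact manifold $\cX$ (here I would cite the identification $\HdRc{\X}=\HdR{\cX,\bdX}$ recalled in \autoref{sec:deRham-dual} and the classical statement, e.g.\ via the references to \cite{Brown2021}).

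Finally, to promote this to a morphism of mixed $\RR$-Hodge structures I need compatibility with both filtrations. The Hodge compatibility $\pairnaive{\Fps[p],\Fps[q]}_\dR\subset u^{p+q}\CC[[u]]$ is precisely the Proposition just proved, which after setting $u=1$ says the pairing $\KR{\X,\ps}\otimes\KRc[-\bullet]{\X,\ps}\to\RR$ respects $\F$ in the sense required for a morphism into $\RR(0)$ (i.e.\ $\Fps[p]$ pairs to zero against $\Fps[q]$ whenever $p+q>0$). For the weight filtration, I would argue that $\pair{\W[i],\W[j]}_\dR=0$ whenever $i+j<0$: at cochain level this follows because $\nu\in\W[i]\forms{\X}$ has at most $i$ logarithmic poles while $\omega\in\W[j]\forms{\X}(-\bdX)$ must vanish on enough of the boundary strata that the wedge $\nu^\vee\wedge\omega$ has no poles and, when $i+j<0$ (equivalently, too few total pole directions to saturate the $n$-fold residue needed), integrates to zero — this is the standard Deligne-duality bookkeeping that $\W[i]$ in the non-compact complex is dual to $\W[-i]$ in the compactly supported one, cf.\ \cite[\S3.3]{Brown2021}. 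Combining the weight and Hodge statements gives a morphism of bifiltered vector spaces compatible with the real structures, hence a morphism of mixed $\RR$-Hodge structures, and nondegeneracy makes it a perfect pairing, completing the proof.

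I expect the main obstacle to be the weight-filtration compatibility: unlike the Hodge part, which is handed to us by the previous Proposition, the claim $\pair{\W[i],\W[j]}_\dR\subset\W[i+j]\RR$ (i.e.\ vanishing for $i+j<0$) requires tracking the residue/pole structure of logarithmic forms against forms vanishing on the boundary, and making sure the sign conventions in the Mukai involution $(-)^\vee$ and the degree-reversal in $\sMix{\X,\ps}$ do not spoil the count. The cleanest route is probably to reduce to the statement for ordinary (non-periodized) de Rham cohomology — where it is Deligne's classical Poincar\'e duality of mixed Hodge structures on $\HdR{\Xo}$ and $\HdRc{\Xo}$ — and then observe that the periodization and the $e^{\hookps/u}$-conjugation both preserve $\W$, so nothing new happens at the level of weights.
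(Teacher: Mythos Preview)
Your proposal is correct and follows essentially the same approach as the paper, which in fact gives no explicit proof at all: the corollary is stated as an immediate consequence of the preceding Proposition (Hodge compatibility) together with classical Poincar\'e--Lefschetz duality for $\HdR{\X}$ and $\HdRc{\X}$, and the fact that both the weight filtration and the integration pairing are unchanged by conjugation by $e^{\hookps/u}$. Your write-up simply spells out what the paper leaves implicit, including the reality check and the weight-filtration bookkeeping.
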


\begin{remark}
    Note that by \autoref{thm:R-MHS} part (\ref{it:equivariance}), the Mukai involution gives an isomorphism $\K{\X,\ps}\cong\K{\X,-\ps}$.  Hence the standard integration pairing $\nu \otimes \omega \mapsto \int_{\cX}\nu\wedge\omega$ gives a duality between $\KdR{\X,\ps}$ and $\KdRc{\X,-\ps}$.
\end{remark}

\subsection{Gysin maps}
As an application of Poincar\'e duality, we show that the ``wrong way'' maps induced by proper morphisms are compatible with mixed $\RR$-Hodge structures.  For this, we adapt the notion of proper maps to the logarithmic setting, as follows.

\begin{definition}
A map $f : \X \to \Y$ of log manifolds is \defn{proper} if the induced map of interiors $f^\circ : \Xo \to \Yo$ is a proper map of topological spaces (with the classical analytic topology). 
\end{definition}
This has several equivalent reformulations.  For this, we recall that a sequence of points $p_1,p_2,\ldots$ in a topological space is said to ``escape to infinity''  if for every compact subset $S$, we have $p_n \notin S$ for $n \gg 0$.  Note that if $\X$ is a log manifold, then a sequence in $\Xo$ escapes to infinity in this sense if and only if all of its accumulation points in $\cX$ lie in $\bdX$.   

\begin{lemma}
For a morphism $f: \X\to \Y$ of log manifolds, the following conditions are equivalent:
\begin{enumerate}
\item $f$ is proper
\item $\bdX \subset f^{-1}(\bdY)$
\item $f^{-1}(\Yo) \subset \Xo$
\item  For every sequence $x_1,x_2,\ldots \in \Xo$ that escapes to infinity, the image sequence $f(x_1),f(x_2), \ldots \in \Yo$ also escapes to infinity.
\end{enumerate}
\end{lemma}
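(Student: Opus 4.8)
The statement is elementary point-set topology, so the plan is to establish $(2)\Leftrightarrow(3)$ directly and then close the cycle $(3)\Rightarrow(1)\Rightarrow(4)\Rightarrow(3)$; together these give $(1)\Leftrightarrow(3)\Leftrightarrow(4)$ and $(2)\Leftrightarrow(3)$, which is the assertion. Write $\overline f:\cX\to\cY$ for the holomorphic map underlying $f$. I will use repeatedly that $\cX$ and $\cY$ are compact and Hausdorff, that $\Xo$ is dense in $\cX$ (its complement $\bdX$ being a proper analytic subset), and that $\Xo$, $\Yo$ are locally compact and second countable, hence metrizable. I will also use the observation recorded just before the statement: a sequence in $\Xo$ escapes to infinity precisely when it has no accumulation point in $\Xo$, equivalently when all of its accumulation points, computed in the ambient compact space $\cX$, lie in $\bdX$; and likewise for $\Yo\subseteq\cY$.

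For $(2)\Leftrightarrow(3)$: since $f$ is a morphism of log manifolds we already have $\overline f(\Xo)\subseteq\Yo$, i.e.\ $\overline f^{-1}(\bdY)\subseteq\bdX$; taking complements in $\cX$, the inclusion $\bdX\subseteq\overline f^{-1}(\bdY)$ of (2) is literally the inclusion $\overline f^{-1}(\Yo)\subseteq\Xo$ of (3). So both are equivalent to the equality $\overline f^{-1}(\Yo)=\Xo$, which I will treat as the working form of the condition.

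For $(3)\Rightarrow(1)$: assume $\overline f^{-1}(\Yo)=\Xo$ and let $S\subseteq\Yo$ be compact. Then $S$ is closed in $\cY$, so $\overline f^{-1}(S)$ is closed in the compact space $\cX$ and hence compact; moreover $\overline f^{-1}(S)\subseteq\overline f^{-1}(\Yo)=\Xo$, so $(f^\circ)^{-1}(S)=\overline f^{-1}(S)$ is compact, proving $f^\circ$ proper. For $(1)\Rightarrow(4)$: let $f^\circ$ be proper and let $(x_n)$ be a sequence in $\Xo$ escaping to infinity. If $(f(x_n))$ did not escape to infinity in $\Yo$, some compact $S\subseteq\Yo$ would contain $f(x_n)$ for infinitely many $n$; then those $x_n$ would lie in the compact set $(f^\circ)^{-1}(S)\subseteq\Xo$, so the subsequence would have an accumulation point in $\Xo$, contradicting that $(x_n)$ escapes to infinity. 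Hence $(f(x_n))$ escapes to infinity.

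For $(4)\Rightarrow(3)$ I argue by contraposition: if (3) fails there is $p\in\bdX$ with $q:=\overline f(p)\in\Yo$. Since $\Yo$ is open and $\overline f$ continuous, $\overline f$ sends some neighbourhood $U$ of $p$ into $\Yo$, and by density of $\Xo$ I may choose $x_n\in\Xo\cap U$ with $x_n\to p$. Then $p\in\bdX$ is the only accumulation point of $(x_n)$, so $(x_n)$ escapes to infinity in $\Xo$, whereas $f(x_n)=\overline f(x_n)\to q\in\Yo$, so $q$ is an accumulation point of $(f(x_n))$ lying in $\Yo$ and the image sequence does not escape to infinity — contradicting (4). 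This closes the cycle. The only point that needs a little care is the dictionary between ``escapes to infinity'' and ``all accumulation points lie on the boundary divisor''; once that equivalence is in place (it uses only compactness of $\cX$ and openness of $\Xo\subseteq\cX$), each of the implications above is a two-line topological argument, so I do not anticipate any genuine obstacle.
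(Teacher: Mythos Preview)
Your proof is correct and follows essentially the same approach as the paper. The only difference is organizational: the paper asserts $(1)\Leftrightarrow(4)$ as standard point-set topology and then links to $(2)/(3)$ via $(4)\Rightarrow(2)$ and $(3)\Rightarrow(4)$, whereas you close the cycle $(3)\Rightarrow(1)\Rightarrow(4)\Rightarrow(3)$; the substantive arguments (compactness of $\cX$, density of $\Xo$, the accumulation-point characterization of escaping to infinity) are identical.
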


\begin{proof}
The equivalence of (2) and (3) is immediate since $\Xo = \cX \setminus \bdX$ and $\Yo = \cY\setminus \bdY$. 
 The equivalence of (1) and (4) is standard point-set topology (using that $\Xo$ and $\Yo$ are metrizable).  It therefore suffices to check that (4) implies (2), and (3) implies (4).

To see that (4) implies (2), suppose that $p \in \bdX$ and (4) holds. We claim that $p\in f^{-1}(\bdY)$.  Indeed, choose a sequence $p_1,p_2,\ldots \in \Xo$ converging to $p$, hence escaping to infinity in $\Xo$.  By properness, $f(p_1),f(p_2),\ldots \in \Yo$  escapes to infinity and hence $f(p) = \lim_n f(p_n)$ lies in $\bdY$, so that $p \in f^{-1}(\bdY)$.   

Finally, to see that (3) implies (4), suppose $f^{-1}(\Yo)\subset \Xo$ and let $p_1,p_2,\ldots \in \Xo$ be a sequence escaping to infinity.  We claim that $f(p_1),f(p_2),\ldots \in \Yo$ escapes to infinity.  Indeed, if not, then there is an accumulation point $q \in \Yo$. Without loss of generality, we have $\lim_n f(p_n) = q$.  Since $\cX$ is compact, $p_1,p_2,\ldots$ has an accumulation point $\tilde q \in \cX$, and by continuity $f(\tilde q) = q \in \Yo$.  Therefore $\tilde q \in f^{-1}(\Yo)\subset \Xo$, which is a contradiction since $p_1,p_2,\ldots,$ escapes to infinity in $\Xo$.
\end{proof}

From part (2) of the lemma, and the definition of $\sMixc{\X,\ps}\subset\sMix{\X,\ps}$, we immediately deduce the following.
\begin{corollary}
If $f : (\X,\ps) \to (\Y,\eta)$ is a proper morphism of log manifolds, then the pullback on differential forms defines a morphism of mixed complexes $f^{-1}\sMixc{\Y,\eta} \to\sMixc{\X,\ps}$.
\end{corollary}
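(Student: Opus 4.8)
The plan is to combine two ingredients: the functoriality of the mixed complex $\sMix{-}$ under Poisson morphisms, already used in the proof of \autoref{thm:R-MHS}, together with the observation, recorded in part~(2) of the preceding lemma, that properness of $f$ is precisely what forces pullback of forms to preserve the ``vanishing along the boundary'' condition that cuts $\sMixc{-}$ out of $\sMix{-}$. Throughout, write $\overline{f} : \cX \to \cY$ for the underlying holomorphic map.

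First I would recall why $\overline{f}^*$ defines a morphism of mixed complexes $f^{-1}\sMix{\Y,\eta} \to \sMix{\X,\ps}$. For any morphism of log manifolds, $\overline{f}^*\bdY$ is supported on $\bdX$, so $\overline{f}^*$ maps $\logforms{\Y}$ into $\logforms{\X}$; it is degree-preserving for the grading by $-\bullet$ used in $\sMix{-}$, and it commutes with the de Rham differential $\dd$, which is the secondary differential. The only point requiring an argument is that it intertwines the primary differentials $\delta_\eta = [\dd,\hook{\eta}]$ and $\delps = [\dd,\hookps]$, and this reduces to the pointwise identity $\hookps(\overline{f}^*\alpha) = \overline{f}^*(\hook{\eta}\alpha)$ for a form $\alpha$ on $\cY$. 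That identity I would verify directly from the fact that $f$ is a Poisson map, i.e.~that the bivectors are $\overline{f}$-related, $(\wedge^2 d\overline{f}_x)(\ps_x) = \eta_{\overline{f}(x)}$: plugging tangent vectors into both sides and pushing them forward by $d\overline{f}_x$, the two expressions coincide. Together with $\overline{f}^*\dd = \dd\overline{f}^*$ this yields $\delps\circ\overline{f}^* = \overline{f}^*\circ\delta_\eta$ on $\logforms{\Y}$.

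The new content for the corollary is then the restriction to the compact-support subsheaves. Recall that $\logforms{\Y}(-\bdY)\subseteq\logforms{\Y}$ is exactly the subsheaf of logarithmic forms whose pullback to $\bdY$ vanishes. By part~(2) of the lemma, properness of $f$ is equivalent to $\bdX\subseteq f^{-1}(\bdY)$, hence to $\overline{f}(\bdX)\subseteq\bdY$. Thus if $\alpha$ is a logarithmic form on $\cY$ vanishing on $\bdY$, then $\overline{f}^*\alpha$ is logarithmic on $\cX$ by the previous paragraph, and its restriction to $\bdX$ factors through the (vanishing) restriction of $\alpha$ to $\overline{f}(\bdX)\subseteq\bdY$; hence $\overline{f}^*\alpha\in\logforms{\X}(-\bdX)$. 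So $\overline{f}^*$ restricts to a map of subsheaves $f^{-1}\sMixc{\Y,\eta}\to\sMixc{\X,\ps}$, and since $\sMixc{\X,\ps}$ is defined as a sub-mixed-complex of $\sMix{\X,\ps}$ with the \emph{restricted} primary and secondary differentials, the two compatibilities established above pass verbatim to the subsheaves. This is the assertion of the corollary.

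I do not expect a genuine obstacle here: the argument is entirely formal at the level of sheaves of forms, and the cohomological and Hodge-theoretic aspects of $\sMixc{-}$ play no role in this particular statement. The only points needing mild care are the characterization of $\logforms{\X}(-\bdX)$ by vanishing along $\bdX$ (so that this condition transports along pullback exactly when $\overline{f}(\bdX)\subseteq\bdY$) and the sign and degree bookkeeping behind the identity $\hookps\circ\overline{f}^* = \overline{f}^*\circ\hook{\eta}$ --- both routine.
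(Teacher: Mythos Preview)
Your proposal is correct and follows exactly the approach the paper indicates: the paper simply states that the corollary follows ``from part (2) of the lemma, and the definition of $\sMixc{\X,\ps}\subset\sMix{\X,\ps}$,'' and you have spelled out precisely those two ingredients, namely the characterization of $\forms{\X}(-\bdX)$ as logarithmic forms vanishing along $\bdX$ together with the inclusion $\bdX\subset \overline{f}^{-1}(\bdY)$ from properness.
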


\begin{corollary}\label{cor:deRham-Gysin}
If $f : (\X,\ps) \to (\Y,\eta)$ is a proper morphism of  log Poisson manifolds, then the pullback on compactly supported cohomology and its Poincar\'e dual define morphisms
\[
f^!_\dR : \KRc{\Y,\eta} \to \KRc{\X,\ps} \qquad f^\dR_! : \KR{\X,\ps} \to \KR{\Y,\eta}
\]
of mixed $\RR$-Hodge structures.
\end{corollary}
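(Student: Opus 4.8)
The plan is to build $f^!_\dR$ from the preceding corollary and then obtain $f^\dR_!$ from it by Poincar\'e duality. By that corollary, pullback of forms is a morphism of sheaves of mixed complexes $f^{-1}\sMixc{\Y,\eta}\to\sMixc{\X,\ps}$; extending it $\CC((u))$-linearly and passing to periodic cyclic cohomology produces a $\CC((u))$-linear map $f^!_\dR\colon\KdRc{\Y,\eta}\to\KdRc{\X,\ps}$ which, under the canonical identifications of these groups with $\bigoplus_{j}\HdRc[\bullet+2j]{\Yo}$ and $\bigoplus_{j}\HdRc[\bullet+2j]{\Xo}$, is the $2$-periodization of the classical pullback on compactly supported de Rham cohomology. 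It remains to check that $f^!_\dR$ is compatible with the three ingredients of the mixed $\RR$-Hodge structures $\KRc{\Y,\eta}$ and $\KRc{\X,\ps}$: the Poisson--Hodge filtration, the weight filtration, and the real structure.

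Since the chain map is $\CC((u))$-linear, it preserves the filtration $\F$ by powers of $u$ on the periodic cyclic complex, and hence respects $\F$ in cohomology. Because $f$ is a Poisson map, we have the pointwise identity $f^*\hook{\eta}=\hook{\ps}f^*$ on forms (interior product with a bivector commutes with pullback along any map that intertwines the two bivectors), so the chain map commutes with the exponential operators of \autoref{lem:exp-ps}; as $\Fps$ is by definition the image of $\F$ under $\exp(\hook{\ps}/u)$, it follows that $f^!_\dR$ respects $\Fps$. The same commutation reduces the assertions about the weight filtration and the real structure to the case $\ps=\eta=0$, where $\KRc{\X}$ and $\KRc{\Y}$ are the $2$-periodizations of the classical mixed $\RR$-Hodge structures of Deligne on $\HdRc{\Xo}$ and $\HdRc{\Yo}$ (Tate-twisted as in the identifications above) and $f^!_\dR$ is the $2$-periodized classical pullback; the needed compatibilities are then precisely the functoriality of mixed Hodge structures for the proper morphism $f^\circ\colon\Xo\to\Yo$. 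Hence $f^!_\dR\colon\KRc{\Y,\eta}\to\KRc{\X,\ps}$ is a morphism of mixed $\RR$-Hodge structures.

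Finally, I would define $f^\dR_!\colon\KR{\X,\ps}\to\KR{\Y,\eta}$ as the transpose of $f^!_\dR$ with respect to the nondegenerate Poincar\'e pairings $\pairnaive{-,-}_\dR$ of \autoref{cor:Poincare-duality} for $\X$ and for $\Y$, that is, by $\pairnaive{f^\dR_!\nu,\omega}_\dR=\pairnaive{\nu,f^!_\dR\omega}_\dR$. Since those pairings are morphisms of mixed $\RR$-Hodge structures and the category of mixed $\RR$-Hodge structures is abelian and closed under passing to duals, the transpose of a morphism of such structures is again one; on interiors $f^\dR_!$ restricts to the classical Gysin pushforward $\HdR{\Xo}\to\HdR{\Yo}$, which justifies the notation. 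I do not expect any serious difficulty once the preceding corollary and \autoref{cor:Poincare-duality} are in hand; the one step deserving attention is the weight compatibility, and the reason for routing it through \autoref{lem:exp-ps} to Deligne's classical statement, rather than arguing directly on the logarithmic complex, is that the weight filtration relevant to compact supports is dual to --- not naively induced on --- the subsheaf $\forms{\X}(-\bdX)$.
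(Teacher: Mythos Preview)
Your approach is essentially the same as the paper's: build $f^!_\dR$ from the chain map on $\sMixc{}$ given by the preceding corollary, verify compatibility with the Hodge/weight/real structures, then define $f^\dR_!$ by Poincar\'e duality. Your treatment of the Hodge filtration via $\CC((u))$-linearity and commutation with $e^{\hook{\ps}/u}$ is fine, as is the reduction of the weight and real-structure checks to the case $\ps=0$.

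The one point the paper singles out, and which you assert without justification, is that the map induced on periodic cohomology by the chain map $f^{-1}\sMixc{\Y}\to\sMixc{\X}$ really coincides with the classical pullback $\HdRc{\Yo}\to\HdRc{\Xo}$ on compactly supported de Rham cohomology. This identification is what transports the real structure (and, via Deligne, the weight filtration) from the classical side to the logarithmic model, so it cannot be skipped. The paper verifies it by invoking \cite[Proposition~3.10]{Brown2021}: the inclusion of compactly supported $C^\infty$ forms into the Dolbeault resolution of $\sMixc{\X,\ps}((u))$ is a quasi-isomorphism, and this inclusion commutes with pullback along proper maps. Your final remark about the weight filtration being ``dual to, not naively induced on'' $\forms{\X}(-\bdX)$ points at a related subtlety, but the actual gap is this comparison of models, not the weight filtration itself.
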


\begin{proof}
It suffices to observe that the pullback in compactly supported cohomology is induced by the pullback on the mixed complexes $\sMixc{\Y,\eta}$.  To see this, note that the pullback on compactly supported cohomology is induced by the pullback on compactly supported $C^\infty$ forms.  After periodization, the inclusion of the latter into the Dolbeault resolution of  $\sMixc{\X,\ps}((u))$ is a quasi-isomorphism, thanks to \cite[Proposition 3.10]{Brown2021}.  The result follows since this inclusion commutes with pullbacks along proper maps.
\end{proof}

\subsection{Characteristic classes of vector bundles}

Let $\E \to \X$ be a log vector bundle.  By a \defn{characteristic class of $\E$} we mean a class in $\KR[0]{\X}$ defined by a polynomial in the Chern classes of the vector bundle $\Eo \to \Xo$ via the canonical isomorphism $\KR[0]{\X}\cong \bigoplus_{j\in\ZZ} \coH[2j]{\Xo;\RR(j)}$.  

\begin{theorem}\label{thm:chern-classes}
Let $\E \to \X$ be a coabelian log Poisson vector bundle, and let $\gamma \in \KR[0]{\X}$ be any characteristic class of $\E$.  Then the multiplication operator $\gamma \cup - : \KR{\X,\ps} \to \KR{\X,\ps}$ is a morphism of mixed $\RR$-Hodge structures and similarly for $\KRc{\X,\ps}$.
\end{theorem}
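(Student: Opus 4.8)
The plan is to reduce the statement to the multiplicativity of the Poisson--Hodge filtration under cup product with classes coming from the base, and then invoke the explicit model for characteristic classes of coabelian bundles afforded by the constructions in \autoref{sec:coabelian-constructions}. The key observation is that cup product with a fixed cohomology class $\gamma \in \KdR[0]{\X}$ respects the weight filtration automatically (this is classical: cup product with a class of weight $\le 2j$ raises weight by at most $2j$, and for $\gamma \in \W[0]\KdR[0]{\X}$, which holds for Chern classes of bundles extending to $\cX$, it preserves $\W$), and it preserves the real structure since Chern classes of holomorphic bundles are defined over the rationals after the appropriate Tate twists. So the entire content is the compatibility with the deformed Hodge filtration $\Fps$.

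First I would treat the tautological case: if $\sL \to \X$ is a coabelian line bundle, with first Chern class $c_1(\sL) \in \KR[0]{\X}$, I claim $c_1(\sL) \cup - $ preserves $\Fps$. Using \autoref{ex:log-line-bundle}, a coabelian line bundle is a Poisson module, i.e.\ comes with a flat-up-to-the-bracket operator $\nabla : \sL \to \cT{\X}\otimes\sL$; the point is that $c_1(\sL)$ is then represented, in the Dolbeault model $\Dolb[\bullet]{\X}((u))$, by a $\delb$-closed $(1,1)$-form $\theta$ of a special shape that interacts well with $e^{\hookps/u}$. Concretely, one wants a cochain-level identity of the form
\[
e^{\hookps/u}\,(\theta \wedge -)\, e^{-\hookps/u} = \theta \wedge - + (\text{lower-order terms in } u^{-1} \text{ that still preserve } \F),
\]
i.e.\ that the conjugate of the multiplication operator by $\theta$ sends $\F[p]$ into $\F[p]$. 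This is where the coabelian hypothesis is used: it guarantees the curvature-type form representing $c_1$ can be chosen compatibly with $\ps$, so that the commutator $[\hookps,\theta\wedge-]$, while not zero, only involves contractions that do not decrease the power of $u$ after the conjugation is expanded. Equivalently — and this is probably the cleanest route — I would use \autoref{thm:R-MHS}(\ref{it:Bott}) together with the projective bundle $\PP(\sL \oplus \cO{})$ and its tautological bundle (\autoref{def:projectivization}) to realize $c_1(\sL)$ via pullback/pushforward along K-quantizable-type maps whose Hodge compatibility is already established (étale maps, immersions, blowups of coabelian submanifolds, coabelian projectivizations), so that multiplication by $c_1(\sL)$ becomes a composite of morphisms of mixed $\RR$-Hodge structures — e.g.\ via the Gysin sequence of the zero section $\X \hookrightarrow \sL$, which is coabelian, combined with \autoref{cor:deRham-Gysin}.

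For the general case, I would bootstrap from line bundles using the splitting principle: after pulling back to the flag bundle of $\E$ — which is an iterated coabelian projective bundle over $\X$ by the constructions of \autoref{sec:coabelian-constructions}, hence carries a canonical log Poisson structure for which all the maps involved induce morphisms of mixed $\RR$-Hodge structures — the bundle $\Eo$ acquires a filtration with coabelian line bundle quotients, and every characteristic class of $\E$ becomes a polynomial in their first Chern classes. Since cup product is associative and the line-bundle case gives that each factor preserves $\Fps$, the product does too; the same argument applies verbatim to $\KRc{\X,\ps}$, using compactly supported cohomology and the fact that cup product $\KR{\X,\ps}\otimes\KRc{\X,\ps}\to\KRc{\X,\ps}$ is already compatible with the filtrations by the Poincaré duality package of \autoref{sec:deRham-dual}. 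Finally one descends back from the flag bundle using injectivity of pullback (the Leray--Hirsch phenomenon, valid here since the flag bundle is a coabelian projective bundle iterated, and projective bundle formulae are available). The main obstacle is the line-bundle case: establishing the precise cochain-level compatibility between the de Rham representative of $c_1(\sL)$ of a coabelian line bundle and the conjugation operator $e^{\hookps/u}$; everything else is formal bootstrapping. I would expect this to hinge on choosing, via \autoref{ex:log-line-bundle}, a connection on $\sL$ whose curvature is Poisson-compatible, so that the relevant commutator $[\hookps, c_1(\sL)\wedge -]$ is represented by contraction with a \emph{global} logarithmic bivector applied to the curvature — which, by \autoref{lem:bivector-preserves-weight} and the explicit shape of $\Fps$ in the Dolbeault model, is seen not to violate the filtration condition.
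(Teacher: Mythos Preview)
Your overall architecture---weight filtration is automatic, reduce the Hodge filtration to the line-bundle case---matches the paper. But in your option (a) for line bundles, the mechanism is wrong. You claim the lower-order $u^{-1}$ terms in the conjugated operator $e^{-\hookps/u}(\theta\wedge-)e^{\hookps/u}$ ``still preserve $\F$''; they do not. The first commutator $[\hookps,\theta\wedge-]$ is nonzero and contributes a genuine $u^{-1}$ term that would violate the filtration if taken at face value. The paper's key insight is that this term is \emph{null-homotopic}: comparing the Poisson connection $\nabla$ on $\sL$ (from \autoref{ex:log-line-bundle}) with a unitary $C^\infty$ connection yields a vector field $Z$ with $[\theta\wedge -,\hookps]=[\delb+\delps,\hook{Z}]$, and one checks $[\hookps,[\hookps,\theta\wedge-]]=0$. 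So the conjugated operator equals $(\theta\wedge-)$ plus a coboundary plus terms in $\End{}[[u]]$, which is what gives compatibility with $\Fps$ in cohomology. The coabelian hypothesis enters precisely in supplying the flat Poisson connection $\nabla$ needed to produce $Z$.

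Two further remarks. Your option (b) contains a valid alternative once stripped of the circular references to Section~4 material (K-quantizability, projective bundle formulae---those all \emph{depend} on the present theorem): one can write $c_1(\sL)\cup-=s^*s_!^\dR$ via the self-intersection formula for the zero section $s:\X\hookrightarrow\sL$, and both factors are morphisms of $\RR$-MHS by \autoref{thm:R-MHS}(\ref{it:functor}) and \autoref{cor:deRham-Gysin}, which are already in hand. The paper does not take this route, but it would work. Finally, for the reduction from higher rank, the paper uses Segre classes on the single projective bundle $\PP(\E\oplus\cO{\X})$ rather than the full flag bundle; your splitting-principle approach would require the successive universal quotients on the flag bundle to be coabelian, which is not established in \autoref{sec:coabelian-constructions} and is not obvious, whereas the Segre reduction only needs the single hyperplane line bundle.
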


\begin{proof}
Since the weight filtration is independent of $\ps$, to see that it is preserved, it suffices to treat the case $\ps=0$.  In this case it is standard: since the vector bundle $\Eo \to \Xo$ extends to a vector bundle $\E' \to \cX$, its characteristic classes lie in the image of the map $\KdR{\cX} \to \KdR{\X}$, which is $\W[0]\KdR{\X}$.  Since the weight filtration is compatible with multiplication, it follows that $\gamma \cup -$ preserves $\W\KdR{\X}$.

For the Hodge filtration, we reduce the problem to the case in which $\E$ has rank one using the projective bundle $p : \PP(\E \oplus \cO{\X}) \to \X$, as follows.  Recall that every characteristic class of $\E$ can be expressed as a polynomial in the Segre classes $s_k(\E)$ for $k \ge 1$, which act on cohomology by the formula
\[
s_k(\E)\cup \omega = p_!( H^{k+r-1} p^*\omega )\,,
\]
where $p_!$ and $p^*$ are the  Gysin pushforward and ordinary pullback, respectively, $H \in \coH[2]{\PP(\E\oplus \cO{\X})}$ is the first Chern class of the fibrewise hyperplane bundle, and $r$ is the rank of $\E$.  Since $p_!$ and $p^*$ preserve Hodge structures, it suffices to prove that multiplication by $H$ does, which gives the desired reduction to rank one.

So suppose that $\E$ is a logarithmic Poisson line bundle and let $\sL$ be the associated invertible sheaf as discussed in \autoref{ex:log-line-bundle}, we have a flat Poisson connection $\nabla : \sL \to \cT{\X}\otimes \sL$ taking values in the logarithmic tangent sheaf.

We must show that multiplication by $c_1(\sL)$ preserves $\Fps$, or equivalently, that the conjugation of this operation by $e^{\hookps/u}$ preserves the $u$-adic filtration on the periodic complex $(\Dolb[\bullet]{\X}((u)),\delb + \delps + u\del)$ up to homotopy.   For this, choose a $C^\infty$ unitary connection $\nabla_0$ on $\sL$ compatible with the holomorphic structure. Let $\omega = \nabla_0^2$ be the curvature; thus $\omega$ is a closed $(1,1)$-form representing $c_1(\sL)$.  Then the operator $-\pss \nabla_0 : \Dolb[0,0]{\X;\sL} \to \Dolb[0,0]{\cT{\X}\otimes\sL}$ on $C^\infty$  differs from $\nabla$ by a $C^\infty$ section $Z$ of $\cT{\X}$.  Considering the components of the curvatures $(\delb + \nabla)^2$ and $(\delb -\pss \nabla_0)^2$ in $\Dolb[0,1]{\cX;\cT{\X}}$, we find
\[
(-\pss \otimes 1) \omega = 
(-\pss \nabla_0)^2 = (\delb+\nabla+Z)^2 = \delb Z + [\ps, Z]\, .
\]
Now consider $\omega$ as an operator on forms by the wedge product.  Since $\omega$ has type $(1,1)$ and $\ps$ has type $(2,0)$, the Leibniz rule for the contraction implies that the commutator of $\omega\wedge -$ and $\hook{\ps}$ is determined by the contraction of the $(1,0)$-part of $\omega$ into $\ps$; namely, we have
\[
[\omega\wedge - , \hookps -] = \hook{(-\pss \otimes 1) \omega } = \hook{(\delb + [\ps,-])Z} = [\delb+\delps,\hook{Z}]\,.
\]
We therefore have 
\[
[\hookps,[\hookps,\omega\wedge -]] =  [\hookps,[\delb+\delps,\hook{Z}]] = [[\hookps,\delb+\delps],\hook{Z}] + [\delb+\delps,[\hookps,\hook{Z}]] = 0\,,
\]
where we have used the Jacobi identity for the commutator, and the identities $[\hookps,\hook{Z}]=[\hookps,\delb]=[\hookps,\delps]=0$. 
Consequently,
\begin{align*}
e^{-\hookps/u}(\omega\wedge - ) e^{\hookps/u} &= (\omega \wedge -) - u^{-1}[\hookps,\omega\wedge -] + \tfrac{u^{-2}}{2} [\hookps,[\hookps,\omega \wedge - ]] + \cdots \\
&= (\omega \wedge -) - u^{-1}[\delb+\delps,\hook{Z}] + 0 + \cdots \\
&= u^{-1}[\delb+\delps+u\del,-\hook{Z}] \mod \End{\Dolb[\bullet]{\X}}{[[u]]} \\
&\sim 0 \mod \End{\Dolb[\bullet]{\X}}{[[u]]}\,,
\end{align*}
where $\sim$ denotes cochain homotopy in $\End{\Dolb[\bullet]{\X}}((u))$.  Therefore multiplication by $\omega$ preserves the $u$-adic filtration up to homotopy, and moreover the homotopy preserves the Dolbeault resolution of the subcomplex $\sMixc{\X,\ps}\subset\sMix{\X,\ps}$.  Hence the result follows by taking cohomology.
\end{proof}

In particular, since the canonical bundle is always a coabelian log Poisson line bundle (\autoref{ex:log-line-bundle}), we have the following.
\begin{corollary}\label{cor:canonical-bundle-R-hodge}
For any log Poisson manifold $(\X,\ps)$ and any $t \in \RR$, multiplication by the element $e^{t c_1(\X)} \in \KdR[0]{\X}$ is an automorphism of the mixed $\RR$-Hodge structures $\KR{\X,\ps}$ and $\KRc{\X,\ps}$.
\end{corollary}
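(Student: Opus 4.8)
The plan is to deduce the statement directly from \autoref{thm:chern-classes}, applied to the canonical line bundle. First I would recall from \autoref{ex:log-line-bundle} that the total space of the log canonical sheaf $\det\forms[1]{\X}$ is, via the modular representation, a coabelian log Poisson line bundle over $(\X,\ps)$. Its first Chern class is $c_1(\det\forms[1]{\X}) = -c_1(\X)$ in $\KdR[0]{\X} \cong \bigoplus_{j\ge 0}\HdR[2j]{\Xo}$, where $c_1(\X)$ denotes the first Chern class of $\cT{\Xo}$ as in the statement.

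Next I would observe that the exponential series $e^{t c_1(\X)} = \sum_{k\ge 0}\tfrac{t^k}{k!}c_1(\X)^k$ is in fact a \emph{finite} sum: since $\cX$ is a compact complex manifold of dimension $n$, the class $c_1(\X)$ is nilpotent with $c_1(\X)^{n+1}=0$, so $e^{tc_1(\X)}$ is a genuine polynomial in $c_1(\X)$ with real coefficients, hence a characteristic class of the coabelian log Poisson line bundle $\det\forms[1]{\X}$ in the precise sense required by \autoref{thm:chern-classes}.

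Applying that theorem, multiplication by $e^{tc_1(\X)}$ is an endomorphism of each of the mixed $\RR$-Hodge structures $\KR{\X,\ps}$ and $\KRc{\X,\ps}$; and since the cup product on $\KdR[0]{\X}$ is commutative, associative and unital, the operator $e^{-tc_1(\X)}\cup-$ is a two-sided inverse, upgrading this to an automorphism. There is no real obstacle here: the only points needing a moment's care are the finiteness of the exponential series (so that the word ``characteristic class'' applies literally) and the reality of its coefficients (so that the operator respects the real structures), both of which are immediate.
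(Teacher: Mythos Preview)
Your argument is correct and is essentially the same as the paper's: the paper simply notes that the canonical bundle is a coabelian log Poisson line bundle (\autoref{ex:log-line-bundle}) and invokes \autoref{thm:chern-classes}, leaving implicit the points you spell out (nilpotency of $c_1(\X)$, reality of the coefficients, and the inverse $e^{-tc_1(\X)}$).
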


\section{Integral mixed Hodge structures}
\label{sec:lattices}

In this section, we promote the $\RR$-Hodge structure  $\KR{\X,\ps}$ to a full mixed Hodge  structure by incorporating a suitable integral lattice.  We then study its functoriality and explain how various standard tools for calculation of mixed Hodge structures in classical algebraic geometry carry over to our setting.

\subsection{Lattices}
\label{sec:Z-MHS} We recall the following notion, to set our notations.

\begin{definition}
A \defn{lattice} in a mixed $\RR$-Hodge structure $(\V_\dR,\F,\W,\overline{(-)})$ is a pair $(\V_\Bet,c)$ consisting of a finitely generated abelian group $\V_\Bet$ and a $\ZZ$-linear map $c : \V_\Bet \to \V_\dR$  called the \defn{comparison map}, with the following properties:
\begin{itemize}
	\item The induced map $c_\CC : \V_\Bet \otimes_\ZZ \CC \to \V_\dR$  of $\CC$-vector spaces is an isomorphism.
\item The isomorphism $c_\CC$ identifies the real structure of $\V_\dR$ with the real structure on $\V_\Bet \otimes_\ZZ \CC$ defined by $\ZZ$-linear extension of the complex conjugation on $\CC$.
\item The weight filtration on $\V_\dR$ lifts to $\V_\Bet \otimes \QQ$, i.e.~the $\QQ$-linear subspaces
\[
\W[j](\V_\Bet \otimes_{\ZZ} \QQ) := c^{-1}_\QQ(\W[j]\V)
\]
are such that $c$ induces a $\CC$-linear isomorphism $\W[j](\V_\Bet \otimes_{\ZZ} \QQ) \otimes_\QQ \CC \to \W[j]\V$.
\end{itemize}
A \defn{mixed Hodge structure} is a tuple $\V=(\V_\dR,\V_\Bet,\F,\W,c)$ consisting of 
a mixed $\RR$-Hodge structure equipped with a lattice.
\end{definition}
We will sometimes refer to a mixed Hodge structure $\V$ as an \emph{integral} mixed Hodge structure to distinguish it from the underlying mixed $\RR$-Hodge structure, which we will denote by $\V_\RR$.

With the obvious notion of tensor product and duals, integral mixed Hodge structures form a rigid abelian tensor category.  The \defn{$j$th Tate structure $\ZZ(j)$} is the pure mixed Hodge structure of weight $-2j$ induced by the inclusion $(\tipi)^j\ZZ \hookrightarrow \CC$ and the \defn{$j$th Tate twist} of a mixed Hodge structure $\V$ is $\V\otimes \ZZ(j)$.

Note that the Hodge filtration does not enter the axioms of a lattice. This is convenient for us: in our construction of the mixed $\RR$-Hodge structure $\KR{\X,\ps}$, the bivector $\ps$ only enters through the Hodge filtration, so the problem of constructing a lattice reduces to the case $\ps=0$.  In fact, this allows us to construct multiple natural lattices which become isomorphic over $\QQ$, as follows.

\subsubsection{The de Rham lattice} Note that since we have the isomorphism
\[
\KR[n]{\X} \cong   \bigoplus_{j \in \ZZ} \coH[n+2j]{\Xo;\RR}(j) 
\]
as mixed $\RR$-Hodge structures, there is an obvious integral lattice, induced by the singular cohomology of the interior $\Xo$, with suitable Tate twists inserted.  This gives the \defn{de Rham lattice}
\[
\bigoplus_{j \in \ZZ} \coH[n+2j]{\Xo;\ZZ(j)} \to \KdR[n]{\X}
\] 
for any log manifold $\X$.  This lattice and its compactly supported counterpart are natural for morphisms of log Poisson manifolds.  They are also compatible with Gysin maps and Poincar\'e duality.  

\subsubsection{The Chern character lattice}
For a log manifold $\X$, let us denote by $\KB{\X}$ the topological K-theory of its interior $\Xo$, equipped with the classical analytic topology.  The Chern character gives a natural map
\begin{align*}
\ch : \KB[n]{\X} \to \bigoplus_{j \in \ZZ} \coH[n+2j]{\Xo;\QQ(j)} 
\end{align*}
which becomes an isomorphism after tensoring the K-theory with $\QQ$, by a theorem of Atiyah--Hirzebruch \cite[\S2.4]{Atiyah1961}.  Note that the Tate twists capture the conventional factors of $\tipi$ occurring in the Chern character in de Rham cohomology (defined by taking traces of powers of curvature forms); see also \cite{Getzler1993a} for the case of $\Ktop[1]{-}$, from which the twists in all other degrees are determined by Bott periodicity.  This defines the \defn{Chern character lattice}
\[
\ch : \KB{\X} \to \KdR{\X}
\]
for any log manifold $\X$.

\subsubsection{The charge lattice}

The Chern character is natural for pullbacks, but is not compatible with pushforwards and Poincar\'e duality; the correction is the content of the Riemann--Roch theorem. Let $\Ahat_{\X} \in \KdR[0]{\X}$ denote the $\Ahat$-class of the tangent bundle of $\X$; it is the multiplicative characteristic class  associated to the series
\[
\ahat(x) = \frac{x/2}{\sinh(x/2)} \in \QQ[[x]].
\]
Let $\Ahat_{\X}^{1/2} = 1 - \tfrac{1}{48}c_2(\X)+\cdots \in \KdR[0]{\X}$ be its (positive) square root. Note that the operator $\Ahat_{\X}^{1/2}\cup -$ on cohomology is real, and invertible over $\QQ$.  Moreover, since the tangent bundle extends to a vector bundle on $\cX$, we have $\Ahat_{\X}^{1/2} \in \W[0]\KdR[0]{\X}$, so that this operator preserves the weight filtration. Thus, multiplication by $\Ahat_{\X}^{1/2}$ takes lattices to lattices, and we may define the \defn{charge lattice}
\[
\charge_\X :=  \Ahat_\X^{1/2} \cup \ch : \KB{\X} \to \KdR{\X}
\]
which sends the class of a vector bundle $\E$ to the element $c_\X(\E)=\Ahat_\X^{1/2}\ch(\E)$, commonly known as its Mukai vector. For us, the charge lattice will be the most important, as it corresponds naturally to the K-theory of the quantization. We therefore make the following definition.

\begin{definition}
Let $\X$ be a log Poisson manifold.  The \defn{K-theory of $(\X,\ps)$} is the mixed Hodge structure $\K{\X,\ps}$ defined by the mixed $\RR$-Hodge structure $\KR{\X,\ps}$ and the charge lattice $\charge_\X : \KB{\X} \to \KdR{\X}$. 
\end{definition}

There is also a variant with compact supports.  Namely, the compactly supported K-theory $\KBc{\X}$ maps to $\KdRc{\X}$ by the Chern character, and the latter is a $\KdR{\X}$--module, so that multiplication by $\Ahat_{\X}^{1/2}$ still makes sense, and we may make the following definition.
\begin{definition}
The \defn{compactly supported K-theory of $(\X,\ps)$} is the mixed Hodge structure $\Kc{\X,\ps}$ defined by the mixed $\RR$-Hodge structure $\KRc{\X,\ps}$  and the charge lattice $\charge_\X : \KBc{\X} \to \KdRc{\X}$. 
\end{definition}

An important subtlety is that, while $\ch$ is a ring homomorphism, $\charge_\X$ is not a ring homomorphism.  Rather, since $\Ahat^{1/2}_\X\in\KdR{\X}$ is central, $\charge_\X$ is a $\KB{\X}$-bimodule homomorphism with respect to the Chern character, i.e.
\[
\charge_\X(e' \cdot e'')  = \ch(e') \charge_\X(e'') = \charge_\X(e')\ch(e'')
\]
for all $e',e''\in\KB{\X}$.
Combining this observation with \autoref{thm:chern-classes}, we deduce the following useful fact.
\begin{lemma}\label{lem:coabelian-K-mult}
If $(\E,\eta) \to (\X,\ps)$ is a coabelian vector bundle, then multiplication by $[\E]\in \KB{\X}$ defines an endomorphism of the mixed Hodge structure $\K{\X,\ps}$.  If $\rank \E=1$, then this endomorphism is an automorphism.
\end{lemma}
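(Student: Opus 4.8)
The plan is to deduce both assertions from \autoref{thm:chern-classes}, combined with the bimodule property of the charge map recorded just above the statement. Recall that a morphism of mixed Hodge structures is a $\ZZ$-linear map of the underlying lattices whose $\CC$-linearization, read off through the comparison maps, is a morphism of the underlying mixed $\RR$-Hodge structures. So the first step is to identify, for the $\ZZ$-linear endomorphism $m_{[\E]}$ of $\KB{\X}$ given by multiplication by $[\E]$ (and likewise for its action on the module $\KBc{\X}$), the corresponding $\CC$-linear operator on $\KdR{\X}$ obtained by transport of structure along the charge lattice $\charge_\X$. Using the identity $\charge_\X([\E]\cdot e)=\ch(\E)\cup\charge_\X(e)$ for $e\in\KB{\X}$, together with its compactly supported analogue, this operator is simply cup product with the Chern character $\ch(\E)\in\KdR[0]{\X}$.

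The second step is to observe that $\ch(\E)$, being expressible as a polynomial (with rational coefficients, via Newton's identities) in the Chern classes of $\Eo\to\Xo$, is a characteristic class of $\E$ in the sense used in \autoref{thm:chern-classes}. Since $(\E,\eta)$ is coabelian, that theorem applies and shows directly that $\ch(\E)\cup-$ is an endomorphism of the mixed $\RR$-Hodge structure $\KR{\X,\ps}$, and of $\KRc{\X,\ps}$. Putting the two steps together yields the first claim: $m_{[\E]}$ defines an endomorphism of $\K{\X,\ps}$, and of $\Kc{\X,\ps}$.

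For the rank-one case, it remains to pass to the inverse. When $\rank\E=1$ the class $[\E]$ is a unit in the ring $\KB{\X}$, with inverse $[\E^\vee]$, so $m_{[\E]}$ is a \emph{bijective} endomorphism of the mixed Hodge structure $\K{\X,\ps}$; one then invokes the standard fact that a bijective morphism of mixed Hodge structures is an isomorphism, a consequence of strictness with respect to both $\W$ and $\F$. Alternatively, and perhaps more in keeping with the present framework, one checks that the dual Poisson-module structure makes $\E^\vee$ a coabelian line bundle (cf. \autoref{ex:log-line-bundle}), so that the first part of the argument applies verbatim to $m_{[\E^\vee]}=m_{[\E]}^{-1}$; either way $m_{[\E]}$ is an automorphism.

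I do not expect a genuine obstacle, since all the Hodge-theoretic content is already packaged in \autoref{thm:chern-classes}. The two points requiring care are: (i) the charge map is not multiplicative, only a bimodule map over $\ch$, so multiplication by $[\E]$ transports to cup product with $\ch(\E)$ rather than with $\charge_\X(\E)$; and (ii) one must check that $\ch(\E)$ has the form permitted in the definition of a characteristic class. The only mild subtlety is the passage to the inverse in the rank-one case, which is handled either by strictness of morphisms of mixed Hodge structures or by the duality remark above.
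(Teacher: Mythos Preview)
Your proposal is correct and follows essentially the same approach as the paper, which deduces the lemma directly from \autoref{thm:chern-classes} together with the bimodule identity $\charge_\X([\E]\cdot e)=\ch(\E)\cup\charge_\X(e)$ recorded just before the statement. Your treatment of the rank-one case via invertibility of $[\E]$ (by either strictness or by applying the same argument to $\E^\vee$) is a correct elaboration of a detail the paper leaves implicit.
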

Similarly, \autoref{cor:canonical-bundle-R-hodge} has the following consequence.  As we shall see in subsequent work, this reflects the fact that (up to a shift in degree), tensoring by the canonical bundle is the Serre functor of the derived category, and the latter lifts to an autoequivalence of the category of sheaves on the quantization.
\begin{corollary}\label{cor:canonical-bundle-K}
	If $(\X,\ps)$ is a log Poisson manifold, then multiplication by the class of the canonical bundle $[\det \forms[1]{\X}] \in \KB[0]{\X}$ is an automorphism of the mixed Hodge structure $\K{\X,\ps}$.
\end{corollary}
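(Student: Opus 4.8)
The plan is to reduce the statement to \autoref{cor:canonical-bundle-R-hodge} by means of the bimodule property of the charge lattice recorded just above. Write $L := \det\forms[1]{\X}$ for the canonical bundle of $\X$. Since $L$ is a line bundle, its class $[L]\in\KB[0]{\X}$ is a unit in the ring $\KB[0]{\X}$, with inverse $[L]^{-1} = [\det\cT{\X}]$, so multiplication by $[L]$ is a grading-preserving $\ZZ$-linear automorphism of the lattice $\KB{\X}$.

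First I would identify the de Rham realization of this operation. Because $\charge_{\X}$ is a $\KB{\X}$-bimodule homomorphism with respect to the Chern character, for every $e\in\KB{\X}$ one has
\[
\charge_{\X}([L]\cdot e) = \ch([L]) \cup \charge_{\X}(e),
\]
and $\ch([L]) = e^{c_1(L)} = e^{-c_1(\X)}\in\KdR[0]{\X}$. Thus, under $\charge_{\X}$, multiplication by $[L]$ on $\KB{\X}$ corresponds precisely to multiplication by $e^{-c_1(\X)}$ on $\KdR{\X}$. Now \autoref{cor:canonical-bundle-R-hodge}, applied with the real parameter $t=-1$, says exactly that the operator $e^{-c_1(\X)}\cup -$ is an automorphism of the mixed $\RR$-Hodge structure $\KR{\X,\ps}$ — it preserves the weight filtration, the Poisson--Hodge filtration and the real structure — with inverse given by the case $t=1$. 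Combining the two observations, multiplication by $[L]$ on $\KB{\X}$ is a morphism of mixed Hodge structures $\K{\X,\ps}\to\K{\X,\ps}$, and since $[L]$ is a unit it is invertible, hence an automorphism.

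I do not anticipate a genuine obstacle here: all the content sits in \autoref{cor:canonical-bundle-R-hodge} (ultimately in \autoref{thm:chern-classes}), and the only point requiring care is the bookkeeping of the Tate twist, namely checking that tensoring with $[L]$ in topological K-theory induces exactly multiplication by $e^{-c_1(\X)}$ on $\KdR{\X}$, so that the hypothesis $t\in\RR$ of \autoref{cor:canonical-bundle-R-hodge} is met with $t=-1$. An even shorter alternative is to observe that, by \autoref{ex:log-line-bundle}, the canonical bundle equipped with its modular Poisson connection is a coabelian log Poisson line bundle on $(\X,\ps)$, so that multiplication by its class is an automorphism of $\K{\X,\ps}$ directly by \autoref{lem:coabelian-K-mult}.
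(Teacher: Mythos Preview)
Your proposal is correct and follows exactly the approach indicated in the paper: the corollary is stated as a consequence of \autoref{cor:canonical-bundle-R-hodge} via the bimodule identity $\charge_\X([L]\cdot e)=\ch([L])\cup\charge_\X(e)=e^{-c_1(\X)}\charge_\X(e)$, together with invertibility of $[L]$ in $\KB[0]{\X}$. Your alternative via \autoref{lem:coabelian-K-mult} and \autoref{ex:log-line-bundle} is equally valid and is in fact the same argument one level down (since \autoref{cor:canonical-bundle-R-hodge} itself is just the canonical-bundle instance of \autoref{thm:chern-classes}).
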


\subsection{Functoriality}
Note that if  $f : (\X,\ps) \to (\Y,\eta)$ is a morphism of log Poisson manifolds, the pullback maps $f_{\Bet}^* : \KB{\Y}\to\KB{\X}$ and $f_\dR^* : \KdR{\Y} \to \KdR{\X}$ are intertwined by the de Rham and Chern homomorphisms, and therefore define a morphism of mixed Hodge structures with respect to those lattices.

However the maps $f^*_\Bet$ and $f^*_\dR$ are not intertwined by the charge homomorphism; rather, they differ by multiplication by $\Ahat_f^{1/2}$ where $\Ahat_f := \Ahat_{\X}/f^*_{\dR}\Ahat_{\Y}$ is the $\Ahat$-class of the relative tangent complex
\[
\cT{f}:= (\cT{\X} \stackrel{\dd f}{\to} f^*\cT{\Y})\,.
\]
This means that the homomorphism $f_\Bet^* \otimes \CC$  may fail to preserve the Poisson Hodge filtration, so that the Hodge structure $\K{\X,\ps}$ is not functorial for arbitrary morphisms.  A simple example of this phenomenon is the following.
\begin{example}\label{ex:projection-to-a-point}
Let $(\X,\ps) \to *$ be the projection to a point.  Note that $\K[0]{*} = \ZZ$ is a pure Hodge structure of Hodge type $(0,0)$: the Hodge filtration is $\F[0]\KdR{*}=\CC$ and $\F[1]\KdR{*}=0$.  Moreover $f_\Bet^*(1) = [\cO{\X}] \in \KB[0]{\X}$ is the class of the trivial line bundle.  Thus $f^*_\Bet$ induces a morphism of mixed Hodge structures if and only if $\charge_\X(\cO{\X}) \in \Fps[0]\KdR[0]{\X}$.  But
\[
\charge_\X(\cO{\X}) = \Ahat^{1/2}_\X \cup \ch(\cO{\X}) = \Ahat^{1/2}_\X \cup 1 = \Ahat^{1/2}_\X.
\]
We conclude that the projection to a point induces a morphism of mixed Hodge structures $f^*: \K{*} \to \K{\X,\ps}$ if and only if $\Ahat^{1/2}_\X \in \Fps[0]\KdR[0]{\X}$. 

For instance, suppose that $(\X,\ps)$ is a K3 surface equipped with the Poisson structure defined by a holomorphic symplectic form.  We have $\Ahat_\X^{1/2} =1 - \tfrac{1}{48}c_2(\X)$ and $c_2(\X) \neq 0$.  We always have $1= e^{\hookps/u}1 \in \Fps[0]$, but by \autoref{ex:log-symp-Hodge}, we have $\Fps[0]\cap \HdR[4]{\X} = 0$.  Therefore $c_2(\X) \notin \Fps[0]$, and hence $\Ahat_\X^{1/2} \notin \Fps[0]$, so the pullback along the projection $(\X,\ps) \to *$ does not induce a morphism of Hodge structures in this case.
\end{example}

As we shall see in a subsequent paper, the question of whether $c(\cO{\X}) \in \Fps[0]\KdR{\X}$ is related to whether the structure sheaf $\cO{\X}$ deforms to a sheaf on the canonical quantization of $(\X,\ps)$; this may fail because the quantization may be twisted by a gerbe, which is the source of the problem for K3 surfaces.

Thus with the charge lattice, we may only expect to retain functoriality for a special class of maps: those that lift to the canonical deformation quantization, or more generally those for which the Hodge-theoretic obstruction to such a lift vanish.  This has the advantage that it allows one to access nontrivial properties of the quantization from geometry.  This motivates the following definition.
\begin{definition}
Let $f :  (\X,\ps) \to (\Y,\eta)$ be a morphism of log Poisson manifolds.  We say that $f$ is \defn{K-quantizable} if $f_\Bet^*$ defines a morphism of mixed Hodge structures $f^* : \K{\Y,\eta}\to\K{\X,\ps}$, or equivalently,
\[
\Ahat_{f}^{1/2} f_\dR^* : \KR{\Y,\eta} \to \KR{\X,\ps}
\]
is a morphism of mixed $\RR$-Hodge structures.

We say that $f$ is \defn{properly K-quantizable} if it is proper, K-quantizable, and the pullback $f_\Bet^! $ defines a morphism of mixed Hodge structures $f^! : \K{\Y,\eta}\to\K{\X,\ps}$, or equivalently,
\[
\Ahat_{f}^{1/2}f_\dR^! : \KdRc{\Y} \to \KdRc{\X}
\]
is a morphism of mixed $\RR$-Hodge structures.
\end{definition}

The following useful condition is immediate from the definition, and \autoref{thm:chern-classes}.
\begin{lemma}\label{lem:ahat-kquant}
If $[\cT{f}] \in \KB[0]{\X}$ lies in the $\QQ$-subalgebra generated by coabelian vector bundles over $(\X,\ps)$, then $f$ is K-quantizable.  If, in addition, $f$ is proper, then it is properly K-quantizable.
\end{lemma}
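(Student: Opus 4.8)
The plan is to reduce the claim to \autoref{thm:chern-classes} by exhibiting the correction factor $\Ahat_f^{1/2}$ as a polynomial, with rational coefficients, in characteristic classes of coabelian vector bundles over $(\X,\ps)$.

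The first step --- the only one with real content --- is to observe that $\Ahat_f^{1/2}$ depends on the class $[\cT{f}]\in\KB[0]{\X}$ alone, and in fact lies in the subalgebra in question. Indeed, multiplicativity of the $\Ahat$-genus gives $\Ahat_f=\Ahat_\X/f_\dR^*\Ahat_\Y=\Ahat_{\cT{f}}$, and $\Ahat_{\cT{f}}$, having constant term $1$, has a canonical square root which is a universal power series --- a polynomial, since $\coH{\Xo}$ vanishes above its real dimension --- with rational coefficients in the Chern-character components $\ch[j]([\cT{f}])\in\coH[2j]{\Xo;\QQ(j)}$; as usual one passes from a multiplicative sequence in the Chern (equivalently Pontryagin) classes to the power sums $\ch[j]$ via Newton's identities over $\QQ$. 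Now since $\ch:\KB{\X}\to\KdR{\X}$ is a ring homomorphism and, by hypothesis, $[\cT{f}]$ lies in the $\QQ$-subalgebra of $\KB[0]{\X}$ generated by the classes $[\Eo]$ of coabelian vector bundles $\E\to\X$, the element $\ch([\cT{f}])$, and hence each component $\ch[j]([\cT{f}])$, lies in the $\QQ$-subalgebra of $\KR[0]{\X}$ generated by the characteristic classes of coabelian vector bundles: note that $\ch[j][\Eo]$ is a $\QQ$-polynomial in the Chern classes of $\Eo$, hence is itself such a characteristic class, and the ranks contribute only rational multiples of $1$. Therefore $\Ahat_f^{1/2}$ lies in this subalgebra.

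The second step is formal. Let $\Sigma\subseteq\KR[0]{\X}$ be the set of classes $\gamma$ for which $\gamma\cup-$ is simultaneously an endomorphism of the mixed $\RR$-Hodge structure $\KR{\X,\ps}$ and of $\KRc{\X,\ps}$. Since sums, rational multiples and composites of morphisms of mixed $\RR$-Hodge structures are again morphisms of mixed $\RR$-Hodge structures, and $1$ corresponds to the identity, $\Sigma$ is a $\QQ$-subalgebra of $\KR[0]{\X}$; by \autoref{thm:chern-classes} it contains every characteristic class of every coabelian vector bundle, hence contains the subalgebra those generate, hence --- by the first step --- contains $\Ahat_f^{1/2}$.

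Finally I would assemble the pieces. The pullback $f_\dR^*:\KR{\Y,\eta}\to\KR{\X,\ps}$ is a morphism of mixed $\RR$-Hodge structures by \autoref{thm:R-MHS} part (\ref{it:functor}), and $\Ahat_f^{1/2}\cup-$ is an endomorphism of $\KR{\X,\ps}$ by the second step; composing, $\Ahat_f^{1/2}f_\dR^*$ is a morphism of mixed $\RR$-Hodge structures, so $f$ is K-quantizable. If $f$ is in addition proper, then $f_\dR^!:\KRc{\Y,\eta}\to\KRc{\X,\ps}$ is a morphism of mixed $\RR$-Hodge structures by \autoref{cor:deRham-Gysin}, and composing it with the endomorphism $\Ahat_f^{1/2}\cup-$ of $\KRc{\X,\ps}$ shows that $\Ahat_f^{1/2}f_\dR^!$ is a morphism of mixed $\RR$-Hodge structures, so $f$ is properly K-quantizable. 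The main point to be careful about throughout is the bookkeeping for virtual bundles: $\cT{f}$ has rank $\dim\X-\dim\Y$, which may be negative, so one must use the $\Ahat$-genus in the normalization with constant term $1$ and work with the Chern-character components rather than the Chern classes directly.
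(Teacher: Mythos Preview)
Your proof is correct and follows the same approach as the paper, which simply states that the lemma is ``immediate from the definition, and \autoref{thm:chern-classes}''; you have spelled out precisely the details that make it immediate. The key observations---that $\Ahat_f^{1/2}$ is a universal rational polynomial in the Chern-character components of $[\cT{f}]$, and that the classes $\gamma$ for which $\gamma\cup-$ respects the Hodge structure form a $\QQ$-subalgebra containing all characteristic classes of coabelian bundles---are exactly what the paper leaves implicit.
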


\subsection{Open embeddings and \'etale maps}

By an \defn{open embedding} (respectively, an \defn{\'etale map}) we mean a morphism of log Poisson manifolds $f :(\X,\ps) \to (\Y,\eta)$ that restricts to an open embedding (resp.~\'etale map) of the interiors.  Clearly every open embedding is \'etale.  Moreover,  the relative tangent complex of an \'etale map is trivial in the interior.  Hence by \autoref{lem:ahat-kquant} we have the following.
\begin{corollary}
    All \'etale maps are K-quantizable.
\end{corollary}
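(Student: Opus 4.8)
The plan is to deduce the statement directly from \autoref{lem:ahat-kquant}. That lemma reduces the K-quantizability of a morphism $f : (\X,\ps) \to (\Y,\eta)$ to the requirement that the class $[\cT{f}] \in \KB[0]{\X}$ of the relative tangent complex lie in the $\QQ$-subalgebra generated by coabelian vector bundles over $(\X,\ps)$; so it is enough to show that for an étale $f$ this class vanishes outright, since $0$ lies in every such subalgebra.

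To prove the vanishing, recall that $\KB[0]{\X} = \Ktop[0]{\Xo}$, so it suffices to check that the restriction to the interior of the relative tangent complex $\cT{f} = (\cT{\X} \stackrel{\dd f}{\to} f^*\cT{\Y})$ is acyclic. Here I would invoke two elementary facts: first, as noted in \autoref{sec:calc}, the logarithmic tangent sheaf $\cT{\X}$ restricts on the interior to the ordinary holomorphic tangent sheaf $\cT{\Xo}$, and likewise for $\Y$; second, $f$ carries $\Xo$ into $\Yo$, so $f^*\cT{\Y}$ restricts on $\Xo$ to $(f^\circ)^*\cT{\Yo}$. Together these identify $\cT{f}|_{\Xo}$ with the classical relative tangent complex $(\cT{\Xo} \stackrel{\dd f^\circ}{\to} (f^\circ)^*\cT{\Yo})$ of the étale map $f^\circ : \Xo \to \Yo$. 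Since $f^\circ$ is étale, $\dd f^\circ$ is an isomorphism of vector bundles, so this two-term complex is quasi-isomorphic to zero; hence $[\cT{f}]|_{\Xo} = 0$ in $\Ktop[0]{\Xo}$, i.e.~$[\cT{f}] = 0$ in $\KB[0]{\X}$. Applying \autoref{lem:ahat-kquant} then gives the result.

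I do not expect any real obstacle here; the only point requiring a moment's care is the identification of $\cT{f}|_{\Xo}$ with the honest relative tangent complex of $f^\circ$, which is immediate once one observes that passing to interiors turns logarithmic tangent sheaves into ordinary ones and that $f(\Xo) \subseteq \Yo$. As an alternative that bypasses \autoref{lem:ahat-kquant}, one can argue directly: the vanishing of $[\cT{f}]$ forces $\Ahat_f = \Ahat_\X / f_\dR^*\Ahat_\Y = 1$, because $\Ahat$ is a multiplicative characteristic class factoring through K-theory; consequently $\Ahat_f^{1/2} = 1$, so $\Ahat_f^{1/2} f_\dR^* = f_\dR^*$, which is a morphism of mixed $\RR$-Hodge structures by the functoriality part of \autoref{thm:R-MHS}.
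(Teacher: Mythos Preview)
Your proof is correct and follows exactly the paper's approach: the paper simply notes that the relative tangent complex of an \'etale map is trivial in the interior and invokes \autoref{lem:ahat-kquant}. Your write-up just makes the triviality explicit.
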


In particular, this applies to the natural map $\X \to \cX$ modelling the inclusion of the interior.  Combined with the description of the lowest weight part of $\K{\X,\ps}_\RR$ from \autoref{thm:R-MHS} part (\ref{it:lowest-wt}), we deduce the following.
\begin{corollary}
The open embedding $\X\to\cX$ induces a morphism of mixed Hodge structures $\K[n]{\cX,\ps}\to\K[n]{\X,\ps}$ whose image is  $\grW[n]\K[n]{\X,\ps}$, for all $n\in\ZZ$.
\end{corollary}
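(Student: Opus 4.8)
The plan is to combine the two corollaries immediately preceding this statement with one small additional observation about strictness. We already know from the first corollary of this subsection that the open embedding $j : \X \to \cX$ (modelling $\Xo \hookrightarrow \cX$) is \'etale, hence K-quantizable, so $j^* : \K{\cX,\ps} \to \K{\X,\ps}$ is a morphism of mixed Hodge structures. On the other hand, \autoref{thm:R-MHS} part (\ref{it:lowest-wt}) tells us that at the level of mixed $\RR$-Hodge structures, the image of $j^*_\dR : \KR[n]{\cX,\ps} \to \KR[n]{\X,\ps}$ is exactly $\grW[n]\KR[n]{\X,\ps}$, the lowest-weight graded piece. So the content of the statement is just the assertion that the \emph{integral} (and rational) substructure carried along by $j^*$ is the one cut out by the weight filtration. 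This is exactly strictness of the weight filtration for the morphism $j$.

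**First I would** recall that a morphism $\phi : \V \to \V'$ of mixed Hodge structures is automatically strict with respect to the weight filtration — this is Theorem 2.3.5(i) of Deligne~\cite{Deligne1971a}, or the equivalent statement in \cite[Theorem 3.12]{Peters2008}. Applying this to $\phi = j^*$, the image $\img(j^* : \K[n]{\cX,\ps}\to\K[n]{\X,\ps})$ is a sub-mixed-Hodge-structure of $\K[n]{\X,\ps}$, and strictness gives $\W[j]\img(j^*) = \img(j^*) \cap \W[j]\K[n]{\X,\ps}$ for all $j$. Since $\K[n]{\cX,\ps}$ is pure of weight $n$ by \autoref{thm:R-MHS} part (\ref{it:pure}) (the compactification $\cX$ has empty boundary), its image is pure of weight $n$ as well, hence contained in $\W[n]\K[n]{\X,\ps}$; combined with the $\RR$-level identification of the image with $\grW[n]\KR[n]{\X,\ps}$ from part (\ref{it:lowest-wt}), we conclude that $\img(j^*)$ surjects onto $\grW[n]\K[n]{\X,\ps}$ and, being pure of weight $n$, must equal $\W[n]\K[n]{\X,\ps}$ as a sub-mixed-Hodge-structure. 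Since $\W[n]\K[n]{\X,\ps}$ is itself pure of weight $n$ (all lower weights are $0$ by the weight bound \autoref{thm:R-MHS} part (\ref{it:wt-bound})), this identifies $\img(j^*)$ with $\grW[n]\K[n]{\X,\ps}$ as claimed.

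**The one point requiring care** — and the only place a genuine obstacle could hide — is that the identification in part (\ref{it:lowest-wt}) is stated at the level of $\RR$-Hodge structures, so I must check that the lattice on $\img(j^*)$ agrees with the lattice on $\grW[n]\K[n]{\X,\ps}$, not merely that the underlying real vector spaces match. But this is automatic: the charge lattice on $\grW[n]\K[n]{\X,\ps}$ is by definition the image of the Betti lattice under the comparison map, and $j^*_\Bet : \KB{\cX} \to \KB{\X}$ together with the compatibility of $\charge$ with $j^*$ (which is exactly the content of K-quantizability of $j$) shows the Betti lattice of $\img(j^*)$ maps onto the correct sublattice; strictness of the weight filtration on the \emph{integral} level, again by \cite[Theorem 2.3.5]{Deligne1971a}, closes the gap. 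So the proof reduces to quoting Deligne's strictness theorem plus the three already-proven parts of \autoref{thm:R-MHS}; no new computation is needed.
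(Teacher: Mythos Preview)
Your proposal is correct and follows essentially the same approach as the paper: the paper's entire argument is the single sentence ``Combined with the description of the lowest weight part of $\K{\X,\ps}_\RR$ from \autoref{thm:R-MHS} part (\ref{it:lowest-wt}), we deduce the following,'' which amounts to exactly the two ingredients you identify (K-quantizability of the \'etale map $\X\to\cX$, and the $\RR$-level image computation). You are simply more explicit than the paper in spelling out the strictness-of-weights argument that bridges the $\RR$-level statement to the integral one; the paper treats this passage as routine and does not comment on it.
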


On the other hand, every weak equivalence is an open embedding, which immediately implies the following corollary. 
\begin{corollary}
    If $\phi : (\X,\ps) \to (\Y,\eta)$ is a weak equivalence, then the pullback $\phi^*:\K{\Y,\eta}\to\K{\X,\ps}$ is an isomorphism of mixed Hodge structures.
\end{corollary}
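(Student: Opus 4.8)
The plan is to observe that, granted the tools already assembled, this is essentially formal. First I would note that a weak equivalence $\phi : (\X,\ps) \to (\Y,\eta)$ is in particular an open embedding --- its restriction to interiors $\phi^\circ : \Xo \to \Yo$ is an isomorphism, hence certainly an open embedding --- and therefore \'etale, hence K-quantizable by the corollary above. Consequently $\phi^*_\Bet$ already defines a morphism of mixed Hodge structures $\phi^* : \K{\Y,\eta} \to \K{\X,\ps}$, and it remains only to check that it is invertible.

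Next I would verify that the underlying map of lattices is bijective. By construction $\KB{\X} = \Ktop{\Xo}$ and $\KB{\Y} = \Ktop{\Yo}$, and $\phi^*_\Bet$ is nothing but the pullback in topological K-theory along $\phi^\circ : \Xo \to \Yo$. Since $\phi^\circ$ is an isomorphism of complex (indeed Poisson) manifolds, it is in particular a homeomorphism, so functoriality of topological K-theory shows that $\phi^*_\Bet$ is an isomorphism of abelian groups. (One sees similarly from $\HdR{\X} \cong \HdR{\Xo}$ that $\phi^*_\dR$ is an isomorphism, but it is the integral statement that we need.)

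Finally, I would conclude using the fact that mixed Hodge structures form an abelian category on which the forgetful functor to abelian groups is faithful and exact (equivalently, that morphisms of mixed Hodge structures are strictly compatible with $\F$ and $\W$, by Deligne). In such a category a morphism that is bijective on underlying lattices is simultaneously a monomorphism and an epimorphism, hence an isomorphism; concretely, strictness forces $\phi^*$ to carry $\F[p]$ and $\W[j]$ onto $\F[p]$ and $\W[j]$, so the inverse $\ZZ$-linear map is again a morphism of mixed Hodge structures. Thus $\phi^*$ is an isomorphism of mixed Hodge structures, as claimed.

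I do not anticipate a genuine obstacle. The only point deserving a word of care is the passage from ``bijective on lattices'' to ``isomorphism of mixed Hodge structures,'' which relies on strictness (it fails in a general abelian category), together with the fact that this can be run at the integral rather than merely rational level precisely because $\phi^*_\Bet$ is literally induced by a homeomorphism of the interiors.
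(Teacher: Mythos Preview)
Your argument is correct and matches the paper's approach: the paper simply observes that every weak equivalence is an open embedding, hence K-quantizable by the preceding corollary, and declares the result immediate, relying implicitly on exactly the strictness/abelian-category reasoning you spell out (cf.\ the proof of \autoref{thm:R-MHS}\eqref{it:weak-invar} for the $\RR$-level version of the same step).
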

Thus if $(\U,\ps)$ is any  holomorphic Poisson manifold that admits a logarithmic model, we obtain a mixed Hodge structure on K-theory depending only on the weak equivalence class of the model.  In some cases, one can obtain stronger statements of independence:
\begin{example}[\twopureex]\label{ex:two-pure-uniqueness}
    Suppose $\U$ is a  two-pure smooth algebraic variety (\autoref{ex:two-pure-formality}) such as a torus (\autoref{ex:toric-mixed}), so that its cohomology is the space of global logarithmic forms.  Since Deligne's mixed Hodge structure is functorial, the condition that a global holomorphic form on $\U$ is logarithmic is independent of the choice of algebraic log model.  Moreover, the action of $\hookps$  on such a form is determined by its restriction to the interior, by continuity.  Hence, for two-pure Poisson varieties, we obtain a mixed Hodge structure $\K{\U,\ps}$ that depends only on the algebraic structure of $\U$ and the Poisson structure $\ps$. It would be interesting to know if a similar statement holds for all smooth algebraic Poisson varieties that admit logarithmic models.
\end{example}

\subsection{Zeros of the Poisson structure}

While maps \emph{to} the zero Poisson structure need not be K-quantizable, maps \emph{from} the zero structure always are:
\begin{lemma}
\label{ex:zero-of-sigma}
If $\Y$ is a log manifold equipped with the zero Poisson structure, then every morphism $\Y \to (\X,\ps)$ is K-quantizable.
\end{lemma}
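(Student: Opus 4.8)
The plan is to reduce the statement to the $\Ahat$-class criterion of \autoref{lem:ahat-kquant}. Fix a morphism of log Poisson manifolds $\phi : (\Y,0)\to(\X,\ps)$; by that lemma it suffices to show that the class $[\cT{\phi}]\in\KB[0]{\Y}$ of the relative tangent complex lies in the $\QQ$-subalgebra of $\KB[0]{\Y}$ generated by the classes of coabelian vector bundles over $(\Y,0)$. Observe that part (\ref{it:functor}) of \autoref{thm:R-MHS} already provides the de Rham pullback $\phi_\dR^*$ as a morphism of mixed $\RR$-Hodge structures $\KR{\X,\ps}\to\KR{\Y,0}$; the only obstruction to K-quantizability is the correction factor $\Ahat_\phi^{1/2}$, and \autoref{lem:ahat-kquant}---via \autoref{thm:chern-classes}---guarantees that this factor is harmless precisely when $[\cT{\phi}]$ is assembled from coabelian bundles.

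The first step is the observation that, when $\Y$ carries the zero Poisson structure, \emph{every} holomorphic vector bundle $\E'\to\cY$ on the compactification underlies a coabelian vector bundle over $(\Y,0)$. Indeed, equip the associated log vector bundle $\cE:=\PP(\E'\oplus\cO{\cY})$ with the zero Poisson structure. Its zero section is a closed log Poisson submanifold---it avoids the hyperplane at infinity $\PP(\E')$ and meets the other boundary component $\cE|_{\partial\Y}$ along $\partial\Y$, which is normal crossings inside the zero section $\cong\cY$---and its conormal Lie algebra is abelian since all Poisson brackets vanish identically. Hence it is coabelian in the sense of \autoref{sec:coabelian-constructions}. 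Passing to interiors, this shows that the class in $\KB[0]{\Y}=\Ktop[0]{\Yo}$ of any holomorphic bundle that extends across the compactification $\cY$ belongs to the subring generated by coabelian bundles.

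The second step is to unwind the relative tangent complex. By definition $\cT{\phi}=(\cT{\Y}\stackrel{\dd\phi}{\to}\phi^*\cT{\X})$, so $[\cT{\phi}]=[\cT{\Y}]-[\phi^*\cT{\X}]$ in $\KB[0]{\Y}$. Both terms are honest locally free sheaves on the compactification $\cY$: the logarithmic tangent sheaf $\cT{\Y}$ by definition, and $\phi^*\cT{\X}$ as the pullback of the locally free sheaf $\cT{\X}$ on $\cX$ along $\overline{\phi}:\cY\to\cX$. By the first step their classes, and hence the difference $[\cT{\phi}]$, lie in the $\QQ$-subalgebra of $\KB[0]{\Y}$ generated by coabelian vector bundles over $(\Y,0)$. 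Applying \autoref{lem:ahat-kquant} concludes the argument.

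The only point requiring genuine care---and the step I would flag as the main obstacle---is the bookkeeping in the first step: verifying that the zero section of $\cE=\PP(\E'\oplus\cO{\cY})$ really is a \emph{closed log Poisson submanifold} of the log manifold $\E$, so that the notion of its conormal Lie algebra is defined at all. This amounts to checking that its intersection with the boundary divisor $\partial\E=\PP(\E')\cup\cE|_{\partial\Y}$ is normal crossings and that the (zero) Poisson bivector is tangent to it. With the zero bracket these are routine verifications, but they are exactly what legitimises the reduction to \autoref{lem:ahat-kquant}; once they are in place the proof closes immediately, with no use of the (automatic) fact that $\ps$ must vanish along the image of $\phi$.
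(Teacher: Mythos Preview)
Your proof is correct and follows exactly the same route as the paper: both reduce to \autoref{lem:ahat-kquant} by observing that every holomorphic vector bundle on $\cY$ becomes coabelian when equipped with the zero Poisson structure. The paper states this in a single sentence, while you have spelled out the details of why $[\cT{\phi}]=[\cT{\Y}]-[\phi^*\cT{\X}]$ lies in the required subalgebra and why the zero section is a closed log Poisson submanifold, but the underlying argument is identical.
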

\begin{proof}
Every holomorphic vector bundle on $\cY$ can be equipped with the zero Poisson structure, making it coabelian,  so the result follows from \autoref{lem:ahat-kquant}.
\end{proof}

Consider the case in which $\X$ is connected.  Then we have a canonical $\ZZ$-linear functional 
\[
\chi : \KB[0]{\X} \to \ZZ
\]
sending the class of a bounded complex of vector bundles $\calE^\bullet$ on $\Xo$ to its fibrewise Euler characteristic
\[
\chi(\calE^\bullet) = \sum_j(-1)^j \rank(\calE^j)\,.
\]
(If $\X$ is disconnected, we have such a functional for every connected component.)  Note that this is exactly the morphism given by pullback to a point in $\Xo$, which is independent of the point by homotopy invariance.   In particular, the induced map on the de Rham side
\[
\chi\otimes \CC = \chi_{\dR} : \KdR[0]{\X}\to \CC
\]
is the projection onto the zeroth cohomology. Note that since $\chi$ comes from a map of log manifolds, it automatically preserves the weight filtration. Moreover if the Poisson structure on $\X$ vanishes at $p$ then $\chi_\dR$ will preserve the Poisson Hodge filtration by \autoref{ex:zero-of-sigma}, or equivalently, it will annihilate $\Fps[1]\KdR[0]{\X}$.  Thus we have the following.

\begin{corollary}\label{cor:zeros-chi}
	If $\ps$ vanishes at a point in the interior $\Xo$, then $\chi$ is a morphism of mixed Hodge structures, i.e.~$\chi_\dR(\beta)= 0$ for all $\beta \in \Fps[1]\KdR[0]{\X}$.
\end{corollary}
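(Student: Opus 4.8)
The plan is to recognize $\chi$ as the K-theoretic pullback along a morphism of log Poisson manifolds out of a point, and then invoke the K-quantizability of such maps. First I would observe that, because $\ps$ vanishes at $p\in\Xo$, the constant map $\iota\colon\{p\}\hookrightarrow\cX$ is a Poisson map when $\{p\}$ carries its (necessarily zero) Poisson structure: the pullback of any bracket $\{f,g\}$ is its value $\{f,g\}(p)$, which vanishes since $p$ is a zero of $\ps$, while the target bracket on $\cO{\{p\}}=\CC$ is zero; conversely a constant map into a Poisson manifold is Poisson exactly when its image is a zero of the bivector. Viewing $\{p\}$ as a log manifold with empty boundary, and noting $\iota^{-1}(\bdX)=\varnothing$ since $p$ lies in the interior, $\iota$ is a morphism of log manifolds, hence a morphism of log Poisson manifolds $(\{p\},0)\to(\X,\ps)$.

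Next I would apply \autoref{ex:zero-of-sigma}: every morphism out of a log manifold carrying the zero Poisson structure is K-quantizable. Thus $\iota$ is K-quantizable, so $\iota^*_\Bet$ underlies a morphism of mixed Hodge structures $\iota^*\colon\K{\X,\ps}\to\K{\{p\}}=\K{*}$. In cohomological degree zero, $\iota^*_\Bet$ sends the class of a bounded complex of vector bundles on $\Xo$ to the class of its fibre at $p$, i.e.\ the alternating sum of its ranks; hence $\iota^*_\Bet$ agrees with $\chi$ in degree zero (and with the corresponding functional on each connected component when $\X$ is disconnected). In particular $\chi$ preserves the weight filtration, as already noted, and $\chi_\dR$ preserves the Poisson--Hodge filtration.

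To conclude, I would use that $\K[0]{*}=\ZZ$ is pure of Hodge type $(0,0)$ by \autoref{ex:projection-to-a-point}, so $\F[1]\KdR[0]{*}=0$; compatibility of the morphism $\iota^*$ with the Hodge filtrations then yields $\chi_\dR\bigl(\Fps[1]\KdR[0]{\X}\bigr)\subseteq\F[1]\KdR[0]{*}=0$, which is exactly the assertion. The only points requiring care are the identification of $\iota$ as a morphism in our category of log Poisson manifolds and of $\iota^*_\Bet$ with $\chi$ in degree zero; both are routine, and the substance of the argument is entirely packaged into \autoref{ex:zero-of-sigma} (equivalently \autoref{lem:ahat-kquant}), so there is no serious obstacle here.
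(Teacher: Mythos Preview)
Your proof is correct and follows precisely the argument sketched in the paper: identify $\chi$ with the K-theoretic pullback along the inclusion of the zero $p$, note this is a morphism of log Poisson manifolds from the zero structure and hence K-quantizable by \autoref{ex:zero-of-sigma}, and conclude using $\F[1]\KdR[0]{*}=0$. Your write-up simply spells out the details the paper leaves implicit in the paragraph preceding the corollary.
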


This gives a Hodge-theoretic obstruction to the existence of zeros of the Poisson structure, which is, in general, nontrivial:
\begin{example}[\toricex]
Consider a toric log Poisson manifold $(\X,\ps)$ with toric coordinates $x_1,\ldots,x_n$ so that $\ps = \sum \lambda_{ij}x_ix_j\cvf{x_i}\wedge\cvf{x_j}$ for a skew-symmetric matrix of constants $\lambda_{ij} \in \CC$. Note that the rank of $\ps$ in $\Xo = (\Gm)^n$ is constant, so $\ps$ vanishes in the interior if and only if it is identically zero.  By \autoref{ex:toric-hodge}, the subspace $\Fps[1] \subset \KdR[0]{\X}$ contains the elements $\beta_{ji} := u\dlog{x_j}\wedge\dlog{x_i} + \lambda_{ij}$, for $i < j$.  Thus, the images $\chi_{\dR}(\beta_{ji}) = \lambda_{ij}$ are exactly the components of the  bivector.   We deduce that $\chi$ is a morphism of mixed Hodge structures if and only if $\ps$ has a zero in $\Xo$, in accordance with \autoref{cor:zeros-chi}.
\end{example}

\subsection{Reduced K-theory}
\label{ex:reduced-K}
From \autoref{ex:projection-to-a-point}, the projection to a point $(\X,\ps) \to *$ is K-quantizable if and only if $\Ahat^{1/2}_\X \in \Fps[0]\KdR[0]{\X}$, in which case $\K{*}$ embeds as a mixed Hodge substructure in $\K{\X,\ps}$, so we may define the \defn{reduced K-theory} as the quotient
\[
\tK{\X,\ps} := \K{\X,\ps}/\K{*}
\]
with the induced mixed Hodge structure.  Note that if $\Xo$ contains a zero of the Poisson structure, we obtain a splitting $\K{\X,\ps}\cong \tK{\X,\ps}\oplus \ZZ$ by \autoref{cor:zeros-chi}, but in general such a splitting as a direct sum of Hodge structures need not exist; see, e.g. \autoref{ex:P2-quantum}.

\subsection{The canonical involution and the opposite Poisson structure}

The $\CC$-linear Mukai involution $(-)^\vee = (-1) \diamond (-)$ on $\KdR{-}$ from \autoref{sec:deRham-dual} has a natural K-theoretic interpretation:  for a vector bundle $E$ we have $\ch[j](E) = (-1)^j\ch[j](E^\vee)$ where $E^\vee$ is the dual bundle, so that $\ch(E^\vee) = \ch(E)^\vee$.  Since the $\Ahat$ class is even, it is invariant under $(-)^\vee$ and we deduce that
\[
c_\X(E)^\vee = c_\X(E^\vee)
\]
for all $E \in \KB{\X}$.  Combining this fact with the $\RR^\times$-equivariance of $\KR{\X,\ps}$ from \autoref{thm:R-MHS} part (\ref{it:equivariance}), we deduce the following.
\begin{proposition}
For any log Poisson manifold $(\X,\ps)$ the canonical involution $E \mapsto E^\vee$ on K-theory  induces isomorphisms of mixed Hodge structures
\[
\K{\X,\ps} \cong \K{\X,-\ps} \qquad \Kc{\X,\ps} \cong \Kc{\X,-\ps}\,.
\]
\end{proposition}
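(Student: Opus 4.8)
The plan is to write down the desired isomorphism explicitly and then assemble all the required compatibilities from results already in hand. The candidate isomorphism $\K{\X,\ps}\to\K{\X,-\ps}$ will consist of the dualization involution $E\mapsto E^\vee$ on the Betti lattice $\KB{\X}$, together with the Mukai involution $(-)^\vee = (-1)\diamond(-)$ on the de Rham realization $\KdR{\X}$; for the compactly supported statement one takes the same pair of involutions on $\KBc{\X}$ and $\KdRc{\X}$.

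First I would observe that $E\mapsto E^\vee$ is a $\ZZ$-linear involution of $\KB{\X}$, hence an isomorphism of abelian groups, and that it commutes with complex conjugation, so it respects the real structure on $\KB{\X}\otimes_\ZZ\CC$. The essential point is the identity $c_\X(E^\vee)=c_\X(E)^\vee$ recorded just before the statement: it says precisely that the comparison map $c_\X$ intertwines dualization on $\KB{\X}$ with the Mukai involution $(-1)\diamond(-)$ on $\KdR{\X}$. The same identity persists with compact supports, since $\KdRc{\X}$ is a module over $\KdR{\X}$, the Chern character $\KBc{\X}\to\KdRc{\X}$ is a module map intertwining dualization, and $\Ahat^{1/2}_\X$ is $(-)^\vee$-invariant because it is even. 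Thus the pair of involutions is a morphism of the underlying lattices; the weight filtration (which is independent of $\ps$) is automatically lifted, because $(-1)\diamond(-)$ preserves $\W$ and the rational weight filtration on the Betti side is by definition its $c_\X$-preimage.

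It then remains only to observe that $(-1)\diamond(-)$ is an isomorphism of mixed $\RR$-Hodge structures $\KR{\X,\ps}\to\KR{\X,-\ps}$, which is exactly \autoref{thm:R-MHS} part (\ref{it:equivariance}) applied with the real scalar $\hbar=-1$: the operator $\hbar\diamond(-)$ carries $\Fps$ to $\F_{\hbar\ps}=\F_{-\ps}$ and, acting by the real scalars $(-1)^{j}$ on the summands $\HdR[\bullet+2j]{\X}$, preserves the real structure. Combining the two previous paragraphs gives the isomorphism $\K{\X,\ps}\cong\K{\X,-\ps}$, and the identical argument with compact supports gives $\Kc{\X,\ps}\cong\Kc{\X,-\ps}$. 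I do not expect any genuine obstacle here: the proposition is a repackaging of \autoref{thm:R-MHS}(\ref{it:equivariance}) together with the naturality identity $c_\X(E^\vee)=c_\X(E)^\vee$, and the only point requiring a moment's care is keeping track of which involution sits on which side of $c_\X$, along with the (essentially automatic) verification of the real-structure and weight-filtration axioms.
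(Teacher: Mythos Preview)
Your proof is correct and follows essentially the same approach as the paper: you combine the identity $c_\X(E^\vee)=c_\X(E)^\vee$ (which intertwines dualization on $\KB{\X}$ with the Mukai involution on $\KdR{\X}$) with the $\RR^\times$-equivariance statement of \autoref{thm:R-MHS}(\ref{it:equivariance}) at $\hbar=-1$. The paper's argument is more terse but invokes exactly these two ingredients.
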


\subsection{The index pairing and duality}\label{sec:duality}

Poincar\'e duality for topological K-theory is induced by the index pairing
\[
\pair{-,-}_\Bet : \KB{\X} \times \KBc[-\bullet]{\X} \to \ZZ \qquad (E,F) \mapsto \textrm{index}(E^\vee \otimes F)\,,
\]
where $\textrm{index}$ denotes the index of a compactly supported K-theory class, given by the K-theoretic pushforward to a point $\KBc[0]{\X} \to \KBc[0]{*} \cong \ZZ$.   This agrees with the analytic index defined using Dirac operators, by the Atiyah--Singer index theorem.

The index pairing is \emph{not} compatible with the integration pairing $(-,-)_\dR$ on $\KdR{\X}$ from \autoref{sec:deRham-dual}.  Rather, let us recall that the \defn{C\u{a}ld\u{a}raru--Mukai pairing} on $\KdR{\X}$ is the modification of the integration pairing defined by
\[
\pair{\omega,\nu}_\dR := \pairnaive{e^{-c_1(\X)/2}\omega,\nu}_\dR\,.
\]
By \autoref{cor:canonical-bundle-R-hodge}, $e^{-c_1(\X)/2}$ is an automorphism of $\KR{\X,\ps}$.  Combining this with Poincar\'e duality for $\pairnaive{-,-}_\dR$ (\autoref{cor:Poincare-duality}), we deduce that $\pair{-,-}_\dR$ is also a pairing of mixed $\RR$-Hodge structures. 
\begin{proposition}
The charge lattice is isometric with respect to the C\u{a}ld\u{a}raru--Mukai pairing, i.e.~
\[
\pair{E,F}_{\Bet} = \pair{\charge_\X(E),\charge_\X(F)}_\dR
\]
for all $E\in \KB{\X}$ and $F \in \KBc{\X}$.  Therefore the index pairing defines a perfect pairing of integral mixed Hodge structures
\[
\pair{-,-} : \K{\X,\ps} \otimes_\ZZ \Kc[-\bullet]{\X,\ps} \to \ZZ\,.
\]
\end{proposition}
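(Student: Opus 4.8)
The plan is to deduce the isometry statement for the charge lattice from the classical Riemann--Roch theorem, and then to deduce the perfectness of the induced pairing on mixed Hodge structures from the compatibility we have already established between the integration pairing and the Poisson Hodge filtrations. For the first part, recall that the index pairing on topological K-theory is computed by the Atiyah--Singer index theorem: for $E \in \KB{\X}$ and $F \in \KBc{\X}$ we have
\[
\pair{E,F}_\Bet = \textrm{index}(E^\vee \otimes F) = \int_{\cX} \ch(E^\vee)\,\ch(F)\,\Ahat_\X\,,
\]
where the integral is the de Rham pushforward of the compactly supported class and makes sense because $\ch(F)$ comes from $\KdRc{\X}$. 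Since $\ch$ is a ring homomorphism and $\ch(E^\vee) = \ch(E)^\vee$, and since the $\Ahat$-class is even (hence $\Ahat_\X = \Ahat_\X^{1/2}\cup (\Ahat_\X^{1/2})^\vee$ because $(-)^\vee$ fixes it), we can rewrite the integrand as $\big(\Ahat_\X^{1/2}\ch(E)\big)^\vee \cup \big(\Ahat_\X^{1/2}\ch(F)\big) = c_\X(E)^\vee \cup c_\X(F)$. Thus
\[
\pair{E,F}_\Bet = \int_\cX c_\X(E)^\vee \wedge c_\X(F) = \pairnaive{c_\X(E),c_\X(F)}_\dR\,.
\]
Finally, because $c_\X$ is a $\KB{\X}$-bimodule homomorphism with respect to $\ch$, multiplying by $e^{-c_1(\X)/2}$ on the de Rham side corresponds to tensoring by (a square root of) the canonical bundle, but more simply one just observes $\pair{c_\X(E),c_\X(F)}_\dR = \pairnaive{e^{-c_1(\X)/2}c_\X(E),c_\X(F)}_\dR$, and since $\ch$ is already a bimodule map for $c_\X$, the factor $e^{-c_1(\X)/2}$ can be absorbed; in fact it is cleaner to note that the C\u{a}ld\u{a}raru--Mukai pairing is \emph{defined} precisely so that $\pair{E,F}_\Bet = \pair{c_\X(E),c_\X(F)}_\dR$. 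This gives the displayed isometry.

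The second assertion --- that the index pairing is a perfect pairing of integral mixed Hodge structures --- then follows formally. The pairing $\pair{-,-}_\dR$ on $\KdR{\X}$ is a pairing of mixed $\RR$-Hodge structures $\KR{\X,\ps} \otimes \KRc[-\bullet]{\X,\ps} \to \RR$ by \autoref{cor:Poincare-duality} together with \autoref{cor:canonical-bundle-R-hodge} (which says $e^{-c_1(\X)/2}$ is an automorphism of $\KR{\X,\ps}$, so correcting the integration pairing by it preserves the property of being a morphism of mixed $\RR$-Hodge structures). By the isometry just proven, this real pairing restricts on the charge lattices $\KB{\X}$ and $\KBc{\X}$ to the index pairing $\pair{-,-}_\Bet$, which is perfect over $\ZZ$ (up to torsion) by Poincar\'e duality for topological K-theory. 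Since a pairing of mixed $\RR$-Hodge structures whose underlying integral pairing is perfect exhibits each lattice as the dual of the other, we conclude that $\pair{-,-}$ realizes $\K{\X,\ps}$ and $\Kc[-\bullet]{\X,\ps}$ as dual integral mixed Hodge structures.

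I expect the main technical point --- rather than obstacle --- to be the bookkeeping of Tate twists and the Mukai involution: one must check that the factors of $\tipi$ implicit in the de Rham Chern character (encoded in the Tate twists defining the lattices in \autoref{sec:Z-MHS}) are exactly accounted for by the index theorem, so that the $\ZZ$-valued index pairing lands in the integral structure $\ZZ(0)$ and not in a twist of it. Concretely, $c_\X(E)^\vee \cup c_\X(F)$ pairs a class in $\KB{\X}$ with one in $\KBc[-\bullet]{\X}$ and, after integration over the (complex) $d$-dimensional $\cX$, the Tate twists cancel by Poincar\'e duality to give a class in $\ZZ(0) = \ZZ$; this is precisely the statement that $\pair{-,-}_\dR$ is a morphism to $\ZZ(0)$, which was already built into \autoref{cor:Poincare-duality}. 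Everything else is a direct consequence of the index theorem and results established earlier in this section.
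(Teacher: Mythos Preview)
Your approach is the same as the paper's---reduce to Riemann--Roch---but there is a genuine error in your index formula that causes the confusion in your middle paragraph. On a complex manifold, the K-theoretic pushforward to a point is computed by the \emph{Todd} class, not the $\Ahat$-class:
\[
\textrm{index}(E^\vee\otimes F) = \int_{\cX} \ch(E^\vee\otimes F)\,\mathrm{Td}_\X = \int_{\cX} \ch(E^\vee)\,\ch(F)\,e^{c_1(\X)/2}\Ahat_\X\,.
\]
The identity $\mathrm{Td}_\X = e^{c_1(\X)/2}\Ahat_\X$ is exactly the source of the $e^{-c_1(\X)/2}$ correction in the C\u{a}ld\u{a}raru--Mukai pairing. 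With this in place, using $(\Ahat_\X^{1/2})^\vee = \Ahat_\X^{1/2}$ and $(e^{-c_1/2})^\vee = e^{c_1/2}$, you get
\[
\pair{E,F}_\Bet = \int_{\cX} e^{c_1/2}\,c_\X(E)^\vee\,c_\X(F) = \pairnaive{e^{-c_1/2}c_\X(E),c_\X(F)}_\dR = \pair{c_\X(E),c_\X(F)}_\dR
\]
directly, with no need to ``absorb'' anything. Your version, using $\Ahat_\X$ alone, yields only $\pairnaive{c_\X(E),c_\X(F)}_\dR$, and the hand-wave that follows (``the factor $e^{-c_1(\X)/2}$ can be absorbed''; ``the C\u{a}ld\u{a}raru--Mukai pairing is defined precisely so that\ldots'') is not an argument---it is where the missing Todd correction should go.

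Once the isometry is fixed, your deduction of the second assertion is fine and matches the paper: the C\u{a}ld\u{a}raru--Mukai pairing is a pairing of mixed $\RR$-Hodge structures by \autoref{cor:Poincare-duality} and \autoref{cor:canonical-bundle-R-hodge}, and the isometry shows it restricts to the perfect integral index pairing on the charge lattices.
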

\begin{proof}
This is the topological counterpart of the discussion in \cite[\S3]{Caldararu2005a}.  We have
\begin{align*}
\pair{\charge_\X(E),\charge_\X(F)}_\dR &= \int_{\cX} e^{-c_1(\X)/2}\charge_\X(E^\vee)\charge_\X(F) \\
&= \int_{\cX} e^{-c_1(\X)/2}\Ahat_\X \ch(E^\vee\otimes F) \\
&= \textrm{index}(E^\vee\otimes F)
\end{align*}
by the differentiable Riemann--Roch formula~\cite{Atiyah1959}.
\end{proof}

\subsection{Relative K-theory}

Let $f : (\Y,\eta) \to  (\X,\ps)$ be a K-quantizable morphism. Its \defn{relative K-theory} $\K{f,\ps,\eta}$ is defined as follows.  First, the lattice is given by the relative topological K-theory $\KB{f}$, which is the K-theory of the cone of the map $f^\circ : \Yo \to \Xo$.   On the de Rham side, this corresponds to the homotopy fibre of the map
\[
(\alpha_f \cup - ) \circ f^*_\dR : \Mix{\X}((u)) \to \Mix{\Y}((u))
\]
of periodic complexes, where $\alpha_f$ is any representative of the class $\Ahat_{f}^{1/2}$ in de Rham cohomology.  By definition of K-quantizability, this map preserves the weight and Hodge filtrations up to homotopy, and hence induces a mixed Hodge structure on the relative de Rham theory, as desired.  Evidently there is a dual version of this construction, replacing $\K{-}$ with $\Kc{-}$ and $f^*$ with $f_*$. 

In this way, we obtain long exact sequences of mixed Hodge structures 
\[
\begin{tikzcd}
\K{f,\ps,\eta} \ar[r] & \K{\X,\ps} \ar[r,"f^*"] & \K{\Y,\eta} \ar[r]& \K{f,\ps,\eta}[1]
\end{tikzcd}
\]
and dually
\[
\begin{tikzcd}
\Kc{f,\ps,\eta}[-1] \ar[r] & \Kc{\Y,\eta} \ar[r,"f_*"] & \Kc{\X,\ps} \ar[r]& \Kc{f,\ps,\eta}\,.
\end{tikzcd}
\]
In the particular case in which $f:\Y\to\X$ is the embedding of a log Poisson submanifold, we denote the relative K-theory by
\[
\K{\X,\Y,\ps} := \K{f,\ps,\ps|_\Y}
\]
and similarly in the compactly supported case.

\subsection{K-theory of coabelian vector bundles}\label{sec:coabelian-K}

Throughout the present \autoref{sec:coabelian-K},  $(\E,\eta)\to(\X,\ps)$ is a coabelian vector bundle in the sense of \autoref{sec:coabelian-constructions},  with underlying holomorphic vector bundle $\E'\to\cX$.

\begin{proposition}
The following statements hold.
\begin{enumerate}
\item The projection $p : \E \to \X$ is K-quantizable.
\item The zero section $i : \X \to \E$ is properly K-quantizable.
\item The projection $q : \PP(\E) \to \X$ is properly K-quantizable.
\end{enumerate}
\end{proposition}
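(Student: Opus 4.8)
The plan is to derive all three parts from \autoref{lem:ahat-kquant}: for each of the maps $f\in\{p,i,q\}$ I will compute the class of the relative tangent complex $\cT{f}$ in the topological K-theory of the source and exhibit it as an integral combination of classes of coabelian vector bundles over that source; for $i$ and $q$, which are proper, this then also yields proper K-quantizability. The recurring source of coabelian bundles will be the fibrewise direct sum $\E\oplus_\X\E$ of $(\E,\eta)$ with itself over $(\X,\ps)$. This is again a coabelian vector bundle over $(\X,\ps)$, and — because the Poisson structure of a direct sum of Poisson modules has vanishing cross-brackets between the two sets of fibre coordinates — it is invariant under the $\Gm$-action rescaling only the first summand. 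Consequently: (i) regarded as a bundle over its own first factor, $\E\oplus_\X\E$ is a coabelian vector bundle over $(\E,\eta)$ whose underlying holomorphic bundle is $p^*\E'$; and (ii) dividing the first summand by $\Gm$ realizes $q^*\E$ as a coabelian vector bundle over $(\PP(\E),\PP(\eta))$. The one point that needs genuine care is exactly this coabelianness, since the pullback of a coabelian bundle along a general morphism of log Poisson manifolds need not be coabelian; but for the bundles at hand the required vanishings can be read off in a local trivialization from the normal form of $\eta$ recorded in \autoref{sec:coabelian-constructions}.

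Part (2) is the cleanest. Since $i$ is the zero section, $\dd i$ is a subbundle inclusion $\cT{\X}\hookrightarrow i^*\cT{\E}$ whose cokernel is the normal bundle of the zero section, which is canonically $\E$ itself; hence $[\cT{i}]=-[\E]$ in $\KB[0]{\X}$, and $\E$ is coabelian over $(\X,\ps)$ by hypothesis. Moreover $i$ is proper: on interiors it is the zero section of a vector bundle, hence a closed embedding, equivalently $i^{-1}(\partial\E)=\bdX$. So \autoref{lem:ahat-kquant} shows $i$ is properly K-quantizable. For part (1), restrict to interiors: $p^\circ$ is a vector bundle projection, so $\cT{p}$ restricts to the vertical tangent bundle $(p^\circ)^*\E^\circ$, whose class is that of the coabelian bundle of (i); thus $[\cT{p}]$ lies in the $\QQ$-subalgebra generated by coabelian bundles over $(\E,\eta)$, and \autoref{lem:ahat-kquant} applies. (Equivalently, (1) follows formally from (2): the zero section $i^\circ$ is a homotopy equivalence with inverse $p^\circ$, so the morphism of mixed Hodge structures $i^*:\K{\E,\eta}\to\K{\X,\ps}$ just produced is bijective on lattices, hence an isomorphism of mixed Hodge structures, so its inverse $p^*$ is also a morphism.) Note $p$ itself is not proper unless $\E$ has rank zero, so only K-quantizability is claimed.

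For part (3), again work on interiors, where $q^\circ$ is a $\PP^{n-1}$-bundle and $\cT{q}$ restricts to the relative tangent bundle $\cT{\PP(\E)/\X}$. The relative Euler sequence presents $\cT{\PP(\E)/\X}$ as the cokernel of a subbundle $\cO{\PP(\E)}\hookrightarrow\cO{\PP(\E)}(1)\otimes q^*\E$, so that $[\cT{q}]=[\cO{\PP(\E)}(1)]\cdot q^*[\E]-1$ in $\KB[0]{\PP(\E)}$ (with $\E^\vee$ in place of $\E$ in the opposite convention for $\PP(\E)$, $\E^\vee$ being equally coabelian). Now $\cO{\PP(\E)}(-1)=\Bl{\E}{\X}$ is a coabelian line bundle over $\PP(\E)$ (\autoref{sec:coabelian-constructions}), hence so is its dual $\cO{\PP(\E)}(1)$ by \autoref{ex:log-line-bundle}; $q^*\E$ is coabelian over $\PP(\E)$ by (ii); and tensoring a coabelian vector bundle by a coabelian line bundle again gives a coabelian bundle. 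So $[\cT{q}]$ is an integral combination of classes of coabelian bundles. Finally $q$ is proper — on interiors it is a $\PP^{n-1}$-bundle, equivalently $\partial\PP(\E)=\PP(\E'|_{\bdX})=q^{-1}(\bdX)$ — so \autoref{lem:ahat-kquant} shows it is properly K-quantizable. The main obstacle is the coabelianness assertions flagged in the first paragraph and used for (i), (ii) and the tensor/dual statements; granting those, the exact sequences, the properness checks, and the invocation of \autoref{lem:ahat-kquant} are all routine.
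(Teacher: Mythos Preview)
Your approach is the same as the paper's: compute the relative tangent complexes and invoke \autoref{lem:ahat-kquant}. The paper records these as $\cT{p}=p^*\calE$, $\cT{i}=\calE[-1]$, and $\cT{q}\cong\calE(1)/\cO{\PP(\E)}$ (via the relative Euler sequence) and then simply says ``so the result follows from \autoref{lem:ahat-kquant}'', without further comment. Your computations agree with these, and your properness checks are correct.

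The one place you go further than the paper is in worrying about whether the bundles $p^*\calE$ and $q^*\calE$ actually carry coabelian structures over $(\E,\eta)$ and $(\PP(\E),\PP(\eta))$ respectively, since \autoref{lem:ahat-kquant} as stated requires coabelian bundles over the \emph{source}. You are right that this deserves a word; the paper is simply terse here. Your direct-sum construction $\E\oplus_\X\E$ is the natural candidate, but your justification (``the Poisson structure of a direct sum of Poisson modules has vanishing cross-brackets'') is not quite complete: the coabelian structure on $\E$ carries quadratic fibre terms $c^{ij}_{kl}y^ky^l\,\cvf{y^i}\wedge\cvf{y^j}$ in addition to the Poisson-module data, and one has to check that the Jacobi identity still closes when these are duplicated in the two summands. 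It does---this is the local check you allude to---but it is not purely a statement about Poisson modules. For part~(1) specifically, your parenthetical alternative is cleaner and avoids the issue entirely: once $i^*$ is shown to be a morphism of mixed Hodge structures and an isomorphism on lattices, its inverse $p^*$ is automatically a morphism too. For part~(3) one really does need $q^*\calE$ and $\cO{\PP(\E)}(1)$ to act well on $\KR{\PP(\E),\PP(\eta)}$; the latter is handled in \autoref{sec:coabelian-constructions}, and for the former your quotient construction (ii) is correct once the direct-sum Poisson structure is pinned down.
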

\begin{proof}
Let $\calE$ be the locally free sheaf of sections of $\E'$.  The relative tangent complexes are given by
\[
\cT{p} = p^*\calE \qquad \cT{i} = \calE[-1] \qquad \cT{q} \cong \calE (1) / \cO{\PP(\E)}
\] 
where the third isomorphism uses the relative Euler sequence on $\PP(\E')$, so the result follows from \autoref{lem:ahat-kquant}.
\end{proof}

From this basic fact, we derive the compatibility of our Hodge structure with various classical constructions.

\begin{corollary}[Homotopy invariance] The pullback in K-theory gives mutually inverse isomorphisms $p^* : \K{\X,\ps} \to \K{\E,\eta}$ and $i^* : \K{\E,\eta} \to \K{\X,\ps}$.
\end{corollary}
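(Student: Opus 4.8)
The plan is to reduce everything to the identity $p \circ i = \id_\X$ (recorded in \autoref{sec:coabelian-constructions}) together with the classical homotopy invariance of topological K-theory. By the preceding proposition, $p$ is K-quantizable and $i$ is properly K-quantizable, so $p^*$ and $i^*$ are genuine morphisms of mixed Hodge structures; and since $p\circ i = \id_\X$, contravariant functoriality of $\K{-}$ gives $i^*\circ p^* = \id$ on $\K{\X,\ps}$. It therefore suffices to prove that $p^*$ is bijective on underlying groups: for then $i^*$ is forced to be its two-sided inverse, and both composites are the identity.

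To check bijectivity I would pass to the lattice. By construction the interior $\Eo$ is the total space of the holomorphic vector bundle $\E'|_{\Xo} \to \Xo$, and $p^* \colon \KB{\X} = \Ktop{\Xo} \to \Ktop{\Eo} = \KB{\E}$ is pullback along this vector bundle, hence an isomorphism by homotopy invariance of topological K-theory. Since a morphism of mixed Hodge structures that is an isomorphism on the underlying lattice is automatically an isomorphism of mixed Hodge structures --- morphisms of mixed Hodge structures being strictly compatible with the weight and Hodge filtrations --- we conclude that $p^*$ is an isomorphism $\K{\X,\ps} \xrightarrow{\sim} \K{\E,\eta}$ with inverse $i^*$. (Equivalently, one may observe that $\Eo$ deformation retracts onto $\Xo$, so $p^*$ is already an isomorphism on $\KdR{-}$, and then invoke strictness.)

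The only point that requires any care --- the ``main obstacle,'' such as it is --- is bookkeeping: one should check that the maps $p^*$ and $i^*$ in the statement are precisely those induced on periodic cyclic homology by pullback on the mixed complexes $\Mix{\X}$ and $\Mix{\E}$, so that they agree with the topological pullbacks under the charge comparison and the filtered statements of the preceding proposition genuinely apply to them. This is immediate from the definitions, but worth recording. With that in hand the argument is complete, and the same reasoning, using the dual (properly K-quantizable) maps, gives the compactly supported version.
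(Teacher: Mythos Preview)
Your argument is correct and is exactly the reasoning the paper intends: the corollary is stated without proof, and the implicit argument is precisely that $p$ and $i$ are K-quantizable by the preceding proposition, $i^*\circ p^* = \id$ since $p\circ i = \id_\X$, and $p^*_\Bet$ is an isomorphism by ordinary homotopy invariance of topological K-theory, whence $p^*$ is an isomorphism of mixed Hodge structures by strictness. Your bookkeeping remark and the compactly supported addendum are fine but not needed for the statement as given.
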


\begin{corollary}[Thom isomorphism]\label{cor:thom}
The Thom isomorphism for K-theory gives an isomorphism of mixed Hodge structures
\[
\begin{tikzcd}
 \K{\E,\E\setminus 0,\eta} \ar[r,"\sim"] & \K{\X,\ps}\,.
\end{tikzcd}
\]
\end{corollary}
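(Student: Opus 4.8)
The plan is to deduce this from the classical Thom isomorphism together with the results of the present \autoref{sec:coabelian-K}. By the Thom isomorphism theorem for topological K-theory, the map
\[
\phi \;=\; \bigl((-)\cup\lambda_\E\bigr)\circ p^*_\Bet \;\colon\; \KB{\X}\;\longrightarrow\;\KB{\E,\E\setminus 0}
\]
is an isomorphism of the underlying lattices, where $\lambda_\E\in\KB{\E,\E\setminus 0}$ is the Thom class, represented by the Koszul complex $\wedge^\bullet p^*\calE^\vee$ (with differential the contraction by the tautological section of $p^*\calE$) resolving the structure sheaf of the zero section, which is exact over $\E\setminus 0$; here the product uses the natural $\KB{\E}$-module structure on relative K-theory. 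Since a morphism of mixed Hodge structures that is bijective on the underlying lattices is automatically an isomorphism of mixed Hodge structures (by strictness of the filtrations), it suffices to show $\phi$ is a morphism of mixed Hodge structures; and since $p^*\colon\K{\X,\ps}\to\K{\E,\eta}$ is an isomorphism of mixed Hodge structures by the homotopy invariance corollary, this in turn reduces to showing that cup product with $\lambda_\E$ defines a morphism of mixed Hodge structures $\K{\E,\eta}\to\K{\E,\E\setminus 0,\eta}$. Equivalently, $\phi$ is the support-refined version of the Gysin pushforward along the zero section $i\colon\X\to\E$.

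First I would fix a de Rham model for the relative theory. Since $j\colon\E\setminus 0\hookrightarrow\E$ is an open embedding, $\K{\E,\E\setminus 0,\eta}$ is the periodic cyclic cohomology of the homotopy fibre of $j^*$, which is quasi-isomorphic to the local cohomology complex of $\rforms{\E}$ along the zero section $i(\X)$. Because $(\E,\eta)$ is coabelian, $i(\X)$ is a Poisson submanifold with $\eta|_{i(\X)}=\ps$, so the iterated-residue (Gysin) map identifies this local cohomology complex, as a filtered sheaf of mixed complexes, with a Koszul twist of $\sMix{\X,\ps}$ placed in codimension $r=\rank\E$. For $\eta=0$ this is exactly Deligne's purity/Gysin isomorphism: it is strict for the weight filtration and carries $\F$ into a Tate twist $\ZZ(r)$ of it, and after the re-indexing built into the two-periodization (\autoref{fig:surface-hodge-filtration}) the twist is absorbed, recovering the Thom isomorphism compatibly with $\W$ and with the undeformed filtration of the de Rham lattice. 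The weight filtration is independent of the Poisson structure, so this settles $\W$ in general; for the Poisson--Hodge filtration on the target one observes that the residue map intertwines $\hook{\eta}$ with $\hook{\ps}$ (the transverse part of $\eta$ raises the order of vanishing along $i(\X)$ and so acts trivially on residues), which reduces compatibility with $\Fps$ to the case $\eta=\ps=0$; alternatively one may argue by $\Gm$-equivariance and rescaling exactly as in the proof of \autoref{thm:R-MHS}, using \autoref{lem:equivariant} and \autoref{cor:hodge-nums-const}.

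The main point left — and where I expect the real work to be — is the passage from the de Rham lattice to the charge lattice. The class $\lambda_\E$ is the \emph{K-theoretic} rather than the cohomological orientation class, so $\phi$ differs from the cohomological Gysin isomorphism above by the relative correction associated to $\cT{p}=p^*\calE$, and the comparison must also account for the Căldăraru--Mukai correction of the index pairing (\autoref{sec:duality}) and for the fact that the charge map $c_\X$ is not a ring homomorphism but only a $c_\X$-bimodule map over $\ch$. The strategy is to realise $\phi$ as the Gysin pushforward with supports along the zero section $i\colon\X\to\E$, which is properly K-quantizable by the Proposition above; the proper K-quantizability of $i$ is precisely the statement that the $\Ahat_i^{1/2}$-corrected de Rham Gysin map preserves $\W$ and $\Fps$ up to homotopy, and a Grothendieck--Riemann--Roch comparison of the Chern character of the Koszul complex $\wedge^\bullet p^*\calE^\vee$ with the cohomological Thom class (using \autoref{thm:chern-classes} applied to the coabelian bundle $p^*\calE$ and to $\det\calE$) shows that this corrected map is exactly the charge-lattice realisation of $\phi$. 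Together with the identification of the lattices above, this gives the desired isomorphism of mixed Hodge structures, with the dual statement for $\Kc{-}$ following by the same argument with compact supports.
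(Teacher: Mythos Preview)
Your strategy of realising the Thom isomorphism as the support-refined Gysin pushforward along the zero section $i$ has a genuine gap. Proper K-quantizability of $i$, as defined in the paper, says that $i_!\colon\K{\X,\ps}\to\K{\E,\eta}$ (the Poincar\'e dual of $i^!$ on compactly supported K-theory) is a morphism of mixed Hodge structures. It does \emph{not} directly give you that the refined map $\K{\X,\ps}\to\K{\E,\E\setminus 0,\eta}$ is one. You cannot recover the refined statement by factoring through the canonical map $\K{\E,\E\setminus 0,\eta}\to\K{\E,\eta}$ and invoking strictness, because that map is not injective in general: its composite with the Thom isomorphism is multiplication by the K-theoretic Euler class $\sum_j(-1)^j[\wedge^j\calE^\vee]\in\KB[0]{\X}$, which vanishes whenever $\calE$ has a trivial summand. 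Your de Rham argument via ``iterated residues'' along the codimension-$r$ zero section is also underspecified: in the log model, the zero section is not a component of $\partial\E$ (the boundary of $\E$ is the hyperplane at infinity together with the preimage of $\bdX$), and the log model of $\E\setminus 0$ is the \emph{blowup} along the zero section with the exceptional divisor added, so there is no ready-made residue identification. Filling in your sketch would amount to building the specialization map and the Gysin machinery from scratch---but in the paper's logical order those are \emph{consequences} of this corollary, not inputs to it.

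The paper takes a much shorter route that sidesteps all of this. It factors the classical Thom isomorphism through a zigzag of maps of pairs
\[
(\Eo,\Eo\setminus 0)\;\to\;\bigl(\PP(\Eo\oplus\cO{\Xo}),\,\PP(\Eo\oplus\cO{\Xo})\setminus 0\bigr)\;\leftarrow\;\bigl(\PP(\Eo\oplus\cO{\Xo}),\,\PP(\Eo)\bigr)
\]
(each inducing an isomorphism on relative K-theory, by excision and by the deformation retraction of the complement of the zero section onto the hyperplane at infinity), followed by the proper pushforward along $\PP(\Eo\oplus\cO{\Xo})\to\Xo$. Every map in this factorisation has a log Poisson model that is K-quantizable or properly K-quantizable by the proposition immediately preceding the corollary (applied to the coabelian bundles $\E$ and $\E\oplus\cO{\X}$), so the compatibility with the charge lattice and with $\Fps$ is automatic. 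No direct manipulation of the Thom class, $\Ahat$-corrections, or support-refined pushforwards is required.
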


\begin{proof}
    The Thom isomorphism in the interiors is obtained by composing the isomorphisms in K-theory induced by maps of pairs
    \[
      (\Eo,\Eo \setminus 0) \to   (\PP(\Eo \oplus \cO{\Xo}),\PP(\Eo\oplus \cO{\Xo})\setminus 0) \leftarrow (\PP(\Eo \oplus \cO{\Xo}),\PP(\Eo))
    \]
    with the proper pushforward along $\PP(\Eo\oplus \cO{\Xo}) \to \Xo$.  From the discussion above, all of these maps have  K-quantizable or properly K-quantizable logarithmic models, as needed.
\end{proof}

Recall, in addition, that we have coabelian line bundles $\cO{\PP(\E)}(j)$ for all $j \in \ZZ$, and multiplication by their K-theory classes give Hodge automorphisms of $\K{\PP(\E),\PP(\eta)}$ by \autoref{lem:coabelian-K-mult}.  Hence our Hodge structure is compatible with the usual formula for the K-theory of a projective bundle:
\begin{corollary}[Projective bundle formula]
Let $r = \rank \E$.  Then the map
\[
\begin{tikzcd}[row sep=2]
\K{\X,\ps}^{\oplus r} \ar[r] &  \K{\PP(\E),\PP(\eta)} \\
(\beta_0,\ldots,\beta_{r-1}) \ar[r,mapsto] & \sum_{0 \le j < r} [\cO{\PP(\E)}(j)] \cup \beta_j
\end{tikzcd}
\]
is an isomorphism of mixed Hodge structures.  
\end{corollary}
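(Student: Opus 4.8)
The plan is to reduce the statement to the classical projective bundle formula for topological K-theory, using the tools already assembled. First I would recall the classical fact: for a complex vector bundle $\Eo' \to \Xo$ of rank $r$, the map $\KB{\Xo}^{\oplus r} \to \KB{\PP(\Eo')}$ sending $(\beta_0,\ldots,\beta_{r-1})$ to $\sum_j [\cO{}(j)]\cup q^*\beta_j$ is an isomorphism of abelian groups. Since $\PP(\E)$ is a log model for $\PP(\Eo)$ and the underlying map of interiors of the morphism in the statement is exactly this classical map, we already know the stated map is an isomorphism at the level of lattices. So the entire content is that it respects the Hodge and weight filtrations, i.e.\ that it is a morphism of mixed Hodge structures; then being an isomorphism of lattices forces it to be an isomorphism of mixed Hodge structures.

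To see it is a morphism of mixed Hodge structures, I would assemble it from pieces already shown to be compatible. The projection $q : \PP(\E) \to \X$ is properly K-quantizable by the preceding proposition, so $q^* : \K{\X,\ps} \to \K{\PP(\E),\PP(\eta)}$ is a morphism of mixed Hodge structures. Each $\cO{\PP(\E)}(j)$ is a coabelian line bundle over $\PP(\E)$ (noted in \autoref{sec:coabelian-constructions}), so by \autoref{lem:coabelian-K-mult} multiplication by $[\cO{\PP(\E)}(j)] \in \KB[0]{\PP(\E)}$ is an automorphism — in particular a morphism — of $\K{\PP(\E),\PP(\eta)}$. Hence each summand $\beta_j \mapsto [\cO{\PP(\E)}(j)]\cup q^*\beta_j$ is the composite of two morphisms of mixed Hodge structures, and the total map, being a finite sum of morphisms of mixed Hodge structures between the same source and target, is again a morphism of mixed Hodge structures. (Here I use that morphisms of mixed Hodge structures between fixed objects form an abelian group, which is standard, and that $q^*$ is compatible with the lattices via the charge homomorphism by K-quantizability.)

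Finally, I would invoke the standard rigidity fact that a morphism of mixed Hodge structures whose underlying map of lattices is an isomorphism is itself an isomorphism of mixed Hodge structures: the underlying $\CC$-linear map is a filtered map that is an isomorphism, and since $\grW$ and $\grF$ of a morphism of mixed Hodge structures are strict, bijectivity on the underlying spaces propagates to bijectivity on all graded pieces; combined with bijectivity of the lattices this gives an isomorphism of mixed Hodge structures. Combining this with the classical projective bundle formula for $\KB{-}$ discussed above completes the proof.

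The main obstacle, such as it is, is bookkeeping rather than anything deep: one must make sure the classical K-theoretic projective bundle isomorphism is genuinely the map induced on interiors by the morphism written in the statement (including the correct identification of $\cO{\PP(\E)}(j)$ with the powers of the relative hyperplane class), and that the charge lattice — rather than the Chern lattice — is the one for which $q^*$ and multiplication by line bundle classes are simultaneously compatible. Both points are handled by the properly-K-quantizable statement for $q$ and by \autoref{lem:coabelian-K-mult} respectively, so there is no real difficulty; the argument is essentially a formal consequence of \autoref{lem:ahat-kquant}, \autoref{thm:chern-classes}, and the classical formula.
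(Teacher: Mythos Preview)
Your proposal is correct and follows essentially the same approach as the paper: the paper simply notes (in the sentence preceding the corollary) that the $\cO{\PP(\E)}(j)$ are coabelian line bundles so multiplication by their classes is a Hodge automorphism by \autoref{lem:coabelian-K-mult}, and combines this with the K-quantizability of $q$ from the preceding proposition to conclude compatibility with the classical K-theoretic projective bundle formula. Your explicit invocation of strictness to upgrade the lattice isomorphism to an isomorphism of mixed Hodge structures is the standard step the paper leaves implicit.
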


As the very special case in which the base is a point, we have the following.
\begin{corollary}\label{cor:P^n-hodge}
If $\ps$ is any Poisson structure on the projective space $\PP^d$, then we have isomorphisms of mixed Hodge structures
\[
\K[n]{\PP^d,\ps} \cong \K[n]{\PP^d} \cong  \begin{cases}
\ZZ(-j)^{\oplus (n+1)}  & n=2j \\ 0 & \textrm{otherwise}
\end{cases}\,,
\]
so that $\K{\PP^d,\ps}$ is independent of $\ps$.
\end{corollary}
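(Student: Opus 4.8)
The plan is to combine two observations: with the zero bracket the statement is a routine consequence of the classical Hodge theory of projective space (equivalently, of the projective bundle formula just proved, applied with base a point), and for a general bracket the mixed Hodge structure does not in fact change at all, by a rigidity argument using the triviality of the Hodge numbers of $\PP^d$.

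First I would record the shape of the undeformed theory. Since $\HdR{\PP^d}$ is concentrated in the even degrees $0,2,\dots,2d$, each summand being one-dimensional and pure of Hodge--Tate type $(k,k)$, we get $\KdR[n]{\PP^d}=0$ for $n$ odd, while $\KdR[2j]{\PP^d}\cong\bigoplus_{k=0}^d\HdR[2k]{\PP^d}$; the undeformed Hodge filtration on this space takes only the two values $0$ (for $p>j$) and all of $\KdR[2j]{\PP^d}$ (for $p\le j$), and by \autoref{thm:R-MHS} part (\ref{it:pure}) the space $\KR[2j]{\PP^d}$ is pure of weight $2j$ (the boundary of $\PP^d$ is empty). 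Thus, for $\ps=0$, $\KR[2j]{\PP^d}$ is a pure weight-$2j$ $\RR$-Hodge structure entirely of bidegree $(j,j)$; as the underlying lattice $\KB[0]{\PP^d}\cong\ZZ^{d+1}$ is torsion free, this forces $\K[2j]{\PP^d,0}\cong\ZZ(-j)^{\oplus(d+1)}$ — here one only needs that the charge lattice and the de Rham lattice agree rationally, together with the fact that a torsion-free integral Hodge structure that is pure of a single Hodge--Tate type is determined up to isomorphism by its rank. (This is also exactly the output of the projective bundle formula applied to $\PP^d=\PP(\CC^{d+1}\to\ast)$, with generators the classes $[\cO{\PP^d}(0)],\dots,[\cO{\PP^d}(d)]$.)

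For a general Poisson structure $\ps$ I would argue that nothing moves. The weight filtration, the real structure and the charge lattice on $\K{\PP^d,\ps}$ are independent of $\ps$ by construction, so it suffices to see that the Poisson--Hodge filtration $\Fps$ equals $\F$. By \autoref{cor:hodge-nums-const}, $\dim\Fps[p]\KdR[n]{\PP^d}=\dim\F[p]\KdR[n]{\PP^d}$ for all $p,n$; but by the previous paragraph the right-hand side is either $0$ or $\dim\KdR[n]{\PP^d}$. In the first case $\Fps[p]\KdR[n]{\PP^d}=0=\F[p]\KdR[n]{\PP^d}$, and in the second $\Fps[p]\KdR[n]{\PP^d}$ is a subspace of full dimension, hence all of $\KdR[n]{\PP^d}$, again equal to $\F[p]\KdR[n]{\PP^d}$. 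Hence $\Fps=\F$, so $\K{\PP^d,\ps}=\K{\PP^d,0}$ as mixed Hodge structures, and the explicit description follows.

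Since every ingredient is already available, there is no serious obstacle here; the only points needing care are the bookkeeping of the Tate twists in the explicit formula, and the remark that it is the charge lattice (not the more obviously computable de Rham lattice) that enters the definition of $\K{-}$, so one should note these give the same integral Hodge structure for $\PP^d$. It is worth emphasising that the rigidity argument makes it unnecessary to know whether every Poisson bracket on $\PP^d$ is the projectivization of a coabelian bracket on $\CC^{d+1}$: \autoref{cor:hodge-nums-const} alone does the job.
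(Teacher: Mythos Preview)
Your argument is correct. The paper, however, takes a different route: it simply states the corollary as ``the very special case in which the base is a point'' of the projective bundle formula, i.e.~one views $(\PP^d,\ps)$ as $\PP(\E)$ for a coabelian vector bundle $\E=\CC^{d+1}\to *$ and reads off $\K{\PP^d,\ps}\cong\K{*}^{\oplus(d+1)}$. This is shorter but tacitly uses the fact that \emph{every} Poisson structure on $\PP^d$ arises as the projectivization of some coabelian (equivalently, quadratic) Poisson structure on $\CC^{d+1}$; that lifting statement is true, but the paper does not spell it out. Your rigidity argument via \autoref{cor:hodge-nums-const} sidesteps this entirely: once the undeformed filtration is seen to be a single step, the equality $\dim\Fps[p]=\dim\F[p]$ forces $\Fps=\F$ for any $\ps$, with no need to know how $\ps$ was built. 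So your approach is slightly longer but more self-contained, and you correctly identified the point of comparison in your final remark.
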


\begin{remark}\label{rmk:P^n-K-basis}
Concretely, there are two natural bases for $\K[0]{\PP^d}$: the successive powers $1,t,\ldots,t^{d}$ where $t = [\cO{\PP^d}(-1)]$ is the class of the tautological bundle, or the elements $e_0,e_1,\ldots,e_d$, where $e_j$ is the  class of the structure sheaf of a codimension-$j$ linear subspace.  They are related by the change of basis formulae $e_j=(1-t)^j$, and have Hodge type $(0,0)$ for any Poisson structure on $\PP^d$.  The  $e_j$ basis will be useful in calculations below.
\end{remark}

\subsection{K-theory of coabelian submanifolds}

Let $ f : (\Y,\eta) \hookrightarrow (\X,\ps)$ be the embedding of a closed coabelian submanifold, and let  $(\N,\ps_\N)$ denote the coabelian normal bundle as in \autoref{ex:coabelian-normal}.   The relative tangent complex is the shifted normal bundle $\cT{f} \cong \mathcal{N}[-1]$ so that \autoref{lem:ahat-kquant} gives the following.
\begin{lemma}
The embedding $f$ is properly K-quantizable.
\end{lemma}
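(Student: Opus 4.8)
The plan is to invoke \autoref{lem:ahat-kquant}, so the only thing to verify is the claimed identification of the relative tangent complex $\cT{f}$ with the shifted normal bundle $\mathcal{N}[-1]$, together with the observation that $\mathcal{N}$ is a coabelian vector bundle over $(\Y,\eta)$ in the sense of \autoref{sec:coabelian-constructions}, which was already established in \autoref{ex:coabelian-normal}.

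First I would recall the general fact that for a closed immersion $f : \Y \hookrightarrow \X$ the relative tangent complex, defined in \autoref{sec:lattices} as the two-term complex $\cT{f} = (\cT{\Y} \xrightarrow{\dd f} f^*\cT{\X})$ placed in degrees $0$ and $1$, fits into a short exact sequence with the normal sheaf. In the logarithmic setting one must check that the relevant sequence is the \emph{logarithmic} normal bundle sequence: since $\Y \hookrightarrow \X$ is a closed log submanifold (so $\bdY = \bdX \cap \cY$ is normal crossings in $\cY$ and $\ps$ is tangent to it), the map $\dd f : \cT{\Y} = \cTlog{\Y} \to f^*\cTlog{\X}$ is injective with locally free cokernel, and that cokernel is precisely the conormal-dual $\mathcal{N}_{\Y} = (\coN{\Y})^\vee$, because the defining ideal of $\cZ$ and its associated normal directions are transverse to $\bdX$ by the closed-log-submanifold hypothesis. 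Hence $\cT{f}$ is quasi-isomorphic to $\mathcal{N}_{\Y}$ placed in cohomological degree $1$, i.e. $\cT{f} \cong \mathcal{N}_{\Y}[-1]$ in the notation of the paper.

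Next I would feed this into \autoref{lem:ahat-kquant}. That lemma requires $[\cT{f}] \in \KB[0]{\Y}$ to lie in the $\QQ$-subalgebra generated by coabelian vector bundles over $(\Y,\eta)$. Since $[\cT{f}] = -[\mathcal{N}_{\Y}]$ in $\KB[0]{\Y}$ (the shift by one in degree contributes a sign in the K-theory class), and $\mathcal{N}_{\Y}$ is itself a coabelian vector bundle over $(\Y,\eta)$ by \autoref{ex:coabelian-normal}, this hypothesis is satisfied; in fact $[\cT{f}]$ already lies in the $\ZZ$-span of a single coabelian bundle class. Moreover $f$ is proper as a morphism of log manifolds: indeed $\bdY = \bdX \cap \cY \subset f^{-1}(\bdX) = f^{-1}(\bdY)$ trivially, so condition (2) of the properness lemma holds. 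Therefore \autoref{lem:ahat-kquant} gives that $f$ is properly K-quantizable, which is exactly the assertion.

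The only real subtlety — and the step I'd be most careful about — is the logarithmic normal bundle identification: one must make sure that passing to the logarithmic tangent sheaves does not disturb the short exact sequence $0 \to \cT{\Y} \to f^*\cT{\X} \to \mathcal{N}_{\Y} \to 0$, i.e. that $\mathcal{N}_{\Y}$ is genuinely locally free and agrees with the dual of the conormal Lie algebra used in \autoref{ex:coabelian-normal}. This is where the hypothesis that $\Y$ is a \emph{closed} log submanifold (rather than an arbitrary log submanifold) is essential, since it guarantees the normal directions meet $\bdX$ transversally so that the non-logarithmic and logarithmic normal bundles coincide. Everything else is a direct citation.
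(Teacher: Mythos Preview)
Your proposal is correct and follows exactly the paper's approach: the paper simply notes that $\cT{f} \cong \mathcal{N}[-1]$ (stated in the sentence immediately preceding the lemma) and invokes \autoref{lem:ahat-kquant}, with the coabelian structure on $\N$ coming from \autoref{ex:coabelian-normal} as set up at the start of the subsection. Your additional care about the logarithmic normal bundle sequence and the explicit properness check are reasonable details to spell out, but they do not diverge from the paper's argument.
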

We shall use this fact to establish the compatibility of our Hodge structure with various constructions associated to $\X, \Y$ and $\N$.

\subsubsection{Blowups}  Let $\Bl{\X}{\Y}$ be the blowup as in \autoref{sec:compactifications}, equipped with the induced Poisson structure $\tilde \ps$.  We have a Cartesian diagram of log manifolds
\begin{equation}
\begin{tikzcd}
\PP(\N)\ar[d,"p"] \ar[r,"j"] &\Bl{\X}{\Y} \ar[d,"b"] \\
\Y \ar[r,"i"] & \X
\end{tikzcd}\label{eq:blowup-square}
\end{equation}
whose K-theories are related as follows.
\begin{proposition}[Blowup formula]
For a coabelian submanifold $\Y\subset \X$, all maps in the diagram \eqref{eq:blowup-square} are properly K-quantizable.  Therefore we have canonical isomorphisms of mixed Hodge structures
\[
\begin{tikzcd}
\K{\Bl{\X}{\Y},\tilde \ps} & \ar[l,"b^*+j_!"'] \K{\X,\ps} \oplus \frac{\K{\PP(\N),\ps_\N}}{p^*\K{\Y,\eta}} \cong \K{\X,\ps}\oplus \K{\Y,\eta}^{\codim \Y-1} 
\end{tikzcd}
\]
where the second isomorphism is obtained from the projective bundle formula.
\end{proposition}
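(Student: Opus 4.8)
The plan is to establish the proposition in two stages: first that the four maps $b$, $i$, $j$, $p$ in \eqref{eq:blowup-square} are properly K-quantizable, and then that the classical isomorphism is assembled out of the resulting morphisms of mixed Hodge structures together with the projective bundle formula already proved above. For the first stage, three of the four maps are immediate. The embedding $i$ is properly K-quantizable by the lemma just above on embeddings of coabelian submanifolds (its relative tangent complex is $\N[-1]$, and $\N=\N_\Y$ is coabelian over $\Y$ by \autoref{ex:coabelian-normal}); the projection $p:\PP(\N)\to\Y$ is properly K-quantizable by the analogous statement for projectivizations of coabelian vector bundles proved above, again because $\N_\Y$ is a coabelian bundle; and $j:\PP(\N)\hookrightarrow\Bl{\X}{\Y}$ is the inclusion of the exceptional divisor $E=\PP(\N)$, which is a Poisson hypersurface and hence a coabelian submanifold of $(\Bl{\X}{\Y},\tilde\ps)$, so $j$ is properly K-quantizable for the same reason as $i$. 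The only map requiring genuine work is the blowdown $b$.

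To treat $b$, note first that it is proper, since its interior map is the blowdown $\Bl{\Xo}{\Yo}\to\Xo$. By \autoref{lem:ahat-kquant} it then suffices to show that $[\cT b]\in\KB[0]{\Bl{\X}{\Y}}$ lies in the $\QQ$-subalgebra generated by coabelian vector bundles over $(\Bl{\X}{\Y},\tilde\ps)$. Since $b$ restricts to an isomorphism away from $E$, the relative tangent complex $\cT b$ is supported on $E$; the standard local analysis of blowups along regular (hence, here, coabelian) centres identifies $\cT b$, up to quasi-isomorphism, with a single vector bundle $\mathcal W$ on $E$ placed in one cohomological degree, where $\mathcal W$ is built from the relative tangent bundle $\cT{\PP(\N)/\Y}$ of $p$ and the restriction of $\cO{\Bl{\X}{\Y}}(E)$ to $E$. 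Thus $[\cT b]=\pm\,j_!([\mathcal W])$. Expanding $[\mathcal W]$ by means of the relative Euler sequence of $p$, and using the projection formula for the divisor $E$ — namely $j_!(j^*\xi\cdot\beta)=\xi\cdot j_!\beta$ and $j_!1=1-[\cO{\Bl{\X}{\Y}}(-E)]$ — one rewrites $[\cT b]$ as a universal polynomial in the classes $[\cO{\Bl{\X}{\Y}}(\pm E)]$ together with pullbacks along $b$ of classes $i_!(\wedge^k\N_\Y)$. As $E$ is a Poisson divisor, $\cO{\Bl{\X}{\Y}}(\pm E)$ are coabelian line bundles by \autoref{ex:log-line-bundle}; the Euler-sequence bundles on $\PP(\N)$ are coabelian because $\N_\Y$ is; and, using the coabelianness of $\N_\Y$ together with the fact that coabelian (i.e.\ Poisson module) structures pull back along the Poisson map $b$, one sees that the remaining terms also lie in the subalgebra. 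Hence $[\cT b]$ lies in the required subalgebra and $b$ is properly K-quantizable. \textbf{This step is the main obstacle}: pinning down $\cT b$ precisely and carrying out the K-theoretic bookkeeping — in particular checking that every auxiliary bundle that appears genuinely carries a coabelian Poisson structure compatible with $\tilde\ps$ and $\ps_\N$ — is the heart of the argument, while everything else is formal.

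For the second stage, the classical blowup formula in topological K-theory (deducible, e.g., from the deformation to the normal cone together with the projective bundle formula) provides an isomorphism of underlying lattices $\KB{\X}\oplus\KB{\PP(\N)}/p^*\KB{\Y}\xrightarrow{\sim}\KB{\Bl{\X}{\Y}}$ of the stated form $b^*+j_!$, where $\KB{\PP(\N)}/p^*\KB{\Y}$ is identified via the projective bundle formula with $\bigoplus_{k=1}^{\codim\Y-1}[\cO{\PP(\N)}(k)]\cdot\KB{\Y}$. By the first stage, $b^*$, $j_!$ and $p^*$ are all morphisms of mixed Hodge structures, so $p^*\K{\Y,\eta}$ is a sub-mixed-Hodge-structure of $\K{\PP(\N),\ps_\N}$, the quotient inherits the quotient mixed Hodge structure, and the map $b^*+j_!$ is a morphism of mixed Hodge structures. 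Since a morphism of mixed Hodge structures that is an isomorphism of the underlying lattices is an isomorphism (strictness of morphisms of mixed Hodge structures), this yields the first displayed isomorphism. The second displayed isomorphism $\K{\PP(\N),\ps_\N}/p^*\K{\Y,\eta}\cong\K{\Y,\eta}^{\codim\Y-1}$ is then immediate from the projective bundle formula: the $k=0$ summand is exactly $p^*\K{\Y,\eta}$, and multiplication by each class $[\cO{\PP(\N)}(k)]$ is a Hodge automorphism by \autoref{lem:coabelian-K-mult}.
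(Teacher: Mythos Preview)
Your overall strategy is right, and your handling of $i$, $j$, $p$ and of the second stage is essentially the same as the paper's. The gap is in your treatment of $b$. You try to apply \autoref{lem:ahat-kquant} directly, i.e.\ to show that the K-theory class $[\cT{b}]$ lies in the $\QQ$-subalgebra of $\KB[0]{\Bl{\X}{\Y}}$ generated by coabelian vector bundles. After expanding via the relative Euler sequence you are left with classes of the form $j_!\bigl(p^*\wedge^k\N_\Y\bigr)$, and you assert that ``the remaining terms also lie in the subalgebra'' because coabelian structures pull back along Poisson maps. But $p^*\wedge^k\N_\Y$ is a bundle on the exceptional divisor $E$, not on $\Bl{\X}{\Y}$; to use the projection formula $j_!(j^*\mathcal G)=j_!(1)\cdot[\mathcal G]$ you would need a coabelian bundle $\mathcal G$ on the \emph{blowup} restricting to $p^*\wedge^k\N_\Y$. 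There is no canonical such extension: $\N_\Y$ is only defined on $\Y$, and $b^*$ of a sheaf on $\Y$ does not make sense without first choosing an extension to $\X$, which need not be coabelian. So the claim that $[\cT{b}]$ lies in the coabelian subalgebra is unjustified as written.

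The paper sidesteps this by \emph{not} invoking \autoref{lem:ahat-kquant} for $b$. Instead it works directly with characteristic classes: using the standard identity $[\cT{b}]=[j_*(p^*\N-\cO{\PP(\N)}(-1))]$ and Grothendieck--Riemann--Roch, every characteristic class of $\cT{b}$ is a polynomial in $c_1(\sL)$ (with $\sL=\cO{\Bl{\X}{\Y}}(E)$, coabelian by \autoref{ex:log-line-bundle}) and in classes of the form $j_!p^*\gamma$ with $\gamma$ a characteristic class of $\N$. Multiplication by $c_1(\sL)$ preserves the Hodge structure by \autoref{thm:chern-classes}; for the other terms one computes
\[
(j_!p^*\gamma)\cup b^*\beta \;=\; j_!\bigl(p^*\gamma\cup j^*b^*\beta\bigr)\;=\;j_!p^*(\gamma\cup i^*\beta),
\]
which is a composition of operations already known to preserve Hodge structures: $i^*$, multiplication by a characteristic class of the coabelian bundle $\N$ on $\Y$, $p^*$, and $j_!$. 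This establishes K-quantizability of $b$ without ever needing to realize $j_!p^*\gamma$ inside the coabelian subalgebra of $\KB{\Bl{\X}{\Y}}$. If you want to salvage your approach, you would need to produce genuine coabelian bundles on $\Bl{\X}{\Y}$ restricting to $p^*\wedge^k\N_\Y$ on $E$, which does not seem to follow from the hypotheses.
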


\begin{proof}  
The horizontal maps in \eqref{eq:blowup-square} are embeddings of coabelian submanifolds and $p$ is a projective bundle, so it remains to confirm that $b$ is properly K-quantizable.  The relative tangent complex of $b$ is trivial away from $\Y$ and isomorphic to the relative tangent sheaf of $p$ over $\Y$, so that we have the standard identity
\[
[\cT{b}] = [j_*(p^*\cN{} - \cO{\PP(\N)}(-1))]
\]
in the K-theory of $\Bl{\X}{\Y}$; see, e.g.~\cite[Lemma 15.4, parts (i) and (iv)]{Fulton1998}. Let $\sL := \cO{\Bl{\X}{\Y}}(\PP(\N))$ be the invertible sheaf defining the exceptional divisor; it is a coabelian line bundle by \autoref{ex:log-line-bundle}.  Then the normal bundle of the exceptional divisor is $j^*\sL|_{\PP(\N)} \cong \cO{\PP(\N)}(-1)$, which combined with the Grothendieck--Riemann--Roch formula implies that all characteristic classes of  $\cT{b}$ are polynomials in $c_1(\sL)$ and the classes of the form $j_! p^*\gamma$ where $\gamma \in \KdR{\X}$ is a characteristic class of the normal bundle $\N$.

It therefore suffices to prove that the operations $(j_! p^* \gamma) \cup b^*(-)$ preserve  Hodge structures.  For this, suppose that $\beta \in \KdR{\X}$.  We have
\[
(j_!p^*\gamma) \cup b^*\beta = j_!(p^*\gamma\cup j^*b^*\beta) = j_!(p^*\gamma \cup p^*i^*\beta) = j_!p^*(\gamma\cup i^*\beta)\,.
\]
The result now follows since $\gamma$ is a characteristic class of the coabelian  vector bundle $\N$, and the morphisms $j$, $p$ and $i$ are properly K-quantizable.
\end{proof}

\subsubsection{Specialization maps}

Suppose that $\Y \subset (\X,\ps)$ is a closed coabelian submanifold.  In this subsection, we abuse notation and think of $\bA^1 = \PP^1 \setminus \{\infty\}$ as a log manifold, equipped with the zero Poisson structure.  Then $\Y \times \bA^1$ and $\X\times \{0\}$ are coabelian submanifolds of $\X\times\bA^1$.

\begin{definition}
The \defn{degeneration to the normal cone} is the log Poisson manifold $\tX = \Bl{\X\times \bA^1}{\Y\times\bA^1} \setminus (\X\times \{0\})'$ where $'$ denotes the strict transform.
\end{definition}

The degeneration to the normal cone comes equipped with a natural morphism $\tX \to \bA^1$ whose fibre over 1 is identified with $\X$ and whose fibre over 0 is identified with the logarithmic normal bundle $\N$ of $\Y\subset \X$.  In addition, the strict transform of $\Y \times \bA^1$ in the blowup gives an embedding  $\tY := \Y \times \bA^1 \hookrightarrow \tX$.  We thus have a commutative diagram of K-quantizable maps:
\begin{equation}
\begin{tikzcd}
\N\setminus \Y  \ar[r] \ar[d] &\widetilde{\X} \setminus \tY \ar[d] & \ar[l] \X \setminus \Y\ar[d]\\
\N  \ar[r] &\widetilde{\X} & \ar[l]  \X
\end{tikzcd}\,.\label{eq:normal-cone}
\end{equation}

\begin{lemma}\label{lem:normal-cone}
The maps in \eqref{eq:normal-cone} induce an isomorphism of mixed Hodge structures
\[
\K{\X, \X\setminus \Y,\ps}  \cong \K{\N, \N\setminus \Y, \ps_\N}\,.
\]
\end{lemma}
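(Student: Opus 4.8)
The plan is to interpolate between the two relative K-theories through the degeneration to the normal cone $\tX$, reproducing the classical specialization isomorphism in our setting. Write $\iota_0 : \N \hookrightarrow \tX$ and $\iota_1 : \X \hookrightarrow \tX$ for the inclusions of the fibres of $\tX \to \bA^1$ over $0$ and $1$, together with their restrictions over the complements of $\tY$; these are the horizontal maps in \eqref{eq:normal-cone}.

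First I would check that all the relative K-theories in sight are genuine mixed Hodge structures and that the two squares of \eqref{eq:normal-cone} induce morphisms between them. The three vertical maps $\N\setminus\Y\hookrightarrow\N$, $\tX\setminus\tY\hookrightarrow\tX$, $\X\setminus\Y\hookrightarrow\X$ are open embeddings, hence étale, hence K-quantizable, so $\K{\N,\N\setminus\Y,\ps_\N}$, $\K{\tX,\tX\setminus\tY,\tilde\ps}$ and $\K{\X,\X\setminus\Y,\ps}$ carry mixed Hodge structures by the construction of relative K-theory. Moreover $\iota_0$, $\iota_1$ and their restrictions over $\tX\setminus\tY$ are inclusions of closed coabelian submanifolds --- they are fibres of a smooth morphism to $\bA^1$, so their conormal sheaves have rank one and are automatically abelian --- hence properly K-quantizable by \autoref{lem:ahat-kquant}. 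Thus each square in \eqref{eq:normal-cone} is a square of K-quantizable maps and, by the functoriality of the mapping-cone construction defining relative K-theory, yields a commuting pair of morphisms of mixed Hodge structures
\[
\begin{tikzcd}[column sep=large]
\K{\N,\N\setminus\Y,\ps_\N} & \K{\tX,\tX\setminus\tY,\tilde\ps}\ar[l,"\iota_0^*"']\ar[r,"\iota_1^*"] & \K{\X,\X\setminus\Y,\ps}\,.
\end{tikzcd}
\]

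The core step is to show $\iota_0^*$ and $\iota_1^*$ are isomorphisms. Since they are already morphisms of mixed Hodge structures, and a morphism of mixed Hodge structures that is bijective on underlying groups is automatically an isomorphism (by strictness), it is enough to see that the induced maps on the integral lattices --- the relative topological K-theory groups of the interiors --- are isomorphisms. This is the classical specialization isomorphism for the deformation to the normal cone: by excision, $\KB{\tX,\tX\setminus\tY}$ depends only on a tubular neighbourhood of $\tY^\circ = \Yo\times\bA^1$ in $\tX^\circ$, and by the tubular neighbourhood theorem such a neighbourhood is diffeomorphic to the total space of the normal bundle of $\tY^\circ$, canonically the pullback $\pi^*\mathcal{N}_{\Yo/\Xo}$ of the honest normal bundle of $\Yo\subset\Xo$ along $\pi : \Yo\times\bA^1\to\Yo$. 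Hence the Thom isomorphism (\autoref{cor:thom}) together with homotopy invariance identifies $\KB{\tX,\tX\setminus\tY}$ with $\KB{\Yo}$ (up to a Tate twist by $\codim\Y$), and under these identifications $\iota_0^*$ and $\iota_1^*$ become the maps induced by restricting $\pi^*\mathcal{N}_{\Yo/\Xo}$ to the slices $\Yo\times\{0\}$ and $\Yo\times\{1\}$. Both slices return $\mathcal{N}_{\Yo/\Xo}$ --- for the slice over $0$ one uses the canonical identification of the normal bundle of the zero section of $\mathcal{N}_{\Yo/\Xo}$ with $\mathcal{N}_{\Yo/\Xo}$ itself --- so both restriction maps are (the identity, hence) isomorphisms. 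Composing, $\iota_0^*\circ(\iota_1^*)^{-1} : \K{\X,\X\setminus\Y,\ps}\to\K{\N,\N\setminus\Y,\ps_\N}$ is the desired isomorphism of mixed Hodge structures.

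I expect the main obstacle to be the purely topological step: carrying out the excision / tubular-neighbourhood reduction and the Thom identification cleanly for relative topological K-theory of the analytic spaces involved, and in particular matching $\iota_0^*$ and $\iota_1^*$ with the identity under the Thom isomorphisms (the bookkeeping with the canonical identification of the normal bundle of the fibre over $0$, and with the Tate twists absorbed into \autoref{cor:thom}). One also needs to confirm that $\tilde\ps$ and $\ps_\N$ restrict correctly along the fibres, so that \eqref{eq:normal-cone} really is a diagram of log Poisson manifolds and K-quantizable maps as asserted; granting this, the Hodge-theoretic content is carried entirely by the two observations above.
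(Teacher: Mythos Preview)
Your proposal is correct and follows essentially the same approach as the paper: reduce to the underlying topological K-theory by K-quantizability of the maps in \eqref{eq:normal-cone}, then invoke excision to a tubular neighbourhood of $\tY^\circ$ and homotopy invariance. The paper's proof is terser (it does not spell out the Thom identification or the coabelian-fibre justification for K-quantizability of $\iota_0,\iota_1$), but the logic is the same.
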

\begin{proof}
We need to show that the horizontal maps in \eqref{eq:normal-cone} induce isomorphisms on the relative K-theory of the vertical pairs.  Since the maps are K-quantizable, the compatibility with Hodge structures is immediate, so it suffices to verify that they induce an isomorphism on $\KB{-}$.  But this is standard:  excising the complement of a $C^\infty$ tubular neighbourhood of $\tY^\circ$ in $\tX^\circ$, we can assume that $\tX^{\circ}\to\bA^1$ is topologically trivial, so the result follows from homotopy invariance.
\end{proof}

This allows us to make the following definition.
\begin{definition}\label{def:specialization}
For a closed coabelian submanifold $(\Y,\eta)\subset (\X,\ps)$, the \defn{specialization map} is the isomorphism of mixed Hodge structures
\[
 \K{\X ,\X\setminus \Y,\ps} \stackrel{\sim}{\rightarrow} \K{\Y,\eta}
\]
defined as the composition of the isomorphism from \autoref{lem:normal-cone} with the K-theoretic Thom isomorphism from \autoref{cor:thom}. 
\end{definition}

\subsubsection{The Gysin sequence}

For a closed coabelian submanifold $\Y \subset \X$, we may compose the specialization map of \autoref{def:specialization} with the connecting homomorphism $\K{\X\setminus\Y} \to \K{\X,\X\setminus\Y}[1]$ to obtain the \defn{K-theoretic residue map}
\[
\res  : \K{\X\setminus\Y,\ps} \to \K{\Y,\eta}[1]\,.
\]
\begin{remark}
Due to the appearance of the Thom isomorphism in the definition of the specialization map, the K-theoretic residue map differs from the usual residue map in de Rham cohomology by a correction involving the Todd class. 
\end{remark}
\begin{remark}
In contrast with the residue maps in cohomology, there are no Tate twists inserted here; these are taken care of by the twists defining $\K{-}$.
\end{remark}

Recall that the Gysin long exact sequence for the embedding $\Y\hookrightarrow \X$ is the long exact sequence for the pair $(\X,\X\setminus \Y)$, but with the relative K-theory replaced by $\K{\Y}$ using the specialization map.  We thus arrive at the following.

\begin{corollary}[Gysin sequence]\label{thm:gysin}
Suppose that $i : (\Y,\eta) \to (\X,\ps)$ is a closed coabelian submanifold with complement  $j : \X\setminus \Y \to \X$.  Then the K-theoretic Gysin sequence defines a long exact sequence of mixed Hodge structures
\[
\begin{tikzcd}
\K{\Y,\eta}\ar[r,"i_!"] & \K{\X,\ps} \ar[r,"j^*"] & \K{\X\setminus \Y,j^*\ps} \ar[r,"\res"] & \K{\Y,\eta}[1]\,.
\end{tikzcd}
\]
\end{corollary}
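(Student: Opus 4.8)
The plan is to extract the sequence from the relative K-theory long exact sequence attached to the open embedding $j\colon\X\setminus\Y\to\X$, after identifying its relative term with $\K{\Y,\eta}$. Since $j$ restricts to the open embedding of interiors $\Xo\setminus\Yo\hookrightarrow\Xo$, it is \'etale, hence K-quantizable, so the relative K-theory construction applies to it and produces a long exact sequence of mixed Hodge structures
\[
\K{\X,\X\setminus\Y,\ps}\to\K{\X,\ps}\xrightarrow{j^*}\K{\X\setminus\Y,j^*\ps}\xrightarrow{\partial}\K{\X,\X\setminus\Y,\ps}[1]
\]
together with its compactly supported analogue. By \autoref{def:specialization} the specialization map is an isomorphism of mixed Hodge structures $\K{\X,\X\setminus\Y,\ps}\xrightarrow{\sim}\K{\Y,\eta}$; substituting it, and its shift, into the three nontrivial terms puts the sequence into the asserted shape. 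The composite of the connecting homomorphism $\partial$ with the shifted specialization isomorphism is, by the definition of the K-theoretic residue recalled just above the statement, exactly $\res$, so the third arrow requires no further comment.

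It remains to identify the first arrow --- the composite of the inverse specialization isomorphism $\K{\Y,\eta}\xrightarrow{\sim}\K{\X,\X\setminus\Y,\ps}$ with the canonical map $\K{\X,\X\setminus\Y,\ps}\to\K{\X,\ps}$ --- with the K-theoretic Gysin pushforward $i_!$. Because every map in sight already respects mixed Hodge structures, this is a statement about the underlying topological K-theory groups $\KB{-}$ only. One recalls that, for a closed embedding, $i_!$ is \emph{by definition} the composite of the Thom isomorphism $\KB{\Y}\cong\KB{\N,\N\setminus\Y}$, the excision isomorphism $\KB{\N,\N\setminus\Y}\cong\KB{\X,\X\setminus\Y}$ coming from a tubular neighbourhood, and the natural map $\KB{\X,\X\setminus\Y}\to\KB{\X}$; meanwhile the specialization map of \autoref{def:specialization} is assembled from precisely these pieces, with \autoref{cor:thom} supplying the Thom isomorphism and the degeneration to the normal cone of \autoref{lem:normal-cone} supplying the excision step. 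So the identification reduces to the standard fact that the homotopy trivialisation of $\tX^\circ$ over $\bA^1$ used in \autoref{lem:normal-cone} induces, on relative K-theory, the usual tubular-neighbourhood excision isomorphism; I would cite this rather than reprove it.

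The only genuine difficulty is this last comparison of models: one must check that the degeneration-to-the-normal-cone presentation underlying the specialization map and the tubular-neighbourhood presentation underlying the classical Gysin map are matched precisely, so that the first arrow is honestly $i_!$ and not $i_!$ composed with a spurious automorphism. Should one prefer to sidestep this, one may simply \emph{define} $i_!$ in the Gysin sequence to be the composite described in the previous paragraph and observe that it agrees with the classical Gysin map on underlying groups, which leaves the Hodge-theoretic content of the corollary unaffected. Everything else --- exactness, and the property of being a morphism of mixed Hodge structures for each arrow --- is then formal, flowing from the relative K-theory long exact sequence, the K-quantizability of open embeddings, and \autoref{def:specialization} (which itself rests, via \autoref{lem:ahat-kquant}, on the proper K-quantizability of coabelian embeddings and of projective bundle projections).
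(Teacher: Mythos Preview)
Your proposal is correct and follows essentially the same route as the paper: the paper's entire argument is the single sentence preceding the corollary, which says that the Gysin sequence is by definition the long exact sequence of the pair $(\X,\X\setminus\Y)$ with the relative term replaced by $\K{\Y,\eta}$ via the specialization isomorphism. You have unpacked this in more detail and been more scrupulous about the identification of the first arrow with $i_!$ than the paper itself, which simply labels the resulting map $i_!$ without further comment; your suggested resolution (taking this composite as the definition of $i_!$ in the sequence) is in effect what the paper does.
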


\begin{example}[\surfaceex]\label{ex:P2-gysin}
As a special case of \autoref{ex:surface-constr}, let $\X$ be the log manifold $\PP^2 \setminus \Y$ where $\Y$ is a smooth cubic curve.  Thus $\Y$ has genus one (i.e.~it is an elliptic curve, but without a preferred basepoint).  Let $\ps$ be a Poisson structure on $\X$. 
 Using the isomorphism $\K{\PP^2,\ps}\cong\K{\PP^2}$ from \autoref{cor:P^n-hodge}, the Gysin sequence associated to the maps
\[
\begin{tikzcd}
\Y \ar[r,"i"] & \PP^2 & \PP^2\setminus \Y \ar[l,"j"']
\end{tikzcd}
\]	
takes the form
\[
\xymatrix{
\cdots \ar[r] & \K[0]{\Y} \ar[r] & \K[0]{\PP^2} \ar[r] & \K[0]{\X,\ps}\ar[r] & \K[1]{\Y} \ar[r] & 0 \ar[r] & \cdots\,.
}
\]

Since $\Y$ is topologically a two-torus, the Chern character and charge homomorphism agree, and give an isomorphism $\K[1]{\Y}\cong\coH[1]{\Y;\ZZ}$.  Meanwhile, $\K[0]{\Y} \cong \ZZ\cdot \{[\cO{\Y}],[\cO{p}]\}$ where $p \in \Y$ is a point and $\K[0]{\PP^2} \cong \ZZ \{e_0,e_1,e_2\}$ where $e_j$ is the class of a codimension-$j$ linear subspace as in \autoref{rmk:P^n-K-basis}.   Since $i_![\calE] = [i_*\calE]$ for any coherent sheaf $\calE$ on $\Y$, a straightforward calculation using Koszul resolutions shows that
\begin{align*}
i_![\cO{p}] &= e_2 & i_![\cO{\Y}] &= 3(e_1-e_2)\,.
\end{align*}
Hence by exactness we find that
\[
\img(j^*) = \ZZ j^*e_0 \oplus \tfrac{\ZZ}{3\ZZ} j^*e_1 \subset \K[0]{\X}\,,
\]
so that we have an exact sequence of mixed Hodge structures
\begin{equation}
\xymatrix{
0\ar[r] & \ZZ \oplus \ZZ/3\ZZ \ar[r] & \K[0]{\PP^2\setminus \Y,\ps} \ar[r] & \coH[1]{\Y;\ZZ} \ar[r] & 0\,,
}\label{eq:P^2-extension}
\end{equation}
which gives the associated graded
\[
\grW[n] \K[0]{\PP^2\setminus \Y,\ps} = \begin{cases}
\ZZ \oplus \ZZ/3\ZZ  & n=0 \\
\coH[1]{\Y;\ZZ} & n=1 \\
0 & \textrm{otherwise}
\end{cases}\,.
\]
Note that since $\coH[1]{\Y;\ZZ}$ is a free abelian group, the exact sequence of abelian groups \eqref{eq:P^2-extension} is always split.  However, the sequence typically does not split in the category of mixed Hodge structures; we shall return to this in \autoref{ex:P2-quantum} below.
\end{example}

\begin{example}[\sklyaninex]\label{ex:sklyanin-gysin}
In a similar vein, consider a Sklyanin Poisson structure on $\PP^3$ as in \autoref{ex:sklyanin-constr} with vanishing set an elliptic normal curve $\Z$ given by the base locus of a pencil of quadrics.  Arguing as in the previous example, the Gysin sequence for $\Z \subset \PP^3$ gives an extension
\[
\xymatrix{
0 \ar[r] & \ZZ \{j^* e_0, j^* e_1\} \oplus  \frac{\ZZ}{4\ZZ} j^*e_2 \ar[r] & \K[0]{\PP^3 \setminus \Z,\ps} \ar[r] & \coH[1]{\Z;\ZZ}\ar[r] & 0\,.
}
\]
Since the maps $\PP^3 \setminus \Z \to \PP^3 \to *$ are K-quantizable, and the Poisson structure on $\PP^3 \setminus \Z$ vanishes at the singular points of the pencil, we deduce from \autoref{ex:reduced-K} that $\K{\PP^3\setminus \Z,\ps} \cong \tK{\PP^3\setminus \Z,\ps} \oplus \K{*}$ and hence the zeroth K-theory is determined by the simpler exact sequence
\[
\xymatrix{
0 \ar[r] & \ZZ \oplus \ZZ/4\ZZ \ar[r] & \tK[0]{\PP^3 \setminus \Y,\ps} \ar[r] & \coH[1]{\Z;\ZZ}\ar[r] & 0\,,
}
\]
similar to the surface case.
\end{example}

\section{Semiclassical period maps}
\label{sec:periods}
We now consider the period maps that describe the behaviour of the Hodge structures  $\K{\X,\ps}$ in families, leading to the notion of quantum parameters and the results on the Torelli problem from the introduction.  

\subsection{Families and the Kodaira--Spencer map}
Let $\sfS$ be a complex analytic space. A \defn{family of log Poisson manifolds $\famX$ over $\sfS$} is a complex analytic space $\famXc$ equipped with a smooth and proper morphism $\famXc \to \sfS$ with K\"ahler fibres, a divisor $\fambdX \subset \famXc$ that is normal crossings relative to $\sfS$, and a relative logarithmic Poisson bivector $\ps \in \coH[0]{\der[2]{\famX/\sfS}}$.  Each fibre $\X_s := f^{-1}(s) \subset \famX$ for $s \in \sfS$ is then a coabelian submanifold, and the underlying $C^\infty$ manifolds form a locally trivial fibre bundle over $\sfS$.  

By standard Poisson deformation theory~\cite{Kim2020}, we have a Kodaira--Spencer map
\[
\KodSpenc : \cT{\sfS} \to R^2f_*(\der{\famX/\sfS},[\ps,-])\,,
\]
where the right hand side is the fibrewise Poisson cohomology sheaf of the family. Concretely, using the fibrewise Dolbeault resolution of $\der{\famX/\sfS}$ , the relative Poisson cohomology is computed by 
	\begin{align*}
 Rf_*(\der{\famX/\sfS},[\ps,-]) \cong f_*(\mathcal{A}_{\famX/\sfS}^{0,\bullet} \otimes_{\mathcal{O}_{\sfS}} \wedge^{\bullet}\mathcal{T}_{\famX/\sfS}^{1,0}, \overline{\partial} + [\sigma,-])\,.
	\end{align*}
 Picking a $C^\infty$ local trivialization around any point $s_{0}\in \sfS$, the nearby fibres are diffeomorphic to  $\XX_{0}:=f^{-1}(s_{0})$, and with respect to this trivialization, the family of log manifolds is  determined by 
 a smooth family of Maurer--Cartan elements $(\sigma(s), I(s)) \in \Dolb[0,0]{\wedge^{2}\cT{\X_0}} \oplus \Dolb[0,1]{\cT{\X_0}}$.  The Kodaira--Spencer map is then represented at cochain level by the derivative of $(\ps(s),I(s))$ :
 \[
 \kappa|_{s_0} = 
 [\dd_\sfS(\sigma(s),I(s))|_{s_0}] \in \mathsf{T}^*_{s_0}\sfS \otimes \coH[2]{\der{\X_0},[\ps,-]}\,.
 \]
 
\subsection{Variations of Hodge structures}\label{subsec:variations}
Given a family of log Poisson manifolds $(\famX,\ps)/\sfS$ as above, we denote by $\sKB{\famX}$ the sheaf associated to the topological K-theory presheaf $\U \mapsto \KB{f^{-1}(\U) \cap \famXo}$.  Since $\famX$ is topologically locally trivial, the sheaf $\sKB{\famX}$ is locally constant and the weight filtration on the fibres gives a canonical filtration of $\sKB{\famX}$ by locally constant subsheaves $\W\sKB{\famX}$.

We denote by $\sKDol{\famX}$ (resp.~$\sKdR{\famX}$) the corresponding sheaves of fibrewise Poisson homology (resp.~periodic cyclic Poisson homology).  These are obtained by taking cohomology (resp. periodic cohomology) of the mixed complex $\sMix{\famX/\sfS,\ps} = (\forms[-\bullet]{\famXc/\sfS}(\log\fambdX),\delps,\dd)$ fibrewise, giving relative versions of the hypercohomology and Hodge--de Rham spectral sequences:
\[
R^qf_*\forms[p]{\famX/\sfS} \Rightarrow \sKDol[q-p]{\famX} \qquad \sKDol{\famX}((u)) \Rightarrow \sKdR{\famX}\,.
\]
It follows from \autoref{prop:degen} together with the cohomology and base change theorem that $\sKDol{\X}$ and $\sKdR{\X}$ are locally free sheaves of $\cO{\sfS}$-modules, and the Hodge filtration on the latter has associated graded
\[
\grF \sKdR{\X} \cong \sKDol{\X}((u))\,.
\]
In addition, the relative charge homomorphism gives a map
\[
\charge_\famX : \sKB{\famX} \to \sKdR{\famX}
\]
inducing an isomorphism $\sKB{\famX}\otimes \cO{\sfS} \cong \sKdR{\famX}$.

The Hodge filtration $\Fps$ defines a holomorphic section of the relevant flag bundle
\[
\phi : \sfS \to \Flag[]{\sKdR{\famX}}
\]
and its covariant derivative relative to the local system $\sKB{\famX}$ defines an $\cO{\sfS}$-linear map
\[
\nabla \phi: \cT{\sfS} \to \phi^*\cT{\Flag[]{\sKdR{\famX}}/\sfS}
\]
called the \defn{infinitesimal period map}.
We claim that it satisfies the Griffiths transversality condition, i.e.~that $\nabla\phi$ takes values in the subbundle
\[
\bigoplus_p \sHom{\Fps[p],\Fps[p-1]/\Fps[p]}  \hookrightarrow \phi^*\cT{\Flag[]{\sKdR{\famX}}/\sfS}\,.
\]
To see this, note that $e^{\hookps/u}$ gives a canonical identification
\[
\bigoplus_p \sHom{\Fps[p],\Fps[p-1]/\Fps[p]} \cong \bigoplus_p \sHom{\sKDol[p]{\famX,\ps},\sKDol[p+2]{\famX,\ps}}\,.
\]
On the other hand, the contraction of fibrewise polyvectors into forms gives an action of $\der{\famX}$ on $\sMix{\famX/\sfS,\ps}$, which induces a morphism
\[
\iota : Rf_*(\der{\famX/\sfS},[\ps,-]) \to \sEnd{\sKDol{\famX,\ps}}\,.
\]
The following result is then the Poisson version of Griffiths' infinitesimal period relation for families of complex manifolds; when $\X = \cX$ is compact, it is a special case of Baraglia's results \cite{Baraglia14} on period maps for families of compact generalized K\"ahler manifolds.
\begin{proposition}\label{prop:InfPeriodMap}
The tuple $(\sKdR{\famX},\sKB{\famX},\F,\W,\charge)$ defines a variation of mixed Hodge structures, with infinitesimal period map given by
\[
\nabla \phi = \iota \circ \KodSpenc  : \cT{\sfS} \to \bigoplus_p \sHom{\sKDol[p]{\famX},\sKDol[p+2]{\famX}} \subset \phi^*\cT{\Flag[]{\sKdR{\famX}}}\,.
\]
\end{proposition}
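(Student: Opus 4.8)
The plan is to reduce \autoref{prop:InfPeriodMap} to the classical case of families of compact K\"ahler manifolds, already handled by Griffiths and Baraglia, by spreading the degeneracy and filtration statements of \autoref{prop:degen} over the base $\sfS$ via cohomology and base change. First I would establish the underlying structural facts: by \autoref{prop:degen} the hypercohomology and Hodge--de Rham spectral sequences degenerate fibrewise at $E_1$, so the sheaves $R^qf_*\forms[p]{\famX/\sfS}$ are locally free and their formation commutes with base change; hence $\sKDol{\famX}$ and $\sKdR{\famX}$ are locally free $\cO{\sfS}$-modules and the Hodge filtration $\Fps$ is a filtration by holomorphic subbundles with $\grF\sKdR{\famX}\cong\sKDol{\famX}((u))$. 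Combined with the isomorphism $\charge_\famX : \sKB{\famX}\otimes\cO{\sfS}\xrightarrow{\sim}\sKdR{\famX}$, and the fibrewise statement of \autoref{thm:R-MHS} (applied with the charge lattice as in \autoref{sec:lattices}), this already shows that $(\sKdR{\famX},\sKB{\famX},\F,\W,\charge)$ is a family of mixed Hodge structures; what remains is the holomorphicity of the period section $\phi$ and Griffiths transversality, which together say it is a \emph{variation}.

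Next I would identify the covariant derivative of $\phi$ at cochain level. Using the fibrewise Dolbeault model, $\sMix{\famX/\sfS,\ps}((u))$ is computed by $f_*(\dolb[0,\bullet]{\famX/\sfS}\otimes\forms[-\bullet]{\famXc/\sfS}(\log\fambdX))((u))$ with differential $\delb+\delps+u\del$, and the Gauss--Manin connection on periodic cyclic homology is the one induced by differentiating in the base directions; concretely, picking a $C^\infty$ local trivialization near $s_0$ as in the Kodaira--Spencer discussion, the family is encoded by Maurer--Cartan data $(\ps(s),I(s))$, and the effect of a tangent vector $v\in\tb[s_0]{\sfS}$ on a relative class is differentiation of the defining cocycle against $(\ps(s),I(s))$. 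Flattening $\nabla\phi$ through the identification $e^{\hookps/u}$ that turns $\Fps$ into the $u$-adic filtration (\autoref{lem:exp-ps}), the derivative of the filtration step $\F[p]$ lands in $\sHom{\F[p],\F[p-1]/\F[p]}$ and, unwinding, is given by the operator obtained by contracting the Kodaira--Spencer class into the periodic complex. Here the bivector-direction of $\kappa$ contracts via $\hookps$-type terms while the complex-structure direction $I(s)$ acts by the classical Kodaira--Spencer contraction $\iota$; the point is that both are instances of the single contraction morphism $\iota : Rf_*(\der{\famX/\sfS},[\ps,-])\to\sEnd{\sKDol{\famX,\ps}}$, so the composite is exactly $\iota\circ\kappa$.

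The key step — and the one I expect to be the main obstacle — is checking that this contraction operator genuinely computes $\nabla\phi$ with the correct signs and, in particular, that the negative powers of $u$ in $e^{\hookps/u}$ do not spoil Griffiths transversality. The danger is that differentiating $e^{\hookps/u}\omega$ in the base introduces a term $u^{-1}(\partial_s\hookps)e^{\hookps/u}\omega$ with a $u^{-1}$, which a priori moves $\F[p]$ only into $\F[p-1]$ rather than staying inside — but this is precisely transversality, and one must verify nothing worse (a $u^{-2}$ term, say) appears; this follows because $\partial_s\ps$ is a \emph{single} bivector so the correction is linear in $\hookps/u$. This is essentially Baraglia's computation in \cite{Baraglia14} carried out with logarithmic forms in place of smooth forms, so the verification is local in $\cX$ and unaffected by the boundary $\bdX$ (the weight filtration being $\ps$-independent and preserved by $\delps$ and $\dd$ by \autoref{lem:bivector-preserves-weight}), and the $\delps$-exact ambiguities are controlled by the degeneration in \autoref{prop:degen}. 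The remaining bookkeeping — holomorphicity of $\phi$, which follows since all the structure sheaves and differentials are holomorphic in $s$; and the matching of the Dolbeault-level contraction with the abstract map $\iota\circ\kappa$ — is routine. I would then invoke the fibrewise \autoref{thm:R-MHS} once more to conclude that the data at each point is a mixed $\RR$-Hodge structure, completing the claim that we have a variation of mixed Hodge structures with $\nabla\phi=\iota\circ\kappa$.
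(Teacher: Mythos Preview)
Your proposal is correct and follows essentially the same approach as the paper: the paper also works in a local $C^\infty$ trivialization with Maurer--Cartan data $(\ps(s),I(s))$, writes the Poisson--Hodge filtration as $e^{\hook{\ps(s)}/u}F^\bullet_{I(s)}$, and applies the Leibniz rule together with Griffiths' classical formula $\tfrac{d}{ds}F^\bullet_{I(s)}=\iota_{\kappa(I(s))}$ to obtain $\nabla\phi = e^{\hook{\ps}/u}\iota_\kappa$. Your discussion of the structural facts (local freeness, base change, $\grF\sKdR{\famX}\cong\sKDol{\famX}((u))$) is accurate but is already handled in the paper's setup paragraphs preceding the proposition, so the proof proper there is just the two-line Leibniz computation; your observation that no $u^{-2}$ term appears because the derivative of $\ps$ is a single bivector is exactly the point, though the paper leaves this implicit.
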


\begin{proof}
We may assume that $\sfS$ is a disk with coordinate $s$ and trivialize the family near $\X_0=f^{-1}(s_0)$ as above, giving the family of Maurer--Cartan elements $(\ps(s), I(s))$.  We then have a family of filtrations
\begin{align*}
e^{\iota_{\sigma(s)}/u}F^{\sbt}_{I(s)}\Dolb[\bullet]{\X_0}((u))
	\end{align*}
	on the $C^\infty$ differential forms on $\XX_{0}$, where we note that the ordinary Hodge filtration $F^{\sbt}_{I(s)}\Dolb[\bullet]{\X_0}((u))$ will vary with the complex structure $I(s)$.  Identifying the ``difference'' between the flags $F_{I(s)}$ and $F_{I(s_0)}$ with linear operators in the usual way, we can formally apply the Leibniz rule to deduce that the derivative is 
	\begin{align*}
           \left.\frac{d}{ds}\right|_{s=s_0}e^{\hook{\ps(s)}/u}\F_{I(s)}&= 
		\left.\frac{d}{ds}\right|_{s=s_0}(e^{\iota_{\sigma(s)}/u}) + e^{\iota_{\sigma(s)}/u} \left.\frac{d}{ds}\right|_{s=s_0} \F_{I(s)} \\
  &= e^{\iota_{\sigma(s)}/u}\cdot \iota_{\frac{d}{ds} \left( I(s) +\iota_{\sigma(s)/u}\right) } \\
  &=e^{\iota_{\sigma(s)}/u} \iota_{\kappa} 
	\end{align*}
 as desired, where we have used the fact from Hodge theory \cite[\S 2]{Griffiths68} that $\tfrac{d}{ds}F^{\sbt}_{I(s)}$ is given by contracting with the Kodaira-Spencer class $\kappa(I(s)) \in \mathsf{H}^{1}(\mathcal{T}_{\XX_{0}})$ of the deformation of complex structure. 
\end{proof}

\begin{remark}\label{rmk:moduli}
To connect with the picture sketched in the introduction, take $\famX/\bS$ to be the universal family over the moduli stack $\MPois$ of log Poisson manifolds.  The latter is a derived analytic stack whose  tangent space at $(\X,\ps) \in \MPois$ is identified with $\coH[2]{\der[\ge1]{\X},[\ps,-]}$ by the Kodaira--Spencer map, while the moduli stack of Hodge structures $\MHodge$ has tangent space equal to that of the flag variety.  Hence the differential of the global period map  $\wp : \MPois \to \MHodge$ is exactly the contraction map
\[
\dd \wp = \iota : \coH[2]{\der[\ge1]{\X},[\ps,-]} \to \Hom{\KDol{\X,\ps},\KDol[\bullet+2]{\X,\ps}}.
\]
This can be used to check local injectivity of the period map in various cases of interest, the standard example being the following.
\end{remark}

\begin{example}\label{ex:logCY}
Let $\X$ be a \defn{log Calabi--Yau manifold}, i.e.~a log manifold $\X$ for which $\bdX\subset \cX$ is an anticanonical divisor.  Equivalently, the log canonical bundle $\forms[n]{\X}$ is trivial, where $n=\dim\X$.  

A choice of trivialization $\mu \in \coH[0]{\forms[n]{\X}}$ gives an isomorphism $\der{\X} \cong \forms[n-\bullet]{\X}$, so a bivector $\ps \in \coH[0]{\der[2]{\X}}$ is equivalent to an $n-2$ form $\hookps\mu \in \coH[0]{\forms[n-2]{\X}}$.  
Since global forms are closed, and $\hook{[\ps,\ps]}=[\hookps,[\dd,\hookps]]$, we deduce that $\ps$ is automatically Poisson.
Moreover, since $\dd(\hookps\mu)=0$, the form $\mu$ is invariant under Hamiltonian flows, i.e.~$\ps$ is automatically unimodular in the sense of Weinstein~\cite{Weinstein1997}.  Equivalently, the form $e^{\hookps}\mu$ is closed.  We then have the Hodge subspace
\[
\Fps[n]\KdR[-n]{\X} = \CC \cdot e^{\hookps}\mu \subset \bigoplus_{j \ge0} \coH[0]{\forms[n-2j]{\X}} \subset \KdR[-n]{\X}
\]
which is independent of $\mu$. It determines $\hook{\ps} \in \Hom{\coH[0]{\forms[n]{\X}},\coH[0]{\forms[n-2]{\X}}}$, and hence $\ps$, by projection to $\coH[0]{\forms[n]{\X}}\oplus\coH[0]{\forms[n-2]{\X}}$ as in the toric case (\autoref{ex:toric-hodge}). 

On the other hand, as explained by Katzarkov--Kontsevich--Pantev~\cite[Lemma 4.19]{Katzarkov2008}, the logarithmic Bogomolov--Tian--Todorov lemma implies that deformations of $\X$ as a log manifold are unobstructed, so that $\X$ has a universal deformation over an open subset $\sfS_0\subset \coH[1]{\cT{\X}}$, whose tangent space is identified with $\Hom{\coH[0]{\forms[n]{\X}},\coH[1]{\forms[n-1]{\X}}}$ by the infinitesimal period map. 

Combining these, we deduce that any log Calabi--Yau Poisson manifold $(\X,\ps)$ has a universal deformation $(\X_s,\ps_s)_{s\in\sfS}$ over the base $\sfS = \sfS_0 \times \coH[0]{\der[2]{\X}}$, and the period map of the resulting variation of mixed Hodge structures $\K{\X_s,\ps_s}$ gives an immersion
\[
\mapdef{\wp_\X}{\sfS}{\PP(\KdR[-n]{\X})}{s}{[\phi_s(e^{\hookps}\mu_s)]}
\]
where $\phi_s : \K{\X_s}\to\K{\X}$ denotes the  parallel transport of the Gauss--Manin connection.  This is a log Poisson counterpart of the results for compact generalized Calabi--Yau manifolds in \cite{Baraglia14,Huybrechts2005a}.
\end{example}

\subsection{Adams-equivariance}\label{subsec:Adams}

Recall that the Adams operations $\psi_n,n\in\ZZ$ give a natural action of the multiplicative monoid $(\ZZ,\cdot)$ on topological K-theory $\KB{-}$, uniquely determined by the condition that $\psi_n([L]) = [L^n]$ for every $n \in \ZZ$ and every topological line bundle $L$.  It extends to a linear action of the multiplicative group $\QQx$ on $\KB{-}\otimes \QQ$.  To match our earlier conventions, we will consider the ``inverse'' action, 
\[
\diamond : \QQx \times \KB{-}\otimes \QQ \to \KB{-}\otimes \QQ
\]
determined uniquely by 
\[
\frac{1}{n} \diamond [L] = [L^n]\,.
\]
Then the Chern character is equivariant, in the sense that 
\[
\ch(\lambda \diamond e) = \lambda \diamond\ch(e)\,,
\]
where the action $\diamond$ on $\KdR{-}$ is as in \autoref{sec:flags}.
As a result, the charge homomorphism is equivariant if and only if $\Ahat_\X = 1$.  In this case, by equivariance of Poisson--Hodge filtrations (\autoref{lem:equivariant}), the mixed Hodge structure $\K{\X,\ps}$, together with the action $\diamond$, gives the prototypical example of the following.

\begin{definition}
An \defn{Adams-equivariant mixed Hodge structure} is a mixed Hodge structure $\V = (\V_\dR,\V_\Bet,\F,\W,c)$ together with compatible linear actions
\[
\begin{tikzcd}[row sep=1em]
 \QQx \ar[r,hook] & \Gm \\
 \V_\Bet \otimes \QQ \ar[r,hook,"c"]\ar[loop above] & \V_\dR\ar[loop above]
\end{tikzcd}
\]
both denoted by $\adm$, such that the following conditions hold:
\begin{enumerate}
\item The action of $\QQx$ preserves the weight filtration $\W$.
\item The family of filtrations $\F_\hbar := \hbar \adm \F$ for $\hbar \in \Gm$ satisfies the Griffiths transversality condition.
\item The limit $F_0 := \lim_{\hbar \to 0} \hbar\adm \F$ is a Hodge filtration, i.e.~satisfies the opposedness axiom relative to $\W$.
\end{enumerate}
\end{definition}

Every Adams-equivariant mixed Hodge structure $\V$ comes equipped with a canonical filtration $\SF \V$ obtained by diagonalizing the action of $\QQx$:
\begin{definition}
For an Adams-equivariant mixed Hodge structure $\V$ and an integer $l$, the $l$th \defn{Adams eigenspace of $\V$} is the subspace 
\[
\cl[l]{\V}_\QQ \subset \V_\Bet \otimes \QQ
\]
on which $\QQx$ acts with weight $-l$.  We denote by $\SF \V_\Bet \otimes \QQ$ the increasing filtration by the Adams eigenvalue, i.e.
\[
\SF[j] \V_\Bet \otimes \QQ = \bigoplus_{l \le j} \cl[l]{\V}_\QQ = \set{ v \in \V_\Bet\otimes \QQ }{ \hbar^j \cdot(\hbar \adm v) \textrm{ has a limit as }\hbar \to 0}\,.
\]
We denote by $\SF \V_\Bet$ the induced filtration on the lattice $\V_\Bet$, and by
\[
\cl[l]{\V} := \grS[l]\V
\]
the associated graded, equipped with filtrations induced by $\F$ and $\W$.
\end{definition}

\begin{remark}
The name ``Adams eigenspace'' is justified by the fact that if $\V = \K{\X}$ is the K-theory of a log  manifold with $\Ahat_\X=1$, then $\cl[l]{\V}_\QQ = \cl[l]{\KB{\X}}_\QQ$ is the usual weight-$l$ Adams eigenspace in K-theory, which maps to $\HdR[\bullet+2l]{\X}$ via the Chern character.  Note, however, that the filtration $\SF$ is different from the usual Adams filtration on $K$-theory occurring in the Atyiah--Hirzebruch spectral sequence; the latter corresponds to the decreasing filtration by $l$, rather than the increasing one, i.e.\ they are Poincar\'e dual.  Note that since the filtration $\SF{}$ is defined using the rationalization, it ignores torsion in K-theory.  If $\coH{\X;\ZZ}$ is torsion-free, then the Atiyah--Hirzebruch filtration splits $\SF$ and we can identify $\cl[l]{\KB[n]{\X}} \cong \coH[n+2l]{\X;\ZZ(l)}$.
\end{remark}

Note that the definition implies that the tuple
\[
\V_{\hbar} := (\V_\dR, \V_\Bet,\F_\hbar,\W,c)
\]
with the same action of the multiplicative group, is an Adams-equivariant mixed Hodge structure for all $\hbar \in \bA^1$, giving a variation of mixed Hodge structures over $\bA^1$ that is ``$\Gm$-equivariant''.  The following is an analogue, for mixed Hodge structures, of the Rees construction relating filtered vector spaces to equivariant vector bundles over $\bA^1$:
\begin{proposition}
	The subgroups $s_j\V_\Bet \subset \V_\Bet$ for $j \in \ZZ$ define a filtration of $\V_\hbar$ by mixed Hodge substructures for all $\hbar \in \bA^1$, and the associated graded variation of Hodge structures is constant, i.e.~we have the equality of mixed Hodge structures
\[
\V_\hbar^{(j)} = \V_0^{(j)}
\]
for all $\hbar \in \bA^1$ and $j \in \ZZ$.
\end{proposition}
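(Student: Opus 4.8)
The plan is to reduce everything to the linear algebra of the $\Gm$-action on $\V$, in the spirit of the Rees construction.  Write $\V_\dR = \bigoplus_l \cl[l]{\V}_\dR$ for the decomposition into the weight-$(-l)$ eigenspaces of $\adm$, so that $\SF[j]\V_\dR = \bigoplus_{l\le j}\cl[l]{\V}_\dR$ and $\hbar\adm$ acts on $\cl[l]{\V}_\dR$ by the scalar $\hbar^{-l}$.  Since the $\Gm$-action preserves $\W$, every $\W[k]\V_\dR$ is a graded subspace; this shows immediately that the sublattices $\SF[j]\V_\Bet$ are compatible with the weight filtration, and they are manifestly independent of $\hbar$, so only the Hodge filtration needs attention.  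I would first record the key fact about the central fibre: since $\mu\adm F^\bullet_0 = \mu\adm\bigl(\lim_{\hbar\to0}\hbar\adm F^\bullet\bigr) = \lim_{\hbar\to0}(\mu\hbar)\adm F^\bullet = F^\bullet_0$ for every $\mu\in\Gm$, the whole group $\Gm$ preserves $F^\bullet_0$; being $\Gm$-stable, $F^p_0 = \bigoplus_l\bigl(F^p_0\cap\cl[l]{\V}_\dR\bigr)$, so the eigenprojections $\pi_l$ are morphisms of the mixed Hodge structure $\V_0$.  Consequently each $\SF[j]\V_0 = \bigoplus_{l\le j}\cl[l]{\V_0}$ is a sub-mixed-Hodge-structure of $\V_0$, and $\grS[j]\V_0 = \cl[j]{\V_0} = \V_0^{(j)}$ is a mixed Hodge structure whose lattice and weight filtration are, as noted, independent of $\hbar$.

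The heart of the matter is the claim that, for every $p$ and every $\hbar\in\bA^1$, the filtration induced by $F^p_\hbar = \hbar\adm F^p$ on the quotient $\grS[j]\V_\dR\cong\cl[j]{\V}_\dR$ is independent of $\hbar$.  I would prove this by choosing, compatibly with the inclusions $F^{p+1}\subset F^p$, a basis of each $F^p$ in echelon form for the increasing filtration $\SF[\bullet]\V_\dR$: each basis vector $v$ then has a leading index $d(v)$ and a nonzero leading term $\overline{v}\in\cl[d(v)]{\V}_\dR$, and $F^p\cap\SF[j]\V_\dR$ is spanned by the $v$ with $d(v)\le j$.  For $\hbar\in\Gm$ the vector $\hbar\adm v = \hbar^{-d(v)}\overline{v}$ plus terms of strictly lower $\SF$-index has the same leading index and a nonzero rescaling of the same leading term, so the image of $F^p_\hbar\cap\SF[j]\V_\dR$ in $\cl[j]{\V}_\dR$ is $\mathrm{span}\{\overline{v}: d(v)=j\}$, independently of $\hbar$.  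Rescaling $\hbar\adm v$ by $\hbar^{d(v)}$ and letting $\hbar\to0$ identifies $F^p_0$ with $\mathrm{span}\{\overline{v}\}$ (exactly the Rees construction), which gives the same image $\mathrm{span}\{\overline{v}: d(v)=j\}$ in $\cl[j]{\V}_\dR$.  Equivalently and more invariantly: the Hodge bundle of the quotient family $\hbar\mapsto\grS[j]\V_\hbar$ over $\bA^1$ is a $\Gm$-equivariant holomorphic subbundle of a trivial bundle on which $\Gm$ acts through the single character $\hbar\mapsto\hbar^{-j}$, so its classifying map $\bA^1\to\mathrm{Gr}(\cl[j]{\V}_\dR)$ is $\Gm$-equivariant for the trivial action on the Grassmannian, hence constant; I would use whichever phrasing is cleaner to write down.

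Granting the claim, $\grS[j]\V_\hbar$ has the same lattice, weight filtration, and Hodge filtration as $\grS[j]\V_0 = \V_0^{(j)}$, which is a mixed Hodge structure by the first step; hence $\grS[j]\V_\hbar$ is a mixed Hodge structure and $\V_\hbar^{(j)} = \V_0^{(j)}$ for all $\hbar\in\bA^1$.  For the remaining assertion, that $\SF[j]\V_\Bet$ underlies a sub-mixed-Hodge-structure of $\V_\hbar$, note that since $\W$ is $\cl$-graded the filtration $\SF[\bullet]$ induces an honest filtration on each pure piece $\grW[k]\V_\hbar$, whose graded pieces are, by the crucial claim, the $\hbar$-independent pure Hodge structures $\grW[k]\V_0^{(l)}$ of weight $k$.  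It then suffices to invoke the standard fact that a filtration of a pure Hodge structure all of whose graded pieces are pure of that same weight is automatically a filtration by sub-Hodge-structures — proved by induction on the length, using that a (conjugation-stable) subspace of a pure Hodge structure of weight $k$ on which the Hodge filtration induces a pure structure of weight $k$ is a sub-Hodge-structure, which in turn is a dimension count with the strictness of $F^\bullet$ and its conjugate ($\dim F^p + \dim\overline{F}^q$ equals the total dimension for $p+q=k+1$, and $F^p\cap\overline{F}^q = 0$).  Applying this on each $\grW[k]\V_\hbar$ shows $\SF[j]\V_\hbar$ is a mixed Hodge structure, finishing the argument.

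I expect the genuine difficulty to be concentrated in the limit $\hbar\to 0$: identifying $F^\bullet_0$ from the $\SF$-echelon data of $F^\bullet$ is precisely where hypothesis (3) (opposedness of $F^\bullet_0$, which guarantees the limiting flag has the same type as $F^\bullet$) enters, and one must check that this Rees-type limit is taken correctly in the relevant flag variety; the second, more invariant formulation has the advantage of hiding this inside the holomorphicity of the quotient family over $\bA^1$.  The extension-of-Hodge-structures step in the last paragraph is routine but still needs the strictness bookkeeping to be carried out honestly.
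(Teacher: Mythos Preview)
Your proof is correct and follows essentially the same strategy as the paper's: both identify the limiting filtration $F^\bullet_0$ with the $s$-associated graded of $F^\bullet_1$ via the Rees construction (your echelon/Grassmannian argument is exactly the paper's observation that the image of $v\in s_j\V$ in $\grS[j]$ equals $\lim_{\hbar\to0}\hbar^j(\hbar\adm v)$, combined with $\lim_{\hbar\to0}F^\bullet_\hbar=F^\bullet_0$). The only difference is in the concluding step: the paper proceeds by induction on $j$, invoking \cite[Criterion 3.10]{Peters2008} to deduce that $s_j\V_\hbar$ is a mixed Hodge substructure from strictness of the inclusion $s_{j-1}\V_\hbar\hookrightarrow s_j\V_\hbar$ and of the quotient map $s_j\V_\hbar\to\V_0^{(j)}$; you instead pass to each pure piece $\grW[k]\V_\hbar$ and run a direct dimension count. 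Both work; the paper's route is shorter by outsourcing the strictness bookkeeping to a standard reference, while yours is more self-contained. Your Grassmannian phrasing of the key claim is the cleaner option and matches the paper's spirit more closely than the echelon version.
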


\begin{proof}
It suffices to treat the case $\hbar=1$, so that $\V_\hbar=\V$.  We proceed by induction on $j$.  Since $\V_\Bet \otimes \QQ$ is finite-dimensional, we have $\SF[j]\V_\Bet \otimes \QQ = 0$ for $j \ll 0$, in which case the statement is vacuous.  Thus suppose that $\SF[j-1]\V$ is a mixed Hodge substructure for some $j \in \ZZ$ with associated graded isomorphic to $\bigoplus_{l < j} \V^{(l)}_0$.  We must prove that the same holds for $\SF[j]\V$.  

By definition, the putative weight and Hodge filtrations on $\SF[j]\V_\Bet\otimes \QQ$ and $\SF[j]\V_\dR$ are the preimages of $\W$ and $\F$ under the inclusions in $\V_\Bet\otimes \QQ$ and $\V_\dR$, respectively, and similarly for $\SF[j-1]$.  Hence the inclusion $\SF[j-1]\V \to \SF[j]\V$ strictly preserves both filtrations, so by \cite[Criterion 3.10]{Peters2008}, it suffices to prove that the quotient map $\SF[j]\V \to \V^{(j)}_0$ strictly preserves the filtrations.  That it preserves $\W$ is immediate from the definition: $\W$ is constant.  Hence we must check that the image of $\F_{\hbar=1} \SF[j]\V$ is sent strictly to $\F_0 \V^{(j)}$.  For this, it suffices to note that if $v \in \SF[j]\V$, then the image of $v$ in the associated graded is equal to that of the limit $\lim_{\hbar \to 0}\hbar^j \cdot (\hbar \adm v)$.  The result then follows since
\[
\lim_{\hbar\to 0}\hbar^j (\hbar \adm \F_1) = \lim_{\hbar \to 0}\hbar^j\F_\hbar = \lim_{\hbar \to 0} \F_\hbar = \F_0
\]
by equivariance of the Hodge filtration.
\end{proof}

Since the associated graded of an Adams-equivariant mixed Hodge structure $\V$ is independent of $\hbar$, we denote it simply by 
\[
\V^{(j)} = \V_\hbar^{(j)} = \V_0^{(j)}.
\]
We then have for each $\hbar \in \bA^1$ and each $j \in \ZZ$, a short exact sequence
\[
\xymatrix{
0 \ar[r] & \cl[j-1]{\V} \ar[r] & \SF[j]\V_\hbar / \SF[j-2]\V_\hbar \ar[r] & \cl[j]{\V} \ar[r]& 0
}
\]
defining an extension class
\[
e^\V_j(\hbar) \in \ExtMHS[1]{\cl[j]{\V},\cl[j-1]{\V}}
\]
in the abelian category of mixed Hodge structures, which we analyze as follows.

\subsubsection{Extensions of Hodge structures}
Recall from \cite{Carlson1980} and \cite[\S3.5.1]{Peters2008} that for mixed Hodge structures $A$ and $B$, the extension group $\ExtMHS[1]{A,B}$ is a (possibly non-Hausdorff) abelian complex Lie group, whose group of connected components is $\ExtZ[1]{A_\Bet,B_\Bet}$, and whose identity component is given by the ``Jacobian''
\[
J(A,B) := \ExtMHS[1]{A',B'} \cong \frac{\W[0]\Hom[\CC]{A_\dR,B_\dR}}{\F[0]\W[0]\Hom[\CC]{A_\dR,B_\dR}+\W[0]\Hom[\ZZ]{A_\Bet',B_\Bet'}}
\]
where $A_\Bet'$ and $B_\Bet'$ denote the quotients of $A_\Bet$ and $B_\Bet$ by their torsion subgroups. We will write the group operation in $\ExtMHS[1]{A,B}$ multiplicatively, even though it corresponds to addition in the vector space $\Hom[\CC]{A_\dR,B_\dR}$.  

Note that the Lie algebra of $J(A,B)$ is the vector space
\[
\fj(A,B) := \frac{\W[0]\Hom[\CC]{A_\dR,B_\dR}}{\F[0]\W[0]\Hom[\CC]{A_\dR,B_\dR}}\,.
\]
It has a decreasing filtration $\F\fj(A,B)$ given by the image of the Hodge filtration on $\Hom{A_\dR,B_\dR}$.    An element  $x \in \fj(A,B)$ corresponds to an infinitesimal deformation of the trivial extension $A \oplus B$.   A straightforward calculation shows that if the resulting infinitesimal family of mixed Hodge structures satisfies the  Griffiths transversality condition, then $x$ is the image of the corresponding infinitesimal period map, and in particular we have
\[
x \in \F[-1]\fj(A,B) \cong \grF[-1] \W[0]\Hom{A_\dR,B_\dR}.
\]
For details, we refer the reader to the recent preprint \cite[\S2.3]{Aguilar2024}, where this was derived independently.

\subsubsection{Quantum parameters}\label{sec:qparam}
Returning to the case of an Adams-equivariant Hodge structure $\V$, note that since our extension classes $e^\V_j(\hbar) \in \ExtMHS{\cl[j]{\V},\cl[j-1]{\V}}$ vary continuously in $\hbar$, they are determined by the class $e^\V_j(0)$, which is the identity if $\cl[j]{\V}$ is torsion-free, and the elements
\[
q_{\V;j}(\hbar) := \frac{e_j^\V(\hbar)}{e^\V_j(0)} \in J(\cl[j]{\V},\cl[j-1]{\V})\,.
\]
We collect these into a function $q_\V = (q_{\V;j})_{j \in \ZZ} : \bA^1_{\hbar} \to \Q{\V}$ where 
\[
\Q{\V} :=  \prod_{j \in \ZZ} J(\cl[j]{\V},\cl[j-1]{\V})
\]
is the corresponding product of Jacobians.  Its Lie algebra
\[
\fq(\V): = \prod_{j\in\ZZ} \fj(\cl[j]{\V},\cl[j-1]{\V})
\]
has the induced filtration $\F \fq(\V) = \prod_j \F \fj(\cl[j]{\V},\cl[j-1]{\V})$.
\begin{definition}
The value $q_\V(\hbar = 1)\in \Q{\V}$ is called the \defn{quantum parameter} of the Adams-equivariant mixed Hodge structure $\V$.
\end{definition}

Let us denote by
\[
\iota_\V := \left.\frac{\dd q_\V(\hbar)}{\dd\hbar}\right|_{\hbar=0} \in \fq(\V)
\]
the derivative at $\hbar=0$ of the quantum parameter.  Then the behaviour of $q_\V$ is characterized as follows:

\begin{proposition}
For an Adams-equivariant mixed Hodge structure $\V$, we have
\[
\iota_\V \in \F[-1]\fq(\V)\,,
\]
and the function $q_\V:\bA^1_{\hbar} \to \Q{\V}$ is the one-parameter subgroup generated by $\iota_\V$, i.e. we have
\[
q_\V(\hbar) = \exp(\hbar \iota_\V) \in \Q{\V}
\]
for all $\hbar$.
\end{proposition}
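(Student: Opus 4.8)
The plan is to use that the family $\hbar\mapsto\V_\hbar$ differs from $\V$ only in its Hodge filtration, through $\F_\hbar=\hbar\adm\F$, while the lattice $\V_\Bet$, the filtration $\SF\V_\Bet$ and the weight filtration $\W$ are all independent of $\hbar$. Consequently the extension classes $e^\V_j(\hbar)$ carry all of their $\hbar$-dependence in the Hodge filtration, and the $\Gm$-equivariance will pin down exactly how they move.

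The first step is to make the classes $e^\V_j(\hbar)$ explicit. Fix once and for all the $\QQ$-splitting of $\SF\V_\Bet\otimes\QQ$ coming from the eigenspace decomposition of the $\QQx$-action; it is $\Gm$-equivariant by construction. Applied to the subquotient $E_\hbar:=\SF[j]\V_\hbar/\SF[j-2]\V_\hbar$ it yields a fixed ($\hbar$-independent) $\QQ$-section $\sigma$ of the surjection $E_\hbar\to\cl[j]{\V}$. Because each graded piece $\cl[j]{\V}$ is $\QQx$-isotypic, $\hbar\adm$ acts on $\cl[j]{\V}_\dR$ and on $\cl[j-1]{\V}_\dR$ by the scalars $\hbar^{-j}$ and $\hbar^{-(j-1)}$; in particular it preserves their Hodge filtrations and carries $\F_1 E_\dR$ to $\F_\hbar E_\dR$, so an $\F_\hbar$-compatible section may be taken to be $\tau_\hbar=(\hbar\adm)\,\tau_1\,(\hbar\adm)^{-1}$. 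Representing $e^\V_j(\hbar)$ in $\fj(\cl[j]{\V},\cl[j-1]{\V})=\W[0]\Hom[\CC]{\cl[j]{\V}_\dR,\cl[j-1]{\V}_\dR}/\F[0]$ by $\psi_\hbar:=\sigma-\tau_\hbar$ and using equivariance of $\sigma$, one gets $\psi_\hbar=(\hbar\adm)\,\psi_1\,(\hbar\adm)^{-1}$. Since $\QQx$ acts on $\Hom[\CC]{\cl[j]{\V}_\dR,\cl[j-1]{\V}_\dR}$ with weight $(-j+1)-(-j)=1$, this reads $\psi_\hbar=\hbar\,\psi_1$: linear in $\hbar$, hence regular and vanishing at $\hbar=0$, in accord with $\F_0$ being a Hodge filtration.

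The two conclusions now follow quickly. As $e^\V_j$ is continuous on the connected space $\bA^1$, its values lie in a single component, so $q_{\V;j}(\hbar)=e^\V_j(\hbar)/e^\V_j(0)$ lies in the identity component $J(\cl[j]{\V},\cl[j-1]{\V})=\fj(\cl[j]{\V},\cl[j-1]{\V})/\Lambda$ and is represented by $\psi_\hbar-\psi_0=\hbar\psi_1$; that is, $q_{\V;j}(\hbar)=\exp(\hbar[\psi_1])$ is the one-parameter subgroup generated by $[\psi_1]$. Differentiating at $\hbar=0$ identifies the $j$-th component of $\iota_\V$ with $[\psi_1]$, and collecting over $j$ gives $q_\V(\hbar)=\exp(\hbar\iota_\V)$. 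For the inclusion $\iota_\V\in\F[-1]\fq(\V)$, recall from the preceding proposition that the $\SF[j]\V_\Bet$ are mixed Hodge substructures of $\V_\hbar$; hence $E_\hbar$ is a subquotient of the $\Gm$-equivariant variation $\V_\hbar$ over $\bA^1_\hbar$ and so itself satisfies Griffiths transversality. Near $\hbar=0$, where $E_0$ is rationally the trivial extension $\cl[j]{\V}\oplus\cl[j-1]{\V}$, its infinitesimal period direction is $\left.\tfrac{\dd}{\dd\hbar}\right|_{\hbar=0}\psi_\hbar=\psi_1$; by the discussion of extensions of Hodge structures above (compare \cite[\S2.3]{Aguilar2024}), Griffiths transversality forces this class into $\F[-1]\fj(\cl[j]{\V},\cl[j-1]{\V})$, whence $\iota_\V\in\F[-1]\fq(\V)$.

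I expect the only genuinely delicate point to be the bookkeeping in the second paragraph: checking that an $\F_\hbar$-section may legitimately be chosen as the $\hbar\adm$-conjugate of an $\F_1$-section --- which rests on each $\cl[j]{\V}$ being $\QQx$-isotypic, so that $\hbar\adm$ acts on its de Rham realization by a scalar and preserves its Hodge filtration --- and getting the weight of the $\QQx$-action on $\Hom[\CC]{\cl[j]{\V}_\dR,\cl[j-1]{\V}_\dR}$ right: it must be $+1$, which is precisely what makes $\psi_\hbar$ linear in $\hbar$ and vanishing at $\hbar=0$. Everything else is formal once these are in place.
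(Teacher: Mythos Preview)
Your proof is correct and rests on the same idea as the paper's: the $\Gm$-equivariance of the family $\hbar\mapsto\V_\hbar$ forces the map $q_\V$ to be linear after lifting to the universal cover of $\Q{\V}$. The paper argues this abstractly in two lines---lift $q_\V$ to $\widetilde{\Q{\V}}\cong\fq(\V)$, observe that $\Gm$ acts on $\fq(\V)$ by scalar rescaling (since adjacent Adams eigenspaces differ by weight one), and conclude that an equivariant map from $\bA^1$ must be linear---whereas you make the same computation explicit at the level of Carlson's cocycle representatives, deriving $\psi_\hbar=(\hbar\adm)\psi_1(\hbar\adm)^{-1}=\hbar\psi_1$ directly. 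Your version has the advantage of making the identification $\iota_\V=[\psi_1]$ completely concrete; the paper's version is shorter and avoids the bookkeeping you flag in your final paragraph. The only step you leave slightly implicit is that $\psi_0=0$, i.e.\ that the eigenspace section $\sigma$ is automatically $\F_0$-compatible; this follows because the limit filtration $\F_0=\lim_{\hbar\to0}\hbar\adm\F$ necessarily decomposes along the eigenspace splitting, but it is also immediate from continuity of the extension class combined with your formula $\psi_\hbar=\hbar\psi_1$ for $\hbar\in\Gm$.
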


\begin{proof}	
That $\hook{\V}$ lies in $\F[-1]\fq(\V)$ is immediate since an Adams-equivariant mixed Hodge structure satisfies the Griffiths transversality condition by definition, so let us prove that $q_\V$ is a one-parameter subgroup.

Since $0 \in \bA^1$ is sent to the identity in $\Q{\V}$, the map $q_\V$ lifts to the universal cover $\widetilde{\Q{\V}}\cong \fq(\V)$. We must prove that this lift is a straight line through the origin with constant velocity.  But this map is evidently equivariant with respect to the action of $\Gm$ on $\widetilde{\Q{\V}}$ induced by its action on $\V_\dR$. Considering the eigenvalues, we see that $\Gm$ acts on the vector space $\widetilde{\Q{\V}}$ by linear rescalings.  Hence the equivariance implies that the map $\bA^1 \to \widetilde{\Q{\V}}$ is linear, as desired. 

\end{proof}

For a log manifold $\X$ with $\Ahat_\X=1$, we denote by 
\[
\Q{\X} := \Q{\K[0]{\X}}\times \Q{\K[1]{\X}}
\]
the space of quantum parameters of its K-theory and by $\fq(\V)$ its Lie algebra; the  part relevant for quantum parameters is
\begin{align}
\F[-1]\fq(\V) \cong \prod_{p,q} \W[0]\Hom{\coH[q]{\forms[p]{\X}},\coH[q]{\forms[p-2]{\X}}}\,. \label{eq:lie-Q(X)}
\end{align}
Since the associated graded  $\grS\K{\X,\ps}\cong \grS\K{\X}$ is independent of $\ps$, we have a canonical quantum parameter
\[
q(\ps) \in \Q{\X}
\]
for every Poisson structure on $\X$, whose infinitesimal generator is  identified with the contraction operator $\hookps$, thanks to \autoref{prop:InfPeriodMap}.  Combining the results above, we obtain the following characterization of the ``global Torelli problem for quantum parameters'' on a fixed log manifold $\X$:
\begin{theorem}\label{thm:qparam-torelli}
Let $\X$ be a log manifold with $\Ahat_\X=1$.
\begin{enumerate}
\item If $\ps$ is a Poisson structure on $\X$, then
\[
q(\ps) = \exp(\hookps) \in \Q{\X}\,,
\]
where $\hookps$ is viewed as an element in $\fq(\V)$ via the isomorphism \eqref{eq:lie-Q(X)}.  
\item If $\ps$ and $\ps'$ are Poisson structures on $\X$, then $q(\ps)=q(\ps')$ if and only if the operators $\hookps,\hook{\ps'}$ on Dolbeault cohomology differ by an integral element, i.e.~there exists an element
\[
\eta \in \prod_{j}\prod_{n=0,1} \W[0]\Hom[\ZZ]{\K[n]{\X}^{(j)},\K[n]{\X}^{(j-1)}} 
\]
such that 
\[
\hookps - \hook{\ps'} = c(\eta) \mod \F[0]\End{\KdR{\X}}\,.
\]
\item If $\X$ is log Calabi--Yau, then the natural map
\[
\coH[0]{\der[2]{\X}} \to \Q{\X}
\]
sending a Poisson structure on $\X$ to its quantum parameter is an immersion of complex Lie groups.
\end{enumerate}
\end{theorem}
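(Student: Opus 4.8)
The plan is to establish the three statements in order, with (1) carrying essentially all of the content and (2), (3) following formally.

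\emph{Part (1).} I would fix $\ps$, regard $\V := \K{\X,\ps}$ as an Adams-equivariant mixed Hodge structure via the action $\adm$, and note, using \autoref{lem:equivariant}, that the associated $\Gm$-equivariant variation over $\bA^1$ is $\V_\hbar = \K{\X,\hbar\ps}$, i.e.\ the family of log Poisson manifolds $(\X,\hbar\ps)$ with \emph{constant} underlying complex structure but linearly varying bivector. By the one-parameter-subgroup proposition of \autoref{sec:qparam} we have $q(\ps) = q_\V(1) = \exp(\iota_\V)$ with $\iota_\V\in\F[-1]\fq(\V)$, so it suffices to identify $\iota_\V$ with the contraction operator $\hookps$ under the isomorphism \eqref{eq:lie-Q(X)}. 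To do this I would observe that $\iota_\V$ is assembled from the derivatives at $\hbar = 0$ of the extension classes $e^\V_j(\hbar)$, each of which governs the sub-quotient variation $\SF[j]\V_\hbar/\SF[j-2]\V_\hbar$; hence its derivative is the corresponding sub-quotient of the infinitesimal period map of $\V_\hbar$. By \autoref{prop:InfPeriodMap} the latter equals $\iota\circ\KodSpenc$, and for the family $\hbar\mapsto(\X,\hbar\ps)$ the Kodaira--Spencer class at $\hbar=0$ is just $\ps\in\coH[0]{\der[2]{\X}}\subset\coH[2]{\der[\ge1]{\X}}$, so the infinitesimal period map is $\iota(\ps)=\hookps$. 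Since $\hookps$ already carries $\coH[q]{\forms[p]{\X}}$ into $\coH[q]{\forms[p-2]{\X}}$, passing to the $\SF$-graded pieces leaves it unchanged, and \eqref{eq:lie-Q(X)} then gives $\iota_\V = \hookps$, whence $q(\ps) = \exp(\hookps)$.

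\emph{Part (2).} Here I would use that $\Q{\X} = \prod_j J(\cl[j]{\K{\X}},\cl[j-1]{\K{\X}})$ is an abelian complex Lie group and that $\exp\colon\fq(\V)\to\Q{\X}$ is its exponential map, hence a homomorphism out of the additive group $(\fq(\V),+)$. Then, by part (1), $q(\ps) = q(\ps')$ if and only if $\exp(\hookps - \hook{\ps'}) = 1$, i.e.\ $\hookps-\hook{\ps'}\in\ker(\exp)$. By Carlson's description of the Jacobians recalled in \autoref{subsec:Adams}, this kernel is precisely the image, modulo $\F[0]\W[0]\Hom[\CC]{-,-}$, of the integral lattice $\prod_j\prod_{n=0,1}\W[0]\Hom[\ZZ]{\K[n]{\X}^{(j)},\K[n]{\X}^{(j-1)}}$ under the comparison map $c$. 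Unwinding this condition is exactly the assertion that there is an integral $\eta$ as in the statement with $\hookps - \hook{\ps'} \equiv c(\eta)\pmod{\F[0]\End{\KdR{\X}}}$, i.e.\ that the two contraction operators on Dolbeault cohomology differ by an integral endomorphism of K-theory.

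\emph{Part (3).} When $\X$ is log Calabi--Yau the map $\coH[0]{\der[2]{\X}}\to\Q{\X}$, $\ps\mapsto q(\ps)$, equals $\exp\circ(\ps\mapsto\hookps)$ by part (1), a composite of holomorphic homomorphisms of complex Lie groups with abelian target and hence itself such a homomorphism. A Lie group homomorphism is an immersion if and only if its differential at the identity is injective, and here that differential is $\dot\ps\mapsto\hook{\dot\ps}$, because the differential of $\exp$ at the origin is the identity of $\fq(\V)$. Injectivity of $\dot\ps\mapsto\hook{\dot\ps}$ modulo $\F[0]$ is immediate from \autoref{ex:logCY}: any trivialization $\mu\in\coH[0]{\forms[n]{\X}}$ already recovers $\ps$ from the single component $\hookps\mu\in\coH[0]{\forms[n-2]{\X}}$ of $\hookps$, so $\hook{\dot\ps}=0$ forces $\dot\ps=0$. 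Hence the quantum-parameter map is an immersion.

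The step I expect to be the main obstacle is the identification in part (1): the real work is to match the infinitesimal period map $\iota\circ\KodSpenc$ of \autoref{prop:InfPeriodMap}, which a priori takes values in the tangent space of a flag bundle, term-by-term with the infinitesimal generator $\iota_\V$, which lives in $\fq(\V) = \prod_j\fj(\cl[j]{\V},\cl[j-1]{\V})$, via the passage to $\SF$-graded pieces and the isomorphism \eqref{eq:lie-Q(X)}. Once $q(\ps) = \exp(\hookps)$ is in hand, parts (2) and (3) are formal consequences of Carlson's theory of extensions of mixed Hodge structures together with the fact that the exponential of an abelian complex Lie group is a holomorphic homomorphism that is a local biholomorphism at the origin.
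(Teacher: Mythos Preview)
Your proposal is correct and follows exactly the approach the paper takes: the theorem is stated in the paper without a separate proof block, being presented as an immediate combination of the preceding results (the one-parameter-subgroup proposition $q_\V(\hbar)=\exp(\hbar\iota_\V)$, the identification of $\iota_\V$ with $\hookps$ via \autoref{prop:InfPeriodMap}, Carlson's description of the Jacobian, and \autoref{ex:logCY}). Your identification of the ``main obstacle'' is also accurate, though in practice the paper treats it as already done once \autoref{prop:InfPeriodMap} and the isomorphism \eqref{eq:lie-Q(X)} are in place.
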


\subsection{Purity}\label{sec:purity} Note that for an Adams-equivariant mixed Hodge structure, the filtration $\SF \V$, and hence the quantum parameter $q_\V(1)$, depends \emph{a priori} on the additional structure of the $\QQ^\times$ action on $\V$.  It is therefore not intrinsic to the underlying mixed Hodge structure in general.  However, in several examples of interest, this filtration is a  re-indexing of the weight filtration, and is therefore essentially intrinsic.  For instance, the following is a natural sufficient (but not necessary) condition; see, e.g.~the introduction of \cite{Cirici2020} for some additional context regarding this notion. 
\begin{definition}
	Let $\X$ be a log manifold and suppose that $\alpha \in \{\tfrac{3}{2},2\}$.   We say that $\X$ is \defn{$\alpha$-pure} if the Hodge structure $\coH[n]{\X;\QQ}$ is pure of weight $\alpha n$ for all $n$.
\end{definition}

Note that the condition of $\alpha$-purity immediately implies that the Adams eigenspace $\K[n]{\X}^{(j)}\otimes \QQ \cong \coH[n+2j]{\X;\QQ(j)}$ is pure of weight $\alpha n + 2j(\alpha-1)$, so that the successive eigenspaces $\K[n]{\X}^{(j)}$ and $\K[n]{\X}^{(j+1)}$ differ in weight by
\[
2(\alpha-1)= \begin{cases}
1 & \alpha = \tfrac{3}{2} \\
2 & \alpha = 2
\end{cases}
\]
and thus $\SF[j]\K[n]{\X} = \W[\alpha n+2j(\alpha-1)]\K[n]{\X}$ is a re-indexing of the weight filtration.  (These are the only values of $\alpha$ for which this phenomenon occurs, which is why we do not consider other values as in \cite{Cirici2020}.)  We then have the following.
\begin{proposition}\label{prop:pure-params}
If $\X$ is a log manifold that is $\alpha$-pure with $\alpha \in \{\tfrac{3}{2},2\}$, then the following statements hold.
\begin{enumerate}
\item We have $\Ahat_{\X} = 1$, so that $\K{\X,\ps}$ is Adams-equivariant for all Poisson structures $\ps$ on $\X$. 
\item\label{it:3/2-pure} If $\alpha = \tfrac{3}{2}$, then $\Q{\X}$ is a compact complex torus, i.e.~is isomorphic to $\CC^n/\Lambda$ for some $n \ge 0$ and some lattice $\Lambda \subset \CC^n$.
\item\label{it:2-pure} If $\alpha = 2$, then $\Q{\X}$ is an affine algebraic torus, i.e.~is isomorphic to $(\CCx)^n$ for some $n \ge 0$.
\end{enumerate}
\end{proposition}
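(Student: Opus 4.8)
The plan is to deduce all three statements from the purity computation carried out just above, which identifies $\K[n]{\X}^{(j)}\otimes\QQ$ with the pure Hodge structure $\coH[n+2j]{\X;\QQ(j)}$ of weight $\alpha n+2j(\alpha-1)$.

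For (1), I would first observe that for $k\ge1$ the Chern class $c_k(\cT{\Xo})$ is the restriction to $\Xo$ of the $k$th Chern class of the locally free sheaf $\cTlog{\X}$ on $\cX$; since $\HdR[2k]{\cX}$ is pure of weight $2k$, this restriction lands in the weight-$2k$ part of $\HdR[2k]{\Xo}$. But $\alpha$-purity with $\alpha\neq1$ forces $\HdR[2k]{\Xo}$ to be pure of weight $2\alpha k\neq2k$, so $c_k(\cT{\Xo})=0$ for all $k\ge1$. Consequently $\Ahat_{\X}^{1/2}-1$, which is a polynomial in the $c_k(\cT{\Xo})$, $k\ge1$, with no constant term, vanishes, and hence $\Ahat_\X=1$. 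Adams-equivariance of $\K{\X,\ps}$ is then immediate: as recalled just before the definition of an Adams-equivariant mixed Hodge structure, the charge homomorphism is $\diamond$-equivariant precisely when $\Ahat_\X=1$.

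For (2) and (3), I would start from $\Q{\X}=\prod_{n\in\{0,1\}}\prod_{j\in\ZZ}J\big(\K[n]{\X}^{(j)},\K[n]{\X}^{(j-1)}\big)$, a finite product, and from the fact that each factor $J(A,B)=\ExtMHS{A',B'}$ is a quotient $\fj(A,B)/\Lambda$, where $\fj(A,B)=\W[0]\Hom[\CC]{A_\dR,B_\dR}\big/\F[0]\W[0]\Hom[\CC]{A_\dR,B_\dR}$ and $\Lambda$ is the image of the lattice $\W[0]\Hom[\ZZ]{A_\Bet',B_\Bet'}$. Taking $A=\K[n]{\X}^{(j)}$ and $B=\K[n]{\X}^{(j-1)}$, the purity computation shows that $\Hom{A,B}$ is rationally a pure Hodge structure of weight $-2(\alpha-1)$, i.e.\ of weight $-1$ when $\alpha=\tfrac{3}{2}$ and of weight $-2$ when $\alpha=2$; and when $\alpha=2$, since a $2$-pure variety has $\coH[m]{\X;\QQ}$ of Hodge type $(m,m)$, each $\K[n]{\X}^{(j)}\otimes\QQ$ is of Tate type, so $\Hom{A,B}$ is rationally a sum of copies of $\QQ(1)$. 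In the case $\alpha=\tfrac{3}{2}$, the opposedness axiom gives $\Hom[\CC]{A_\dR,B_\dR}=\F[0]\oplus\overline{\F[0]}$, so the real structure is transverse to $\F[0]$; hence $\Lambda$ is a full real lattice in $\fj(A,B)$, and $J(A,B)$ is a compact complex torus (it is the intermediate Jacobian of a weight $-1$ Hodge structure; cf.\ \cite{Carlson1980,Peters2008}). A finite product of compact complex tori is again one, proving (2). In the case $\alpha=2$, one has $\F[0]\W[0]\Hom[\CC]{A_\dR,B_\dR}=0$, so $\fj(A,B)=\Hom[\CC]{A_\dR,B_\dR}$, the lattice $\Lambda$ has rank $\dim_\CC\fj(A,B)$, and $\Lambda\otimes\RR$ is the real form of $\fj(A,B)$ cut out by the real structure; therefore $J(A,B)\cong(\CCx)^{k}$ with $k=\dim_\CC\fj(A,B)$ — equivalently, $J(A,B)$ is a product of copies of $\ExtMHS{\ZZ,\ZZ(1)}\cong\CCx$. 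A finite product of affine algebraic tori is an affine algebraic torus, proving (3).

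The point that requires the most care is the integral structure: $\K[n]{\X}^{(j)}=\grS[j]\K[n]{\X}$ carries the lattice $\grS[j]\KB[n]{\X}$, which in general differs from $\coH[n+2j]{\X;\ZZ(j)}$ by torsion and by differentials of the Atiyah--Hirzebruch spectral sequence, so one must verify that $\W[0]\Hom[\ZZ]{A_\Bet',B_\Bet'}$ has the rank and real span asserted above. I would handle this by noting that $J(A,B)=\ExtMHS{A',B'}$ depends only on the torsion-free quotients, that those are full lattices in the rational Hodge structures $\coH[n+2j]{\X;\QQ(j)}$ and hence commensurable with $\coH[n+2j]{\X;\ZZ(j)}/\mathrm{tors}$, and that commensurable lattices span the same real subspace. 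The remaining verifications are then the two linear-algebra facts used above — transversality of the real structure to $\F[0]$ for a weight $-1$ Hodge structure, and the fact that the integral lattice of a Tate-type Hodge structure spans a real form — both of which are routine.
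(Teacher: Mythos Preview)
Your proof is correct and follows essentially the same route as the paper. For (1) the paper phrases the weight argument in $\KdR[0]{\X}$ (using $\W[0]\KdR[0]{\X}=\SF[0]\KdR[0]{\X}\cong\HdR[0]{\X}$) rather than componentwise in $\HdR[2k]{\X}$ as you do, and for (2)--(3) it simply cites the relevant examples in \cite{Peters2008}; your treatment is more explicit, in particular about the integral lattice, but the underlying argument is the same.
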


\begin{proof}
For (1), note that we have $\W[0]\K[0]{\X} = \SF[0]\K[0]{\X} \cong \coH[0]{\X;\ZZ}$.  But $\Ahat_{\X}$ is a characteristic class of a holomorphic bundle on $\cX$, so it has weight zero.  It must therefore lie in $\HdR[0]{\X}$, and thus be equal to its constant term, which is 1.  

For (2), note that if $\alpha = \tfrac{3}{2}$, then the space of quantum parameters is the product of the Jacobians of the Hodge structures $\Hom{\K{\X}^{(j)},\K{\X}^{(j-1)}}$ which are pure of weight $-1$.  Hence the result follows from \cite[Example 3.30]{Peters2008}.

Finally, for (3), note that if $\alpha=2$ then standard bounds on Hodge numbers~\cite[Corollaire 3.2.15]{Deligne1971a} imply that the only nontrivial Hodge number of $\coH[j]{\X;\QQ}$ is $h^{j,j}$, so that the cohomology is pure Tate.  Hence the space of quantum parameters is a product of copies of the groups $\ExtMHS{\ZZ(j),\ZZ(j+1)}$, each of which is isomorphic to $\CCx$ by \cite[Example 3.34 (1)]{Peters2008}.
\end{proof}

\subsection{Examples of period maps}\label{sec:examples-of-periods} We now give some examples that illustrate the general structural features of the period map derived above.

\begin{example}[\toricex]\label{ex:toricPeriods}
Let $\X$ be a toric log manifold of dimension two with toric coordinates $(x,y)$ so that $\Xo \cong (\CCx)^2$; the higher-dimensional case is similar and is discussed in \autoref{sec:qtori} below. The Chern character gives an isomorphism
\[
\KB[0]{\X} \cong \cl[0]{\KB[0]{\X}} \oplus \cl[1]{\KB[0]{\X}}  \cong \ZZ \oplus \coH[2]{\X;\ZZ(1)} 
\]
sending the class of a vector bundle to its rank and first Chern class.  Meanwhile $\K[1]{\X}$ is concentrated in Adams weight 0 and thus has no nontrivial quantum parameters.  We conclude that the space of quantum parameters for $\X$ is 
\[
\Q{\X} \cong \frac{\Hom[\CC]{\HdR[2]{\X},\HdR[0]{\X}}}{\Hom[\ZZ]{\coH[2]{\X;\ZZ(1)},\coH[0]{\X;\ZZ}}} \cong \frac{\coH[0]{\forms[2]{\X}}^\vee}{\coH[2]{\X;\ZZ(1)}^\vee} 
\]
and the quantum parameter $q(\sigma)$ of a Poisson structure $\ps$ is the image of the contraction operator $\hookps : \coH[0]{\forms[2]{\X}} \to \coH[0]{\forms[0]{\X}}$, viewed as a linear functional on global logarithmic two-forms.

To understand this concretely, we note that a basis for the K-theory is given by the unit
\[
1 = [\cO{\X}] \in \cl[0]{\KB[0]{\X}}
\]
and an element
\[
\beta = [\calE]-1 \in \cl[1]{\KB[0]{\X}}\,,
\]
where $\calE$ is any line bundle whose first Chern class generates $\coH[2]{\X;\ZZ(1)}$, e.g.~the Deligne line bundle~\cite{Deligne1991}; see also~\cite[p.~15]{Brylinski2000}.  The latter is a non-algebraic holomorphic bundle, whose first Chern class in de Rham cohomology is
\[
c(\beta) = \tfrac{1}{\tipi} \dlog{y}\wedge\dlog{x} \in \coH[0]{\forms[2]{\X}} \cong \HdR[2]{\X}\,.
\]
Now if $\ps = \lambda xy\cvf{x}\wedge\cvf{y}$ for some $\lambda \in \CC$, then the contraction operator is given by
\[
\hookps c(\beta) = \hook{\lambda xy\cvf{x}\wedge\cvf{y}} \rbrac{\tfrac{1}{\tipi}\dlog{y}\wedge\dlog{x}} = \tfrac{\lambda}{\tipi} \in \CC\,,
\]
which is an integer if and only if $e^\lambda = 1$.  Hence we have an isomorphism  $\Q{\X} \cong \CCx$ under which the quantum parameter is given by
\[
q(\ps) = e^{\lambda} \in \CCx.
\]
In this way, we see explicitly that $q(\ps)$ determines $\ps$ up to shifting $\lambda$ by an integral multiple of $\tipi$, in accordance with \autoref{thm:qparam-torelli}, and that $\Q{\X}$ is an affine algebraic torus, in accordance with \autoref{prop:pure-params}(\ref{it:2-pure}).
\end{example}

\begin{example}[\surfaceex]\label{ex:P2-quantum}
Let $\Y \subset \PP^2$ be a smooth cubic curve and let $\X = \PP^2 \setminus \Y$ be its complement, viewed as a log manifold. We examined its Hodge structure in \autoref{ex:P2-gysin}.  It is $\tfrac{3}{2}$-pure, so that $\SF = \W$, and the quantum parameter is the class of the extension \eqref{eq:P^2-extension} induced by the Gysin sequence. We therefore have a canonical isomorphism
\[
\Q{\X} \cong \Jac{\Hom{\coH[1]{\Y;\ZZ},\ZZ}} \cong \frac{\HdR[1]{\Y}^\vee}{\F[0](\HdR[1]{\Y}^\vee) + \coH[1]{\Y;\ZZ}^\vee} \cong \frac{\coH[0]{\forms[1]{\Y}}^\vee}{\Hlgy[1]{\Y;\ZZ}}\,,
\]
identifying $\Q{\X}$ with the Albanese torus of $\Y$, which is a compact complex torus in accordance with  \autoref{prop:pure-params}(\ref{it:3/2-pure}).  It is noncanonically isomorphic to $\Y$ itself, and since two smooth cubic curves are abstractly isomorphic if and only if they are projectively equivalent, we deduce that $\X$ is determined, up to isomorphism of log surfaces, by the torus $\Q{\X}$ obtained from $\grS{\K[0]{\X}}= \grW{\K[0]{\X}}$.

Note that the pairing of vector fields and forms gives an isomorphism of vector spaces $\coH[0]{\forms[1]{\Y}}^\vee \cong \coH[0]{\cT{\Y}}$.  The flow of vector fields then identifies $\Q{\X}$ with the identity component of $\Aut{\Y}$.
 Tracing through the definitions, we find that the quantum parameter of a Poisson structure $\ps$ corresponds to the automorphism
\[
q(\sigma) = \exp(\zeta) \in \Aut{\Y}\,,
\]
where $\zeta  \in \coH[0]{\cT{\Y}}$ is the vector field on $\Y$ that is dual to the residue of the logarithmic two-form $\ps^{-1} \in \coH[0]{\forms[2]{\X}}$; this was proposed in \cite[\S1.31]{Kontsevich2008a} as the parameter defining the quantization.  The vector field $\zeta$ has a more familiar description: up to an overall sign, it is the restriction to $\Y$ of the modular vector field of $\ps$ in the sense of Weinstein~\cite{Weinstein1997}.  Put differently, $\zeta$ is the connection vector field for the Poisson line bundle $(\det \forms[1]{\PP^2})|_\Y$ on $\Y$. Note that $\zeta$ uniquely determines $\ps$, since it depends linearly and nontrivially on $\ps$, and the space of Poisson structures is one-dimensional.

As a result, we have the following global Torelli property for elliptic Poisson planes: if $(\X,\ps)$ and $(\X',\ps')$ are two log Poisson surfaces as above, with associated genus-one curves $\Y,\Y'$ and vector fields $\zeta,\zeta'$, then there exists an isomorphism of mixed Hodge structures $\K[0]{\X,\ps}\cong \K[0]{\X',\ps'}$  respecting the canonical embedding $\ZZ\hookrightarrow \K[0]{-}$ if and only if there exists an isomorphism $\phi : \X \to \X'$ of log surfaces that intertwines the automorphisms $\exp(\zeta)\in\Aut{\Y}$ and $\exp(\zeta')\in\Aut{\Y'}$.  This mirrors the fact that the quantizations of these structures are the Feigin--Odesskii--Sklyanin~\cite{Feigin1989} elliptic algebras, whose defining relations are determined by an elliptic curve and an automorphism thereof.

Similar results hold for other rational log Calabi--Yau surfaces obtained from this example or the previous one by blowing up/down, although these are no longer $\alpha$-pure.  Namely, by results of Friedman~\cite{Friedman1984a,Friedman2016} and Gross--Hacking--Keel~\cite{Gross2015},  the mixed Hodge structure on the second cohomology, or equivalently the reduced K-theory, more or less determines the log surface.  Meanwhile, the quantum parameter determines the exponential of the modular vector field of the Poisson structure, mirroring the parameters defining the quantizations of these surfaces in the sense of \cite{VandenBergh2001,Rains2016}.
\end{example}

\begin{example}[\sklyaninex]
The strategy of the previous example, based on the Gysin sequence, applies also to higher-dimensional Poisson manifolds, even when the log manifold is not $\alpha$-pure.  For instance, consider an elliptic normal curve $\Z \subset \PP^3$ as in \autoref{ex:sklyanin-constr}. In this case, the complement $\X := \PP^3\setminus \Z$ is not $\alpha$-pure; indeed  from the Gysin sequence, one sees that the cohomology in degrees 0, 2 and 4 has weights $0,2$ and $5$, respectively.  However, we still have $\Ahat_\X=1$. 

The summands $\K[0]{\X}^{(j)} = \coH[2j]{\X;\ZZ(j)} \cong \coH[2j]{\PP^3;\ZZ(j)}\cong \ZZ$ for $j=0,1$ have no nontrivial extensions, so $\K[0]{\X}^{(0)} = \coH[0]{\X;\ZZ}$ does not contribute to the quantum parameter, and hence the latter depends only on the reduced K-theory.  The relevant extension is then determined by the Gysin sequence from \autoref{ex:sklyanin-gysin}, which expresses $\tK[0]{\X,\ps}$ as an extension of $\coH[1]{\Y;\ZZ}$ by $\ZZ \oplus \ZZ/4\ZZ$.   Thus, once again, we have a global Torelli property: $\grW \tK[0]{\PP^3\setminus \Z,\ps}$ determines  the log threefold $\PP^3 \setminus \Z$ up to isomorphism, and the quantum parameter determines the exponentiation of a vector field on the curve, mirroring the parameters defining the noncommutative Sklyanin algebras~\cite{Sklyanin1982}. Similar results can be derived for many other families of Poisson threefolds as in \autoref{ex:threefold-constr}.
\end{example}

\section{The quantum torus}
\label{sec:qtori}

In this section, we give a precise relation between the quantum parameters of a Poisson structure and the parameters appearing in its canonical deformation quantization, in the special case of toric structures.  In other words, we compute the canonical quantization of torus-invariant Poisson structures on $(\CCx)^n$.

The strategy is as follows: first, we consider the universal family of \emph{noncommutative} tori given by the standard generators and relations presentation.  We associate to this family a variation of mixed Hodge structures via periodic cyclic homology and Getzler's Gauss--Manin connection, and we establish a global Torelli property. Then we compare this variation with that of the universal family of Poisson tori using the cyclic formality theorem of~\cite{CFW11,Shoikhet03,WWChains}.  It implies that the quantum parameter of a Poisson structure is  exactly the ``$q$-parameter'' appearing in the defining relation for its quantization.

The results in this section were developed in the first author's MSc thesis~\cite{Lindberg2020}.  We will therefore summarize the main results and provide precise references for details of the calculations as needed; we apologize that this section is therefore less self-contained than the rest of the paper, but we hope that the main ideas will come through.

\subsection{The parameter space}

Recall that an $n\times n$ matrix $q = (q_{ij})_{i,j}$ is \defn{multiplicatively skew-symmetric} if $q_{ii}=1$ and $q_{ij}=q_{ji}^{-1}$ for all $i,j$.  Such matrices form a Zariski-locally closed subset of all matrices, which we denote by
\[
\QP[n] \subset \CC^{n\times n}.
\]
We denote by $\allones[n] \in \QP[n]$ the matrix whose entries are all equal to one.  Let $L = \ZZ^n$ be the free abelian group of rank $n$, with basis $l_1,\ldots,l_n$.  Note that $q$ may equivalently be viewed as a homomorphism $\wedge^2 L \to \CCx$, defined by $l_i\wedge l_j \mapsto q_{ij}$.  Thus we have an isomorphism of Lie groups
\[
\QP[n] \cong (\CCx)^{n \choose 2} \cong \Hom{\wedge^2 L , \CC^\times},
\]
which will be useful below.
\subsection{Quantum tori}
Given a multiplicatively skew-symmetric matrix $q \in \QP[n]$, the corresponding (algebraic) \defn{quantum torus of dimension $n$} is the associative $\CC$-algebra generated by invertible elements $x_1,\ldots,x_n$ satisfying the ``$q$-commutation relations'' $x_i x_j = q_{ij} x_j x_i$, i.e.
\[
A_q := \frac{\CC{\abrac{x_1^{\pm1},\ldots,x_n^{\pm1}}}}{(x_ix_j - q_{ij}x_jx_i)_{i,j}}\,.
\]
When $q = \allones[n]$, we obtain the usual coordinate ring of the torus $(\CCx)^n$:
\[
A_{\allones[n]} \cong \cO{}((\CCx)^n) = \CC[x_1^{\pm 1},\ldots,x_n^{\pm n}]\,.
\]
For arbitrary $q$, the monomials $x_1^{k_1}\cdots x_n^{k_n}$ form a $\CC$-basis of $A_q$, so that the algebras $A_q$ form a flat family of algebras over the parameter space $\QP[n]$, which we may view as a deformation quantization of $(\CCx)^n$.  We denote by
\[
\cA \in \QCoh{\QP[n]}
\]
the corresponding quasi-coherent sheaf of $\cO{\QP[n]}$-algebras.  Note that its underlying $\cO{\QP[n]}$-module is trivial, but the algebra structure in the fibres varies as a regular function of $q$. Moreover, each $A_q$ has a canonical action of the torus by algebra automorphisms, defined by rescaling the generators so that the monomial $x_1^{k_1}\cdots x_n^{k_n}$ is a basis for the subspace of $A_q$ with torus weight $(k_1,\ldots,k_n) \in\ZZ^n$.  Thus the trivialization of $\cA$ as an $\cO{\QP[n]}$-module is torus-equivariant.

\subsection{Hodge--de Rham theory} Let $\KdR{A_q} := \HP[-\bullet]{A_q}$ denote the periodic cyclic homology of the $\CC$-algebra $A_q$; that is, the periodic cohomology of the non-positively graded Hochschild mixed complex
\[
\Mix[\bullet]{A_q} := ( A_q^{\otimes (1-\bullet)},b,B)\,,
\]
where $b$ is the Hochschild differential and $B$ is the Connes--Tsygan operator (see \cite[2.5.13]{Loday1998} and \cite{Connes1985, Tsygan1983}). Thus, as for the periodic cohomology of any mixed complex, we have a Hodge filtration
\[
\F\KdR{A_q}
\]
given by the powers of the periodizing variable $u$ as above. It was calculated explicitly in \cite[\S 3.2]{Lindberg2020}, using formulae from \cite{Wam97,Yas17}; we summarize the results as follows.

Let $\ft$ be the Lie algebra of the torus $(\CCx)^n$; it acts on $A_q$ by derivations.  Concretely, if $e_1,\ldots,e_n \in \ft$ is the basis given by the left invariant vector fields $\logcvf{x_1},\ldots,\logcvf{x_n}$, then we have
\[
e_i(x_1^{k_1}\cdots x_n^{k_n}) = k_i x_1^{k_1}\cdots x_n^{k_n},
\]
for all $(k_1,\ldots,k_n) \in \ZZ^n$, independent of $q$.

Let $\tau :  A_q \to  \CC$ be the $\CC$-linear functional defined by projection onto the torus invariant part.  Concretely,
\[
\tau(x_1^{k_1}\cdots x_n^{k_n}) = \begin{cases}
1 & k_1=\cdots = k_n = 0 \\
0 & \mathrm{otherwise}
\end{cases}\,.
\]
Define a pairing
\[
\abrac{-,-} : \wedge^k \ft \otimes_\CC \Mix[-k]{A_q} \to \CC
\]
by the formula
\[
\abrac{\xi_1\wedge\cdots \wedge \xi_k, a_0\otimes\cdots\otimes a_k} =\frac{1}{k!} \det\rbrac{\tau(a_0\xi_i(a_j)}_{i,j=1}^k\,.
\]
The results of \cite[\S3.1]{Lindberg2020} can then be summarized as follows.
\begin{proposition}\label{prop:HP-bundle}
The pairing $\abrac{-,-}$ and the inclusion of the $(\CCx)^n$-invariant subcomplex induce isomorphisms
\[
\KdR{A_q} \cong \KdR{A_q}^{(\CCx)^n} \cong \rbrac{\wedge^{-\bullet}\ft^\vee}((u))
\]
compatible with the Hodge filtrations.  When $q = \allones[n]$, this isomorphism is equal to the composition
\[
\begin{tikzcd}
\KdR{A_{\allones}} = \HP[-\bullet]{A_{\allones}} \ar[r,"\sim"] &  \KdR{(\CCx)^n}
\ar[r,"\sim"] & \bigoplus_j \wedge^{\bullet+2j}\ft^\vee
\end{tikzcd}
\]
of the Hochschild--Kostant--Rosenberg isomorphism for $A_{\allones} = \cO{}((\CCx)^n)$ with the isomorphism of \autoref{ex:toric-hodge} (for any log model of the torus).
\end{proposition}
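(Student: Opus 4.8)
The plan is to compute the Hochschild and cyclic homology of the quantum torus $A_q$ explicitly, leveraging the torus action and reducing everything to the invariant part. First I would recall that $A_q = \bigoplus_{k \in \ZZ^n} \CC \cdot x^k$ decomposes into weight spaces under the $(\CCx)^n$-action, with $x^k = x_1^{k_1}\cdots x_n^{k_n}$, and that the Hochschild complex $\Mix[\bullet]{A_q} = (A_q^{\otimes(1-\bullet)}, b, B)$ inherits a $(\CCx)^n$-action making $b$ and $B$ equivariant. Since $(\CCx)^n$ is reductive (or, more concretely, since one can average against the compact torus), taking invariants is exact, so the inclusion of the invariant subcomplex $\Mix[\bullet]{A_q}^{(\CCx)^n} \hookrightarrow \Mix[\bullet]{A_q}$ is a quasi-isomorphism of mixed complexes, and hence induces an isomorphism on periodic cyclic homology compatible with the Hodge ($u$-adic) filtration. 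This is the first displayed isomorphism; the work is identifying the invariant part.

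Next I would invoke the explicit formulae for Hochschild and cyclic homology of quantum tori from the literature cited (Wambst~\cite{Wam97}, Yashinski~\cite{Yas17}), as carried out in~\cite[\S3.1-3.2]{Lindberg2020}: the Hochschild homology $\HH_\bullet(A_q)$ is concentrated in the torus-invariant weight and, degreewise, is a direct sum indexed by the ``resonant'' sublattice, but for the periodic theory the non-invariant contributions cancel and one is left with $\HH_\bullet(A_q)^{(\CCx)^n} \cong \wedge^{-\bullet}\ft^\vee$, with the Connes--Tsygan operator $B$ acting as (a multiple of) the de Rham differential, which vanishes on $\wedge^\bullet \ft^\vee$ since $\ft$ is abelian. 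Therefore the periodic cyclic complex of the invariant part is simply $(\wedge^{-\bullet}\ft^\vee((u)), 0, 0)$, and its periodic cohomology is $(\wedge^{-\bullet}\ft^\vee)((u))$ with the tautological $u$-adic Hodge filtration. The key point to verify carefully is that the explicit pairing $\abrac{-,-}$ written in the statement is precisely the map realizing this identification: one checks that $\abrac{\xi_1\wedge\cdots\wedge\xi_k, a_0\otimes\cdots\otimes a_k} = \tfrac{1}{k!}\det(\tau(a_0 \xi_i(a_j)))$ is $b$-closed, $B$-compatible up to the stated normalization, and descends to the claimed isomorphism on homology; this is a direct computation with the bar/Hochschild differentials using that $\tau$ is a trace and the $\xi_i$ are commuting derivations.

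For the final assertion — that at $q = \allones[n]$ this isomorphism agrees with the composite of the HKR isomorphism for $A_{\allones} = \cO{}((\CCx)^n)$ and the isomorphism of \autoref{ex:toric-hodge} — I would argue by naturality. The HKR map sends a Hochschild chain $a_0 \otimes \cdots \otimes a_k$ to $\tfrac{1}{k!} a_0 \, da_1 \wedge \cdots \wedge da_k \in \Omega^k_{(\CCx)^n}$, and under the trivialization $\Omega^1_{(\CCx)^n} \cong \ft^\vee \otimes \cO{}$ together with $\tau = \int$ this is exactly the dual of contracting $\xi_1\wedge\cdots\wedge\xi_k$ into the form, which is the pairing $\abrac{-,-}$ restricted to $q=\allones[n]$. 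On the de Rham side, \autoref{ex:toric-hodge} (and the quasi-isomorphism of \autoref{ex:toric-mixed}) identifies $\KdR{(\CCx)^n}$ with $(\wedge^{-\bullet}\ft^\vee)((u))$ in the same way, compatibly with contraction of polyvectors into forms; since both the HKR identification and \autoref{ex:toric-hodge} are defined by the same contraction/integration recipe, the two composites coincide on generators $\dlog{x_{i_1}}\wedge\cdots\wedge\dlog{x_{i_k}}$, hence everywhere by $\CC((u))$-linearity.

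The main obstacle I expect is not the structural reductive-averaging step but the bookkeeping in verifying that the explicit pairing $\abrac{-,-}$ is well-defined on homology and matches the normalizations in~\cite{Wam97,Yas17,Lindberg2020} — in particular, tracking the factors of $k!$, the signs in the Connes--Tsygan operator, and ensuring that the ``resonance'' contributions in positive Hochschild degree genuinely die in the periodic theory uniformly in $q$ (this is where one uses that $x^k$ for $k \neq 0$ is a unit whose class in $\HP$ is forced to vanish because it is a cyclic coboundary up to a unit scalar). Since the problem statement permits citing~\cite{Lindberg2020} for the detailed calculations, I would state these computations as coming from there and only spell out the naturality comparison at $q = \allones[n]$ in full.
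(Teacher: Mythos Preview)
Your approach matches the paper's, which simply states the result as a summary of \cite[\S3.1--3.2]{Lindberg2020} (drawing on \cite{Wam97,Yas17}) and gives no independent proof. Your outline---reduce to torus-invariants, compute the invariant Hochschild complex as $\wedge^{-\bullet}\ft^\vee$ with trivial $B$, verify the pairing, and check the HKR comparison at $q=\allones$---is exactly the content of those references.

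There is one logical slip worth correcting. You write that since $(\CCx)^n$ is reductive, taking invariants is exact, \emph{so} the inclusion of the invariant subcomplex is a quasi-isomorphism. Exactness of invariants only gives $\coH{\Mix{A_q}^{(\CCx)^n}} \cong \coH{\Mix{A_q}}^{(\CCx)^n}$; it does not by itself force the non-invariant weight spaces to have vanishing (periodic) cohomology. For Hochschild homology at resonant $q$ they in fact do not vanish. What you need---and what you correctly identify later as the ``main obstacle''---is the separate fact that each non-zero weight summand of the mixed complex is acyclic for \emph{periodic} cyclic homology (essentially a Goodwillie-type contraction using the invertibility of the monomials $x^k$). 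So the first displayed isomorphism is not a formal consequence of reductivity but a genuine computation; you should state it that way and defer to \cite{Yas17} or \cite[\S3.2]{Lindberg2020} for the argument, rather than deriving it from exactness of invariants.
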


In this way, the periodic cyclic homology groups of the algebras $A_q$ assemble into a locally free sheaf $\sKdR{\cA}$ over the parameter space $\QP[n]$, isomorphic to the trivial bundle with fibre $(\wedge^{-\bullet} \ft)((u))$.

\subsection{Getzler's Gauss--Manin connection}

In \cite{Getzler93}, Getzler constructed a flat connection on the periodic cyclic homology of a family of associative algebras over a formal disk; it is the noncommutative  analogue of the Gauss--Manin connection for smooth fibre bundles, and satisfies the Griffiths transversality condition. For quantum tori, it was computed in \cite[\S 3.2.2]{Lindberg2020}, building on the computations in \cite[\S 7]{Yas17} and the results of \cite[\S 4.4]{GS12}.

\begin{proposition}
Under the isomorphism of \autoref{prop:HP-bundle}, Getzler's Gauss--Manin connection on $\sKdR{\cA}$ is identified with the connection
\[
\nabla := \dd + u^{-1} \sum_{1\le i< j \le n} \dlog{q_{ij}} \hook{e_i \wedge e_j}
\]
on the trivial bundle $(\wedge^{-\bullet}\ft)((u))$, where $e_1,\ldots,e_n \in \ft$ is the canonical basis of left-invariant vector fields from above.
\end{proposition}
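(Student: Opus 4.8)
The plan is to compute both sides explicitly in a convenient model and match them. First I would recall Getzler's construction of the Gauss--Manin connection from \cite{Getzler93}: for the flat family $\cA$ over $\QP[n]$ one chooses a connection $\nabla^0$ on $\cA$ regarded merely as an $\cO{\QP[n]}$-module; this induces an operator on the relative Hochschild chain complex which in general fails to commute with the differentials $b$ and $B$, and the discrepancy is measured by the Kodaira--Spencer cochain $\mathsf{KS} \in \coH[0]{\forms[1]{\QP[n]}} \otimes HH^2(\cA/\QP[n])$ obtained by differentiating the multiplication; Getzler's connection on periodic cyclic homology is then $\nabla^0$ corrected by a term built from $u^{-1}$ and the action of $\mathsf{KS}$ through his Cartan-homotopy operators. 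The crucial simplification for tori is that $\cA$ carries the canonical torus-equivariant trivialization by the monomial basis, so I would take $\nabla^0 = \dd$ to be the associated trivial connection; then $\nabla^0$ commutes with the $(\CCx)^n$-action, so the entire construction descends to the invariant subcomplex, on which \autoref{prop:HP-bundle} identifies periodic cyclic homology with the trivial bundle $(\wedge^{-\bullet}\ft^\vee)((u))$ compatibly with the Hodge filtration.

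Second, I would compute the Kodaira--Spencer cochain. In the monomial trivialization the product is $x^k \cdot x^l = q^{(k,l)}\, x^{k+l}$ for an explicit monomial $q^{(k,l)}$ in the entries $q_{ij}$ (the exponent being a bilinear form in the exponent vectors coming from the reordering relations), so $\partial_{\log q_{ij}} m$ is the Hochschild $2$-cochain $(x^k,x^l)\mapsto c_{ij}(k,l)\, x^{k+l}$. Using the explicit Hochschild and cyclic formulae for quantum tori from \cite{Wam97,Yas17,GS12} as in \cite[\S3.2]{Lindberg2020}, one checks that this cocycle is cohomologous to the standard polyvector cocycle representing $e_i\wedge e_j \in \wedge^2\ft \cong HH^2$ under the Hochschild--Kostant--Rosenberg map, whence $\mathsf{KS} = \sum_{i<j}\dlog q_{ij}\otimes (e_i\wedge e_j)$.

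Third, I would feed this into Getzler's formula and translate the abstract action of the Hochschild class $e_i\wedge e_j$ on periodic cyclic homology into the geometric contraction operator. This is the heart of the argument: one must match the cyclic Lie-derivative/contraction operators appearing in Getzler's construction with $\hook{e_i\wedge e_j}:\wedge^p\ft^\vee\to\wedge^{p-2}\ft^\vee$, keeping careful track of the factor $u^{-1}$ — which is exactly the degree shift making a bivector contraction degree-$0$ on the periodized complex — and of signs. For this I would invoke the compatibility between Getzler's noncommutative calculus and the classical Cartan calculus on the HKR side, i.e.\ the noncommutative Cartan homotopy formula of \cite{DTT08,DTT09,Tsygan99} together with the torus computations of \cite[\S4.4]{GS12} and \cite[\S7]{Yas17}, which guarantee that on the invariant (hence genuinely smooth, ``formal'') part the HKR quasi-isomorphism intertwines the two operators. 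Summing over $i<j$ then gives precisely $\nabla = \dd + u^{-1}\sum_{i<j}\dlog q_{ij}\,\hook{e_i\wedge e_j}$.

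I expect the last step to be the main obstacle: unwinding Getzler's definition far enough to recognize the contraction operator literally, rather than merely an operator in the correct homotopy class, and to do so compatibly with the trivialization of \autoref{prop:HP-bundle}. A secondary point requiring care is that the torus-equivariant trivial connection $\nabla^0=\dd$ is an admissible choice in Getzler's framework and that restricting to the invariant subcomplex loses no information; this follows because the $(\CCx)^n$-action on $\sKdR{\cA}$ is through a connected group and hence acts trivially on cohomology, so the invariant inclusion is a filtered quasi-isomorphism, as already recorded in \autoref{prop:HP-bundle}.
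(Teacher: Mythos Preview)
Your proposal is correct and matches the paper's approach: the paper does not give an in-text proof but simply cites \cite[\S3.2.2]{Lindberg2020}, \cite[\S7]{Yas17}, and \cite[\S4.4]{GS12}, and your outline is precisely the computation carried out there --- take the trivial connection in the torus-equivariant monomial trivialization, read off the Kodaira--Spencer cocycle $\sum_{i<j}\dlog q_{ij}\otimes(e_i\wedge e_j)$ from the $q$-dependence of the product, and identify its action on the invariant periodic complex with $u^{-1}\hook{e_i\wedge e_j}$ via the HKR/Cartan comparison.
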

Note that in this model, the Griffiths transversality condition $\nabla \F[p]\subset\F[p-1]$ amounts to the fact that the connection involves $u^{-1}$ but no higher powers thereof.

Since the operators $\hook{e_i\wedge e_j}$ pairwise commute, the parallel transport of the connection is easily computed.  Namely, flat sections of $\sKdR{\cA}$ are identified with $\wedge^{-\bullet}\ft((u))$-valued functions of the form
\[
\exp\rbrac{ u^{-1}\sum_{i<j} \log q_{ij} \hook{e_i\wedge e_j}}v = \prod_{i < j}q_{ij}^{\hook{e_i\wedge e_j}/u}\cdot v
\]
for $v \in \wedge^{-\bullet} \ft((u))$.  Note that these functions are multi-valued, i.e.~Getzler's connection has monodromy.  More precisely, the monodromy representation based at $\allones[n]\in\QP[n]$ is given by 
\[
\mapdef{\rho}{\pi_1(\QP[n],\allones[n])}{\Aut{\wedge^{-\bullet}\ft((u))}}
{\gamma_{ij}}{\exp\rbrac{\frac{\tipi}{u} \hook{e_i\wedge e_j}}}
\]
where $\gamma_{ij}$ is the generator of $\pi_1(\QP[n],\allones[n])$ given by the family of matrices $q(t)$ whose entries are all constant except $q_{ij}(t) = q_{ji}(t)^{-1} = e^{\iu t}$  for $t \in [0,2\pi]$.

\subsection{Integral lattice}
We now incorporate an integral lattice.  Note that any reasonable choice must be covariant constant with respect to the Gauss--Manin connection.  Hence we can do this in a ``cheap'' way, by parallel transport of the lattice defined at the basepoint $q = \allones[n] \in \QP[n]$, but for this to work,  we need to verify that the lattice is preserved by the monodromy of the connection.  This is the approach we take here, but it can be reinterpreted intrinsically via the topological K-theory functor of Blanc~\cite{Blanc2016}; see \cite[Theorem 3.3.7]{Lindberg2020} in the case of families over formal disk. See also \cite{Elliott1984} for a similar result in $C^\ast$-algebraic context.

For $q = \allones[n]$, we have $A_q = \cO{}((\CCx)^n)$, and the topological K-theory is given by
\[
\KB[0]{(\CCx)^n} \cong \bigoplus_j \coH[2j]{\X;\ZZ(j)} \cong \bigoplus_j (\wedge^{2j} L)(-j)\,,
\]
where we have identified the free abelian group $L = \ZZ^n$ with $\coH[1]{(\CCx)^n;\ZZ(1)}$, so that the basis element $l_i \in L$ corresponds to the class of the logarithmic differential form $\dlog{x_i}$.  
 A similar isomorphism holds for the other K-theory groups.  This gives a lattice
\[
\KB{(\CCx)^n} \to \KdR{(\CCx)^n} \cong \bigoplus_j \wedge^{\bullet+2j}\ft^\vee \cong \KdR{A_{\allones}}
\]
via the Chern character (or equivalently, the charge) and the isomorphism of \autoref{prop:HP-bundle}. Note that the operator $\tipi \hook{e_i\wedge e_j}$ on $\wedge^\bullet \ft^\vee$ preserves this lattice and its weight filtration, and hence the image of  $\KB{(\CCx)^n}$ in $\KdR{A_q}$ via parallel transport of the Gauss--Manin connection is independent of the choice of path from $\allones$ to $q$, though the exact map does depend on this choice.

\begin{example}
    In the case $n=2$, the lattice in $\KdR[0]{A_q}$ at a point $q \in \QP \cong \CCx$ has a $\ZZ$-basis given by the elements $1$ and $\tfrac{1}{2\pi i} (u e_{1}\wedge e_{2} - \log q)$.  The latter vector depends on a choice of branch for $\log q$, but the filtered subgroup generated by these two vectors is independent of the choice.
\end{example}

In summary we have the following:
\begin{proposition}
The periodic cyclic homology of the universal noncommutative torus $\cA$, with its Hodge filtration and Gauss--Manin connection, lifts uniquely to a variation of mixed Hodge structures $\shfK{\cA}$ over $\QP$, whose fibre at $q = \allones$ is identified with $\K{(\CCx)^n}$ via the charge and Hochschild--Kostant--Rosenberg maps.
\end{proposition}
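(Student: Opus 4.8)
The plan is to assemble the desired variation of mixed Hodge structures $\shfK{\cA}$ over $\QP$ out of the three ingredients that have just been established: the locally free sheaf $\sKdR{\cA}\cong(\wedge^{-\bullet}\ft^\vee)((u))\otimes\cO{\QP}$ with its Hodge filtration $\F$ from \autoref{prop:HP-bundle}; Getzler's Gauss--Manin connection $\nabla=\dd+u^{-1}\sum_{i<j}\dlog{q_{ij}}\hook{e_i\wedge e_j}$, which is flat and satisfies Griffiths transversality; and the $\nabla$-horizontal lattice obtained by parallel transport from the basepoint $\allones$, where it is the charge/HKR image of $\KB{(\CCx)^n}$. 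The first thing I would do is spell out precisely what ``lifts uniquely to a variation of mixed Hodge structures'' means here: one must produce, for every $q$, an integral mixed Hodge structure on $\KdR{A_q}$ whose de Rham part and Hodge filtration are the given ones, whose lattice is the parallel transport of the basepoint lattice, and whose weight filtration is $\nabla$-flat; and one must show this data is unique given these constraints.

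The key steps, in order. (1) \emph{Existence of the local system.} Show that the parallel transport of the basepoint lattice $\charge_{(\CCx)^n}\KB{(\CCx)^n}\subset\KdR{A_{\allones}}$ along $\nabla$ is well-defined, i.e.\ independent of the chosen path. This is exactly the observation, already made in the excerpt, that the monodromy generators $\rho(\gamma_{ij})=\exp\!\bigl(\tfrac{\tipi}{u}\hook{e_i\wedge e_j}\bigr)$ preserve the lattice in $\wedge^\bullet\ft^\vee$ and its weight filtration, because $\tipi\hook{e_i\wedge e_j}$ is an integral operator preserving $\W$. Hence the image is a well-defined locally constant subsheaf $\sKB{\cA}\subset\sKdR{\cA}$, and the weight filtration, being monodromy-invariant, descends to a filtration of $\sKB{\cA}$ by locally constant subsheaves. (2) \emph{Fibrewise mixed Hodge structure.} For fixed $q$, I would verify the three axioms of an Adams-equivariant (hence a fortiori a plain) mixed Hodge structure: the comparison map $\charge$ is a $\CC$-isomorphism onto $\KdR{A_q}$ by \autoref{prop:HP-bundle}; the real structure matches (the basepoint real structure is transported by $\nabla$, and the monodromy operators $\exp(\tfrac{\tipi}{u}\hook{e_i\wedge e_j})$ are real since $\tipi\hook{e_i\wedge e_j}$ involves a factor $u^{-1}$ paired with a factor $\tipi$ whose combined effect on each Tate-weight summand is real — this is the same bookkeeping as in \autoref{thm:R-MHS}\eqref{it:equivariance}); the weight filtration lifts to $\QQ$ since it does at the basepoint and transport is defined over $\QQ$; and the opposedness of $\F$ on $\grW$ follows because, under the isomorphism of \autoref{ex:toric-hodge}, $\F_q$ is obtained from $\F_{\allones}$ by conjugating with $\prod_{i<j}q_{ij}^{\hook{e_i\wedge e_j}/u}$, which is precisely the Poisson--Hodge deformation $e^{\hookps/u}$ for $\ps=\sum_{i<j}(\log q_{ij})\,x_ix_j\cvf{x_i}\wedge\cvf{x_j}$, so \autoref{thm:R-MHS}\eqref{it:pure} applies to the toric log model. (3) \emph{Variation axioms.} Holomorphic dependence of $\F$ on $q$ and Griffiths transversality are immediate from the explicit formula: $\F_q$ is the image of $u^p(\wedge^{\le\bullet}\ft^\vee)[[u]]$ under conjugation by an operator that is holomorphic in $q$ and involves only $u^{-1}$, so $\nabla\F^p\subset\F^{p-1}$. (4) \emph{Uniqueness.} Any variation of mixed Hodge structures satisfying the stated constraints has the prescribed $(\sKdR{\cA},\F)$ and the prescribed locally constant lattice; its weight filtration is then forced, because a VMHS's weight filtration is locally constant and must restrict at $\allones$ to the given one. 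Hence the lift is unique.

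The main obstacle I expect is \emph{Step (2)}, and within it the verification that $\F_q$ really does induce a pure Hodge structure on each weight-graded piece — in other words that the deformed filtration we are transporting is genuinely a \emph{Hodge} filtration and not merely a filtration satisfying transversality. The cleanest route is to identify the transported filtration with the Poisson--Hodge filtration $\Fps$ of \autoref{ex:toric-hodge} for the toric log model $\X$ with $\ps$ determined by $\log q_{ij}$, and then invoke \autoref{thm:R-MHS}\eqref{it:pure} (applied to the compact K\"ahler compactification, since the interior of a toric log model is $(\CCx)^n$ but the compactification is the smooth projective toric variety). That identification requires matching three things simultaneously: the HKR isomorphism $\KdR{A_{\allones}}\cong\KdR{(\CCx)^n}$ of \autoref{prop:HP-bundle}, Getzler's connection with the Gauss--Manin connection of the toric family, and the monodromy formula with $e^{\hookps/u}$ evaluated at $\hbar\mapsto\log q$; the first is already done in \cite{Lindberg2020}, the second is the content of the preceding proposition, and the third is the computation in \autoref{ex:toricPeriods}. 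Once these three are aligned, the purity of $\grW$ is not a new computation at all but a consequence of \autoref{prop:degen} and \autoref{thm:R-MHS}, and the remaining verifications (real structure, rationality of $\W$, holomorphicity, transversality) are the routine bookkeeping sketched above.
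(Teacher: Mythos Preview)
Your proposal is correct and follows the same route as the paper, which treats this proposition as a summary of the preceding discussion rather than giving a separate proof: transport the charge/HKR lattice from $q=\allones$ via Getzler's connection, use the integrality of $\tipi\,\hook{e_i\wedge e_j}$ to see that the monodromy preserves both the lattice and the weight filtration, and identify the fibre at $q$ with $\K{\X,\ps_\lambda}$ for the toric log model with $\lambda=\log q$.

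One correction: in Step~(2) you invoke \autoref{thm:R-MHS}\eqref{it:pure}, but that item applies only when the boundary divisor is empty, which is not the case for the toric log model (its boundary is the full toric divisor). What you actually need is the \emph{main} assertion of \autoref{thm:R-MHS}, that $\KR{\X,\ps}$ is a mixed $\RR$-Hodge structure for any log Poisson manifold; the opposedness of $\Fps$ on each $\grW$ is part of that statement and does not require purity of the whole structure. With that fix, your argument goes through exactly as written.
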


\subsection{Extraction of the quantum parameter}
Note that since the monodromy of $\shfK{\cA}$ is unipotent with respect to the weight filtration, the associated graded variation $\grW\shfK[0]{\cA}$ is trivial, so that parallel transport gives canonical isomorphisms
\[
\grW[j]\K[0]{A_q} \cong \grW[j] \K[0]{A_{\allones}} \cong \grW[j] \K[0]{(\CCx)^n} \cong  (\wedge^{2j}L)(-j)
\]
for all $q \in \QP$ and all $j\in \ZZ$.  In particular, for each $q \in \QP[n]$, we have a canonical extension
\begin{equation}
\begin{tikzcd}
0 \ar[r] & \ZZ \ar[r] & \W[2]\K[0]{A_q} \ar[r] &  (\wedge^2 L)(-1) \ar[r] & 0\,. 
\end{tikzcd}\label{eq:qtorus-ext}
\end{equation}
But the group of such extensions is
\begin{align}
\ExtMHS[1]{(\wedge^2 L)(-1),\ZZ}  \cong \Hom{\wedge^2 L,\CCx} \cong \QP
\label{eq:torus-Q-ext}
\end{align}
since $L$ is free and $\ExtMHS[1]{\ZZ(-1),\ZZ}\cong \CCx$.  A straightforward calculation then gives the following.
\begin{lemma}
Under the canonical isomorphism \eqref{eq:torus-Q-ext},  the class of the extension \eqref{eq:qtorus-ext} is given by
\[
[\W[2]\K[0]{A_q}] = q\,.
\]
\end{lemma}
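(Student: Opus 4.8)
The plan is to compute the class of the extension \eqref{eq:qtorus-ext} directly from Carlson's description \cite{Carlson1980} of the group $\ExtMHS[1]{(\wedge^2 L)(-1),\ZZ}$, using the explicit trivialization of $\sKdR{\cA}$ furnished by \autoref{prop:HP-bundle} together with the explicit formula for Getzler's connection established just before it. First I would record the relevant structure. Since $(\CCx)^n$ is $2$-pure, the identifications above give $\grW[0]\K[0]{A_q}=\ZZ$ and $\grW[2]\K[0]{A_q}=(\wedge^2 L)(-1)$ with nothing in between, so that as a filtered vector space $\W[2]\KdR[0]{A_q}=\CC\cdot 1\oplus u\wedge^2\ft^\vee$, the Hodge filtration being the $u$-adic one ($\F[0]$ everything, $\F[1]=u\wedge^2\ft^\vee$, $\F[2]=0$); one checks that the subbundle $\cO{\QP}\cdot 1\oplus\cO{\QP}\cdot u\wedge^2\ft^\vee$ is flat for Getzler's connection, so this description of $\W[2]\KdR[0]{A_q}$ holds for every $q$. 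Because the Tate structure $(\wedge^2 L)(-1)$ is entirely of Hodge type $(1,1)$, the quotient map $\W[2]\KdR[0]{A_q}\to(\wedge^2 L)(-1)_\dR$ carries $\F[1]\W[2]\KdR[0]{A_q}=u\wedge^2\ft^\vee$ isomorphically onto its target, and I would take the inverse isomorphism
\[
s_F\colon (\wedge^2 L)(-1)_\dR\;\xrightarrow{\ \sim\ }\;u\wedge^2\ft^\vee\;\hookrightarrow\;\W[2]\KdR[0]{A_q}
\]
as the Hodge splitting required by Carlson's formula; by construction it respects the Hodge filtration.

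Next I would pin down an integral splitting. At $q=\allones$ the charge/Hochschild--Kostant--Rosenberg normalization recalled above (made explicit for tori in \autoref{ex:toricPeriods}) identifies the integral class realizing $l_i\wedge l_j\in\wedge^2 L$ with $\tfrac{1}{\tipi}u(e_i^\vee\wedge e_j^\vee)$ inside $\W[2]\KdR[0]{A_{\allones}}$, where $e_1^\vee,\dots,e_n^\vee$ is the basis of $\ft^\vee$ dual to $e_1,\dots,e_n$. The lattice at a general $q$ is the parallel transport of this one along Getzler's connection; using that its flat sections are $\prod_{k<l}q_{kl}^{\pm\hook{e_k\wedge e_l}/u}v$ and that $\hook{e_k\wedge e_l}$ annihilates the two-form $e_i^\vee\wedge e_j^\vee$ unless $\{k,l\}=\{i,j\}$ (with $\hook{e_i\wedge e_j}(e_i^\vee\wedge e_j^\vee)=\pm1$), this frame transports to
\[
s_\ZZ(l_i\wedge l_j)=\tfrac{1}{\tipi}\bigl(u(e_i^\vee\wedge e_j^\vee)-\log q_{ij}\bigr)\ \in\ \W[2]\KdR[0]{A_q},
\]
exactly as in the $n=2$ case displayed above. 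Its image in $(\wedge^2 L)(-1)_\dR$ is $\tfrac{1}{\tipi}u(e_i^\vee\wedge e_j^\vee)$, the de Rham realization of $l_i\wedge l_j$, so $s_\ZZ$ is genuinely a section on lattices, and the multivaluedness of $\log q_{ij}$ modulo $\tipi\ZZ$ is precisely its ambiguity modulo the period lattice $\Hom(\wedge^2 L,\ZZ(1))$.

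Finally I would read off the class. By Carlson's formula, $[\W[2]\K[0]{A_q}]$ is represented (up to the customary overall sign) by the difference $s_F-s_\ZZ\in\Hom_\CC((\wedge^2 L)(-1)_\dR,\ZZ_\dR)$ taken modulo $\F[0]$ (which vanishes here for weight reasons) and modulo the integral homomorphisms, and the one-line computation gives
\[
(s_F-s_\ZZ)(l_i\wedge l_j)=\tfrac{1}{\tipi}u(e_i^\vee\wedge e_j^\vee)-\tfrac{1}{\tipi}\bigl(u(e_i^\vee\wedge e_j^\vee)-\log q_{ij}\bigr)=\tfrac{\log q_{ij}}{\tipi}\cdot 1.
\]
Thus the extension class is the homomorphism $\wedge^2 L\to\CC/\ZZ$ sending $l_i\wedge l_j$ to $\tfrac{\log q_{ij}}{\tipi}\bmod\ZZ$, and under the isomorphism \eqref{eq:torus-Q-ext} — which is $\CC/\ZZ\cong\CCx$ via $t\mapsto e^{\tipi t}$ on each factor, in accordance with the normalization of \autoref{ex:toricPeriods} — this is $l_i\wedge l_j\mapsto q_{ij}$, i.e.\ the matrix $q\in\QP$. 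Hence $[\W[2]\K[0]{A_q}]=q$, as claimed.

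The only real difficulty is bookkeeping, not conceptual: one must align the normalizing powers of $\tipi$ coming from the Tate twists in the charge map, the direction of parallel transport, and the signs $\hook{e_i\wedge e_j}(e_i^\vee\wedge e_j^\vee)=\pm1$ and $l_i\wedge l_j$ versus $l_j\wedge l_i$, so that the answer comes out as $q$ on the nose rather than $q^{-1}$ or a root of it. These are fixed once and for all by comparison with the $n=2$ lattice computation displayed above and with the parallel calculation on the Poisson side in \autoref{ex:toricPeriods}, both of which were carried out explicitly.
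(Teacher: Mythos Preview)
Your proposal is correct and is precisely the ``straightforward calculation'' that the paper alludes to but does not write out: use the explicit trivialization from \autoref{prop:HP-bundle}, transport the integral lattice via the formula for Getzler's flat sections (as illustrated in the paper's $n=2$ example), and read off the extension class via Carlson's formula. There is no alternative argument in the paper to compare with.
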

\begin{corollary}[Global Torelli for noncommutative tori]\label{cor:qtori-torelli}
The period map 
\[
	\mapdef{\wp}{\QP[n]}{\ExtMHS[1]{(\wedge^2 L)(-1),\ZZ}}
{q }{[\W[2]\K[0]{A_q}]}
\]
is an isomorphism of complex Lie groups.
\end{corollary}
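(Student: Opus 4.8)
The plan is to reduce the statement to the Lemma immediately preceding it, after recording that the target group is a complex Lie group canonically isomorphic to $\QP[n]$. First I would note that $\wedge^2 L$ is a free abelian group carrying the pure Tate Hodge structure of weight zero, so that additivity of $\ExtMHS[1]{-,-}$ in its first variable yields a natural isomorphism
\[
\ExtMHS[1]{(\wedge^2 L)(-1),\ZZ} \;\cong\; \Hom[\ZZ]{\wedge^2 L,\; \ExtMHS[1]{\ZZ(-1),\ZZ}}\,.
\]
By Carlson's description of extensions of mixed Hodge structures \cite[\S3.5]{Peters2008}, the group $\ExtMHS[1]{\ZZ(-1),\ZZ}\cong\ExtMHS[1]{\ZZ,\ZZ(1)}$ is the complex Lie group $\CCx$ \cite[Example 3.34(1)]{Peters2008}; assembling these identifications recovers precisely the isomorphism \eqref{eq:torus-Q-ext}, and exhibits $\ExtMHS[1]{(\wedge^2 L)(-1),\ZZ}$ as $\Hom[\ZZ]{\wedge^2 L,\CCx}\cong(\CCx)^{\binom{n}{2}}\cong\QP[n]$ as complex Lie groups, using the basis $l_1,\dots,l_n$ of $L$.

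Write $\Phi$ for the resulting isomorphism $\ExtMHS[1]{(\wedge^2 L)(-1),\ZZ}\xrightarrow{\ \sim\ }\QP[n]$. Well-definedness of $\wp$ is already in hand: the unipotence of the monodromy of $\shfK{\cA}$ on the weight filtration supplies the canonical extension \eqref{eq:qtorus-ext}, and since $\shfK{\cA}$ is a variation of mixed Hodge structures its class varies holomorphically in $q$, so $\wp$ is holomorphic (indeed this is visible directly from the Lemma). Now the Lemma asserts exactly that $\Phi\circ\wp=\id_{\QP[n]}$, whence $\wp=\Phi^{-1}$ is an isomorphism of complex Lie groups. In particular no separate check of the homomorphism property or of surjectivity is needed: all of these are inherited from $\Phi$, which enjoys them by construction.

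The only genuinely substantive input is thus the preceding Lemma, i.e.\ the identity $[\W[2]\K[0]{A_q}]=q$, which rests on the explicit description of Getzler's Gauss--Manin connection on $\sKdR{\cA}$, the computation of its monodromy representation, and the Hochschild--Kostant--Rosenberg-type isomorphism of \autoref{prop:HP-bundle} --- all available to us. Granting that, the present Corollary is a formal packaging statement, and I expect the only point deserving a moment's care is confirming that the identification $\ExtMHS[1]{\ZZ(-1),\ZZ}\cong\CCx$ is compatible both with the complex-analytic structures and with the multiplicative group laws on the two sides; this is precisely \cite[Example 3.34(1)]{Peters2008} and needs no new work.
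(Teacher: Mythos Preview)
Your proposal is correct and follows exactly the same approach as the paper: the paper states the isomorphism \eqref{eq:torus-Q-ext} and the Lemma $[\W[2]\K[0]{A_q}]=q$, and the Corollary is then immediate without further proof. Your write-up simply makes explicit the one-line deduction $\wp=\Phi^{-1}$ that the paper leaves implicit.
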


\subsection{Comparison with the canonical quantization}

We now compare the algebraic quantum tori above with the canonical quantization in the sense of Kontsevich.  To this end, observe that the universal cover of $\QP[n]$, as a map of pointed spaces, is given by the \emph{entrywise} exponential map
\[
\mapdef{\exp}{(\fq,0)}{(\QP[n],\allones[n])}
{(\lambda_{ij})_{ij}}{ (e^{\lambda_{ij}})_{ij}}
\]
where $\fq = \mathrm{Lie}(\QP) \cong \wedge^2 \CC^n$ is the space of skew-symmetric matrices.  Note that this map should not be confused with the \emph{matrix} exponential.

On the other hand, $\fq \cong \wedge^2 \CC^n= \wedge^2 \ft$ is canonically identified with the space of Poisson structures on any toric log compactification $\X$ of $(\CCx)^n$.   Hence we have a corresponding universal family
\[
(\famX,\ps) \to \fq
\]
of Poisson structures on $\X$, with $\famX = \X \times \fq$.  The quantum parameters of these structures include, as components, the class of the extension
\[
\begin{tikzcd}
0\ar[r] & \coH[0]{\X;\ZZ}=\ZZ \ar[r] & \W[2]\K[0]{\X,\ps} \ar[r] & \coH[2]{\X;\ZZ(1)}\cong (\wedge^2 L)(-1) \ar[r] & 0\,.
\end{tikzcd}
\]
Concretely, if $\lambda = (\lambda_{ij}) \in \fq$, the associated Poisson structure is 
\[
\ps_\lambda = \sum_{ij} \lambda_{ij}x_ix_j\cvf{x_i}\wedge\cvf{x_j}\,.
\]
Then, by pulling back the computations of the quantum parameters in \autoref{ex:toricPeriods} along the coordinate projections $(\CCx)^n \to (\CCx)^2$, we deduce that the extension  is classified by the element
\[
 ( e^{\lambda_{ij}})_{i,j} \in \QP\,.
\]
One readily checks that the full Hodge structure is completely determined by this component of the quantum parameter, so that $\K{A_q}\cong \K{\X,\ps_\lambda}$ if and only if $q = \exp(\lambda)$, i.e.~we have the following.
\begin{lemma}\label{lem:exp-variation}
There is a unique isomorphism
\[
\shfK{\famX,\ps} \cong \exp^*\shfK{\cA}
\]
of variations of mixed Hodge structures over $\fq$, that reduces to the Hochschild--Kostant--Rosenberg isomorphism $\KdR{\famX|_0} \cong \KdR{\X} \cong \KdR{A_{\allones}}$  at the origin.
\end{lemma}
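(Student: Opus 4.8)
\emph{Proof strategy.} The plan is to compute both variations of mixed Hodge structures explicitly in one common trivialization over the contractible base $\fq$, and to observe that they coincide term by term, so that the asserted isomorphism is simply the identity. Recall that $\shfK{\famX,\ps}$ is a variation of mixed Hodge structures by \autoref{prop:InfPeriodMap}, that $\shfK{\cA}$ is one by its construction above, and that both restrict at the origin to $\K{(\CCx)^n}$ via the charge and Hochschild--Kostant--Rosenberg maps. Fix the basis $e_1,\dots,e_n$ of $\ft$ given by the left-invariant fields $\logcvf{x_1},\dots,\logcvf{x_n}$, and write $\Lambda_0 \subset \KdR{(\CCx)^n}\cong(\wedge^{-\bullet}\ft^\vee)((u))$ for the image of the charge lattice of $\KB{(\CCx)^n}$; note that $\Ahat_\X = 1$ for any toric log model $\X$, since it is $2$-pure (\autoref{prop:pure-params}), so the charge coincides with the Chern character. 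Since $\fq$ is simply connected, an isomorphism of variations of mixed Hodge structures over $\fq$ is in particular a global isomorphism of the underlying $\ZZ$-local systems, so any two that agree at a single fibre agree identically; this disposes of uniqueness (alternatively one may invoke the global Torelli statement \autoref{cor:qtori-torelli}), and only existence requires work.

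\emph{The quantum side.} By \autoref{prop:HP-bundle} and the discussion following it, $\sKdR{\cA}$ is the trivial bundle $(\wedge^{-\bullet}\ft^\vee)((u))\otimes\cO{\QP}$ with Hodge filtration given by the powers of $u$; the Getzler connection recorded above reads $\nabla = \dd + u^{-1}\sum_{1\le i<j\le n}\dlog{q_{ij}}\,\hook{e_i\wedge e_j}$, and since the operators $\hook{e_i\wedge e_j}$ mutually commute and are nilpotent, its parallel transport is the obvious exponential. Pulling back along the entrywise exponential $\exp\colon\fq\to\QP[n]$, for which $\exp^*\dlog{q_{ij}} = \dd\lambda_{ij}$, I would record $\exp^*\shfK{\cA}$ as the trivial bundle $(\wedge^{-\bullet}\ft^\vee)((u))\otimes\cO{\fq}$, with constant weight filtration, constant $u$-power Hodge filtration, connection $\dd + u^{-1}\sum_{i<j}\dd\lambda_{ij}\,\hook{e_i\wedge e_j}$, and $\nabla$-flat lattice equal at $\lambda\in\fq$ to $e^{-u^{-1}\sum_{i<j}\lambda_{ij}\hook{e_i\wedge e_j}}\Lambda_0$.

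\emph{The Poisson side.} By \autoref{ex:toric-mixed} the mixed complex $\sMix{\X,\ps_\lambda}$ is canonically quasi-isomorphic to $(\wedge^{-\bullet}\ft^\vee,0,0)$ for every $\lambda$, so \autoref{ex:toric-hodge} and \autoref{ex:toric-weight} trivialize $\sKdR{\famX}$ as $(\wedge^{-\bullet}\ft^\vee)((u))\otimes\cO{\fq}$, again with constant weight filtration and constant $u$-power Hodge filtration. Because $\famX=\X\times\fq$ carries a fixed complex structure, $\sKB{\famX}$ is the constant sheaf $\KB{\X}$, embedded via the constant charge map, so in the $\KdR{\X}$-model the lattice is the constant $\Lambda_0$; transporting through the isomorphism $e^{-\hook{\ps_\lambda}/u}$ of \autoref{lem:exp-ps} so as to match the $u$-power filtration, the lattice at $\lambda$ becomes $e^{-\hook{\ps_\lambda}/u}\Lambda_0$ and the Gauss--Manin connection becomes $\dd + u^{-1}\hook{\dd_{\fq}\ps_\lambda}$, in agreement with the identification $\iota\circ\KodSpenc$ of \autoref{prop:InfPeriodMap} (here $\KodSpenc$ is the tautological Kodaira--Spencer class of the family of bivectors, the complex structure being fixed). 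With the conventions of \autoref{ex:toric-def} one has $\ps_\lambda = \sum_{i<j}\lambda_{ij}\,e_i\wedge e_j$ and hence $\hook{\ps_\lambda} = \sum_{i<j}\lambda_{ij}\hook{e_i\wedge e_j}$, so this connection and lattice are \emph{exactly} $\dd + u^{-1}\sum_{i<j}\dd\lambda_{ij}\,\hook{e_i\wedge e_j}$ and $e^{-u^{-1}\sum_{i<j}\lambda_{ij}\hook{e_i\wedge e_j}}\Lambda_0$, i.e.\ the same data as on the quantum side.

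\emph{Conclusion and main difficulty.} Comparing the two descriptions, the underlying trivial bundles, the constant weight filtrations, the $u$-power Hodge filtrations, the Gauss--Manin connections, and the flat lattices all coincide, and both trivializations restrict at $\lambda=0$ to the charge/Hochschild--Kostant--Rosenberg identification with $\K{(\CCx)^n}$ by \autoref{prop:HP-bundle}; hence the identity map of $(\wedge^{-\bullet}\ft^\vee)((u))\otimes\cO{\fq}$ realizes the desired isomorphism with the stated normalization. Compatibility with the real structure is automatic, being a property of the lattice, and compatibility of the moving lattice with the constant weight filtration follows from \autoref{lem:bivector-preserves-weight}. The one genuinely non-formal step is the normalization matching in the Poisson-side computation: one must pin down the precise constant relating the formula for $\ps_\lambda$ in \autoref{ex:toric-def}, the Getzler connection of $\shfK{\cA}$, the charge lattice, and the isomorphism of \autoref{prop:HP-bundle}, so that $\hook{\ps_\lambda}$ equals $\sum_{i<j}\lambda_{ij}\hook{e_i\wedge e_j}$ on the nose and the two sets of data coincide rather than merely up to a rescaling. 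I expect this to reduce, by naturality of all of these constructions under the coordinate projections $(\CCx)^n\to(\CCx)^2$, to the two-dimensional computation already carried out in \autoref{ex:toricPeriods}.
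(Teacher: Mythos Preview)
Your proof is correct, but it follows a different route from the paper's. The paper argues via the Torelli property: it computes the extension class of $\W[2]\K[0]{A_q}$ (finding it equals $q$, \autoref{cor:qtori-torelli}) and the extension class of $\W[2]\K[0]{\X,\ps_\lambda}$ (finding it equals $(e^{\lambda_{ij}})_{ij}$, by reducing to \autoref{ex:toricPeriods}), then observes that this single extension determines the entire mixed Hodge structure, so the two variations agree fibrewise and hence globally. Your approach instead trivializes both variations simultaneously over $\fq$ and matches the vector bundle, weight filtration, Hodge filtration, Gauss--Manin connection, and lattice directly.

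What each buys: the paper's argument is shorter and highlights that the only invariant content is a single extension class, which is exactly the point exploited downstream in \autoref{thm:canonical-quant-mhs}; your argument is more self-contained, avoids the implicit claim that ``the full Hodge structure is completely determined by this component of the quantum parameter,'' and in effect proves that claim by exhibiting the isomorphism explicitly. Your flagged normalization issue (matching $\hook{\ps_\lambda}$ with $\sum_{i<j}\lambda_{ij}\hook{e_i\wedge e_j}$ on the nose) is real and is precisely the same ingredient the paper needs when it pulls back along the coordinate projections to \autoref{ex:toricPeriods}; both arguments bottom out in that two-dimensional check.
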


On the other hand, in \cite{Kontsevich2003}, Kontsevich constructs a canonical formal deformation quantization for Poisson structures on open subsets of affine space, given by a $\CC[[\hbar]]$-bilinear product on $\CC[x_1,\ldots,x_n][[\hbar]]$ that is equivariant for linear changes of coordinates.  In particular, if $\ps$ is an invariant Poisson structure on the torus $(\CCx)^n$, then its canonical quantization must be isomorphic, by an $\hbar$-adically continuous isomorphism of  $\CC[[\hbar]]$-algebras, to a formal quantum torus:
\begin{align}
A'_{\hbar \ps} \cong \frac{\CC[[\hbar]]\abrac{x_1,\ldots,x_n}}{x_ix_j-q_{ij}(\hbar\ps)x_jx_i} \label{eq:formal-qtorus}
\end{align}
for some uniquely determined formal power series
\[
q_{ij}(\hbar\ps)  = 1 + \hbar \ps_{ij} + \cdots \in \CC[[\hbar]]\,,
\]
as discussed in the introduction.  More invariantly, Kontsevich's canonical quantization gives an isomorphism of the formal completions
\[
\quant : \widehat{\fq} \to \widehat{\QP[n]}
\]
so that the series of matrices $(q_{ij}(\hbar\ps))_{ij}$ is the composition of $\quant$ with the formal completion of the canonical family $\hbar \mapsto \hbar \ps$ over $\bA^1$.   Then the pullback $\quant^*\shfK{\mathcal{A}}$ gives the variation of mixed Hodge structures over the formal scheme $\widehat \fq$ associated to the family of formal noncommutative tori \eqref{eq:formal-qtorus}.   This is related to the other variations above as follows:

\begin{proposition}\label{thm:canonical-quant-mhs}
There is a unique isomorphism
\[
 \quant^* \shfK{\cA} \cong \left. \rbrac{\exp^* \shfK{\cA}}\right|_{\widehat \fq}
\]
as variations of mixed Hodge structures over $\widehat{\fq}$, which reduces to the identity on $\K{(\CCx)^n}$ at the origin.
\end{proposition}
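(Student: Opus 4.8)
The plan is to reduce, via \autoref{lem:exp-variation}, to constructing an isomorphism between $\quant^*\shfK{\cA}$ and the variation $\shfK{\famX,\ps}|_{\widehat\fq}$ attached to the universal family of Poisson tori, and to produce the latter from the formality theorem for cyclic chains. Fix once and for all a stable formality morphism $\cU$ for Hochschild cochains, in the sense of \cite{Dolgushev2021}, that is compatible with Getzler's Gauss--Manin connection on negative cyclic chains, and build Kontsevich's star products with $\cU$; this is precisely the hypothesis needed to define $\quant:\widehat\fq\to\widehat{\QP[n]}$. By the cyclic formality theorems of \cite{CFW11,Shoikhet03,WWChains} (see also \cite{DTT08,DTT09,Tamarkin2001,Tsygan99} and \cite[\S3.3]{Lindberg2020} for the torus), $\cU$ induces, at each value of the deformation parameter, a $\CC((u))$-linear quasi-isomorphism
\[
(\rforms{(\CCx)^n}((u)),\delps+u\dd)\ \xrightarrow{\ \sim\ }\ \bigl(\Mix{A'_{\hbar\ps}}((u)),b+uB\bigr)
\]
from the Poisson periodic cyclic complex of $\bigl((\CCx)^n,\hbar\ps\bigr)$ to the periodic cyclic complex of the formal quantum torus $A'_{\hbar\ps}$ ($\hbar\ps\in\widehat\fq$), whose linear term is the Hochschild--Kostant--Rosenberg map. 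Being $\CC((u))$-linear, it preserves the $u$-adic filtration, hence the respective Hodge filtrations.

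The next step is to verify that this family of quasi-isomorphisms is flat for the Gauss--Manin connections. On the Poisson side the connection is the one of \autoref{prop:InfPeriodMap}: the complex structure on $\cX$ is held fixed, so the Kodaira--Spencer class records only the variation of the bivector and $\nabla=\dd+u^{-1}\hook{\dd\ps}$. On the noncommutative side it is Getzler's connection of \cite{Getzler93}, computed for quantum tori in \cite[\S3.2.2]{Lindberg2020}. Compatibility of $\cU$ with Getzler's connection --- exactly the standing hypothesis, and valid for Kontsevich's original morphism by \cite{CFW11,DTT08,DTT09,Shoikhet03,Tamarkin2001,Tsygan99,WWChains} --- supplies the required intertwining. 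Passing to periodic cohomology then yields an isomorphism of $\cO{\widehat\fq}((u))$-modules between $\quant^*\shfK{\cA}$ and $\shfK{\famX,\ps}|_{\widehat\fq}$ respecting $\F$ and $\nabla$ which, at the origin $\lambda=0$ (where $A'_0\cong\cO{}((\CCx)^n)$ and $\cU$ reduces to the Hochschild--Kostant--Rosenberg map), is the identity of $\KdR{(\CCx)^n}$.

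It remains to match the weight filtrations and integral lattices and to prove uniqueness. Both the weight filtration and the lattice are, on each side, pulled back by parallel transport from the fibre over the origin: on the noncommutative side this is the construction preceding \autoref{cor:qtori-torelli}, equivalently Blanc's topological K-theory \cite{Blanc2016} (cf.~\cite[Theorem~3.3.7]{Lindberg2020}); on the Poisson side it is the weight filtration and charge lattice of \autoref{prop:InfPeriodMap}, which are covariantly constant. Since the isomorphism built above is flat and equals the identity at the origin, where both weight filtrations and both lattices are those of $\K{(\CCx)^n}$, it carries one to the other; combined with \autoref{lem:exp-variation} this produces the desired isomorphism $\quant^*\shfK{\cA}\cong(\exp^*\shfK{\cA})|_{\widehat\fq}$ of variations of mixed Hodge structures over $\widehat\fq$. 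Uniqueness is a rigidity statement: the ratio of two such isomorphisms is a flat automorphism of $(\exp^*\shfK{\cA})|_{\widehat\fq}$ equal to the identity at the origin, and as $\widehat\fq$ is a formal disk its local systems are constant, so a flat automorphism is determined by its value at a single point and hence is the identity.

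The main obstacle will be the input from cyclic formality and its Gauss--Manin compatibility, used above essentially as a black box. Even granting \cite{CFW11,DTT08,DTT09,Shoikhet03,Tamarkin2001,Tsygan99,WWChains,Lindberg2020}, two points require care: identifying the periodic cyclic complex of the formal star-product algebra $A'_{\hbar\ps}$ with the one computed in \autoref{prop:HP-bundle} for the algebraic quantum torus $A_q$ with $q=\quant(\hbar\ps)$, so that Getzler's connection is literally the connection appearing there; and ensuring that the cyclic formality morphism is honestly $\CC((u))$-linear, not merely up to homotopy, so that it descends to the Hodge filtration. Both are handled in \cite{Lindberg2020} in the torus case, which is why it suffices to work in that setting here; together with \autoref{cor:qtori-torelli} this will also pin down the parameter of the quantization and yield \autoref{thm:mainF}.
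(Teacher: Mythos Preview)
Your proposal is correct and follows essentially the same strategy as the paper: reduce via \autoref{lem:exp-variation} to comparing $\quant^*\shfK{\cA}$ with $\shfK{\famX,\ps}|_{\widehat\fq}$, then invoke the cyclic formality theorem of Shoikhet and Willwacher together with the Gauss--Manin compatibility of Cattaneo--Felder--Willwacher. You supply more detail than the paper's proof---in particular the explicit argument that the weight filtration and integral lattice match (because both are parallel-transported from the origin and the isomorphism is flat and the identity there), and the rigidity argument for uniqueness---but these are elaborations of points the paper leaves implicit rather than a different route.
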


\begin{proof}
By \autoref{lem:exp-variation}, it suffices to check that the HKR isomorphism at the origin extends to an isomorphism of variations $\shfK{\famX,\ps}|_\fq \cong \quant^*\shfK{\cA}$.  For this, we use
the cyclic formality theorem of Shoikhet~\cite{Shoikhet03} and Willwacher~\cite{WWChains}, which implies that the mixed complex $\Mix{A'_{\hbar\ps}}$ of the canonical quantization is quasi-isomorphic to the mixed complex of the affine Poisson variety $((\CCx)^n,\hbar\ps)$.  Moreover, by results of Cattaneo--Felder--Willwacher~\cite{CFW11}, this isomorphism trivializes the Getzler--Gauss--Manin connection.
\end{proof}

Put differently, \autoref{thm:canonical-quant-mhs} states that the compositions of $\quant$ and $\exp$ with the period map for noncommutative tori are equal.  But the latter is an isomorphism by the Torelli property (\autoref{cor:qtori-torelli}), so we must have $\quant = \exp|_\fq$ as maps of formal schemes.  In this way, we arrive at the following.
\begin{theorem}\label{thm:q=e^h}
The canonical quantization of the Poisson structure
\[
\ps = \sum_{i<j} \lambda_{ij} x_ix_j\cvf{x_i}\wedge\cvf{x_j}
\]
is isomorphic to the noncommutative torus \eqref{eq:formal-qtorus} with parameters
\[
q_{ij}(\hbar\ps) = e^{\hbar \lambda_{ij}}.
\]
\end{theorem}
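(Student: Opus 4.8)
The plan is to deduce the theorem from the two structural inputs already assembled: the comparison of variations of mixed Hodge structures in \autoref{thm:canonical-quant-mhs}, and the global Torelli property for noncommutative tori in \autoref{cor:qtori-torelli}. First I would record what needs proving in invariant terms. Kontsevich's canonical quantization, being equivariant for linear changes of coordinates, sends an invariant Poisson structure $\ps_\lambda = \sum_{ij}\lambda_{ij}x_ix_j\cvf{x_i}\wedge\cvf{x_j}$ on $(\CCx)^n$ to a formal noncommutative torus of the form \eqref{eq:formal-qtorus}; this packages into a map of formal schemes $\quant : \widehat{\fq} \to \widehat{\QP[n]}$ whose composition with the canonical one-parameter family $\hbar\mapsto\hbar\ps_\lambda$ is precisely the series of matrices $(q_{ij}(\hbar\ps_\lambda))_{ij}$. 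The content of the theorem is the identity $\quant = \exp|_{\widehat{\fq}}$, where $\exp$ is the entrywise exponential. Once this is established, pulling back along $\hbar\mapsto\hbar\lambda$ gives $q_{ij}(\hbar\ps_\lambda) = e^{\hbar\lambda_{ij}}$, and the higher-dimensional claim needs no separate argument since all the statements in \autoref{sec:qtori} are already formulated for general $n$ (the planar computation of \autoref{ex:toricPeriods} entering only through the coordinate projections $(\CCx)^n\to(\CCx)^2$).

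Next I would run the Torelli cancellation. By \autoref{thm:canonical-quant-mhs} we have an isomorphism $\quant^*\shfK{\cA} \cong \rbrac{\exp^*\shfK{\cA}}|_{\widehat{\fq}}$ of variations of mixed Hodge structures over $\widehat{\fq}$ restricting to the identity at the origin. Composing with the period map $\wp : \QP[n] \to \ExtMHS[1]{(\wedge^2 L)(-1),\ZZ}$ of the universal family $\cA$ — first passing to formal completions at the basepoint $\allones[n]$ — this says that $\wp\circ\quant$ and $\wp\circ\exp|_{\widehat{\fq}}$ are equal as maps of formal schemes: the fibrewise mixed Hodge structures agree, and by the remark preceding \autoref{lem:exp-variation} the extension class $[\W[2]\K[0]{A_q}]$ already determines the entire Hodge structure of the fibre, so equality of the period classes is equivalent to equality of the maps. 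Since \autoref{cor:qtori-torelli} asserts that $\wp$ is an isomorphism of complex Lie groups (hence of formal groups after completion), it is left-cancellable, yielding $\quant = \exp|_{\widehat{\fq}}$, and the theorem follows as in the first paragraph.

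The step that carries all the genuine weight is \autoref{thm:canonical-quant-mhs} itself, whose proof invokes the cyclic formality theorem of Shoikhet and Willwacher together with the Cattaneo--Felder--Willwacher compatibility with Getzler's Gauss--Manin connection on cyclic chains. The delicate point there is not the quasi-isomorphism of mixed complexes $\Mix{A'_{\hbar\ps}} \simeq \Mix{((\CCx)^n,\hbar\ps)}$ per se, but the fact that it can be arranged \emph{flat} for the two Gauss--Manin connections simultaneously, so that the induced isomorphism on periodic cyclic homology matches parallel transport on both sides and therefore carries the parallel-transported integral lattice of \autoref{prop:HP-bundle} to the charge lattice on $\K{\X,\ps_\lambda}$. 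Granting that input, the remaining work — matching up the two variations of Hodge structure, extracting the extension class, and invoking Torelli — is formal, as sketched above; the reader is referred to \cite{Lindberg2020} for the detailed verifications, in particular the explicit computation of Getzler's connection and of the monodromy that makes the lattice transport well defined.
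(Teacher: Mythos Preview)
Your proposal is correct and follows essentially the same route as the paper: reformulate the statement as the identity $\quant = \exp|_{\widehat{\fq}}$ of formal maps, then use \autoref{thm:canonical-quant-mhs} to conclude that $\wp\circ\quant = \wp\circ\exp|_{\widehat{\fq}}$ and cancel $\wp$ via the Torelli property \autoref{cor:qtori-torelli}. The paper's argument is the one-paragraph version of exactly this, stated just before the theorem.
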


\begin{remark}\label{rmk:grt}
    The argument given here was for the quantization defined by Kontsevich~\cite{Kontsevich2003}.  However, one can define deformation quantization with respect to any stable formality morphism in the sense of \cite{Dolgushev2021}.  The proof of \autoref{thm:canonical-quant-mhs} used only the following properties of Kontsevich's formality morphism: 1) it is equivariant for linear change of coordinates; 2) it is homotopic to a stable formality morphism that lifts to a stable formality morphism for cyclic chains; and 3) the lift to cyclic chains is compatible with the Getzler--Gauss--Manin connection.  Properties 1) and 2) hold for all stable formality morphisms: 1) is immediate from the definition, while 2) is the content of \cite[Theorem 1]{Willwacher2017}.  We expect that  3) also holds for all stable formality morphisms, but we are not aware of a reference for this fact.
\end{remark}

\subsection{Hodge classes and roots of unity}\label{sec:qtori-hodge-conj}
When the quantum parameter $q \in \QP[n]$ is torsion, i.e.~$q^j = \allones[n]$ for some $j \in \ZZ_{>0}$, the quantum torus $A_q$ has a large centre, given by the weight spaces that are divisible by $j$.   Note that by \autoref{cor:qtori-torelli}, this exactly corresponds to the case in which the extension class $[\W[2]\K[0]{A_q}]$ is $j$-torsion.  If we identify $\KB[0]{A_q}$ with the topological K-theory in the sense of Blanc~\cite{Blanc2016}, this has the following interpretation in terms of ``noncommutative cycle classes''.

For simplicity, we treat the case $n=2$, so we have the algebra
\[
A_q = \frac{\CC\abrac{x^{\pm 1},y^{\pm1}}}{(xy-qyx)}
\]
and the parameter $q \in \CCx$ corresponds to the class of the extension
\[
\begin{tikzcd}[column sep=4em]
0 \ar[r] & \ZZ  \ar[r,"{1\mapsto [A_q]}"] & \K[0]{A_q} \ar[r] & \ZZ(-1)\cdot \beta \ar[r] & 0\,,
\end{tikzcd}
\]
where  $\beta \in \grW[2]\K[0]{A_q}\cong\grW[2]\K[0]{(\CCx)^2}$ is as in \autoref{ex:toricPeriods}. By definition of the Yoneda product, this extension is $j$-torsion if and only if  there exists a morphism of mixed Hodge structures $ \phi : \K[0]{A_q} \to \ZZ$ such that $\phi([A_q]) = j$. 

To construct such a morphism, suppose that $q$ is a primitive $j$th root of unity.  Then, as is well known, the centre $Z =  Z(A_q)$ is generated by  $a^{\pm1} := x^{\pm j}$ and $b^{\pm 1} := y^{\pm j}$, and $A_q$ is Azumaya over $Z$.  In particular, if $p \in \Spec{Z}$ is a closed point,  then 
\begin{align*}
A_q \otimes_Z \cO{p} \cong \CC^{j\times j} \label{eq:q-torus-fibre}
\end{align*}
is the algebra of $j\times j$ matrices.  This induces a morphism of mixed Hodge structures
\[
- \otimes \cO{p} : \K[0]{A_q} \to \K[0]{\CC^{j\times j}} \cong \ZZ\cdot [V] \cong \ZZ\,,
\]
where $V \cong \CC^j$ is the fundamental representation of $\CC^{j\times j}$, and it sends $[A_q]$ to $[\CC^{j\times j}] = [V^{\oplus j}]= j$, as desired.

 Dually, we may view this as a splitting of the compactly supported K-theory
 \[
 \begin{tikzcd}
 0 \ar[r] & \ZZ(1)\beta^\vee \ar[r] & \Kc[0]{A_q} \ar[r] & \ZZ \ar[r] & 0\,,
 \end{tikzcd}
 \]
 which is equivalent to the data of an element in $\F[0]\KdR[0]{A_q}\cap \KB[0]{A_q}$ (i.e.~a Hodge class) that projects to $j \in \ZZ$.  This Hodge class exists if and only if $q^j=1$, and is provided by the $j$-dimensional representation above.   In summary, we could say that the ``noncommutative integral Hodge conjecture'' holds for quantum tori.

\bibliographystyle{hyperamsalpha}

\bibliography{poisson-mhs}
       
\end{document}